\newtheorem{theorem}{Theorem}
\newtheorem*{conv*}{Conventions}{\bf}{\it}
\newtheorem{corollary}{Corollary}
\newtheorem{lemma}{Lemma}
\newtheorem{proposition}{Proposition}
\theoremstyle{definition}
\newtheorem{definition}{Definition}
{\bf}{\rm}
\newtheorem{remark}{Remark}
\newtheorem*{ack*}{Acknowledgments}
\newtheorem{example}{Example}
\newcommand{\com}{{\scriptstyle\bullet}}
\newcommand{\R}{\mathbb R}
\newcommand{\Z}{\mathbb Z}
\newcommand{\N}{\mathbb N}
\newcommand{\Sets}{\mathcal Sets}
\newcommand{\colim}{\mathop{\lim\limits_{\textstyle\longrightarrow}}\limits}
\newcommand{\hsset}{sSet_\infty}
\newcommand{\hcdga}{CDGA_\infty}
\newcommand{\sset}{sSet}
\newcommand{\cdga}{CDGA}
\newcommand{\hkmod}{k\textit{-}Mod_\infty}
\newcommand{\cga} {CGA}
\newcommand{\bordfr}{Bord^{fr}}
\newcommand{\CSS}{\mathcal{CS}e\mathcal{S}p}
\newcommand{\SeSp}{\mathcal{S}e\mathcal{S}p}
\newcommand{\pt}{\mathop{pt}_\com}
\newcommand{\Ball}{\mathcal{B}\mathit{all}}
\newcommand{\noprint}[1]{}
\title[Derived higher Hochschild, TCH,  Factorization algebras]{Derived Higher Hochschild Homology,  Topological Chiral Homology and Factorization algebras}
\author[G.~Ginot]{Gr\'egory Ginot}
\address{Gr\'egory Ginot, UPMC - Universit\'e Pierre et Marie Curie, \\
Institut  Math\'ematiques de Jussieu,   Case 247\\  4, place Jussieu, 75252 Paris Cedex 05, France \\ and   DMA - Ecole Normale Sup\'erieure,\\
45 rue d'Ulm, 75230 Paris, France}
\email{ginot@math.jussieu.fr}
\author[T.~Tradler]{Thomas~Tradler}
\address{Thomas Tradler, Department of Mathematics, College of Technology, City University of New York, 300 Jay Street, Brooklyn, NY 11201, USA,}
\email{ttradler@citytech.cuny.edu}
\author[M.~Zeinalian]{Mahmoud~Zeinalian}
\address{Department of Mathematics, C.W. Post Campus
of Long Island University, 720 Northern Boulevard, Brookville, NY
11548, USA} 
\email{mzeinalian@liu.edu}
\begin{document}

\begin{abstract}  
We study the higher Hochschild chain functor, factorization algebras  and their relationship with topological chiral homology. 
To this end, we emphasize that the higher Hochschild complex is a $(\infty, 1)$-functor $\hsset \times \hcdga$  where $\hsset$ and $\hcdga$ are the $(\infty,1)$-categories of simplicial sets and commutative differential graded algebras, and give 
an axiomatic characterization of this functor. 
From the axioms, we deduce several properties and computational tools for this functor. 
We study the relationship between the higher 
Hochschild functor and factorization algebras by showing that, in good cases, 
the Hochschild functor determines a constant commutative factorization algebra. 
Conversely,  every constant commutative factorization algebra is naturally equivalent to a Hochschild
 chain factorization algebra. Similarly, we study the relationship between the above concepts
 and topological chiral 
homology. In particular, we show that on their common domains of definition, the higher 
Hochschild functor is naturally equivalent to topological chiral homology.  
A byproduct of our proof yields  a  similar statement about the relationship between
 the blob complex and topological chiral homology of unoriented manifolds.
 Finally, we prove that topological chiral homology 
 determines a locally constant factorization algebra and, further, 
that this functor induces an equivalence between locally constant factorization algebras on a 
manifold and (local system of) $E_n$-algebras.We also deduce that Hochschild chains 
and topological chiral homology satisfies an exponential law, \emph{i.e.}, a Fubini type 
Theorem to compute them on products of manifolds.
\end{abstract}
\maketitle
\setcounter{tocdepth}{2}
\tableofcontents

\section{Introduction} 
In this paper, we study the higher Hochschild chain\footnote{Note, that in this paper we are using a \emph{cohomological} grading for our differential graded modules, see Convention (7) on page \pageref{convention7}. 
Also, this paper only deals with Hochschild chains and Hochschild homology, and never with Hochschild cochains or Hochschild cohomology.} complex $CH^{\com}_{X_\bullet}(A)$, functorially assigned to a simplicial set $X_\bullet$ (or a topological space), and a commutative differential graded algebra (CDGA) $A$, from an axiomatic point of view. Recently, motivated by topological quantum field theories, several concepts integrating (higher) categories of spaces or manifolds with those of algebras of different types have arisen. We also study the relationship between higher Hochschild chains, factorization algebras~\cite{CG,Co}, topological chiral homology~\cite{L-TFT,L-VI} and the blob complex~\cite{MW}.  
 Higher Hochschild homology was first introduced by Pirashvili in \cite{P}.
 The  higher Hochschild complexes (as well as other aforementioned concepts) are a generalization of the well-known and classical Hochschild complex. In fact,  for the case of the standard simplicial set model $X_\bullet=S^1_\bullet$ for the circle, $CH^{\com}_{S^1_\bullet}(A)$  reduces to the standard Hochschild complex $CH_{\com}(A)= A^{\otimes \com+1}$, see \cite{H,H2}.

In contrast with most other generalizations, higher Hochschild chains are defined over \emph{any} (simplicial set model of a) space and not only (stratified) manifolds. However, this forces us to restrict our attention to CDGAs or at best to $E_\infty$-algebras. More precisely, the higher Hochschild chains form a bifunctor $CH: \sset\times \cdga \to \cdga$ from the categories of simplicial sets and differential graded commutative algebras to the latter category. The functoriality with respect to spaces (and not merely manifold embeddings) is a key feature which allows us to derive algebraic operations on the higher Hochschild chain complexes from maps of topological spaces. For instance, it was crucially used  to study the Hodge decomposition of Hochschild homology  (Pirashvili~\cite{P}) or to give and study models of (higher) string topology~\cite{G,GTZ}. Also, its underlying  combinatorial properties  allow a generalization of Chen's iterated integral~\cite{GTZ}. Higher Hochschild is also a convenient setting to study holonomy of (higher) gerbes (for instance see~\cite{TWZ}) or compute  the observables of classical topological field theories, see~\cite{CG} and \S~\ref{S:Factorization}.      The higher Hochschild homology satisfies many axioms similar to those of Eilenberg-Steenrod for singular homology: naturality in each variable, commutations with coproducts in both variable, homotopy invariance and the dimension axiom, see  Corollary~\ref{C:Homologyfunctor}. 

However, to fully appreciate the higher Hochschild functor, one needs to go beyond mere homology and consider the higher Hochschild chains in a \emph{derived setting} which allows to formulate the analogue of the \emph{excision axiom}. This  axiom, reminiscent of the locality axioms of topological field theories,  asserts that Hochschild chains maps the homotopy pushout of simplicial sets to the derived tensor product of algebras, \emph{i.e.} homotopy pushout of CDGAs. This gluing property together with the homotopy invariance allow to build many examples of Hochschild chain complexes and to do computations as demonstrated in~\cite{GTZ}. Further, such an enhancement is needed in order to correctly compare the higher Hochschild functor with more sophisticated concepts, such as topological chiral homology, which naturally lies in a homotopical setting.  More precisely, we  interpret the higher Hochschild chains as a (derived) bifunctor from the $(\infty,1)$-categories $\hsset$ of simplicial sets  and  $\hcdga$ of CDGAs, which are  suitable localizations of the categories of simplicial sets and CDGAs, with respect to (weak) homotopy equivalences and quasi-isomorphisms. This framework (instead of simply homology) is also needed to keep track of the topology of topological spaces modeled by the simplicial sets; for instance the usual Hochschild complex $CH{_\ast}(A)$ interpretated in an $(\infty,1)$-category retains a circle action governing cylic homology as shown in~\cite{L-TFT,ToVe3}.  Here, following Rezk and Lurie \cite{Re,L-TFT}, an $(\infty,1)$-category means a complete Segal space. In our context, the $(\infty,1)$-categories we considered are obtained by a Dwyer-Kan localization process from standard model categories, though the results of this paper should not depend on the particular chosen approach to $(\infty,1)$-categories, see also Remark \ref{R:non-unique}.

\smallskip

 Our first main result is the following theorem.
\setcounter{section}{3} \setcounter{subsection}{2} \setcounter{theorem}{0}
\begin{theorem}\label{T:introderivedfunctor}
The Hochschild chains lift as a functor of $(\infty,1)$-categories $CH: \hsset \times \hcdga \to \hcdga$ which satisfies the following axioms
\begin{enumerate}
\item {\bf value on a point:} there is a natural equivalence  of CDGAs $CH^\com_{pt}(A)\cong A$.
\item {\bf monoidal:} there are natural equivalences of CDGAs
$$CH_{\coprod {X_i}_\com}^{\com}(A)\cong \bigotimes CH_{{X_i}_\com}^{\com}(A)$$
\item {\bf homotopy gluing/pushout:} $CH$ sends homotopy pushout in $\hsset$ to homotopy pushout in $\hcdga$, \emph{i.e.}  there is a natural equivalence of CDGAs
$$CH^{\com}_{X_\com \cup_{Z_\com}^{h} Y_\com}(A)\cong CH^{\com}_{X_\com}(A)\otimes_{CH^{\com}_{Z_\com}(A)}^{\mathbb{L}} CH^{\com}_{Y_\com}(A).$$   
\end{enumerate}
\end{theorem}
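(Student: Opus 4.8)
The plan is to realize the desired functor as the total left derived functor of the strict Hochschild chain construction, and then to read the three axioms off the single formal fact that a left derived functor preserves homotopy colimits. Recall that at the point-set level $CH^\com\colon \sset\times\cdga\to\cdga$ sends $(X_\com,A)$ to the realization $\big|[n]\mapsto A^{\otimes X_n}\big|$ of the simplicial CDGA obtained by tensoring one copy of $A$ for each simplex of $X_\com$, with faces and degeneracies encoded by the multiplication and unit of $A$. For a fixed $A$, the assignment $S\mapsto A^{\otimes S}$ is exactly the tensoring of $A$ over $\Sets$ (coproducts of CDGAs being tensor products), hence preserves colimits of sets; applying this levelwise and then realizing --- a colimit which commutes with colimits --- shows that $X_\com\mapsto CH^\com_{X_\com}(A)$ preserves \emph{all} colimits of simplicial sets, and in particular turns disjoint unions into tensor products and pushouts into pushouts.

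First I would make the construction homotopy-meaningful in the $A$-variable by restricting to cofibrant CDGAs. Since a tensor product of quasi-isomorphisms between cofibrant objects is a quasi-isomorphism, $A\mapsto A^{\otimes S}$ preserves quasi-isomorphisms, and therefore so does $CH^\com_{X_\com}(-)$ levelwise and after realization. By the universal property of the Dwyer--Kan localization this descends to an $(\infty,1)$-functor in the $A$-slot and immediately gives Axiom (1), namely $CH^\com_{pt}(A)=A^{\otimes\{*\}}=A$.

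The heart of the proof is homotopy invariance in the simplicial-set variable, which also supplies the gluing axiom. For cofibrant $A$ I would prove that $CH^\com_{-}(A)\colon\sset\to\cdga$ is left Quillen: it is a left adjoint by the colimit-preservation above (and accessibility), so it is enough to check that it carries the generating (trivial) cofibrations to (trivial) cofibrations --- concretely, that each boundary inclusion induces a cofibration $CH^\com_{\partial\Delta^n}(A)\hookrightarrow CH^\com_{\Delta^n}(A)$ and each horn inclusion a trivial cofibration. Equivalently, the simplicial CDGA $[n]\mapsto A^{\otimes X_n}$ is Reedy cofibrant, so that its realization computes the \emph{homotopy} colimit over $\Delta^{op}$. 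Granting this, the total left derived functor $\mathbb{L}CH^\com$ exists, is homotopy invariant in $X_\com$, and preserves homotopy colimits. Combining the two one-variable derivations, $\mathbb{L}CH^\com\colon\hsset\times\hcdga\to\hcdga$ is, for each fixed $A$, the essentially unique homotopy-colimit-preserving functor out of $\hsset$ with value $A$ on the point, because the $(\infty,1)$-category of spaces is the free cocompletion of a point. Axioms (2) and (3) are then automatic: such a functor sends $\coprod X_i$ to the coproduct, i.e. the derived tensor product, of the $CH^\com_{{X_i}_\com}(A)$, and sends the homotopy pushout $X_\com\cup^h_{Z_\com}Y_\com$ to the homotopy pushout of CDGAs $CH^\com_{X_\com}(A)\otimes^{\mathbb L}_{CH^\com_{Z_\com}(A)}CH^\com_{Y_\com}(A)$.

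I expect the main obstacle to be the Reedy/left-Quillen cofibrancy check of the third step: this is precisely where the cofibrancy hypothesis on $A$ must be exploited to guarantee that the strict realization computes the homotopy colimit and that monomorphisms of simplicial sets induce cofibrations of CDGAs, and hence where both homotopy invariance in the space variable and the gluing property are genuinely earned. A secondary, more bookkeeping-type difficulty is to assemble the two separate one-variable derivations into a single coherent bifunctor of $(\infty,1)$-categories rather than deriving in each argument in isolation.
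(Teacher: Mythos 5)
Your proposal is correct in outline but reaches the theorem by a genuinely different route from the paper's detailed argument. The paper never invokes left-Quillen/Reedy machinery: because $k$ is a field of characteristic zero, the \emph{strict} bifunctor $CH:\sset\times\cdga\to\cdga$ already preserves weak equivalences in both variables for \emph{arbitrary} $A$ --- in the algebra slot by exactness of $\otimes_k$, and in the space slot by Pirashvili's identification $CH^\com_{X_\com}(A)\cong k_{X_\com}\otimes_\Gamma\mathcal{L}(A)$ together with the projectivity of each $k_{X_m}$ as a right $\Gamma$-module (Proposition~\ref{P:homologyinvariance}) --- so it descends directly to the hammock localizations with no cofibrant replacement anywhere. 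Axiom (3) is then extracted from an explicit point-set computation: after replacing one leg of the pushout by an injective map, the strict pushout formula of Proposition~\ref{P:pushout} applies, and injectivity makes $CH^{\com}_{\widehat{Y}_\com}(A)$ levelwise free (semi-free) over $CH^{\com}_{Z_\com}(A)$, so the strict relative tensor product is already the derived one; axiom (2) is the explicit isomorphism of Proposition~\ref{P:tensor}. Your route instead restricts to cofibrant $A$, proves $CH^\com_{-}(A)$ is left Quillen via a Reedy-cofibrancy check, and deduces (2) and (3) from abstract preservation of homotopy colimits plus the universal property of $\hsset$; this endgame coincides with the paper's one-line appeal to the fact that $\hcdga$ is tensored over $\hsset$. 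Two caveats. First, the cofibrancy restriction on $A$ is unnecessary in characteristic zero, and having imposed it you still owe an identification of $\mathbb{L}CH$ with the strict $CH$ on non-cofibrant algebras, which comes back to the same exactness input. Second, in the left-Quillen check the assertion that horn inclusions go to \emph{trivial} cofibrations must not presuppose homotopy invariance in $X_\com$ (which is what you are trying to establish); it should be argued via simplicial homotopies --- horn inclusions are simplicial deformation retracts and $CH$ converts simplicial homotopies into chain homotopies --- otherwise the argument is circular. What your approach buys is that (2) and (3) come for free once colimit-preservation is in hand; what the paper's buys is that every verification is elementary and valid for all $A$.
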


Furthermore, the above axioms actually \emph{define} the (derived) higher Hochschild chains: 
indeed our second main result, Theorem~\ref{T:deriveduniqueness}  can be rephrased as \setcounter{theorem}{1}
\begin{theorem}
The Hochschild chains is the \emph{unique}  bifunctor 
$\hsset \times \hcdga \to \hcdga$ satisfying the axioms (1), (2), (3) in 
Theorem~\ref{T:introderivedfunctor}.
\end{theorem}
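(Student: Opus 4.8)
The plan is to show that any $(\infty,1)$-functor $F\colon \hsset\times\hcdga\to\hcdga$ satisfying axioms (1)--(3) of Theorem~\ref{T:introderivedfunctor} is naturally equivalent to $CH$; since $CH$ itself satisfies these axioms by that theorem, it is enough to prove that two such functors $F$ and $G$ are equivalent. The first move is to separate the variables by currying, viewing $F$ and $G$ as $(\infty,1)$-functors $\widetilde{F},\widetilde{G}\colon \hsset \to \mathcal{E}$, where $\mathcal{E}=\mathrm{Fun}(\hcdga,\hcdga)$ is the $(\infty,1)$-category of endofunctors of $\hcdga$. Because $\hcdga$ admits all small homotopy colimits (being the underlying $(\infty,1)$-category of a combinatorial model category), so does $\mathcal{E}$, with colimits computed objectwise; consequently axiom (2) says that $\widetilde{F}$ and $\widetilde{G}$ send coproducts of simplicial sets to coproducts in $\mathcal{E}$, while axiom (3) says that they send homotopy pushouts to homotopy pushouts. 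Axiom (1) pins down the value on the one-point space: $\widetilde{F}(\pt)\simeq \mathrm{id}_{\hcdga}\simeq \widetilde{G}(\pt)$.

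The heart of the argument is that these two preservation properties force the agreement of $\widetilde{F}$ and $\widetilde{G}$ on all of $\hsset$, because $\hsset$ is generated under homotopy colimits by the point. Concretely, I would first note that a functor preserving coproducts and homotopy pushouts automatically preserves sequential homotopy colimits, since a mapping telescope $\hocolim_n Y_n$ is the homotopy coequalizer of a pair of maps between coproducts, and a homotopy coequalizer is a homotopy pushout. I would then use the skeletal filtration $X_\com=\colim_n \mathrm{sk}_n X_\com$, in which each stage fits in a homotopy pushout square
\[ \mathrm{sk}_n X_\com \;\simeq\; \mathrm{sk}_{n-1}X_\com \;\cup^{h}_{\coprod \partial\Delta^n}\; \coprod \Delta^n, \]
the coproducts running over the nondegenerate $n$-simplices of $X_\com$. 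Each $\Delta^n$ is contractible, so homotopy invariance (which is automatic for a functor out of the localization $\hsset$) together with axiom (1) gives $\widetilde{F}(\Delta^n)\simeq\mathrm{id}_{\hcdga}\simeq\widetilde{G}(\Delta^n)$; an induction on $n$ using axioms (2) and (3) then shows that $\widetilde{F}$ and $\widetilde{G}$ agree on every skeleton, and passing to the sequential colimit in $n$ shows that they agree on $X_\com$. Conceptually, this skeletal induction is exactly the concrete incarnation of the universal property of $\hsset$ as the free cocompletion of the point under homotopy colimits: evaluation at $\pt$ induces an equivalence between (homotopy-)colimit-preserving functors $\hsset\to\mathcal{E}$ and the category $\mathcal{E}$ itself, and under it both $\widetilde{F}$ and $\widetilde{G}$ correspond to $\mathrm{id}_{\hcdga}$.

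Assembling these identifications yields an equivalence $\widetilde{F}\simeq\widetilde{G}$ in $\mathrm{Fun}(\hsset,\mathcal{E})$, and uncurrying produces the desired natural equivalence $F\simeq G$ of bifunctors; naturality in the CDGA variable is automatic, because the entire comparison takes place inside $\mathcal{E}=\mathrm{Fun}(\hcdga,\hcdga)$ and the anchoring datum at $\pt$ is the identity endofunctor in both cases. I expect the main obstacle to be precisely the homotopy-coherent bookkeeping in this last assembly: one must produce $\widetilde{F}\simeq\widetilde{G}$ not stage by stage but as a single morphism of $(\infty,1)$-functors, so the equivalences on the skeleta have to be chosen compatibly with the transition maps and with the coproduct and pushout structures. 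This is exactly the difficulty that the universal property of $\hsset$ is designed to resolve, so invoking it---rather than attempting to rigidify the skeletal induction by hand---is what makes the coherence tractable; the residual work is the essentially formal verification that axioms (2) and (3) genuinely upgrade to preservation of all the homotopy colimits involved, with enough attention to cofibrancy that the strict pushouts and coproducts in the skeletal filtration really represent homotopy colimits.
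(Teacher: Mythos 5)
Your proposal is correct and follows essentially the same two-pronged strategy as the paper: the clean argument via the universal property of $\hsset$ as the free homotopy-colimit completion of the point (the paper cites Lurie's result that any presentable $(\infty,1)$-category is uniquely tensored over $\hsset$), backed up by the concrete skeletal induction using the pushout squares $sk_n X_\com \simeq sk_{n-1}X_\com \cup^h_{\coprod \partial\Delta^n_\com} \coprod \Delta^n_\com$ and the coequalizer presentation of $X_\com = \bigcup_n sk_n X_\com$. The currying into $\mathrm{Fun}(\hcdga,\hcdga)$ is a cosmetic repackaging, and the coherence issue you flag (choosing the skeletal equivalences compatibly with the module structures over $F_{\coprod \partial\Delta^n_\com}(A)$) is exactly the point the paper's written-out induction takes care to address.
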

These two results actually follow from the fact that $\cdga$ is tensored over simplicial sets and the general formalism of $(\infty,1)$-categories as in~\cite{Lu11,L-VI} and allow to interpret the Hochschild functor as a (derived) mapping stack in the context of~\cite{ToVe}, see Corollary~\ref{C:mappingstack}.   We also show that the derived Hochschild functor $CH:\hsset\times \hcdga\to \hcdga$ has many good formal properties: for instance it commutes with finite (homotopy) colimits in both arguments and with finite products of simplicial spaces (Corollary~\ref{C:hocolim} and Proposition~\ref{P:product}). Further, the locality axioms leads to an Eilenberg-Moore spectral sequence computing the higher Hochschild homology (Corollary~\ref{C:HlocalitySpecSeq}).

We also deal with the pointed versions of higher Hochschild chains, which allows to define Hochschild chains over a pointed simplicial set $X_\com$ of a CDGA $A$ with coefficient in an $A$-module $M$ and establish similar results for this theory.

By homotopy invariance, we can define $CH^\com_X(A)$ for a topological space $X$, 
generalizing the concept for a simplicial set $X_\com$, in such a way that  all of the above 
properties still hold. With this, we can now  offer interpretations of $CH_{X}^\bullet(A)$ in various contexts. 
First, in Section \ref{S:factor-alg}, we use these properties to give an interpretation of Hochschild chains over spaces of a CDGA $A$ as a factorization algebra in any dimension. The concept of \emph{factorization algebras} (see~\cite{CG,Co}) is inspired by Topological Quantum Field Theory, in which they appear naturally to encode observables. They were inspired by the work of Beilinson and Drinfeld~\cite{BD} (in an algebraic-geometry framework). Roughly speaking a factorization algebra $\mathcal{F}$ is a rule which (covariantly) associate cochain complexes to open subsets   of   a space $X$  together with multiplications $$\mathcal{F}(U_1)\otimes \cdots \otimes \mathcal{F}(U_n)\to \mathcal{F}(V)$$ for any family of pairwise disjoint open subsets of an open set $V$ in $X$. It should satisfy a \lq\lq{}cosheaf-like\rq\rq{} condition, meaning that $\mathcal{F}(V)$ can be computed by \v{C}ech complexes indexed on nice enough covers, called factorizing covers, see~\cite{CG} and Section~\ref{S:Factorization}. The (derived) global sections of a factorization algebra $\mathcal{F}$ is also called the factorization homology of $\mathcal{F}$  and is denoted $HF(\mathcal{F},X)$.

In this context we prove that the higher Hochschild chain functor defines a commutative factorization algebra $\mathcal{CH}_X(A)$, if $X$ admits a good cover whose factorization homology is precisely the derived Hochschild chains $CH_{X}^{\com}(A)$.
\setcounter{section}{4} \setcounter{subsection}{2} \setcounter{theorem}{3}
\begin{theorem}
Let $X$ be a topological space with a factorizing good cover and $A$ be a CDGA.
 Assume further that there is a basis of open sets in $X$ which is also a factorizing good cover. 
Then the assignment $\mathcal{CH}_X: U\mapsto CH_U^\com(A)$ is a factorization algebra on $X$. 
 \end{theorem}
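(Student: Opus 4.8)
The plan is to first equip $\mathcal{CH}_X$ with a prefactorization algebra structure directly from the axioms of Theorem~\ref{T:introderivedfunctor}, and then to verify the gluing (``cosheaf-like'') axiom by recognizing the relevant \v{C}ech complex as a homotopy colimit that $CH$ computes through its compatibility with homotopy colimits. For the prefactorization structure, an inclusion of opens $U\subseteq V$ is in particular a map of spaces, so functoriality of $CH$ in the space variable (using homotopy invariance to define $CH^\com_{-}(A)$ on topological spaces) gives a structure map $CH^\com_U(A)\to CH^\com_V(A)$. For pairwise disjoint opens $U_1,\dots,U_n\subseteq V$, their disjoint union is open in $V$, and the monoidal axiom~(2) followed by the map induced by the open inclusion $\coprod_i U_i\hookrightarrow V$ yields
$$\bigotimes_{i=1}^{n} CH^\com_{U_i}(A)\;\cong\; CH^\com_{\coprod_i U_i}(A)\;\longrightarrow\; CH^\com_V(A).$$
Associativity, unitality and symmetry of these maps follow formally from functoriality in the space variable together with the coherence of the monoidal equivalences, and the commutativity of $A$ makes $\mathcal{CH}_X$ a \emph{commutative} prefactorization algebra.

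Next I would reduce the verification of the gluing axiom to the given basis. The axiom demands that for every open $V\subseteq X$ and every factorizing good cover $\mathcal{U}=\{U_i\}$ of $V$, the canonical map $\check{C}(\mathcal{U},\mathcal{CH}_X)\to CH^\com_V(A)$ from the \v{C}ech complex is an equivalence. Since by hypothesis $X$ has a basis of open sets forming a factorizing good cover, a standard cofinality/Kan extension argument along the basis reduces the check to opens $V$ and covers drawn from this basis.

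The technical heart is then to identify $\check{C}(\mathcal{U},\mathcal{CH}_X)$ with a homotopy colimit. By construction this \v{C}ech complex is assembled, via the prefactorization structure maps, from the values $CH^\com_{U_\alpha}(A)$ on (disjoint unions of) members of $\mathcal{U}$ and their finite intersections; unwinding the monoidal and pushout axioms exhibits it as the homotopy colimit $\hocolim_\alpha CH^\com_{U_\alpha}(A)$ over the diagram of opens and intersections of the cover. As $\mathcal{U}$ is a \emph{good} cover, each nonempty finite intersection is contractible, whence $CH^\com_{U_{i_0}\cap\cdots\cap U_{i_k}}(A)\simeq A$ by homotopy invariance and the value-on-a-point axiom~(1), and the diagram of spaces has homotopy colimit weakly equivalent to $V$ by the nerve theorem. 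Since $CH$ commutes with finite homotopy colimits in the space variable (Corollary~\ref{C:hocolim}, which extends the homotopy pushout axiom~(3)), and since $V\simeq\hocolim_\alpha U_\alpha$, we obtain
$$\check{C}(\mathcal{U},\mathcal{CH}_X)\;\simeq\;\hocolim_\alpha\, CH^\com_{U_\alpha}(A)\;\simeq\; CH^\com_V(A),$$
which is precisely the required gluing condition; applied to $V=X$ it also yields the assertion that the factorization homology of $\mathcal{CH}_X$ is $CH^\com_X(A)$.

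The main obstacle I anticipate is the precise reconciliation in this last step between the combinatorics of a \emph{factorizing} (Weiss) cover---whose \v{C}ech complex involves disjoint unions of cover elements mediated by the multiplication maps---and the nerve-type homotopy colimit diagram to which the homotopy-colimit compatibility of $CH$ applies. One must show that the factorizing and good-cover hypotheses together produce exactly the diagram whose homotopy colimit recovers $V$, and that the structure maps of $\mathcal{CH}_X$ agree with those of this homotopy colimit; here the monoidal axiom is the bridge relating the disjoint-union combinatorics of factorizing covers to the pushout combinatorics underlying homotopy colimits.
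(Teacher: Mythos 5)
Your overall architecture matches the paper's: the prefactorization structure comes from functoriality in the space variable plus the monoidal axiom, the good-cover hypothesis reduces each term of the \v{C}ech complex to a tensor power of $A$, and the Nerve Theorem recovers $V$ from the combinatorics of the cover. However, the step you explicitly flag as ``the main obstacle'' is not a detail to be reconciled later --- it is the heart of the proof, and the tools you name do not close it. The \v{C}ech complex of a factorization algebra is indexed by $P\mathcal{U}$, the finite tuples of \emph{pairwise disjoint} members of the cover, so after the good-cover reduction its $k$-th row is a sum over $\alpha_1,\dots,\alpha_k\in P\mathcal{U}$ of tensor powers of $A$; this is not the nerve-indexed diagram to which a ``homotopy colimit of opens'' argument directly applies. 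The paper bridges the gap by identifying this disjointness-restricted complex with the colimit $\colim CH^\com_{K_\bullet}(A)$ over finite simplicial sets $K_\bullet\to N_\com(\mathcal{V})$ whose images consist of pairwise disjoint opens, and then invoking the \emph{factorizing} hypothesis to show this subdiagram is cofinal in the full colimit defining $CH^\com_{N_\com(\mathcal{V})}(A)$ (Definition~\ref{D:Hoch}); only after that does the Nerve Theorem identify the answer with $CH^\com_V(A)$. Without this cofinality argument --- which is precisely where the factorizing condition enters the proof --- the equivalence $\check{C}(\mathcal{U},\mathcal{CH}_X)\simeq CH^\com_V(A)$ remains unproved.

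A secondary problem: you invoke Corollary~\ref{C:hocolim}, which only asserts commutation of $CH$ with \emph{finite} homotopy colimits in the space variable, but a factorizing good cover of an open set is essentially never finite, and the associated nerve diagram is an infinite homotopy colimit. The paper sidesteps this by using the colimit-over-finite-simplicial-subsets description built into Definition~\ref{D:Hoch} (a filtered colimit that $CH$ computes by construction) together with the cofinality argument above, rather than any general colimit-commutation statement. You would need to substitute that mechanism, or prove commutation for the specific infinite colimit at hand, for your argument to go through.
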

In particular, this applies when $X$ is a manifold. Further, we prove that any factorization algebra for which $\mathcal{F}(U)$ (for contractible $U$) is naturally equivalent to a CDGA $A$ is canonically equivalent to $\mathcal{CH}_X(A)$.
\setcounter{corollary}{9}
\begin{corollary}
Let $X$ be a topological space with a sufficiently nice cover, let $A$ be a CDGA, and let $\mathcal{F}$ be a strongly constant factorization algebra on $X$ of type $A$. Then there is a natural equivalence of factorization algebras $\mathcal{F} \cong \mathcal{CH}_X(A)$.\\
In particular, there is a natural equivalence $HF(\mathcal{F})\cong CH_X^\com(A)$ in $\hkmod$.
\end{corollary}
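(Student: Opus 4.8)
The plan is to construct a natural equivalence $\mathcal{F} \simeq \mathcal{CH}_X(A)$ by first comparing the two factorization algebras on a basis of nice (contractible) opens, and then extending over all opens using the cosheaf/descent property that both enjoy as factorization algebras. The starting observation is that on any contractible open $U$ of the nice cover, both sides are canonically equivalent to $A$: by homotopy invariance together with the value-on-a-point axiom of Theorem~\ref{T:introderivedfunctor}, one has $CH_U^\com(A) \simeq CH_{pt}^\com(A) \cong A$, while the hypothesis that $\mathcal{F}$ is strongly constant of type $A$ supplies a natural equivalence $\mathcal{F}(U) \simeq A$. Composing these yields an objectwise equivalence $\mathcal{F}(U) \simeq CH_U^\com(A)$ on the basis.

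The first real step is to promote these objectwise equivalences to a morphism of (commutative) factorization algebras on the basis, \emph{i.e.} to check compatibility with the factorization product maps. For $\mathcal{CH}_X(A)$, the structure map attached to a finite family of pairwise disjoint basis opens $U_1, \dots, U_n \subset V$ is obtained from the functoriality of $CH$ in the space variable applied to the inclusion $\coprod_i U_i \hookrightarrow V$, combined with the monoidal axiom $CH_{\coprod_i U_i}^\com(A) \simeq \bigotimes_i CH_{U_i}^\com(A)$. On the side of $\mathcal{F}$, the commutative factorization structure transports, through the equivalences $\mathcal{F}(U)\simeq A$, to the CDGA multiplication of $A$ --- this is precisely what the phrase ``of type $A$'' records. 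Matching these two descriptions shows that the basis-level equivalence intertwines the two product structures; the delicate point here, and the main obstacle, is to arrange this compatibility \emph{coherently} in the $(\infty,1)$-categorical sense rather than merely up to homotopy, so that one genuinely obtains a morphism of factorization algebras and not just a degreewise equivalence.

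Having identified the two factorization algebras on the basis, the second step is to extend the comparison to an arbitrary open $V \subset X$. Since the nice cover is a factorizing good cover and forms a basis, both $\mathcal{F}(V)$ and $CH_V^\com(A)$ are computed, up to equivalence, by the \v{C}ech complex associated to a factorizing good cover of $V$ by basis elements; this is the defining cosheaf-like condition of a factorization algebra, and for $\mathcal{CH}_X(A)$ it is guaranteed by the preceding Theorem. The equivalence established on basis opens --- and on their intersections, which again lie in, or are resolved by, the basis --- induces a levelwise equivalence of the two \v{C}ech complexes, hence an equivalence $\mathcal{F}(V) \simeq CH_V^\com(A)$. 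Because the construction is natural with respect to inclusions $V \hookrightarrow V'$ and, by the previous step, compatible with the factorization products, this assembles into the desired natural equivalence of factorization algebras $\mathcal{F} \simeq \mathcal{CH}_X(A)$.

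Finally, the ``in particular'' clause follows by passing to (derived) global sections: factorization homology sends equivalences of factorization algebras to equivalences, so $\mathcal{F} \simeq \mathcal{CH}_X(A)$ yields $HF(\mathcal{F}) \simeq HF(\mathcal{CH}_X(A))$ in $\hkmod$, and the latter is equivalent to $CH_X^\com(A)$ by the computation of the factorization homology of $\mathcal{CH}_X(A)$ recorded in the preceding Theorem.
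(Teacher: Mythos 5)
Your argument is correct and follows essentially the same route as the paper's proof of Corollary~\ref{C:CH=FH}: identify $\mathcal{F}$ and $\mathcal{CH}_X(A)$ on the contractible basis opens via the natural equivalences to $A$ (compatible with the product maps by the ``of type $A$'' diagram), compare the resulting \v{C}ech complexes over a factorizing good cover, and conclude by the fact that a factorization algebra is determined by its restriction to a factorizing basis, with the $HF$ statement following from Theorem~\ref{T:CH=FH}. The coherence issue you flag is real but is treated in the paper at the same level of detail (it is absorbed into the word ``natural'' in the definition of strongly constant of type $A$), so nothing further is needed.
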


In Section \ref{S:TCH}, we establish a relationship  between the \emph{topological chiral homology} functor defined by Lurie~\cite{L-TFT,L-VI} and both  the higher Hochschild functor and factorization algebras.  
 To obtain a comparison 
between these functors, it is important to note that they are defined in two different setting
 with a common intersection. Topological chiral homology, denoted  $\int_M A$, is 
defined  for any $E_n$-algebra $A$ (where $E_n$ is an operad equivalent to the little cubes in 
dimension $n$) and an $m$-dimensional manifold $M$, $m\leq n$,  such that $M\times D^{n-m}$ is
 framed (we say $M$ is $n$-framed).  Further $\int_M A$ is an $E_{n-m}$-algebra which is also a
 module over the $E_{n-m+1}$-algebra $\int_{\partial M} A$.  Topological chiral homology can be 
interpreted as an invariant of framed manifolds produced by an extended $(\infty,n)$-Topological 
Field Theory in the sense of~\cite{L-TFT}; the theory in question takes values in an 
$(\infty,n)$-category of $E_n$-algebras whose $n$-morphisms are (homotopy types) of chain complexes.
 Note that topological chiral homology depends on and comes with a choice of a sequence of maps of
 operads,
\begin{equation*}
 \xymatrix{E_1 \ar@{^{(}->}[r]&  E_2\ar@{^{(}->}[r] & \dots \dots \ar@{^{(}->}[r]& E_n\ar@{^{(}->}[r] & \dots \dots \ar@{->>}[r] &  \mathop{Com}  }
 \end{equation*}
which allows one to interpret a CDGA as an $E_n$-algebra for any $n$. When $A$ is a CDGA, things simplify greatly, and we can give a simple description of $\int_M A$ in terms of the higher Hochschild complex of $A$. Using excision for topological chiral homology, see Proposition~\ref{P:TCHpushout}, we prove, in a rather geometric way:
\setcounter{section}{5} \setcounter{subsection}{2} \setcounter{theorem}{4}
\begin{theorem}
Let $M$ be a manifold  endowed with a framing of $M \times D^k$ and $A$ be a differential graded commutative algebra viewed as an $E_{m+k}$-algebra. Then topological chiral homology of $M$ with coefficients in $A$, denoted by $\int_M A$ is equivalent  to $CH^\com_M(A)$ viewed as an $E_k$-algebra.
\end{theorem}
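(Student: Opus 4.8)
The plan is to recognize both sides as functors on $n$-framed $m$-manifolds (with $n=m+k$) valued in $E_k$-algebras, and to show that $M\mapsto \int_M A$ obeys the same three structural axioms that characterize the higher Hochschild functor in Theorem~\ref{T:introderivedfunctor}. Once this is in place, the equivalence follows by identifying the two functors through their common behavior on disks, on disjoint unions, and on collar gluings.

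First I would verify the three axioms for topological chiral homology. On a single disk one has $\int_{D^m} A\simeq A$ as an $E_k$-algebra, essentially by the definition of $\int_M A$ as a colimit over embedded disks. Correspondingly, since $D^m$ is contractible, homotopy invariance of $CH$ together with the value-on-a-point axiom gives $CH^\com_{D^m}(A)\simeq CH^\com_{pt}(A)\simeq A$. For disjoint unions, $\int_{M\sqcup N}A\simeq \int_M A\otimes \int_N A$, matching the monoidal axiom~(2). The crucial excision axiom is supplied by Proposition~\ref{P:TCHpushout}: for a collar decomposition $M=U\cup_{N\times\R}V$, topological chiral homology sends this to the derived tensor product
$$\int_M A \;\simeq\; \int_U A \otimes_{\int_{N\times\R} A}^{\mathbb{L}} \int_V A.$$
On the Hochschild side, the open cover $\{U,V\}$ with $N\times\R\simeq N$ as interface realizes the underlying space of $M$ as a homotopy pushout, so axiom~(3) of Theorem~\ref{T:introderivedfunctor} yields the analogous derived tensor product $CH^\com_U(A)\otimes^{\mathbb{L}}_{CH^\com_{N\times\R}(A)}CH^\com_V(A)$.

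With the three axioms in hand, I would produce the equivalence by choosing a factorizing good cover of $M$ by disks, as guaranteed for manifolds in Section~\ref{S:factor-alg}. Over such a cover the relevant overlaps are disjoint unions of disks, so both functors assign the same chain complexes (tensor powers of $A$) to every element of the cover and to every intersection, and both send the gluing data to the same derived tensor products. An induction over the cover -- equivalently, over a handle decomposition of $M$, attaching one handle at a time and applying excision at each stage -- then produces a natural equivalence $\int_M A\simeq CH^\com_M(A)$. Conceptually, this is the statement that both functors, restricted to the common domain of manifolds, solve the gluing problem pinned down by Theorem~\ref{T:introderivedfunctor}, so that uniqueness forces them to coincide.

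I expect the main obstacle to be the bookkeeping of the $E_k$-algebra structures, rather than the chain-level equivalence. The $E_k$-structure on $\int_M A$ arises geometrically from the $k$ normal directions in the framing of $M\times D^k$, whereas on $CH^\com_M(A)$ it is inherited from the commutative (indeed $E_\infty$) multiplication restricted along the chosen map $E_k\to\mathop{Com}$. One must therefore carry out the entire argument in the $(\infty,1)$-category of $E_k$-algebras, checking that both Proposition~\ref{P:TCHpushout} and axiom~(3) are equivalences of $E_k$-algebras and that the disk-level identification $\int_{D^m}A\simeq A\simeq CH^\com_{D^m}(A)$ respects these multiplications. Since $A$ is a CDGA, its $E_{m+k}$-structure factors through $\mathop{Com}$, which forces the $E_k$-structure on $\int_M A$ to be of commutative type and makes the compatibility tractable; nonetheless, verifying it at the base case, where the framing data genuinely enters, is the step demanding the most care.
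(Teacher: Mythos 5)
Your proposal follows essentially the same route as the paper: both sides agree on disks, both satisfy monoidality under disjoint union and an excision/gluing axiom (Proposition~\ref{P:TCHpushout} versus axiom~(3) of Theorem~\ref{T:derivedfunctor}), and the identification is propagated over a handle decomposition while tracking the $E_k$-structures through the fixed maps $E_k\to \mathop{Com}$. The only details your sketch glosses over, and which the paper spells out, are that attaching a $j$-handle for $j\geq 2$ requires first establishing the equivalence on the thickened spheres $S^{j-1}\times D^{m-j}$ (done by a separate induction on $j$ using the same excision argument) and that non-compact manifolds need a colimit argument over countably many handles.
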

In other words, topological chiral homology  and  higher Hochschild chains coincide on their 
common intersection for an $n$-framed manifold $M$, and a CDGA $A$. As an immediate corollary,  in that case,  $\int_M A$ is independent of the $n$-framing, see \S~\ref{SS:applications}.  

In Section~\ref{S:blob}, we explain briefly the relationship between topological chiral homology and the \emph{blob complex}~\cite{MW}. More precisely, topological chiral homology extends to all manifolds provided that it is applied to an $\mathbb{E}_n^{O(n)}$-algebra, \emph{i.e.} an algebra over the semi-direct product of the $E_n$-operad with the orthogonal group $O(n)$ or said otherwise an $E_n$-algebra homotopically $O(n)$-invariant. The blob complex can also be defined for such algebras and any closed manifold and agrees with topological chiral homology in that case, see Proposition~\ref{P:blob}.

The relation between factorization algebras, derived higher Hochschild chains and topological chiral
 homology for CDGAs can be pushed further. Indeed, the data of an $E_n$-algebra are equivalent to 
those of a locally constant factorization algebra in $\R^n$~\cite{Co,L-VI}, see Proposition~\ref{P:Fac=En}. Further, 
the assumptions of having an $E_n$-algebra and a framed manifold to define topological chiral 
homology  can be replaced by the one of having a suitable (kind of) cosheaf of $E_n$-algebras  on 
an $n$-dimensional manifold $N$. Such a cosheaf is called an $\mathbb{E}_{N}^{\otimes}$-algebra~\cite{L-VI} and is also inspired by the work of Beilinson-Drinfeld~\cite{BD}. The techniques
 developed to compare Hochschild chains with factorization algebras and topological chiral homology
 leads to  
Theorem~\ref{T:HF=TCH} which can be rephrased as 
\setcounter{section}{5} \setcounter{subsection}{4} \setcounter{theorem}{5}
\begin{theorem} Let $M$ be a manifold of dimension $n$.
\begin{enumerate}
\item Topological chiral homology defines a natural $(\infty,1)$-functor $\mathcal{TC}_M$ from the category of $\mathbb{E}_{M\times \R^d}^{\otimes}$-algebras to the category of locally constant factorization algebras on $M$ with value in $E_d$-algebras, such that $\int_M A \cong HF(\mathcal{TC}_M,M)$.
 \item The functor $\mathcal{TC}_M(A)$ is an equivalence.
\end{enumerate}
 \end{theorem}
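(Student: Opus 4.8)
The plan is to build $\mathcal{TC}_M$ by fiberwise topological chiral homology, to verify the factorization-algebra axioms from excision, and then to obtain the equivalence in (2) by a local-to-global argument that reduces everything on balls to Dunn's additivity theorem together with Proposition~\ref{P:Fac=En}. First I would construct the functor: given an $\mathbb{E}_{M\times \R^d}^{\otimes}$-algebra $A$ and an open $U\subseteq M$, the restriction $A|_{U\times \R^d}$ is an $\mathbb{E}_{U\times \R^d}^{\otimes}$-algebra, and I set $\mathcal{TC}_M(A)(U):=\int_U A$, the topological chiral homology taken only along the $U$-directions of $U\times \R^d$. Since the $\R^d$-directions are not integrated out, the output retains a residual $E_d$-algebra structure; concretely, this is the topological chiral homology of a (locally) $E_{n+d}$-algebra over the $n$-manifold $U$, which by the dimension count is an $E_{(n+d)-n}=E_d$-algebra. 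Inclusions $U\hookrightarrow V$ and disjoint unions furnish the structure maps of a prefactorization algebra valued in $E_d$-algebras, and functoriality in $A$ is inherited from that of topological chiral homology.

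To see that $\mathcal{TC}_M(A)$ is a genuine factorization algebra, multiplicativity over disjoint opens is the monoidal behaviour of topological chiral homology, while the cosheaf/\v{C}ech descent condition over a factorizing cover follows from excision (Proposition~\ref{P:TCHpushout}): crossing a factorizing cover of $U$ with $\R^d$ yields a factorizing cover of $U\times \R^d$ in the $U$-directions, and excision identifies $\int_U A$ with the homotopy colimit of the associated \v{C}ech diagram. Local constancy is then checked on a ball $U\cong \R^n$, where $A|_{U\times\R^d}$ corresponds to a single $E_{n+d}$-algebra $B$ and $\int_{\R^n}B\cong B$ as $E_d$-algebras; thus $\mathcal{TC}_M(A)$ is constant on balls and every inclusion of balls induces an equivalence. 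Finally, taking $U=M$ gives $\mathcal{TC}_M(A)(M)=\int_M A$, and since factorization homology is the value on the total space we obtain $HF(\mathcal{TC}_M,M)\cong \int_M A$, which proves (1).

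For (2) I would produce a quasi-inverse. A locally constant factorization algebra $\mathcal{F}$ on $M$ with values in $E_d$-algebras restricts, over each ball $U\cong \R^n$, to a locally constant factorization algebra on $\R^n$ valued in $E_d$-algebras; by Proposition~\ref{P:Fac=En} this is precisely an $E_n$-algebra object in $E_d$-algebras, \emph{i.e.}\ an $E_{n+d}$-algebra by Dunn's additivity theorem $E_n\otimes E_d\simeq E_{n+d}$. Because locally constant factorization algebras and $\mathbb{E}_{M\times \R^d}^{\otimes}$-algebras satisfy descent along the same covers of $M$, these local $E_{n+d}$-algebras glue to an $\mathbb{E}_{M\times \R^d}^{\otimes}$-algebra, defining the candidate inverse. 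That the two composites are equivalent to the identity then reduces, again on balls, to the statement that $\int_{\R^n}(-)$ and Proposition~\ref{P:Fac=En} implement mutually inverse identifications, compatibly with Dunn additivity.

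The hard part will be the local-to-global step: upgrading the pointwise (on balls) equivalence to an equivalence of the global $(\infty,1)$-functors. This demands that both sides be determined by, and satisfy effective descent for, their restrictions to a basis of balls, and that $\mathcal{TC}_M$ and its candidate inverse respect these descent data --- that is, that the equivalence is natural with respect to embeddings of balls and the associated gluing maps. Establishing this naturality, using excision for topological chiral homology on one side and the cosheaf condition for factorization algebras on the other, is where the essential work lies; once local constancy pins down each functor by its values on balls, the global equivalence follows.
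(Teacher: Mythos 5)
Your overall architecture matches the paper's: you define $\mathcal{TC}_M(A)(U):=\int_U A$ with the $E_d$-structure coming from the unintegrated $\R^d$-directions (the paper formalizes exactly this via Lemma~\ref{L:EnFact}, reducing to $d=0$ by pushing forward along $\pi_1:M\times\R^d\to M$), you build the inverse by restricting a locally constant factorization algebra to balls and invoking Proposition~\ref{P:Fac=En}, and you check the composites on balls and then globalize by descent from a nice cover. That is the paper's proof of Theorem~\ref{T:HF=TCH} in outline, and your quasi-inverse is essentially the paper's functor $\mathcal{EA}_M$ (the paper does not even need to ``glue'' local $E_{n+d}$-algebras: the values on balls together with the structure maps already constitute a locally constant $N(\text{Disk}(M\times\R^d))$-algebra, which by Lurie's Theorem 5.2.4.9 \emph{is} an $\mathbb{E}_{M\times D^d}^{\otimes}$-algebra).

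The genuine gap is in part (1), in the single sentence asserting that the cosheaf/\v{C}ech descent condition ``follows from excision.'' Excision, as available here (Proposition~\ref{P:TCHpushout} and Lemma~\ref{L:TCHpushout2}), is a \emph{binary} gluing statement along one collared codimension-one submanifold $N\times D^1$; the factorization-algebra axiom demands that $\check{C}(\mathcal{U},\mathcal{F})\to\mathcal{F}(U)$ be an equivalence for \emph{every} factorizing cover $\mathcal{U}$ of every open $U$ (for example, the cover of a ball by all sufficiently small balls), and there is no direct passage from iterated binary cuts to that general \v{C}ech condition. This is precisely where the paper's proof (of Proposition~\ref{P:TCHisFact}, reused for Theorem~\ref{T:HF=TCH}) does its real work: it constructs an auxiliary locally constant factorization algebra by gluing, over a handle decomposition, the factorization algebras on balls supplied by Proposition~\ref{P:Fac=En} (so the \v{C}ech condition holds \emph{by construction}, via the descent property of factorization algebras from~\cite{CG}), and then proves by an induction --- first balls, then $S^{i}\times D^{m-i}$ using Lemma~\ref{L:Enmodule} together with excision, then arbitrary opens via their own handle decompositions --- that this auxiliary object agrees with $U\mapsto\int_U A$. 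Relatedly, you locate ``the hard part'' in the naturality of the local-to-global step of part (2); in the paper that step is dispatched quickly (descent from a cover by geodesically convex opens on which the two factorization algebras are naturally identified), whereas the substantive difficulty sits exactly in the descent verification of part (1) that your sketch compresses to one line. To repair the proposal, replace ``descent follows from excision'' by the construction of the glued auxiliary factorization algebra and the handle-decomposition induction identifying it with $\int_{(-)}A$.
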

 Finally in Section~\ref{SS:applications}, we derive some applications of our results to give an interpretation of topological chiral homology in terms of maping spaces and to prove that topological chiral homology satisfies the exponential law, \emph{i.e.},    if $M$ and $N$ are manifolds and $\mathcal{A}$ is an $\mathbb{E}_d[M\times N]$-algebra, then,  there is an equivalence of $E_d$-algebras
$$\int_{M\times N} \mathcal{A} \; \cong \; \int_M\Big(\int_N \mathcal{A}\Big) $$see Corollary~\ref{C:FubiniTCH}. 

\smallskip
 
 Let us outline the philosophy intertwining the different concepts studied here. 
Given an $n$-framed manifold $M$ of dimension $m$ (\emph{i.e.} $M\times \R^{n-m}$ is framed),   
and an $E_n$-algebra $A$, we can form the topological chiral homology $\int_M A$ 
(or equivalently consider factorization algebra homology), which can be thought of as a colimit of tensor products of 
$A$ indexed by balls in the manifold. Now, if we embed $M\times \R^{n-m}$ in $M\times \R^{n-m+1}$ equipped 
with the induced framing, one can form $\int_M B$ for an $E_{n+1}$-algebra. But two different framings of 
$M\times \R^{n-m}$ may become equivalent after the embedding. Since a CDGA $C$ is an $E_k$-algebra
 (as well as an $\mathbb{E}_{M}^{\otimes}$-algebra) for all $k$, $\int_M C$ should not be able to distinguish 
different framings. Since manifolds embed in euclidean spaces, we further see that $\int_M C$ should makes sense for 
\emph{any} manifold.  Note that constant factorization algebra can be pulled back along open immersions and pushed 
forward  any map. This hints that any deformation retract of a manifold  should
 also have a well defined topological chiral homology (with value in a $C$) equivalent to the one of the manifold. 
All of this suggests that, for CDGAs, topological chiral homology may be extended to any CW-complex and is a 
homotopy invariant, which is precisely realized by the derived higher Hochschild functor. Said otherwise, 
higher Hochschild is the \lq\lq{}limit\rq\rq{} for $n$ going to $\infty$ of topological chiral homology defined 
as an invariant of manifolds of dimension $n$.

One of the emerging pattern here is that there is a balance to keep in between the manifolds and the algebraic 
structure needed to produce a (derived) invariant. For instance, in order to consider $E_n$-algebras, 
one need to consider  only  at most $n$-dimensional manifolds (possibly with extra structure such as a framing). 
In particular, working with only associative algebras restricts attention to manifolds of dimension $1$. 
At the opposite side of the spectrum, restricting to CDGAs allows to build and study \emph{explicit} 
examples in a much easier way and to compute them when adding the usual Rational Homotopy techniques 
to the axiomatic properties satisfied by the theory. 

\smallskip

We choose to work with commutative differential graded algebras since we are mainly interested in
 the characteristic zero case. However, it is also possible to work with simplicial commutative algebras,
 and all our results should makes sense in this setting. Simplicial commutative algebras are better behaved 
if one wants to deal with positive characteristic.

\begin{ack*}
We would like to thank the referees, David Ayala, Kevin Costello and Owen Gwilliam for many useful discussions and
 comments. The first author would like to thank the Einstein Chair at CUNY for their invitation, 
and the second and third  would like to thank IH\'ES for inviting them. 
The second and third authors were partially supported by the NSF grant DMS-0757245, and by the Max-Planck 
Institute for Mathematics in Bonn, Germany.
\end{ack*}

\setcounter{section}{1} \setcounter{theorem}{0} \setcounter{corollary}{0}

\section{Preliminary definitions and notation}

In this section we recall some standard definitions and constructions.

\textbf{Conventions:}
\begin{enumerate} 
\item We fix a ground field $k$ of characteristic zero.  The $(\infty,1)$-category of differential graded $k$-modules (\emph{i.e.} complexes) will be denoted $\hkmod$.
\item The (na\"ive) categories of simplicial sets and of commutative differential graded algebras  will be respectively denoted by $sSet$ and $CDGA$. The category of commutative graded algebra will be denoted $CGA$. Unless otherwise stated, all algebras will be assumed to be \emph{unital}.
\item We will simply refer to commutative differential graded algebras as CDGAs.
\item The $(\infty,1)$-categories of simplicial sets and commutative differential graded algebras  will be respectively denoted by $\hsset$ and $\hcdga$. 
\item Let $n\geq 1$ be an integer. By an $E_n$-algebra we mean an algebra over an $E_n$-operad. Unless otherwise stated, we work in the context of operads of differential graded  $k$-modules or $\infty$-operads in $\hkmod$. We will write $E_n\text{-}Alg_\infty$ for the $(\infty,1)$-category of $E_n$-algebras.
\item We work with a cohomological grading (unless otherwise stated)  for all our (co)homology groups and graded spaces, even when we use subscripts to denote the grading. In particular, all differentials are of degree $+1$, of the form $d:A^i\to A^{i+1}$ and the homology groups $H_i(X)$ of a space $X$ are concentrated in non-positive degree.
\item  \label{convention7}
We will denote by $CH_{X_\com}^n(A)$ the \emph{Hochschild chain complex} over $X_\com$ with value in $A$ of \emph{total} degree $n$. This Hochschild chain complex was noted differently in the papers~\cite{G,GTZ}. We choose this notation in order to put emphasis on the \emph{covariance} of the Hochschild chain functor with respect to $X_\com$ and the fact that we are considering cohomological degree.
\end{enumerate}

\subsection{Simplicial sets}
\label{S:Delta}
Denote by $\Delta$ the category whose objects are the ordered sets $[k]=\{0,1,\dots,k\}$, and morphisms $f:[k]\to [l]$ are non-decreasing maps $f(i)\geq f(j)$ for $i>j$. In particular, we have the morphisms $\delta_i:[k-1]\to[k], i=0,\dots, k$, which are injections that miss $i$ and we have surjections $\sigma_j:[k+1]\to [k], i=0,\dots,k$, which send $j$ and $j+1$ to $j$.

A \emph{simplicial set} is by definition a contravariant functor from $\Delta$ to the category of  sets $\Sets$ or written as a formula, $Y_\com:\Delta^{op}\to\Sets$. Denote by $Y_k=Y_\com([k])$, and call its elements simplicies. The image of $\delta_i$ under $Y_\com$ is denoted by $d_i:=Y_\com(\delta_i):Y_{k}\to Y_{k-1}$, for $i=0,\dots,k$, and is called the $i$th face. Similarly, $s_i:=Y_\com(\sigma_i):Y_{k}\to Y_{k+1}$, for $i=0,\dots,k$, is called the $i$th degeneracy. An element in $Y_k$ is called a degenerate simplex, if it is in the image of some $s_i$, otherwise it is called non-degenerate.

A simplicial set is said to be \emph{finite} if $Y_k$ is finite for every object $[k]\in \Delta$.  A \emph{pointed} simplicial set 
is a contravariant functor into the category $\Sets_*$ of pointed finite sets, $Y_\com:\Delta^{op}\to \Sets_*$. In particular, each $Y_k=Y_\com([k])$ has a preferred element called the base point, and all differentials $d_i$ and degeneracies $s_i$ preserve this base point.


A morphism of (finite or not, pointed or not) simplicial sets is a natural transformation of functors $f_\com:X_\com\to Y_\com$. Thus  $f_\com$ is given by a sequence of maps $f_k:X_k\to Y_k$ (preserving the base point in the pointed case), which commute with the faces $f_k d_i = d_i f_{k+1}$, and degeneracies $f_{k+1} s_i=s_i f_{k}$ for all $k\geq 0$ and $i=0,\dots, k$.

\smallskip

One of the most important construction for us is the pushout.
\begin{definition}\label{D:wedge}
Let $X_\com, Y_\com$, and $Z_\com$ be simplicial sets, and let $f_\com:Z_\com\to X_\com$ and $g_\com:Z_\com\to Y_\com$ be maps of simplicial sets. We define the wedge $W_\com=X_\com\cup_{Z_\com} Y_\com$ of $X_\com$ and $Y_\com$ along $Z_\com$ as the simplicial space given by $W_k= (X_k \cup Y_k)/\sim$, where $\sim$ identifies $f_k(z)=g_k(z)$ for all $z\in Z_k$. The face maps are defined as $d^{W_\com}_i(x)=d^{X_\com}_i(x), d^{W_\com}_i(y)=d^{Y_\com}_i(y)$ and the degeneracies are $s^{W_\com}_i(x)=s^{X_\com}_i(x), s^{W_\com}_i(y)=s^{Y_\com}_i(y)$ for any $x\in X_k \hookrightarrow W_k$ and $y\in Y_k \hookrightarrow W_k$. It is clear that $W_\com$ is well-defined and there are simplicial maps $X_\com\stackrel{i_\com}\to W_\com$ and $Y_\com\stackrel{j_\com}\to W_\com$.

If $X_\com$ is a pointed simplicial set, then we can make $W_\com$ into a pointed simplicial set by declaring the base point to be the one induced from the inclusion $X_\com\to W_\com$. (Note that this is in particular the case, when $X_\com, Y_\com, Z_\com, f_\com$ and $g_\com$ are in the pointed setting.)
\end{definition}

\subsection{Commutative Differential Graded Algebras} \label{S:CDGA}
We let $\cdga$ be the category of commutative differential graded algebras (over the characteristic zero field $k$). We do not assume the underlying chain complexes of our algebras to be bounded since in practice, it happens that one has to consider the Hochschild chains of de Rham forms on a space, which is generally $\Z$-graded. We follow the approach of~\cite[Chapter 1.1]{ToVe} and~\cite{Hi} for the model category properties of $\cdga$ and modules over  CDGAs. Recall from~\cite[Section 2.3]{Ho}, that there is a standard cofibrantly generated closed model category structure on the category of unbounded chain complexes for which fibrations are epimorphisms and (weak) equivalences are quasi-isomorphisms. It is further a symmetric monoidal model category with respect to the tensor products of chain complexes.

Since we work in characteristic zero, there is a standard closed model category structure on $\cdga$~\cite[Theorem 4.1.1]{Hi} as well, for which fibrations are epimorphisms and (weak) equivalences are quasi-isomorphisms (of CDGAs). 
The category $\cdga$ also has a monoidal structure given by the tensor product (over the ground field $k$) of differential graded commutative algebras, which makes $\cdga$ a symmetric monoidal model category. Note that since $k$ is assumed to be a field, this monoidal structure is given by an exact bifunctor.

Also note that $\cdga$ is simplicially enriched. Indeed, given $A,B \in \cdga$, we can form $\text{Map}_{\cdga}(A,B)$ the simplicial set of maps $[n]\mapsto \mathop{Hom}_{\cdga}(A, B\otimes \Omega^*(\Delta^n))$ (where $\Omega^*(\Delta^n)$ is the CDGA of forms on the $n$-dimensional standard simplex). 

 For any CDGA $A$, one can consider its category of \emph{differential graded (left) modules}, that we will denote by $A\textit{-}Mod$.  Again it has a natural model category structure with fibrations being epimorphisms and weak equivalences being quasi-isomorphisms. Further all assumptions in~\cite[Chapter 1.1]{ToVe} are satisfied. In particular, the tensor product of $A$-modules makes $A\textit{-}Mod$ a symmetric monoidal model category (in the sense of~\cite{Ho}) such that the functor $M\otimes_A -$ preserves weak equivalences when $M$ is cofibrant. Moreover, for any CDGA $A$, the category $A-\cdga$ of differential graded commutative $A$-algebra, in other words commutative monoid objects in $A\textit{-}Mod$, has a natural structure of  proper model category such that, for any cofibrant $A$-algebra $B$, the base change functor $B\otimes_A -: A\textit{-}Mod \to B\textit{-}Mod$ preserves weak equivalences~\cite[Chapter 1.1]{ToVe}.

\def \Top {Top}
\def \hTop {Top_{\infty}}
\subsection{Dwyer-Kan localization and $(\infty,1)$-categories}\label{S:DKL}
The $(\infty,1)$-categories that we are concerned about in this paper arise from model categories structures via the Dwyer-Kan localization turning them into simplicial categories.  Indeed simplicial categories are model for $(\infty,1)$-categories~\cite{Be1}. We now explain briefly how one gets $(\infty,1)$-categories out of model categories such as those considered in Section~\ref{S:Delta},~\ref{S:CDGA} above.

Following~\cite{Re,L-TFT}, by an $(\infty,1)$-category we mean a \textit{complete Segal space}. Rezk has shown that the category of simplicial spaces has a (simplicial closed) model structure, denoted $\CSS$  such that a complete Segal space is precisely a fibrant object for this model structure~\cite[Theorem 7.2]{Re}.  
Note that there is also a (simplicial closed) model category structure, denoted $\SeSp$, on the category of simplicial spaces such that a fibrant object in the $\SeSp$ structure is precisely a Segal space. We let $\mathbb{R}:\SeSp \to \SeSp$ be a fibrant replacement functor. Rezk~\cite{Re} has defined a completion functor $X_\bullet \to \widehat{X_\bullet}$ which, to a Segal space, associates an equivalent complete Segal space. Thus, the composition $X_\bullet \mapsto \widehat{\mathbb{R}(X_\bullet)}$ gives  a (fibrant replacement in the model category $\CSS$) functor $L_{\CSS}$ from simplicial spaces to complete Segal spaces. 

It remains to explain how to go from a model category to a simplicial space. The standard key idea is to use Dwyer-Kan localization. Let $\mathcal{M}$ be a model category and $\mathcal{W}$ be its subcategory of weak-equivalences. We denote $L^H(\mathcal{M},\mathcal{W})$ its \emph{hammock localization}, see \cite{DK}. One of the main property of $L^H(\mathcal{M},\mathcal{W})$ is that it is a simplicial category and that the (usual) category $\pi_0(L^H(\mathcal{M},\mathcal{W}))$ is the homotopy category of $\mathcal{M}$. Further,   every weak equivalence has a (weak) inverse in $L^H(\mathcal{M},\mathcal{W})$. When $\mathcal{M}$ is  further a simplicial model category, then for every pair $(x,y)$ of objects $\mathop{Hom}_{L^H(\mathcal{M},\mathcal{W})}(x,y)$ is naturally homotopy equivalent to the derived mapping space $ \mathbb{R}Hom (x,y)$. 

It follows that any model category $\mathcal{M}$ gives functorially rise to the simplicial category
$L^H(\mathcal{M},\mathcal{W})$. Taking the nerve $N_\bullet(L^H(\mathcal{M},\mathcal{W}))$ we obtain a simplicial space. Composing with the complete Segal Space replacement functor we get a functor $\mathcal{M}\to L_\infty(\mathcal{M}):= L_{\CSS}(N_\bullet(L^H(\mathcal{M},\mathcal{W})))$ from model categories to $(\infty,1)$-categories (that is complete Segal spaces).

\begin{example} \label{E:hsset}Applying the above procedure to the model category of simplicial sets $\sset$, we obtain the $(\infty,1)$-category $\hsset$. Similarly from the model category $\cdga$ of CDGAs we obtain the $(\infty,1)$-category $\hcdga$.
 Note that a simplicial set is determined by its $(\infty, 0)$ path groupoid and therefore the category of simplicial sets should be thought of as the $(\infty, 1)$ category of all $(\infty, 0)$ groupoids. Further,  the tensor product (over $k$) of algebras is a monoidal functor which gives  $\cdga$ a structure of monoidal model category, see~\cite{Ho}. Thus $\hcdga$ also inherits the structure of a symmetric monoidal $(\infty,1)$-category in the sense of~\cite{Re,L-TFT}. Similarly, the  disjoint union of simplicial sets endows  $\sset$ and $\hsset$ with symmetric monoidal structures. 

The model category of topological spaces yields the $(\infty,1)$-category $\hTop$. Since $\sset$ and $\Top$ are Quillen equivalent~\cite{GoJa,Ho}, the associated $(\infty,1)$-categories are equivalent (as $(\infty,1)$-categories): $\hsset \stackrel{\sim}{\underset{\sim}{\rightleftarrows}} \hTop$, where the left and right equivalences are respectively induced by the singular set and geometric realization functors.

One can also consider the pointed versions ${\hsset}_*$ and ${\hTop}_*$ of the above $(\infty,1)$-categories (using the model categories of these pointed versions~\cite{Ho}).
\end{example}

\begin{example} \label{E:Amod} As recalled in Section~\ref{S:CDGA}, there are model categories categories 
$A\textit{-}Mod$ and $A\text{-}\cdga$ of modules and commutative algebras over a CDGA $A$. Thus the above procedure 
gives us $(\infty,1)$-categories $A\textit{-}Mod_\infty$ and $A\textit{-}\cdga_\infty$ and the base change functor 
lifts to an $(\infty,1)$-functor. Further, if $f: A\to B$ is a weak equivalence, the natural 
functor $f_*:B\textit{-}Mod \to A\textit{-}Mod$ induces an equivalence $B\textit{-}Mod_\infty \stackrel{\sim}\to A\textit{-}Mod_\infty$ 
of $(\infty,1)$-categories since it is a Quillen equivalence. 

Moreover, if $f:A\to B$ is a morphism of CDGAs, it induces a natural functor 
$f^*:A\textit{-}Mod \to B\textit{-}Mod, M\mapsto M\otimes_A B$, which is an equivalence of $(\infty,1)$-categories 
when $f$ is a quasi-isomorphism, and is a (weak) inverse of $f_*$ (see~\cite{ToVe} or~\cite{KM}). 
Here we also (abusively) denote $f^*:A\textit{-}Mod_\infty \to B\textit{-}Mod_\infty$ and $f_*:B\textit{-}Mod_\infty \to A\textit{-}Mod_\infty$ the 
(derived) functors of $(\infty,1)$-categories induced by $f$.
 Since we are working over a field of characteristic zero, the same results applies to monoids in
 $A\textit{-}Mod$ and $B\textit{-}Mod$, that is to the categories $A\text{-}\hcdga$ and $B\text{-}\hcdga$.
Also note that, if  $f:A\to B$, $g:A\to C$ are CDGAs homomorphisms, we can form 
the (homotopy) pushout $D\cong B\otimes^{\mathbb{L}}_{A} C$. 

 Let us denote $p: B\to D$ and $q:C\to D$ 
the natural CDGAs maps. We thus get  two natural base change ($(\infty,1)$-)functors 
$C\textit{-}Mod_\infty \underset{p_*\circ q^*}{\stackrel{f^*\circ g_*}\rightrightarrows} B\textit{-}Mod_\infty$. Given any $M\in C\textit{-}Mod$,
the natural map $ f^*\circ g_*(M) \to p_*\circ q^*(M)$ is an 
equivalence~\cite[Proposition 1.1.0.8]{ToVe}.  

 The $(\infty,1)$-category $\hcdga$  is \emph{tensored} over $\hsset$ (and thus $\hTop$ as well) as follows from~\cite[\S 4.4.4]{Lu11}. We refer to~\cite{Lu11, L-VI} for tensored $(\infty,1)$-categories (which is the obvious analogue of the classical notion of tensored categories over $\Top$ or $\sset$); we simply recall that an $(\infty,1)$-category $\mathcal{C}$ is tensored over $\hsset$ if there exists an $(\infty,1)$-functor $\mathcal{C} \times \hsset \to \mathcal{C}$, denoted $(C, X_\bullet)\mapsto C\boxtimes X_\bullet$, together with natural equivalences
 $$Map_{\mathcal{C}}\big(C\boxtimes X_\bullet, D\big) \; \cong \; Map_{\hsset}\big(X_\com, Map_{\mathcal{C}}\big(C, D\big)\big). $$
 In fact, the tensor $A\boxtimes X_\bullet$ is precisely realized by the Hochschild derived functor, see Theorem~\ref{T:derivedfunctor}.

\end{example}

\begin{example}\label{E:EnOperad}
 We denote $E_n\textit{-}Alg_\infty$ the $(\infty,1)$-category of $E_n$-algebras which is given by algebras over any  $E_n$-($\infty$-)operads as introduced in~\cite[Section 5.1]{L-VI} in the symmetric monoidal ($(\infty,1)$-)category $(\hkmod,\otimes)$. It is equivalent to the $(\infty,1)$-category associated to model categories (deduced for instance from~\cite[Theorem 4.1.1]{Hi}) of algebras over the usual operad of singular chains on the little $n$-dimensional disk operad or as algebras over the Barratt-Eccles operad (which is an Hopf operad)~\cite{BF}.  
\end{example}

\begin{remark} \label{R:non-unique} There are other functors that yields a complete Segal space out 
of a model category. For instance, one can use the classification diagram of Rezk~\cite{Re}.
 Let again $\mathcal{M}$ be a model category and, for any integer $n$, let $\mathcal{M}^{[n]}$ be 
the (model) category of $n$-composables morphisms, that is the category fo functors from the poset 
$[n]$ to $\mathcal{M}$. The \emph{classification diagram} of $\mathcal{M}$ is the simplicial space
 $n\mapsto N_\bullet(\mathcal{W}e(\mathcal{M}^{[n]}))$ where $\mathcal{W}e(\mathcal{M}^{[n]})$ is 
the subcategory of weak equivalences of $\mathcal{M}^{[n]}$. Then taking a \emph{Reedy} fibrant 
replacement yields another complete Segal space
 $N_\bullet(\mathcal{W}e(\mathcal{M}^{[n]}))^{f}$~\cite[Theorem 6.2]{Be2}, \cite[Theorem 8.3]{Re}.
 It is known that the Segal space $N_\bullet(\mathcal{W}e(\mathcal{M}^{[n]}))^{f}$ is equivalent 
to $L_{\CSS}(N_\bullet(L^H(\mathcal{M},\mathcal{W})))$~\cite{Be2}.

\smallskip

More generally, there are several model for $(\infty,1)$-categories and several equivalent ways to obtain an $(\infty,1)$-category out of a ``homotopy theory''. We believe the results of this paper can easily be applied to the favorite model of the reader.
 
\end{remark}

\section{Derived higher Hochschild functor}

\subsection{Naive axiomatic approach to higher Hochschild  homology}
\label{S:NaiveHH}
We first recall the standard construction of  chain complexes computing higher Hochschild homology (also called Hochschild homology over spaces) following~\cite{P,GTZ}. The higher Hochschild complex is a functor $CH: sSet \times CDGA \to CDGA$. This functor is defined as follows:
the tensor products $A\otimes B$ of two CDGAs has an natural structure of \cdga (in other words, $\cdga$ has a symmetric monoidal structure canonically induced by the underlying tensor product of chain complexes). Furthermore,  the multiplication $A\otimes A\to A$ is an algebra homomorphism since $A$ is commutative. It follows that  a $\cdga$ can be thought of a strict symmetric monoidal functor from the category of finite sets with disjoint union to the category of chain complexes (whose value on a finite set $J$ is given by $A^{\otimes J}$), which can be extended to the category of all sets by taking colimits. Given a simplicial set $X_\com$, thought of as a functor $\Delta^{op} \to \sset$, compose these two functors to obtain a simplicial complex $X_\com \mapsto A^{\otimes X_\com}$. The total complex (that is the geometric realization $ \big|A^{\otimes X_\com}\big| $) of this simplicial complex is, by definition, $CH^{\com}_{X_\com}(A,A)$. In more details, we get the following explicit definitions.
\begin{definition}\label{D:Hoch}
First let $Y_\com:\Delta^{op}\to \Sets_*$ be a finite pointed simplicial set, and for $k\geq 0$, we set $y_k:=Y_k -\{*\}$ to be the complement of the base point in $Y_k$. Furthermore, let $(A=\bigoplus_{i\in \Z} A^i, d, \com)$ be a differential graded, associative, commutative algebra, and $(M=\bigoplus_{i\in \Z}M^i,d_M)$ a differential graded module over $A$ (viewed as a symmetric bimodule). Then, the 
{\bf Hochschild chain complex of $A$ with values in $M$ over $Y_\com$} is defined as\footnote{We recall that we are using a cohomological type grading for our differential graded modules, see Convention (7) on page \pageref{convention7}, hence the upper index $n$. Note that, nowhere in this paper will we consider Hochschild cochains.} $CH^{\com}_{Y_\com}(A,M):=\bigoplus_{n\in \Z} CH^n_{Y_\com}(A,M)$, where $$ CH^n_{Y_\com}(A,M):=\bigoplus_{k\geq 0} (M\otimes A^{\otimes y_k})^{n+k} $$ is given by a sum of elements of total degree $n+k$. In order to define a differential $D$ on $CH_{Y_\com}^{\com}(A,M)$, we define morphisms $d_i:Y_k\to Y_{k-1}$, for  $i=0,\dots,k$ as follows. First note that for any map $f:Y_k\to Y_l$ of pointed sets, and for $m\otimes a_1\otimes \dots\otimes a_{y_k}\in M\otimes A^{\otimes y_k}$, we denote by $f_*:M\otimes A^{\otimes y_k}\to M\otimes A^{\otimes y_l}$, 
\begin{equation}\label{f_*}
 f_*(m\otimes a_1\otimes \dots\otimes a_{y_k})=(-1)^{\epsilon} n\otimes b_1\otimes \dots\otimes b_{y_l},
 \end{equation}
  where $b_{j}=\prod_{i\in f^{-1}(j)} a_i$ (or $b_j=1$ if $f^{-1}(j)=\emptyset$) for $j=0,\dots,y_{l}$, and $n=m\com \prod_{i\in f^{-1}(\text{basepoint}), i\neq \text{basepoint}}a_i$. The sign $\epsilon$ in equation \eqref{f_*} is determined by the usual Koszul sign rule of $(-1)^{|x|\com |y|}$ whenever $x$ moves across $y$. In particular, there are induced boundaries $(d_i)_*:CH^k_{Y_\com}(A,M)\to CH^{k-1}_{Y_\com}(A,M)$ and degeneracies $(s_j)_*:CH^k_{Y_\com}(A,M)\to CH^{k+1}_{Y_\com}(A,M)$, which we denote by abuse of notation again by $d_i$ and $s_j$. Using these, the differential $D:CH^{\com}_{Y_\com}(A,A)\to CH^{\com}_{Y_\com}(A,A)$ is defined by letting $D(a_0\otimes a_1\otimes \dots\otimes a_{y_k})$ be equal to 
\begin{equation*}
\sum_{i=0}^{y_k} (-1)^{k+\epsilon_i} a_0\otimes \dots\otimes d(a_i)\otimes \dots\otimes a_{y_k}+\sum_{i=0}^k (-1)^i d_i (a_0\otimes \dots\otimes a_{y_k}),
\end{equation*}
where $\epsilon_i$ is again given by the Koszul sign rule, \emph{i.e.}, $(-1)^{\epsilon_i}=(-1)^{|a_0|+\cdots+|a_{i-1}|}$. The simplicial conditions on $d_i$ imply that $D^2=0$.

\smallskip

If $Y_\com:\Delta^{op}\to \Sets$ is a finite (not necessarily pointed) simplicial set, we may still define $CH_{Y_\com}^{\com}(A):=\bigoplus_{n\in \Z} CH^n_{Y_\com}(A,A)$ via the same formula as above, $CH^n_{Y_\com}(A,A):=\bigoplus_{k\geq 0} (A\otimes A^{\otimes y_k})_{n+k}$. Formula \eqref{f_*} again induces boundaries $d_i$ and degeneracies $s_i$, which produce a differential $D$ of square zero on $CH^{\com}_{Y_\com}(A,A)$ as above.

\smallskip

If $Y_\com$ is any simplicial set we define
$$CH_{Y_\com}^{\com}(A,M):=\colim_{\small \begin{array}{l}K_\com\to Y_\com, \\ K_\com \mbox{ finite} \end{array}} CH_{K_\com}^{\com}(A,M) $$
as the colimit over all finite simplicial sets. If $Y_\com$ is finite, then this definition agrees with the previous ones thanks to the Yoneda lemma.
\end{definition}
\begin{remark}
Note that due to our grading convention, if $A$ is non graded, or concentrated in degree 0, then $HH_\com^{Y_{\com}}(A,A)$ is concentrated in non positive degrees. In particular, our grading is  opposite of the one in~\cite{L}.
\end{remark}

Note that the equation~\eqref{f_*} also makes sense for any map of simplicial pointed sets $f:X_k\to Y_k$. 
Since $A$ is graded commutative and $M$ symmetric, $(f\circ g)_* =f_*\circ g_*$, hence $Y_\com \mapsto CH^\com_{Y_\com}(A,M)$ is a functor from the category of finite pointed simplicial sets to the category of simplicial $k$-vector spaces, see~\cite{P}. If $M=A$, $CH^\com_{Y_\com}(A)$ is a functor from the category of finite simplicial sets to the category of simplicial $k$-algebras. \smallskip

Now note that any map  $g:A\to B$ of CDGAs and any maps of modules $\rho:M\to N$  over $g:A\to B$, \emph{i.e.} $\rho(am)=g(a)\rho(m)$, induces a map $CH_{Y_\com}^{\com}(g,\rho): CH_{Y_\com}^{\com}(A,M)\to CH_{Y_\com}^{\com}(B,N)$ of simplicial vector spaces.


The chain complex $\left(CH_{Y_\com}^{\com}(A), D\right)$ inherits a structure of (differential graded) algebra. This is a formal consequence of the fact that $CH_{Y_\com}^{\com}(A)$ is a simplicial commutative algebra.
Indeed, given two simplicial vector spaces $V_\com$ and $W_\com$, one defines a simplicial structure on the simplicial space $(V\times W)_k:=V_k\otimes W_k$ using the boundaries $d^V_i\otimes d^W_i$ and degeneracies $s^V_i\otimes s^W_i$. The \emph{shuffle product} is (the collection of) maps  $sh:V_p\otimes W_q\to (V\times W)_{p+q}$ defined by $$ sh(v\otimes w)=\sum_{(\mu,\nu)} sgn(\mu,\nu) (s_{\nu_q}\dots s_{\nu_1}(v)\otimes s_{\mu_p}\dots s_{\mu_1}(w)), $$
where $(\mu,\nu)$ denotes a $(p,q)$-shuffle, \emph{i.e.} a permutation of $\{0,\dots,p+q-1\}$ mapping $0\leq j\leq p-1$ to $\mu_{j+1}$ and $p\leq j\leq p+q-1$ to $\nu_{j-p+1}$, such that $\mu_1<\dots<\mu_p$ and $\nu_1<\dots<\nu_q$.

Since $CH^{\com}_{Y_\com}(A,M)$ is a simplicial vector space, we obtain an induced shuffle map $sh:CH_{Y_p}^{\com}(A,M)\otimes CH^{\com}_{Y_q}(B,N) \to CH^{\com}_{Y_{p+q}}(A\otimes B, M\otimes N)$ for any CDGAs $A,B$ and modules $M,N$. Now, since $A$ is a CDGA, the multiplication $\mu:A\otimes A\to A$ is an algebra map, and the map $\nu:M\otimes A\to M$ a map of $A$-modules. Composing these maps with the shuffle products  we obtain the 
multiplication 
$$ sh_{Y_\com}:CH^{\com}_{Y_\com}(A,M)\otimes CH^{\com}_{Y_\com}(A)\stackrel {sh} \to CH^{\com}_{Y_\com}(A\otimes A,M\otimes A)\stackrel {CH_{Y_\com}^{\com}(\mu,\nu)} \longrightarrow  CH^{\com}_{Y_\com}(A,M).  $$

\begin{proposition}\label{P:shuffleinvariance}  The multiplication  $sh_{Y_\com}$ makes $CH^{\com}_{Y_\com}(A)$ a differential graded commutative algebra and $CH_{Y_\com}^{\com}(A,M)$ a DG-module over $CH^{\com}_{Y_\com}(A)$, which are natural in $A$ and $M$. 
\end{proposition}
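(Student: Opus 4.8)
The plan is to reduce the statement to the classical Eilenberg--Zilber theorem and to the standard algebraic properties of the shuffle map, once the relevant objects are recognized as a simplicial commutative algebra together with a simplicial module over it. As already noted above, $CH^\com_{Y_\com}(A)$ is a simplicial commutative algebra and $CH^\com_{Y_\com}(A,M)$ a simplicial module over it; concretely, $[k]\mapsto CH^\com_{Y_k}(A)$ is a simplicial object in $\cdga$ whose $k$-th level is a tensor product of copies of the CDGA $A$, hence a CDGA, and whose structure maps $d_i,s_i$ --- being induced by maps of (pointed) finite sets through the symmetric monoidal functor $J\mapsto A^{\otimes J}$ --- are algebra homomorphisms precisely because $A$ is commutative (collapsing two tensor factors by $\mu$ is an algebra map) and unital (inserting a unit along a degeneracy is an algebra map). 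By Definition~\ref{D:Hoch}, $CH^\com_{Y_\com}(A,M)$ is the total complex of this simplicial DG-module, its differential $D$ being the sum of the internal differential of $A,M$ and the alternating face sum $\sum_i (-1)^i d_i$.

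I would then invoke the three classical properties of the shuffle map $sh$: (i) it is a morphism of chain complexes, i.e.\ it commutes with the total differential $D$ (Eilenberg--Zilber; or a direct check from the simplicial identities); (ii) it is strictly associative; and (iii) it is graded commutative, that is $sh\circ\tau=\tau\circ sh$ up to the Koszul sign, $\tau$ denoting the symmetry of the tensor product. With these in hand, the product $sh_{Y_\com}=CH^\com_{Y_\com}(\mu,\nu)\circ sh$ is a chain map, because $sh$ is one and $CH^\com_{Y_\com}(\mu,\nu)$ is induced by the algebra map $\mu$; it is associative and graded commutative because $sh$ and $\mu$ both are; and the class of the unit $1$ of $A$ in the $k=0$ summand is a two-sided unit, since shuffling against a simplicial-degree-zero element only degenerates it and $\mu(1\otimes-)=\mathrm{id}$. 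This makes $\left(CH^\com_{Y_\com}(A),D,sh_{Y_\com}\right)$ a CDGA. The DG-module structure on $CH^\com_{Y_\com}(A,M)$ is obtained in exactly the same way, with $\mu$ replaced by the action map $\nu\colon M\otimes A\to M$: the module axioms follow from the associativity of $sh$ together with the associativity and unitality of $\nu$, using that $M$ is a symmetric bimodule so as to match the signs. Finally, naturality is immediate: for $g\colon A\to B$ and $\rho\colon M\to N$ over $g$, the map $CH^\com_{Y_\com}(g,\rho)$ is levelwise $\rho\otimes g^{\otimes y_k}$, commutes with $sh$ by naturality of the shuffle map (which is built only from degeneracies), and intertwines $(\mu,\nu)$ with the corresponding maps for $B,N$ because $g$ is an algebra map and $\rho$ a module map over $g$.

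The only genuinely nontrivial input is the package (i)--(iii), which is precisely the content of the Eilenberg--Zilber theorem and the Eilenberg--Mac\,Lane sign computations. Accordingly, the main point to watch is the sign bookkeeping: the Koszul signs coming from the internal grading of $A$ and $M$ must be checked to be compatible with the shuffle signs. Since $sh$ acts solely through degeneracies in the simplicial direction while the internal differential acts in the orthogonal direction, these two sources of signs do not interfere and the classical verification carries over essentially verbatim to the present DG setting. More conceptually --- and this is the cleanest way to organize the argument --- one may simply cite that the normalized chains functor from simplicial DG $k$-modules to DG $k$-modules is lax symmetric monoidal via the shuffle (Eilenberg--Zilber) map, so that it automatically carries the commutative monoid $CH^\com_{Y_\com}(A)$ and its module $CH^\com_{Y_\com}(A,M)$ to a CDGA and a DG-module over it, naturally in both $A$ and $M$.
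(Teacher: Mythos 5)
Your proposal is correct and is essentially the argument the paper has in mind: the paper's own ``proof'' consists of the citation to \cite[Proposition 2.4.2]{GTZ} for the algebra structure, with the remark that the module case is identical, and what you have written out --- the identification of $CH^{\com}_{Y_\com}(A)$ as a simplicial commutative algebra with $CH^{\com}_{Y_\com}(A,M)$ a simplicial module over it, followed by the standard Eilenberg--Zilber properties of the shuffle map (chain map, associativity, graded commutativity, unitality) and the naturality check --- is precisely the content of that reference. No gaps; the lax symmetric monoidal formulation at the end is a clean way to package it.
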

\begin{proof} The proof of the algebra structure is given in~\cite[Proposition 2.4.2]{GTZ} and the proof of the module structure is the same.
\end{proof}

Note that $CH_{\pt}^{\com}(A)$ is the (chain complex associated to the) constant simplicial CDGA $A$. In particular there is a canonical quasi-isomorphism $\eta: A=CH_{\mathop{pt}_0}^{\com}(A)\to CH_{\pt}^{\com}(A)$ splitting the augmentation map $CH_{\pt}^{\com}(A)\to CH_{\mathop{pt}_0}^{\com}(A)$.    It follows from Proposition~\ref{P:shuffleinvariance} above that if $X_\com$ is a pointed simplicial set, the canonical map $\pt \to X_\com $
induces a natural $A$-module structure on $CH_{X_\com}^{\com}(A,M)$ (and an $A$-algebra structure on $CH_{X_\com}^{\com}(A)$). In other words, $CH_{X_\com}^\com (A,M)$ is naturally an $A$-module.

Summing up the previous discussion and proposition we obtain:
\begin{corollary}\label{C:Hfunctor} 
The rule $(Y_\com, A)\mapsto (CH^{\com}_{Y_\com}(A),D,sh_{Y_\com})$ is a functor $CH:\sset \times \cdga \to \cdga$. Similarly, the rule $(Y_\com, M)\mapsto (CH^{\com}_{Y_\com}(A,M),D,sh_{Y_\com})$ is a functor $CH:\sset_* \times A\textit{-}Mod \to A\textit{-}Mod$.
\end{corollary}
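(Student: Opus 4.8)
The plan is to assemble the statement from the naturality properties already recorded in the discussion preceding Proposition~\ref{P:shuffleinvariance}, treating the two variables separately and then checking that the resulting assignments are compatible and land in the correct target categories. Throughout, I would first verify functoriality in each variable at the level of simplicial vector spaces (equivalently, after passing to total complexes), and only at the end check compatibility with the multiplicative structure produced by $sh_{Y_\com}$.

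First I would establish functoriality in the simplicial variable. For fixed $A$ (and $M$), the identity $(f\circ g)_\ast = f_\ast\circ g_\ast$ noted after Definition~\ref{D:Hoch} --- valid precisely because $A$ is graded commutative and $M$ symmetric --- already shows that $Y_\com\mapsto CH^\com_{Y_\com}(A,M)$ is a functor from finite (pointed) simplicial sets to simplicial vector spaces, indeed to simplicial commutative algebras when $M=A$. Passing to the total complex is itself functorial, and the colimit defining $CH^\com_{Y_\com}(A,M)$ for an arbitrary $Y_\com$ is functorial by its universal property, so functoriality in $Y_\com$ extends to all of $\sset$. Here one must check that a simplicial map $f_\com\colon X_\com\to Y_\com$ induces a chain map, i.e.\ that it commutes with the differential $D$ of Definition~\ref{D:Hoch}; this splits into commutation with the internal differential (immediate, as $f_\ast$ reindexes and multiplies tensor factors while $d$ is a derivation) and with the simplicial faces $d_i$ (which holds since $f_\com$ is a simplicial map), the Koszul signs of \eqref{f_*} being preserved throughout.

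Next I would treat functoriality in the algebra (resp.\ module) variable for fixed $Y_\com$. A CDGA morphism $g\colon A\to B$ induces levelwise the maps $A^{\otimes y_k}\to B^{\otimes y_k}$, and a module map $\rho\colon M\to N$ over $g$ induces $M\otimes A^{\otimes y_k}\to N\otimes B^{\otimes y_k}$. The crucial point --- and the step I expect to be the main obstacle, in the sense that it is where the hypotheses are genuinely used --- is that these levelwise maps commute with the structure maps $f_\ast$ of \eqref{f_*}: since $f_\ast$ multiplies together the tensor factors lying in a common fibre $f^{-1}(j)$, compatibility forces $g$ to be an \emph{algebra} homomorphism (and $\rho$ a module map over $g$), not merely a chain map. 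Granting this, $g$ and $\rho$ induce maps of simplicial vector spaces commuting with all $d_i$, $s_i$ and with the internal differentials, hence chain maps on total complexes.

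Finally I would observe that the two functorialities commute: the faces and degeneracies act on the indexing set $y_k$ of tensor factors, while $g$ and $\rho$ act factorwise on the entries, so these operations are independent and $(f_\com,g)$ (resp.\ $(f_\com,\rho)$) defines a morphism on the product category $\sset\times\cdga$ (resp.\ $\sset_*\times A\textit{-}Mod$), with identities and composites manifestly preserved. It then remains to check that the induced maps respect the multiplicative structure. By Proposition~\ref{P:shuffleinvariance}, $sh_{Y_\com}$ makes $CH^\com_{Y_\com}(A)$ a CDGA and $CH^\com_{Y_\com}(A,M)$ a module over it; since the shuffle map $sh$ is natural in the simplicial vector space and the multiplications $\mu\colon A\otimes A\to A$, $\nu\colon M\otimes A\to M$ are natural in $g,\rho$, the induced maps intertwine the products $sh_{Y_\com}$ and are thus morphisms of CDGAs (resp.\ of modules). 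For the target of the second functor, the required $A$-module structure on $CH^\com_{Y_\com}(A,M)$ comes, as noted before the corollary, from the basepoint map $\pt\to Y_\com$ endowing $CH^\com_{Y_\com}(A)$ with an $A$-algebra structure via $A=CH^\com_{\mathop{pt}_0}(A)\to CH^\com_{Y_\com}(A)$; naturality of this map in pointed simplicial maps and in module maps guarantees the induced maps are $A$-linear, which completes the verification that both rules are functors.
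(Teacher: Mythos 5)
Your proposal is correct and follows essentially the same route as the paper, which proves this corollary simply by assembling the preceding discussion (the identity $(f\circ g)_\ast=f_\ast\circ g_\ast$, the maps $CH_{Y_\com}^{\com}(g,\rho)$ induced by algebra and module morphisms, and the $A$-module structure coming from the basepoint map $\pt\to Y_\com$) with the naturality statement of Proposition~\ref{P:shuffleinvariance}. Your write-up merely makes explicit the verifications the paper leaves implicit.
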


\begin{definition}\label{D:HH}
The Hochschild homology of a CDGA $A$  over a simplicial set $X_\com$ is the cohomology\footnote{Recall that we are using a cohomological type grading for our differential graded modules, see Convention (7) on page \pageref{convention7}.} $HH_{X_\com}^{\com}(A)=H^\bullet(CH_{X_\com}^{\com}(A),D)$ of the CDGA $(CH_{X_\com}^{\com}(A),D, sh)$ as a commutative graded algebra.

Further if $X_\bullet$ is pointed and $M$ is an $A$-module, the Hochschild homology of $A$ with value in $M$ over $X_\com$ is the homology $HH_{X_\com}^{\com}(A,M)=H^*(CH_{X_\com}^{\com}(A,M),D)$ as a graded module over  $HH_{X_\com}^{\com}(A)$.
\end{definition}

Now let $X$ be a \emph{topological space},  we define the Hochschild homology of a $CDGA$ $A$ over $X$ to be $HH^\com_{S_\com(X)}(A)$ where $$S_\com(X)=\mathop{Map}(\Delta^\bullet, X)$$ is the singular simplicial set of $X$. If $X$ is pointed, then $S_\com(X)$ is a pointed simplicial set and we define the Hochschild homology of a $CDGA$ $A$ with value in an $A$-module $M$ over $X$ to be $HH^\com_{S_\com(X)}(A,M)$.


The Hochschild chain functor satisfies the following properties which allows to build \emph{explicitly and easily} these chain complexes out of other simplicial sets and do computations (for instance, see~\cite{P,G,GTZ}). 


\begin{proposition}[Tensor Products of CDGAs and disjoint union of simplicial sets] \label{P:tensor}Let $A,B$ be two CDGAs. For any $X_\com\in \sset$, there is a canonical isomorphism
$$CH^{\com}_{X_\com}(A\otimes B) \cong CH^{\com}_{X_\com}(A)\otimes CH^{\com}_{X_\com}(B) $$ of CDGAs. Further for any simplicial set $Y_\com$, one has a natural isomorphism
$$ CH^{\com}_{X_\com \coprod Y_\com}(A) \cong CH^{\com}_{X_\com}(A)\otimes CH^{\com}_{Y_\com}(A)$$ of CDGAs and a natural isomorphism of modules $$ CH^{\com}_{X_\com \coprod Y_\com}(A,M) \cong CH^{\com}_{X_\com}(A,M)\otimes CH^{\com}_{Y_\com}(A)$$ if $X_\com$ is a pointed simplicial set.
\end{proposition}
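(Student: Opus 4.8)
The plan is to prove all three equivalences by the same two-step mechanism: first exhibit a strict isomorphism at the level of the underlying simplicial CDGAs (resp.\ simplicial modules), and then pass to the associated total complexes (geometric realizations) via the shuffle map. Since the Hochschild complex over an arbitrary simplicial set is defined as the filtered colimit $\colim_{K_\com \to Y_\com,\, K_\com \text{ finite}} CH^\com_{K_\com}(A)$, and both $-\otimes-$ and $\coprod$ commute with filtered colimits (a finite subcomplex of $X_\com \coprod Y_\com$ is itself a disjoint union $K_\com \coprod L_\com$ of finite subcomplexes of $X_\com$ and $Y_\com$), I would first reduce all three claims to the case of \emph{finite} simplicial sets.

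The first ingredient is the strong symmetric monoidal structure of the functor $J \mapsto A^{\otimes J}$ on finite sets. For each simplicial degree $k$ it provides canonical regrouping isomorphisms, natural in $[k]$ (i.e.\ compatible with all faces and degeneracies),
$$ (A\otimes B)^{\otimes X_k} \;\cong\; A^{\otimes X_k}\otimes B^{\otimes X_k}, \qquad A^{\otimes (X_k\coprod Y_k)} \;\cong\; A^{\otimes X_k}\otimes A^{\otimes Y_k}, $$
and, in the pointed module case, placing $M$ at the base point (which lies in the $X_\com$-summand of $X_\com\coprod Y_\com$),
$$ M\otimes A^{\otimes (x_k \coprod Y_k)} \;\cong\; \big(M\otimes A^{\otimes x_k}\big)\otimes A^{\otimes Y_k}. $$
These assemble into isomorphisms of simplicial CDGAs (resp.\ simplicial modules) relating $(A\otimes B)^{\otimes X_\com}$ to the degreewise tensor product $A^{\otimes X_\com}\otimes B^{\otimes X_\com}$, and $A^{\otimes (X\coprod Y)_\com}$ to $A^{\otimes X_\com}\otimes A^{\otimes Y_\com}$ (and likewise in the module case). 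This step is purely formal and sign-free.

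The second ingredient compares the total complex of a degreewise tensor product of simplicial modules with the tensor product of the two total complexes. This is exactly the Eilenberg--Zilber comparison, realized by the shuffle map $sh$ introduced just before Proposition~\ref{P:shuffleinvariance}: it is a natural quasi-isomorphism and, being the lax symmetric monoidal structure carried by geometric realization, it intertwines the shuffle products defining the CDGA structure on each side as well as the module actions coming from the base point. Composing $sh$ with the degreewise regrouping isomorphisms of the first step therefore yields the three desired natural equivalences of CDGAs (resp.\ of modules over $CH^\com_{X_\com}(A)$); in the localized category $\hcdga$ these become the claimed isomorphisms.

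I expect the main obstacle to be precisely this second step: checking that the comparison map is a morphism of CDGAs (and of modules), not merely of underlying complexes. Concretely, one must verify that the shuffle map is multiplicative for the shuffle products on the two sides and respects the $A$-module structure, which reduces to the standard compatibility of the Eilenberg--Zilber shuffle with multiplication together with a careful bookkeeping of Koszul signs. All the genuine content lies in controlling these signs and in recognizing that the identification of the two realizations is the shuffle quasi-isomorphism rather than an on-the-nose equality of graded vector spaces.
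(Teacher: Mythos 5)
Your first step---the degreewise regrouping isomorphisms $(A\otimes B)^{\otimes X_k}\cong A^{\otimes X_k}\otimes B^{\otimes X_k}$ and $A^{\otimes(X_k\coprod Y_k)}\cong A^{\otimes X_k}\otimes A^{\otimes Y_k}$, natural in $[k]$, with $M$ placed at the base point in the module case---is exactly the paper's entire proof; the reduction to finite simplicial sets via the filtered-colimit definition is implicit there, and your justification of it is fine. Where you genuinely diverge is your second step: the paper never invokes the Eilenberg--Zilber/shuffle comparison here, because the tensor product on the right-hand side is intended degreewise, \emph{i.e.}\ as the tensor product of simplicial CDGAs (compare the footnote to Proposition~\ref{P:pushout}, where the analogous tensor product is explicitly taken in simplicial modules over a simplicial CDGA); with that reading the regrouping maps already give an on-the-nose isomorphism, which is what the word ``isomorphism'' in the statement is asserting, and the proof stops after your first step. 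If one instead insists on tensoring the total complexes equipped with their shuffle products (Proposition~\ref{P:shuffleinvariance}), then you are right that the shuffle map is the only available comparison; it is indeed multiplicative and a natural quasi-isomorphism, but it is \emph{not} an isomorphism (already for $A=k$ the two sides have different dimensions in each degree), so your route only yields the statement up to natural equivalence in $\hcdga$. That weaker form is all that is actually used downstream (the monoidal axiom of Theorem~\ref{T:derivedfunctor}), so nothing breaks, but be aware that your proof establishes the equivalence version of the proposition rather than the literal isomorphism; to recover the latter you should identify the right-hand side as the degreewise tensor product and omit the Eilenberg--Zilber step.
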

\begin{proof}It follows from the canonical isomorphisms $(A\otimes B)^{\otimes n}\cong A^{\otimes n} \otimes B^{\otimes n}$ and $A^{\otimes n+m}\cong A^{\otimes n}\otimes A^{\otimes m}$.
\end{proof}

Recall that, by functoriality, if $f:Y_\com\to X_\com$ is a map of simplicial sets, then for any CDGA $A$, we have a map of algebra $f_*:CH_{Y_\com}^{\com}(A)\to CH_{X_\com}^{\com}(A)$ which exhibits the Hochschild complex  of $A$ over $X_\com$ as a module over the Hochschild complex  of $A$ over $Y_\com$.  
Let $Z\com \to X_\com$, $Z_\com \to Y_\com$ be two maps of simplicial sets and let $W_\com$ be a pushout $W_\com \cong X_\com \coprod_{Z_\com} Y_\com$.
\begin{proposition}\label{P:pushout} 
There is a natural map of simplicial modules \footnote{The tensor product in Proposition~\ref{P:pushout} is the tensor product of (simplicial) modules over the simplicial differential graded commutative algebra $CH_{Z_\com}^{\com}(A,A)$. Passing to the Hochschild chain complexes,  it induces a natural map of CDGAs and modules and yield a quasi-isomorphism if $Z_\com$ injects into either $ X_\com$ or $ Y_\com$, see~\cite[Corollary 2.4.3]{GTZ}.}
$$  CH_{X_\com}^{\com}(A,M)\otimes_{CH_{Z_\com}^{\com}(A,A)} CH_{Y_\com}^{\com}(A,A) \to CH_{W_\com}^{\com}(A,M) $$ which is a map of algebras if $M=A$ (with its natural module structure).
If $Z_\com$ injects into either $Z_\com\stackrel {f_\com}\to X_\com$ or $Z_\com\stackrel{g_\com}\to Y_\com$, then this map is in fact an isomorphism of $CH_{W_\com}^{\com}(A)$-modules.
\end{proposition}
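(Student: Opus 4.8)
The plan is to construct the natural comparison map levelwise and then verify it is an isomorphism under the injectivity hypothesis by a direct simplicial-degree analysis. The key observation is that everything here is \emph{levelwise}: the Hochschild complex is built by applying the strict symmetric monoidal functor $J\mapsto A^{\otimes J}$ (and $J\mapsto M\otimes A^{\otimes J}$) to the simplicial sets, so it suffices to work at a fixed simplicial level $[k]$ and then observe that all constructions commute with the face and degeneracy maps.

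\textbf{Construction of the map.} At level $k$, we have a pushout of \emph{pointed} sets (after basepoint bookkeeping) $W_k \cong X_k \cup_{Z_k} Y_k$, where $\sim$ identifies $f_k(z)$ with $g_k(z)$. First I would define the map $CH^\com_{X_\com}(A,M)\otimes_{CH^\com_{Z_\com}(A,A)} CH^\com_{Y_\com}(A,A) \to CH^\com_{W_\com}(A,M)$ using the maps of algebras $f_*: CH^\com_{Z_\com}(A)\to CH^\com_{X_\com}(A)$ and $g_*:CH^\com_{Z_\com}(A)\to CH^\com_{Y_\com}(A)$ induced by the structure maps, together with the canonical maps $(i_\com)_*$ and $(j_\com)_*$ coming from the inclusions $X_\com \to W_\com$ and $Y_\com\to W_\com$ of Definition~\ref{D:wedge}. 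The target is a module over $CH^\com_{Z_\com}(A)$ via either structure map, and the two induced module structures agree because $i_\com \circ f_\com = j_\com \circ g_\com$; this is precisely what allows the map to factor through the tensor product over $CH^\com_{Z_\com}(A,A)$. That it is a map of simplicial modules (and of algebras when $M=A$) follows from the naturality and the compatibility of the shuffle product established in Proposition~\ref{P:shuffleinvariance}.

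\textbf{Isomorphism under injectivity.} Assume without loss of generality that $g_\com: Z_\com \to Y_\com$ is injective. At a fixed level $k$, injectivity of $g_k$ means $Z_k$ sits inside $Y_k$, and the pushout of sets becomes $W_k = X_k \sqcup (Y_k \setminus g_k(Z_k))$ after the gluing, so that $W_k$ decomposes cleanly as $X_k$ together with the "new" elements of $Y_k$. Using the monoidal splitting $A^{\otimes J \sqcup J'}\cong A^{\otimes J}\otimes A^{\otimes J'}$ (the isomorphism of Proposition~\ref{P:tensor}), one gets at each level a natural isomorphism $M\otimes A^{\otimes x_k}\otimes_{A^{\otimes z_k}} A^{\otimes y_k} \cong M\otimes A^{\otimes w_k}$, where the tensor over $A^{\otimes z_k}$ exactly cancels the overlapping copy of $A^{\otimes z_k}$ that is shared between the $X$-factor and the $Y$-factor. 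One then checks these levelwise isomorphisms are compatible with the simplicial faces and degeneracies, hence assemble to an isomorphism of simplicial modules; passing to the associated total complexes gives the claim. The module (resp.\ algebra) structure is respected by construction.

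\textbf{Main obstacle.} The genuinely delicate point is the bookkeeping of the tensor-over-$CH^\com_{Z_\com}(A)$ structure at the \emph{simplicial} level versus the chain level. Before taking homology, the tensor product $CH^\com_{X_\com}(A,M)\otimes_{CH^\com_{Z_\com}(A,A)} CH^\com_{Y_\com}(A,A)$ is a tensor product of \emph{simplicial} modules over the simplicial algebra $CH^\com_{Z_\com}(A)$ (as the footnote emphasizes), so one must be careful that "tensor product" here is taken levelwise and that the algebraic cancellation $A^{\otimes z_k}\otimes_{A^{\otimes z_k}} (-)$ is precisely the identity rather than a derived operation. The injectivity hypothesis is exactly what guarantees this cancellation is strict, so that no flatness or cofibrancy issue intervenes; I would take care to spell out why the set-level inclusion $z_k \hookrightarrow y_k$ makes $A^{\otimes y_k}$ a free (hence flat) $A^{\otimes z_k}$-module, making the levelwise relative tensor product compute correctly and agree with $A^{\otimes w_k}$. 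Without the injectivity, this cancellation fails at the point-set level and one only obtains the quasi-isomorphism recorded in the footnote rather than a strict isomorphism.
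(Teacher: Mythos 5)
Your proposal is correct and is essentially the argument the paper relies on: the paper's proof simply defers to~\cite[Lemma 2.1.6]{GTZ}, and that lemma is proved by exactly your levelwise decomposition $W_k\cong X_k\sqcup\bigl(Y_k\setminus g_k(Z_k)\bigr)$ together with the observation that injectivity makes $A^{\otimes y_k}$ a free $A^{\otimes z_k}$-module, so the relative tensor product cancels strictly. The only point worth noting is that your ``without loss of generality'' is mildly asymmetric since $M$ sits on the $X$-side, but the same freeness argument handles the case where $f_\com$ is the injection, so nothing is lost.
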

\begin{proof} The proof~\cite[Lemma 2.1.6]{GTZ} given in the case $M=A$ applies to any module $M$.
\end{proof}



\begin{corollary}\label{C:Homologyfunctor}
The rule $(X_\com,A)\mapsto HH_{X_\com}^{\com}(A)$ is a functor 
$HH:\sset \times \cdga\to \cga$ which satisfies the following axioms
\begin{enumerate}
\item {\bf bimonoidal:} Hochschild homology is monoidal with respect to the monoidal structures given by the disjoint union of simplicial sets and tensor products of algebras. In other words, there are natural isomorphisms:
 $$HH_{X_\com \times Y_\com}^{\com}(A)\cong HH_{X_\com}^{\com}\!(A) \otimes HH_{Y_\com}^{\com}(A), \; HH_{X_\com}^{\com}(A\otimes B) \cong HH_{X_\com}^{\com}(A) \otimes HH_{X_\com}^{\com}(B). $$
\item {\bf homotopy invariance :} if $f:X_\com\to Y_\com$ and $g:A\to B$ are (weak) homotopy equivalences, then $HH(f,g):HH_{X_\com}^{\com}(A)\to HH_{Y_\com}^{\com}(B)$ is an isomorphism.
\item {\bf point} There is a natural isomorphism $HH_{pt}^{\com}(A)\cong A$   
\end{enumerate}
A similar statement holds with the category of topological spaces instead of simplicial sets, and with the pointed analogs of these categories (as in Corollary~\ref{C:Hfunctor}).
\end{corollary}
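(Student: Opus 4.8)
The plan is to verify the three axioms of Corollary~\ref{C:Homologyfunctor} as straightforward consequences of the chain-level results already established, namely Corollary~\ref{C:Hfunctor}, Proposition~\ref{P:tensor}, and Proposition~\ref{P:pushout}, together with the Eilenberg--Zilber theorem. Functoriality of $HH$ is immediate: by Corollary~\ref{C:Hfunctor} the rule $(X_\com,A)\mapsto CH^\com_{X_\com}(A)$ is a functor into $\cdga$, and passing to cohomology $H^\bullet(-,D)$ is itself functorial, so the composite lands in $\cga$ and sends the pair $(X_\com,A)$ to $HH^\com_{X_\com}(A)$. The shuffle product descends to cohomology, which is where the commutativity of the graded algebra structure comes from.

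For the \textbf{bimonoidal} axiom, I would apply the cohomology functor to the two canonical isomorphisms of CDGAs furnished by Proposition~\ref{P:tensor}. The isomorphism $CH^\com_{X_\com}(A\otimes B)\cong CH^\com_{X_\com}(A)\otimes CH^\com_{X_\com}(B)$ gives the second displayed isomorphism directly, via the K\"unneth formula over the ground \emph{field} $k$ (recall $k$ has characteristic zero, so there are no $\mathrm{Tor}$ terms and $H^\bullet$ of a tensor product is the tensor product of the $H^\bullet$). For the first displayed isomorphism I would use $CH^\com_{X_\com\coprod Y_\com}(A)\cong CH^\com_{X_\com}(A)\otimes CH^\com_{Y_\com}(A)$ from the same proposition, again followed by K\"unneth; note the statement as written uses $X_\com\times Y_\com$ on the left, which should read as the disjoint union to match Proposition~\ref{P:tensor}.

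For \textbf{homotopy invariance}, the argument for $g\colon A\to B$ a quasi-isomorphism reduces to checking that $CH^\com_{X_\com}(g)$ is a quasi-isomorphism, which for finite $X_\com$ follows degreewise from the fact that tensoring quasi-isomorphisms of complexes over the field $k$ yields a quasi-isomorphism, and then passes to the colimit over finite subcomplexes. The genuinely substantive point is invariance under a weak homotopy equivalence $f\colon X_\com\to Y_\com$ of simplicial sets: here I would invoke the standard fact that $Y_\com\mapsto CH^\com_{Y_\com}(A)$, being built degreewise as a functor of the simplicial set (Pirashvili's construction, recalled after Definition~\ref{D:Hoch}), sends weak equivalences to quasi-isomorphisms. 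The cleanest route is to observe that the simplicial vector space $n\mapsto A^{\otimes Y_n}$ depends only on the homotopy type of $Y_\com$ up to levelwise quasi-isomorphism, so that a weak equivalence induces a quasi-isomorphism on associated total (normalized) complexes by Eilenberg--Zilber; alternatively one cites that this is the underlying chain-level statement of the homotopy-invariance built into the derived functor of Theorem~\ref{T:introderivedfunctor}.

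I expect the \textbf{main obstacle} to be the homotopy-invariance in the space variable, since neither Proposition~\ref{P:tensor} nor Proposition~\ref{P:pushout} addresses it, and it is the one axiom whose proof is not a purely algebraic consequence of a chain-level isomorphism. The monoidal and point axioms are essentially formal once the corresponding CDGA-level identities are cited, but the weak-equivalence invariance requires genuinely comparing Hochschild complexes over two different simplicial models, and the honest justification is that this is precisely the homotopy-invariance statement underlying the derived lift in Theorem~\ref{T:introderivedfunctor}. The final clause, extending the statement to topological spaces and to the pointed setting, I would dispatch by the same reasoning applied to the singular simplicial set $S_\com(X)$ from Definition~\ref{D:HH}, noting that homotopy invariance is exactly what makes the passage to topological spaces well-defined.
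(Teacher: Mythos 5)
Your reduction of the bimonoidal and point axioms to Proposition~\ref{P:tensor}, Corollary~\ref{C:Hfunctor} and the K\"unneth formula over the field $k$ matches the paper, and you are right that the displayed $X_\com\times Y_\com$ is a typo for the disjoint union. You also correctly identify the crux: homotopy invariance in the simplicial-set variable. But the two justifications you offer for that step do not work. First, the claim that a weak equivalence $f\colon X_\com\to Y_\com$ makes $n\mapsto A^{\otimes X_n}\to A^{\otimes Y_n}$ a \emph{levelwise} quasi-isomorphism is false: at a fixed simplicial level the map is just induced by the set map $X_n\to Y_n$ via formula~\eqref{f_*}, and already for $X_\com=pt_\com$ and $Y_\com=\Delta^1_\com$ the level-$n$ map $A\to A^{\otimes(n+2)}$ is not a quasi-isomorphism unless $A\simeq k$; the equivalence only appears after totalizing, so Eilenberg--Zilber applied levelwise proves nothing here. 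Second, appealing to the homotopy invariance ``built into'' the derived lift of Theorem~\ref{T:introderivedfunctor} is circular in the paper's logic: that lift (Theorem~\ref{T:derivedfunctor}) is \emph{constructed} by checking that the chain-level bifunctor sends weak equivalences to quasi-isomorphisms, i.e.\ it presupposes exactly the statement you are trying to prove.

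The missing ingredient is Proposition~\ref{P:homologyinvariance}, which the paper's proof cites explicitly and which is the only genuinely non-formal input. Its proof is Pirashvili's: one rewrites $CH^\com_{X_\com}(A)$ as the tensor product $k_{X_\com}\otimes_{\Gamma}\mathcal{L}(A)$ of a right $\Gamma$-module $k_{X_\com}$ (built from the simplicial set) with the left $\Gamma$-module $\mathcal{L}(A)\colon n\mapsto A^{\otimes n+1}$; since each $k_{X_m}$ is a projective right $\Gamma$-module, this tensor product computes $k_{X_\com}\otimes^{\mathbb{L}}_{\Gamma}\mathcal{L}(A)$ and is therefore invariant under quasi-isomorphisms in either argument. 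In particular invariance holds for any map inducing an isomorphism on $H_\com(X)\to H_\com(Y)$, which is strictly stronger than invariance under weak homotopy equivalences. Your levelwise tensor argument does suffice for the algebra variable (combined with the convergent skeletal spectral sequence of Proposition~\ref{P:AHSpecSeq}), but for the space variable you need to supply Pirashvili's argument or an equivalent; without it the proof is incomplete.
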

\begin{proof}
This follows from Proposition~\ref{P:pushout},  Proposition~\ref{P:tensor}, Corollary~\ref{C:Hfunctor} and Proposition~\ref{P:homologyinvariance} below.
\end{proof}
The axioms listed in the above proposition are \emph{not} enough to uniquely determine Hochschild homology as a functor. Indeed, we are missing an analog of the Excision/Mayer-Vietoris axioms in the classical Eilenberg-Steenrod axioms for singular homology. The analog of this axiom is similar to the \emph{locality axiom} of a Topological Field Theory.  In view of Proposition~\ref{P:pushout}, we wish to compute the Hochschild homology over  an union of two open sets as the tensor product of the Hochschild homology of each open tensored over the Hochschild homology over their intersection. This forces us to take \emph{derived} tensor products. Thus a better framework for an axiomatic description of Hochschild chains is given by considering derived categories or $(\infty,1)$-categories. We deal with this \emph{locality} axiom in Section~\ref{S:inftyfunctor} below. This axiom translates into an Eilenberg-Moore spectral sequence for Hochschild homology, see Corollary~\ref{C:HlocalitySpecSeq}.

\smallskip

A crucial property of Hochschild chains which allows to pass to homotopy categories, is the fact, proved by Pirashvili~\cite{P}, that the higher Hochschild chain complex is  invariant along \emph{quasi-isomorphisms} in both arguments.
\begin{proposition}\label{P:homologyinvariance}[Homotopy and homology invariance]   If $f: X_\com \to Y_\com$ is a map of simplicial sets inducing an isomorphism in homology $H_\com (X)\stackrel{\simeq}\to H_\com (Y)$ , then the map $CH_{X_\com}^\com (A, M) \to CH_{Y_\com}^\com	(A, M)$ is a quasi-isomorphism. 

\smallskip

Further if $h:A\to B$ is a quasi-isomorphism of CDGAs, then the induced map $h_\com : CH_{X_\com}^{\com}(A)\to CH_{X_\com}^{\com}(B)$ is a quasi-isomorphism of CDGAs.

\smallskip If $Z_\com$ is a pointed simplicial set and $M$ is a $B$-module, the induced map $h_\com : CH_{Z_\com}^{\com}(A,M)\to CH_{Z_\com}^{\com}(B,M)$ is a quasi-isomorphism of $ CH_{Z_\com}^{\com}(A)$-modules and if $\alpha: M\to N$ is a map of $B$-modules, the induced map $\alpha_\com: CH_{Z_\com}^{\com}(B,M)\to CH_{Z_\com}^{\com}(B,N)$ is a quasi-isomorphism of $CH_{Z_\com}^{\com}(B)$-modules.
\end{proposition}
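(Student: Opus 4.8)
The plan is to treat the two invariance statements by genuinely different arguments: invariance in the algebra/module variable (the second and third paragraphs of the statement) is essentially formal once one works over the field $k$, whereas invariance in the space variable (the first paragraph) is the substantial point and is exactly the content of Pirashvili's theorem. Throughout I would use the following standing facts, all valid because $\mathrm{char}\,k=0$: every $k$-module is flat, so the Künneth theorem gives that a tensor product of quasi-isomorphisms is again a quasi-isomorphism; for a finite group $G$ and a $kG$-module, invariants coincide with coinvariants and $(-)^{G}$ is exact (averaging), so each symmetric power $\mathrm{Sym}^n(-)=\big((-)^{\otimes n}\big)^{\Sigma_n}$ is a retract of $(-)^{\otimes n}$ and hence preserves quasi-isomorphisms; and the Eilenberg--Zilber theorem, which lets realization commute with tensor products up to natural quasi-isomorphism.

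First I would dispose of the algebra and module variables. Fix $X_\com$ (resp. a pointed $Z_\com$). Since, by Definition~\ref{D:Hoch}, $CH^\com_{X_\com}$ is the filtered colimit of $CH^\com_{K_\com}$ over finite $K_\com\to X_\com$ and homology commutes with filtered colimits --- and since here the simplicial set is fixed while only the coefficients vary --- it suffices to treat finite simplicial sets. For finite $Y_\com$ the map $h_\com\colon CH^\com_{Y_\com}(A)\to CH^\com_{Y_\com}(B)$ is the realization (total complex) of a map of simplicial chain complexes which in simplicial degree $k$ is $A^{\otimes y_k}\to B^{\otimes y_k}$; by Künneth this is a quasi-isomorphism for every $k$. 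A levelwise quasi-isomorphism of simplicial chain complexes induces a quasi-isomorphism on realizations: this follows from the spectral sequence of the skeletal filtration (equivalently, the first filtration of the associated double complex), whose $E_1$-terms are computed levelwise and hence match; using the normalized complex this filtration is finite for finite $Y_\com$, so there is no convergence issue even for unbounded $A$. The same argument, applied degreewise to $M\otimes A^{\otimes y_k}\to M\otimes B^{\otimes y_k}$ and to $M\otimes B^{\otimes y_k}\to N\otimes B^{\otimes y_k}$, gives the two module statements (the latter provided $\alpha$ is a quasi-isomorphism). Compatibility with the CDGA- and module-structures is automatic, since all these maps are realizations of maps of simplicial CDGAs, resp. simplicial modules.

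For the space variable I would first reduce to a cofibrant $A$. Choosing a quasi-isomorphism $QA\xrightarrow{\sim}A$ with $QA$ cofibrant (in $\mathrm{char}\,0$, a semifree/Sullivan CDGA), the statement just proved makes the two vertical maps in the naturality square relating $CH^\com_{X_\com}(QA)\to CH^\com_{Y_\com}(QA)$ to $CH^\com_{X_\com}(A)\to CH^\com_{Y_\com}(A)$ into quasi-isomorphisms, so by two-out-of-three it is enough to treat $A=QA$. When $A=\mathrm{Sym}(V)$ is genuinely free, the identity $\mathrm{Sym}(V)^{\otimes y_k}\cong\mathrm{Sym}\big(V\otimes\tilde k[Y_k]\big)$ together with the splitting into symmetric powers, Eilenberg--Zilber, and the exactness of $\mathrm{Sym}^n$ identifies $CH^\com_{Y_\com}(\mathrm{Sym}(V))$ naturally with $\mathrm{Sym}\big(V\otimes \tilde C_\com(Y)\big)$, which depends on $Y$ only through the quasi-isomorphism type of the reduced chains $\tilde C_\com(Y)$. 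Since a homology isomorphism $f$ induces a quasi-isomorphism $\tilde C_\com(f)$ and $V\otimes(-)$ and $\mathrm{Sym}^n$ preserve quasi-isomorphisms, the free case follows --- uniformly in $X_\com$, since $\tilde C_\com$ and $\mathrm{Sym}$ commute with the filtered colimits defining the infinite case.

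The main obstacle is the passage from free to merely cofibrant $A$: for a semifree algebra the underlying graded object is still $\mathrm{Sym}(V)$, but the twisted differential destroys the clean identification with $\mathrm{Sym}\big(V\otimes \tilde C_\com(Y)\big)$. To handle this I would filter $CH^\com_{Y_\com}(A)$ by symmetric-power degree, observe that the associated graded is precisely the free case, and compare the resulting spectral sequences for $X$ and $Y$; this filtration is exactly the Hodge ($\lambda$-)decomposition, and the homology-invariance of each of its weight pieces is the heart of Pirashvili's argument. The delicate point is the convergence of this spectral sequence when $A$ is unbounded, which is why for this final step I would appeal directly to Pirashvili~\cite{P}. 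I would also stress that, in contrast with the algebra variable, one \emph{cannot} reduce the space variable to finite simplicial sets by a filtered-colimit argument, since a homology isomorphism between infinite simplicial sets need not restrict to homology isomorphisms on finite subobjects; the space-variable invariance must therefore be established at the level of the colimit-preserving functor itself.
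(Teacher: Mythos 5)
Your treatment of the algebra and module variables is correct and genuinely more elementary than the paper's: the reduction to finite simplicial sets by filtered colimits, K\"unneth over the field $k$, and the comparison of levelwise quasi-isomorphisms via the skeletal filtration all go through. (One small inaccuracy: a finite simplicial set in this paper's sense has each $Y_k$ finite but may still have nondegenerate simplices in every dimension, so the normalized filtration need not be finite; the conclusion nevertheless holds because the filtration is exhaustive and bounded below in the simplicial direction, or because the total complex is the filtered colimit of the brutal truncations.) Your observation that the space variable cannot be reduced to finite simplicial sets by the same colimit trick is also correct and well taken, as are the reduction to cofibrant $A$ and the identification $CH^\com_{Y_\com}(\mathrm{Sym}(V))\simeq \mathrm{Sym}\big(V\otimes C_\com(Y)\big)$ in the strictly free case.

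The genuine gap is the passage from free to semifree $A$, which is exactly where the content of the space-variable statement lies: a general CDGA is quasi-isomorphic to a semifree algebra, not to a free one, so the free case alone proves nothing beyond formal algebras with free homology. Your word-length filtration is decreasing and unbounded, and for unbounded $A$ its spectral sequence has no evident convergence; you acknowledge this and fall back on citing Pirashvili, but Pirashvili's homology-invariance theorem is proved for $\Gamma$-modules valued in $k$-modules, i.e.\ for ordinary non-dg coefficients, so it neither rescues your spectral sequence nor covers the differential graded case by itself. The paper closes precisely this gap by a different mechanism: it writes $CH^{\com}_{X_\com}(A)\cong k_{X_\com}\otimes_{\Gamma}\mathcal{L}(A)$ and invokes the projectivity of each right $\Gamma$-module $k_{X_m}$, so that the underived tensor product already computes $k_{X_\com}\otimes^{\mathbb{L}}_{\Gamma}\mathcal{L}(A)$; invariance in both variables (space and dg-algebra simultaneously) is then immediate from invariance of the derived tensor product under quasi-isomorphisms of either factor, with no auxiliary filtration and no convergence issue. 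To complete your argument you would need either to reproduce this functor-homology step for semifree $A$ or to replace the word-length spectral sequence by one whose convergence you can actually establish.
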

\begin{proof} This is essentially due to Pirashvili~\cite{P}. Indeed, let $\Gamma$ be the category of finite sets, then the Hochschild chain complex $CH_{X_\com}^\com (A)$ is isomorphic to the tensor product $$k_{X_\com} \otimes_{\Gamma} \mathcal{L}(A)$$ of the left $\Gamma$-module $\mathcal{L}(A)$ and the simplicial right  $\Gamma$-module $k_{X_\com}$. Here the left $\Gamma$-module $\mathcal{L}(A)$ is defined  by $n\mapsto A^{\otimes n+1}$ and formula~\eqref{f_*}. The right $\Gamma$-module $k_{X_\com}$ is  defined by $n\mapsto \colim_{K_\com \mbox{ finite}} k\big[Hom_{\Gamma}\big([n], K_\com\big)]$ where $[n]$ is the finite set $\{0,\dots,n\}$, see~\cite{P}. A quasi-isomorphism of CDGAs induces a quasi-isomorphism of left $\Gamma$-module and similarly for a quasi-isomorphism of simplicial sets. Since each  right $\Gamma$-module $k_{X_m}$  ($m\in \N$) is a projective right $\Gamma$-module (see~\cite{P}), the tensor product $k_{X_\com} \otimes_{\Gamma} \mathcal{L}(A)$ is quasi-isomorphic to the derived tensor product $k_{X_\com} \otimes^{\mathbb{L}}_{\Gamma} \mathcal{L}(A)$. It follows that this complex is invariant along quasi-isomorphisms in both arguments ($X_\com$ and $A$).  
The proof in the case of pointed simplicial sets and modules is the same with the category $\Gamma$ replaced by the category of pointed finite sets.
\end{proof}

\subsection{Higher Hochschild as an $(\infty, 1)$-functor}\label{S:inftyfunctor}

In this section we deal with axioms for the theory of higher Hochschild \emph{chains} instead of mere homology. That is, we upgrade the previous section, in particular Corollary~\ref{C:Homologyfunctor},  to the setting of derived categories, or  more precisely $(\infty,1)$-categories.   Said otherwise,  we replace the category of simplicial sets by its homotopical analogue: the   $(\infty, 1)$-category of simplicial sets denoted $\hsset$ and we replace the category of commutative differential algebras by $\hcdga$, the $(\infty,1)$-category associated to the derived category of commutative differential graded algebras (obtained by inverting quasi-isomorphisms of CDGAs). 

In this settting, we will prove that the axioms  determine \emph{uniquely} the Hochschild chain as an $(\infty,1)$-functor (lifting Hochschild homology).  These axioms are \emph{not} specific to CDGA  but rather come from the fact that any  presentable $(\infty,1)$-category is (homotopically) canonically tensored over simplicial sets according to~\cite[Corollary 4.4.4.9]{Lu11}.

\begin{theorem}\label{T:derivedfunctor}
There is a canonical equivalence $CH_{X_\com}(A)\cong X_\com \boxtimes A$ between the Hochschild chains and the tensor of $A$ and $X_\com$, \emph{i.e.} there are natural equivalences (in $\hsset$)  
\begin{equation}
 \text{Map}_{\hcdga}\big(CH_{X_\com}(A),B\big) \; \cong \; 
 \text{Map}_{\hsset}\big(X_\com, \text{Map}_{\hcdga}(A,B) \big).
\end{equation}
In particular, 
the Hochschild chains lift as a functor of $(\infty,1)$-categories $CH: \hsset \times \hcdga \to \hcdga$ which satisfies the following axioms
\begin{enumerate}
\item {\bf value on a point:} there is a natural equivalence $CH^\com_{pt}(A)\cong A$ of CDGAs.
\item {\bf monoidal:} $CH$ is monoidal with respect to both variables. Precisely, there are natural equivalences
$$CH_{X_\com\coprod Y_\com}^{\com}(A)\cong CH_{X_\com}^{\com}(A) \otimes CH_{Y_\com}^{\com}(A), $$ $$
CH_{X_\com}^\com (A\otimes B)\cong CH_{X_\com}^{\com}(A) \otimes  CH_{X_\com}^{\com}(B).$$  
\item {\bf homotopy gluing/pushout:} $CH$ sends homotopy pushout in $\hsset$ to homotopy pushout in $\hcdga$. More precisely, given maps $Z_\com\stackrel{f}\to X_\com$ and $Z_\com\stackrel{g} \to Y_\com$  in $\hsset$, and $W_\com \cong X_\com \bigcup^{h}_{Z_\com} Y_\com$ a homotopy pushout, there is a natural equivalence
$$CH^{\com}_{W_\com}(A)\cong CH^{\com}_{X_\com}(A)\otimes_{CH^{\com}_{Z_\com}(A)}^{\mathbb{L}} CH^{\com}_{Y_\com}(A).$$   
\end{enumerate}
\end{theorem}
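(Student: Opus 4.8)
The plan is to derive the entire theorem from the single master equivalence $CH_{X_\com}(A)\cong X_\com\boxtimes A$ asserted in the first displayed equation, treating the three axioms as formal consequences of the tensoring of $\hcdga$ over $\hsset$. Thus the proof splits into two parts: (i) establishing the adjunction equivalence itself, and (ii) reading off axioms (1)--(3) from standard properties of a tensored $(\infty,1)$-category. The heart of the matter is part (i); part (ii) is essentially bookkeeping with colimits.

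\textbf{Construction of the tensor equivalence.} First I would invoke that $\hcdga$ is a presentable symmetric monoidal $(\infty,1)$-category, so by \cite[Corollary 4.4.4.9]{Lu11} it is canonically tensored over $\hsset$, giving an $(\infty,1)$-functor $(A,X_\com)\mapsto A\boxtimes X_\com$ characterized by the natural equivalence
$$\mathrm{Map}_{\hcdga}(A\boxtimes X_\com, B)\;\cong\;\mathrm{Map}_{\hsset}\big(X_\com,\mathrm{Map}_{\hcdga}(A,B)\big).$$
The task is then to identify this abstractly-defined tensor with the explicit Hochschild functor $CH$ of Section~\ref{S:NaiveHH}. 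The strategy is to check that $CH_{X_\com}(A)$ satisfies the same universal property. Both functors preserve homotopy colimits in the $X_\com$-variable: the tensor does so by construction, while $CH$ does so by the homotopy-pushout property of Proposition~\ref{P:pushout} together with the disjoint-union formula of Proposition~\ref{P:tensor} (and homotopy invariance, Proposition~\ref{P:homologyinvariance}). Since every simplicial set is a homotopy colimit of its simplices, it suffices to verify the equivalence on $X_\com=\Delta^n$. Here homotopy invariance reduces $\Delta^n$ to a point, and the value-on-a-point identification $CH_{\pt}(A)\cong A\cong A\boxtimes\{*\}$ settles the base case. The comparison natural transformation is supplied on finite simplicial sets by the shuffle/multiplication structure of Proposition~\ref{P:shuffleinvariance}, and one checks it is an equivalence by the colimit argument. \emph{This is the main obstacle}: one must ensure the explicit combinatorial model $CH$ actually computes the derived tensor, i.e.\ that the non-derived constructions of Definition~\ref{D:Hoch} descend correctly to $(\infty,1)$-categories and that the colimit presentation is compatible on both sides. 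The delicate point is that $CH_{X_\com}(A)$ is defined as a strict colimit, and one must know this strict colimit already models the homotopy colimit when probed by cofibrant-fibrant objects—this is where Pirashvili's projectivity of the $\Gamma$-modules $k_{X_m}$ (invoked in Proposition~\ref{P:homologyinvariance}) does the essential work.

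\textbf{Deducing the axioms.} Once the tensor equivalence is in hand, the three axioms follow formally. Axiom (1), value on a point, is immediate since $A\boxtimes\{*\}\cong A$ for the monoidal unit $\{*\}$ of $\hsset$. Axiom (3), homotopy gluing, follows because the tensor $A\boxtimes(-)$ is a left adjoint (by the defining equivalence, it corepresents a mapping-space functor), hence preserves homotopy pushouts; applied to the pushout $W_\com\cong X_\com\cup^h_{Z_\com}Y_\com$ this yields exactly
$$CH^\com_{W_\com}(A)\cong CH^\com_{X_\com}(A)\otimes^{\mathbb L}_{CH^\com_{Z_\com}(A)}CH^\com_{Y_\com}(A),$$
since the homotopy pushout in $\hcdga$ is the derived tensor product. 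The disjoint-union half of axiom (2) is the special case $Z_\com=\emptyset$ of axiom (3), the empty pushout being a coproduct and the coproduct in $\hcdga$ being $\otimes$. The remaining half, $CH_{X_\com}(A\otimes B)\cong CH_{X_\com}(A)\otimes CH_{X_\com}(B)$, is the statement that the tensoring functor is symmetric monoidal in the $\hcdga$-variable; this already holds strictly by Proposition~\ref{P:tensor} and passes to the $(\infty,1)$-level by homotopy invariance. All naturality assertions are inherited from the naturality of the defining mapping-space equivalence.
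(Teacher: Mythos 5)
Your overall strategy coincides with the paper's primary (short) proof: both rest on \cite[Corollary 4.4.4.9]{Lu11} to produce the tensoring of $\hcdga$ over $\hsset$ and on the explicit results of Section~\ref{S:NaiveHH} to identify $CH$ with it. The difference is one of ordering. The paper's detailed demonstration verifies the three axioms \emph{directly} from the combinatorial model (passage to the hammock localization via Proposition~\ref{P:homologyinvariance}, the monoidal statements via Proposition~\ref{P:tensor} and Quillen exactness of $\coprod$ and $\otimes_k$, and the gluing axiom via a cofibrant replacement plus Proposition~\ref{P:pushout}), and obtains the tensor identification as a byproduct; you instead establish the tensor identification first and then read the axioms off formally. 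Your route is legitimate, but note that it makes part (ii) largely vacuous: the claim in part (i) that ``$CH$ preserves homotopy colimits in the $X_\com$-variable'' \emph{is} axiom (3) (together with (2)), so the real content is concentrated there and cannot be recovered later from the tensor property without circularity.

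That is also where the one genuine soft spot sits. Proposition~\ref{P:pushout} only gives a \emph{strict} isomorphism $CH^{\com}_{X_\com}(A)\otimes_{CH^{\com}_{Z_\com}(A)}CH^{\com}_{Y_\com}(A)\cong CH^{\com}_{W_\com}(A)$ for a strict pushout with one injective leg; to conclude that $CH$ sends homotopy pushouts to homotopy pushouts you must additionally know that this underived relative tensor product computes the derived one. You flag that something delicate is needed but attribute it to Pirashvili's projectivity of the right $\Gamma$-modules $k_{X_m}$; that lemma only gives quasi-isomorphism invariance of $CH$ in both variables (Proposition~\ref{P:homologyinvariance}) and does not address the relative tensor product over $CH^{\com}_{Z_\com}(A)$. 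The paper's argument is different and is the missing ingredient: when $Z_\com\hookrightarrow \widehat{Y}_\com$ is a levelwise injection, $CH^{\com}_{\widehat{Y}_\com}(A)$ is a semi-free $CH^{\com}_{Z_\com}(A)$-algebra, so the strict tensor product already models $\otimes^{\mathbb{L}}$. With that observation inserted, your argument closes up and agrees in substance with the paper's.
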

In particular Theorem~\ref{T:derivedfunctor} implies  similar statements involving the homotopy categories of simplicial sets and CDGAs instead of their $(\infty,1)$-refinement.
Axiom~{\bf (3)} is the \emph{locality} axiom (as for Topological Field Theories) playing the role of the excision/Mayer-Vietoris axiom for classical homology. 
\begin{proof} The second equivalence in the monoidal axiom follows from Proposition~\ref{P:tensor}. The rest  is an immediate consequence of~\cite[Corollary 4.4.4.9]{Lu11}, \S~\ref{S:NaiveHH} and the fact that the coproduct in $\hcdga$ is given by the tensor product of CDGAs. Note that the axioms can also be proved easily and directly using the results of Section~\ref{S:NaiveHH}, as we now demonstrate for the interested reader's convenience (this incidentally yields immediately another proof of the identification of derived Hochschild chains with a tensor). 
{We already know that the Hochschild chain complex defines a bifunctor $(X_\com,A)\mapsto CH_{X_\com}^{\com}(A)$ from the  category $\sset \times \cdga$ to $\cdga$, see Proposition~\ref{P:shuffleinvariance}. The $(\infty,1)$-categories $\hsset$ and $\hcdga$ both arise from suitables localizations of the weak equivalences (see Section~\ref{S:DKL}). That is $\hsset=L_{\CSS}(N_\bullet(L^H(\sset,\mathcal{W}_{\sset})))$ and $\hcdga=L_{\CSS}(N_\bullet(L^H(\cdga,\mathcal{W}_{\cdga})))$ where $W_{\sset}$ and $W_{\cdga}$ are the respective subcategories of weak equivalences in $\sset$ and $\cdga$ given in Sections~\ref{S:Delta} and~\ref{S:CDGA}. Thus, in order to show that the functor $(X_\com,A)\mapsto CH_{X_\com}^{\com}(A)$ lifts to the $(\infty,1)$-associated categories, it suffices to show that this bifunctor passes to the (hammock)  localization of these model categories along their subcategory of weak equivalences  $W_{\sset}$ and $W_{\cdga}$. By construction of the hammock localization functor $L^H$, it suffices to prove that the usual Hochschild chain complex bifunctor maps weak equivalences (in $\sset\times \cdga$) to weak equivalences (in  $\cdga$) which follows from Proposition~\ref{P:homologyinvariance}. More precisely, the bifunctor $CH: \hsset\times \hcdga \to \hcdga$ is the bifunctor induced by the bifunctor \begin{multline*} 
  L_{\CSS}\big(N_\bullet\big((CH_{-}^{\com}(-))\big)\big):L_{\CSS}(N_\bullet(L^H(\sset \times \cdga, W_{\sset}\times W_{\cdga})))\\ \longrightarrow \quad L_{\CSS}(N_\bullet(L^H(\cdga, W_{\cdga})))                                                                                                                   \end{multline*}
 obtained by localizing the Hochschild chains.

\smallskip

When $X_\com$ is contractible, Proposition~\ref{P:homologyinvariance} implies that  there  is an natural equivalence  $CH_{X_\com}^{\com}(A)\cong CH_{pt_\com}^{\com}(A)\cong A$ of CDGAs where $pt_\com$ is the standard simplicial model of a point, see~\cite[Example 2.3.4]{GTZ}. This proves the value on a point axiom.

\smallskip

The compatibility with the monoidal structures follows from Proposition~\ref{P:tensor} since both monoidal functors $\coprod: \sset \times \sset \to \sset$ and $\otimes_k: \cdga\times \cdga \to \cdga$ are Quillen exact, thus passes to the $(\infty,1)$-category induced by the model categories structures.

\smallskip

It remains to prove the homotopy gluing axiom. We let $f_\com: Z_\com\to X_\com$ and $g_\com: Z_\com\to Y_\com$ be simplicial sets maps. Since we already know that the Hochschild chain complex preserves weak equivalences, we can replace $W_\com$ by  the pushout $X_\com \bigcup_{Z_\com} \widehat{Y}_\com$ where $\hat{g}_\com:  Z_\com\to \widehat{Y}_\com$ is a cofibration and $p:\widehat{Y}\stackrel{\simeq}\to Y$ is a fibrant replacement of $Y$. 

\smallskip

By Proposition~\ref{P:pushout}, there is a natural isomorphism 
\begin{equation}\label{eq:homotopypushout}CH_{X_\com \bigcup_{Z_\com} \widehat{Y}_\com}^\com (A)\cong  CH^{\com}_{X_\com}(A)\otimes_{CH^{\com}_{Z_\com}(A)} CH^{\com}_{\widehat{Y}_\com}(A).\end{equation}

Since $Z_\com \stackrel{\hat{g}_\com}\hookrightarrow \widehat{Y}_\com$ is injective, the induced map $\hat{g}_*: CH^{\com}_{Z_\com}(A) \to CH^{\com}_{\widehat{Y}_\com}(A)$ of CDGAs exhibits $CH^{\com}_{\widehat{Y}_\com}(A)$ as a semi-free $CH^{\com}_{Z_\com}(A)$ algebra, thus the tensor product in the quasi-isomorphism~\eqref{eq:homotopypushout} can be replaced by a derived tensor product:
  \begin{equation}\label{eq:homotopypushout2}CH_{X_\com \bigcup_{Z_\com} \widehat{Y}_\com}^\com (A)\cong  CH^{\com}_{X_\com}(A)\otimes_{CH^{\com}_{Z_\com}(A)}^{\mathbb{L}} CH^{\com}_{\widehat{Y}_\com}(A).\end{equation}
By homotopy invariance, the map $p_*:CH^{\com}_{\widehat{Y}_\com}(A)\cong CH^{\com}_{Y_\com}(A)$ is a quasi-isomorphism and thus the homotopy pushout axiom follows from the quasi-isomorphism~\eqref{eq:homotopypushout2}.}
\end{proof}

\begin{remark}\label{R:derivedfunctorTop}
As in Corollary~\ref{C:Homologyfunctor},   the (model) categories of simplicial sets and  (compactly generated) topological spaces being Quillen equivalent,  one can replace   $\hsset$ by its topological counterpart $\hTop$ in Theorem~\ref{T:derivedfunctor}. This yield the Hochschild chain functor over spaces $CH: \hTop \times \hcdga \to \hcdga$, see \S~\ref{S:factor-alg}. By definition Hochschild chain are functorial in spaces; for instance any continuous map $f: X\to Y$ between topological spaces induces a chain map $f_\com:CH_X^\com(A) \to CH_Y^\com(A)$ which is an equivalence if $f$ is a weak-equivalence. We will get back to and focus on this Hochschild functor over topological spaces in Section~\ref{S:factor-alg}
\end{remark}

The locality axiom~{\bf (3)} in Theorem~\ref{T:derivedfunctor} yields an Eilenberg-Moore type spectral sequence for computing higher Hochschild homology.
\begin{corollary} \label{C:HlocalitySpecSeq} Given an homotopy pushout $W_\com \cong X_\com \cup_{Z_\com}^h Y_\com$, 
 there is a natural strongly convergent spectral sequence of cohomological type of the form
 $$E_2^{p,q}:= {Tor}_{p,q}^{HH_{Z_\com}^{\com}(A)}\left(HH_{X_\com}^{\com}(A),HH_{Y_\com}^{\com}(A)\right) \Longrightarrow HH_{W_\com}^{p+q}(A) $$
 where $q$ is the \emph{internal} grading.  The spectral sequence is furthermore a spectral sequence of differential $HH_{Z_\com}^{\com}(A)$-algebras.
\end{corollary}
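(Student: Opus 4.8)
The plan is to obtain the spectral sequence as the standard Eilenberg--Moore (hyper-Tor) spectral sequence associated to the derived tensor product appearing in the locality axiom. By Theorem~\ref{T:derivedfunctor}~(3), the homotopy pushout $W_\com \cong X_\com \cup^h_{Z_\com} Y_\com$ yields a natural equivalence
\begin{equation*}
CH^\com_{W_\com}(A) \;\cong\; CH^\com_{X_\com}(A) \otimes^{\mathbb L}_{CH^\com_{Z_\com}(A)} CH^\com_{Y_\com}(A)
\end{equation*}
in $\hcdga$. Writing $R := CH^\com_{Z_\com}(A)$, which is a CDGA, together with the $R$-algebras $P := CH^\com_{X_\com}(A)$ and $Q := CH^\com_{Y_\com}(A)$, the task reduces to producing a convergent spectral sequence computing $H^\bullet(P \otimes^{\mathbb L}_R Q)$ out of the (co)homology algebras $H^\bullet(R)$, $H^\bullet(P)$, $H^\bullet(Q)$.

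First I would present the derived tensor product by a semifree (or cofibrant) resolution. Following the proof of Theorem~\ref{T:derivedfunctor}, I would replace $Q$ by a semifree $R$-algebra $\widehat Q$ (obtained from a cofibration $Z_\com \hookrightarrow \widehat Y_\com$ as in the quasi-isomorphism~\eqref{eq:homotopypushout2}), so that the derived tensor product is computed by the ordinary tensor product $P \otimes_R \widehat Q$ of complexes. Then I would filter $P \otimes_R \widehat Q$ by the standard bar-type (resolution) filtration, whose associated graded computes, at the $E_1$-stage, the bar construction of the homology modules, and whose $E_2$-term is by definition the derived functor $\mathrm{Tor}^{H^\bullet(R)}_{\bullet,\bullet}(H^\bullet(P), H^\bullet(Q))$. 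Because $R$, $P$, $Q$ are all CDGAs and the resolution respects the multiplicative structure, the filtration is multiplicative and the resulting spectral sequence is one of differential graded $H^\bullet(R) = HH^\com_{Z_\com}(A)$-algebras; one records the homological degree of the resolution as $p$ and the internal grading as $q$.

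The convergence claim is the point requiring genuine care, since we work with unbounded $\Z$-graded complexes (Convention (1) and \S~\ref{S:CDGA} deliberately allow no boundedness). I would therefore invoke the standard convergence criterion for the Eilenberg--Moore/hyper-Tor spectral sequence: the resolution filtration is exhaustive and the filtration is bounded in each total degree because the bar filtration raises the internal degree, so for fixed $p+q$ only finitely many filtration stages contribute. This gives strong (conditionally convergent, then genuinely convergent) convergence to $HH^{p+q}_{W_\com}(A) = H^{p+q}\big(CH^\com_{W_\com}(A)\big)$, using the equivalence from the locality axiom to identify the abutment. Naturality in the pushout diagram follows from the naturality of the semifree resolution and of the bar filtration.

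The main obstacle I anticipate is exactly this convergence bookkeeping in the unbounded setting: one must verify that the internal grading $q$ keeps the total-degree pieces finite so that the spectral sequence does not merely converge conditionally, and one must check that the multiplicative (shuffle) structure of \S~\ref{S:NaiveHH} passes correctly to the filtered complex so that the $E_r$-pages really are $HH^\com_{Z_\com}(A)$-algebras and the differentials are derivations. Once the semifree resolution is chosen compatibly with the algebra structure, these are standard but need to be stated with the grading conventions of Convention~(6) in mind.
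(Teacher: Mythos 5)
Your proposal is correct and follows essentially the same route as the paper: the locality axiom of Theorem~\ref{T:derivedfunctor} identifies $CH^{\com}_{W_\com}(A)$ with the derived tensor product $CH^{\com}_{X_\com}(A)\otimes^{\mathbb{L}}_{CH^{\com}_{Z_\com}(A)}CH^{\com}_{Y_\com}(A)$, and the spectral sequence is the Eilenberg--Moore/hyper-Tor spectral sequence of that derived tensor product (the paper simply cites \cite[Theorem 4.7]{KM} for what you construct by hand via the semifree resolution and bar-type filtration), with multiplicativity obtained exactly as you say, by choosing the resolution to be a semifree $CH^{\com}_{Z_\com}(A)$-\emph{algebra} resolution. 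One small correction on the convergence point you flag: for unbounded $\Z$-graded complexes the bar filtration need not be bounded in each total degree, but strong convergence still holds because this is a left half-plane spectral sequence of cohomological type with exiting differentials, which converges strongly by Boardman's criteria (the mechanism the paper invokes again, via \cite{Bo}, for Proposition~\ref{P:AHSpecSeq}).
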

Recall that we are considering a cohomological grading; in particular the spectral sequence is concentrated in the left half-plane with respect to this grading (and $p$ is negative). 
\begin{proof} 
By Theorem~\ref{T:derivedfunctor}, we have $CH^{\com}_{W_\com}(A)\cong CH^{\com}_{X_\com}(A)\mathop{\otimes}_{CH^{\com}_{Z_\com}(A)}^{\mathbb{L}} CH^{\com}_{Y_\com}(A)$ as CDGAs. Now the spectral sequence follows from  standard results on derived tensor products (of $CH^{\com}_{Z_\com}(A)$-modules)~\cite[Theorem 4.7]{KM}. That the spectral sequence is one of algebras comes from the fact, that we can choose a semi-free  resolution of $CH^{\com}_{X_\com}$ by a $CH^{\com}_{Z_\com}$-algebra.  
\end{proof}

\begin{example}
\begin{enumerate}
\item
It is a well-known fact, that the usual Hochschild complex of an associative algebra $A$ $CH_{\com}(A)=CH^{\com}_{S^1_\bullet} (A)$ may be written as a $Tor$ over the bimodule $A^e=A\otimes A^{op}$, see \emph{e.g.} \cite[Proposition 1.1.13]{L}. More explicitly,
$$ HH_\bullet(A)=Tor^{A^e}_{\com}(A,A). $$
Identifying $HH^{pt_\com}_\bullet(A)=A$, and $HH_{\{pt_\com,pt_\com^-\}}^\bullet(A)=A\otimes A^{op}=A^e$, where $pt_\com^-$ denotes the point with opposite orientation, we see that, in this case, the spectral sequence of Corollary \ref{C:HlocalitySpecSeq} collapses at the $E_2$ level,
 $$ {Tor}^{HH^{\{pt_\com,pt_\com^-\}_\com}_{\com}(A)}\left(HH_{pt_\com}^{\com}(A),HH_{pt_\com}^{\com}(A)\right) = HH_{S^1_\com}^{\com}(A) $$
 where we used that $S^1_\com \cong pt_\com \cup_{\{pt,pt^-\}_\com}^h pt_\com$ and further that when $A$ is commutative, $A^{op}=A$.
\item
Let $f_\bullet:Z_\bullet\to X_\bullet$ be a map of simplicial spaces. Then, the mapping cone  $(C_f)_\bullet$ is given as the homotopy pushout $(C_f)_\bullet\cong X_\bullet \cup_{Z_\bullet}^h pt_\bullet$. We obtain a spectral sequence computing $HH^\com_{(C_f)_\com}(A)$,
$$ {Tor}_{p,q}^{HH_{Z_\com}^{\com}(A)}\left(HH_{X_\com}^{\com}(A),A \right) \Longrightarrow HH_{(C_f)_\com}^{\com}(A) $$
\item
In a more straightforward way, we may use the gluing property to give explicit models for spaces with cubic subdivisions. Our starting point is given by models for interval $I_\com$ and the square $I^2_\com$ via,
$$ CH^\com_{I_\com}(A)=\sum_k A^{\otimes (k+2)}\quad\text{ and }\quad CH^\com_{I^2_\com}(A)=\sum_k A^{\otimes (k+2)^2}. $$
The $i^{\text{th}}$ differential is given in the case of $I_\com$ by multiplying the $i^{\text{th}}$ and $(i+1)^{\text{th}}$ tensor factors, and in the case of $I^2_\com$ by multiplying the $i^{\text{th}}$ and $(i+1)^{\text{th}}$ column of tensor factors and the $i^{\text{th}}$ and $(i+1)^{\text{th}}$ rows of tensor factors simultaneously. For more detail, see \cite[Example 2.3.4]{GTZ}. Then we obtain the Hochschild complex for the cylinder $C_\com=I^2_\com\cup_{(I_\com\cup I_\com)}  I_\com$ by gluing $I^2_\com$ and $I_\com$ along $I_\com\cup I_\com$ on opposite edges of $I^2_\com$, and with this the torus $T_\com=C_\com\cup_{(I_\com\cup I_\com)} I_\com$ by gluing the remaining sides together. A more elaborate version of this is given in \cite[Example 2.3.2]{GTZ}. By using similar, but more elaborate considerations, one can in fact obtain Hochschild models over any surface, see \cite[Section 3.1]{GTZ}.
\end{enumerate}
\end{example}

Higher Hochschild chain complexes behaves much like cochains of mapping spaces (see~\cite[sections 2.2,  2.4]{GTZ}). Indeed they satisfy a kind of exponential law:  

\begin{proposition}[Finite Products of simplicial sets] \label{P:product} Let $X_\com$, $Y_\com$ be simplicial sets. Then there is a natural equivalence
$$CH_{X_\com\times Y_\com}^{\com}(A) \stackrel{\sim}\to CH_{X_\com}^\com\left( CH_{Y_\com}^{\com}(A)\right)$$ in $\hcdga$.
\end{proposition}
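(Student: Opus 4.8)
The plan is to deduce the exponential law directly from the identification of Hochschild chains with the tensoring of $\hcdga$ over $\hsset$ furnished by Theorem~\ref{T:derivedfunctor}. Writing $CH_{X_\com}(A)\cong X_\com\boxtimes A$, the right-hand side of the claim becomes the iterated tensor $CH_{X_\com}(CH_{Y_\com}(A))\cong X_\com\boxtimes(Y_\com\boxtimes A)$, whereas the left-hand side is the single tensor $CH_{X_\com\times Y_\com}(A)\cong (X_\com\times Y_\com)\boxtimes A$. Thus the entire statement reduces to a compatibility of the $\hsset$-action with the cartesian product, namely a natural equivalence $(X_\com\times Y_\com)\boxtimes A\cong X_\com\boxtimes(Y_\com\boxtimes A)$ in $\hcdga$.

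To establish this I would argue by the enriched $(\infty,1)$-categorical Yoneda lemma, testing both objects against an arbitrary $B\in\hcdga$ and iterating the defining adjunction of the tensor from Theorem~\ref{T:derivedfunctor}:
\begin{align*}
\text{Map}_{\hcdga}\big(X_\com \boxtimes (Y_\com \boxtimes A),\, B\big)
&\cong \text{Map}_{\hsset}\big(X_\com,\, \text{Map}_{\hcdga}(Y_\com \boxtimes A, B)\big) \\
&\cong \text{Map}_{\hsset}\big(X_\com,\, \text{Map}_{\hsset}(Y_\com, \text{Map}_{\hcdga}(A,B))\big) \\
&\cong \text{Map}_{\hsset}\big(X_\com \times Y_\com,\, \text{Map}_{\hcdga}(A,B)\big) \\
&\cong \text{Map}_{\hcdga}\big((X_\com \times Y_\com)\boxtimes A,\, B\big).
\end{align*}
Here the first and second equivalences are the tensor adjunction (the second applied to $Y_\com\boxtimes A$), the last is again the tensor adjunction, and the crucial third equivalence is the exponential law in $\hsset$ itself. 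Since every step is natural in $B$ (and in $X_\com$, $Y_\com$, $A$), Yoneda yields the desired natural equivalence of the corepresenting objects, and tracing through the identifications of Theorem~\ref{T:derivedfunctor} recovers the comparison map $CH_{X_\com\times Y_\com}(A)\stackrel{\sim}\to CH_{X_\com}(CH_{Y_\com}(A))$.

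The formal ingredients (the tensor adjunction and the $\infty$-Yoneda lemma) are exactly what Theorem~\ref{T:derivedfunctor} and the framework of \cite{Lu11} make available, so the genuinely substantive input is the third equivalence: the cartesian closedness of the $(\infty,1)$-category of spaces, i.e.\ the fact that $\text{Map}_{\hsset}(Y_\com,Z)$ is an internal hom and $\text{Map}_{\hsset}(X_\com\times Y_\com,Z)\cong\text{Map}_{\hsset}(X_\com,\text{Map}_{\hsset}(Y_\com,Z))$. This is standard for spaces but is precisely where the product structure enters, and hence the main point to pin down carefully. A more hands-on alternative would bypass the universal property altogether: for finite $X_\com,Y_\com$ one has $(X_\com\times Y_\com)_k=X_k\times Y_k$, so $A^{\otimes(X_\com\times Y_\com)}$ is the diagonal of the bisimplicial chain complex $A^{\otimes X_\bullet\times Y_\bullet}$, and the two sides of the proposition correspond to realizing this bisimplicial object along its diagonal versus one simplicial direction at a time; an Eilenberg--Zilber/shuffle comparison then produces an explicit quasi-isomorphism, after which one passes to the colimit over finite subcomplexes and invokes homotopy invariance (Proposition~\ref{P:homologyinvariance}). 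The obstacle on that route is controlling the shuffle comparison together with the colimit, which is exactly why the tensoring argument above is the cleaner one.
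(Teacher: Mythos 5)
Your argument is correct, but it is a genuinely different route from the one the paper takes. The paper disposes of Proposition~\ref{P:product} in one line by citing Corollary 2.4.4 of \cite{GTZ}, which is essentially your ``hands-on alternative'': for finite simplicial sets one identifies $A^{\otimes (X_\com\times Y_\com)}$ with the diagonal of the bisimplicial object $A^{\otimes X_\bullet\times Y_\bullet}$ and compares the diagonal realization with the iterated realization via an Eilenberg--Zilber/shuffle equivalence, then passes to colimits over finite subcomplexes. Your primary argument instead runs entirely through the identification $CH_{X_\com}(A)\cong X_\com\boxtimes A$ of Theorem~\ref{T:derivedfunctor} and reduces the statement to the associativity of the tensoring of the presentable $(\infty,1)$-category $\hcdga$ over $(\hsset,\times)$, which you verify by the iterated adjunction plus cartesian closedness of $\hsset$ and Yoneda; this is sound, since the tensoring of a presentable $(\infty,1)$-category over spaces is indeed a module structure over the cartesian symmetric monoidal structure, and it is arguably the cleaner and more conceptual proof, fitting the axiomatic spirit of Section 3. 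What the paper's (i.e.\ \cite{GTZ}'s) approach buys is an \emph{explicit} comparison quasi-isomorphism at the chain level, which is what one wants for concrete computations; what yours buys is a proof that is manifestly natural in all variables and requires no combinatorics. The only point to be careful about in your version is the last sentence: the abstract equivalence produced by Yoneda is a priori just \emph{some} natural equivalence, and identifying it with the specific map induced by the projections $X_\com\times Y_\com\to X_\com, Y_\com$ (as in \cite{GTZ}) would require an extra, if routine, compatibility check; for the proposition as stated this is not needed.
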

\begin{proof} It follows from Corollary 2.4.4 of \cite{GTZ}.
\end{proof}

The Hochschild chain functor $CH:\hsset\times \hcdga \to \hcdga$ is essentialy (up to equivalences) determined by the homotopy pushout axiom, coproduct and its value on a point. In other words, it is the \emph{unique} $(\infty,1)$-functor (up to natural equivalences) satisfying the three axioms (value on a  point, coproduct and locality) listed below in Theorem~\ref{T:deriveduniqueness}. This is once again a consequence of the fact that higher Hochschild is a tensor. The precise uniqueness statement is :

\begin{theorem}[Derived Uniqueness]\label{T:deriveduniqueness}
Let $(X_\com, A)\mapsto F_{X_\com}(A)$ be a bifunctor $\hsset\times \hcdga \to \hcdga$ which satisfies the following three axioms.

\begin{enumerate}
\item {\bf value on a point:} \label{A:point} There is a natural equivalence of CDGAs $F_{pt_\com}(A)\cong A$.
\item {\bf coproduct:} \label{A:coproduct}There are natural equivalences $$F_{\coprod\limits_{I} (X_i)_\com}(A)\cong \colim_{\small \begin{array}{l}K\subset I \\ K \mbox{ finite}\end{array}} \bigotimes_{k\in K} F_{(X_k)_\com}(A) $$
\item {\bf homotopy gluing/pushout:} \label{A:pushout} $F$ sends homotopy pushout in $\hsset$ to homotopy pushout in $\hcdga$. More precisely, given two maps $Z_\com\stackrel{f}\to X_\com$ and $Z_\com\stackrel{g}\to Y_\com$ in $\hsset$, and $W_\com \cong X_\com \bigcup^{h}_{Z_\com} Y_\com$ a homotopy pushout,  one has a natural equivalence
$$F_{W_\com}(A)\cong F_{X_\com}(A)\otimes_{F_{Z_\com}(A)}^{\mathbb{L}} F_{Y_\com}(A).$$  
\end{enumerate}
Then $F$ is naturally equivalent to the higher Hochschild chains bifunctor $CH$ as a bifunctor \emph{i.e.} as an object in $Hom_{(\infty,1)-cat}(\hsset\times \hcdga, \hcdga)$.
\end{theorem}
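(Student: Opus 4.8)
The plan is to show that, after fixing the algebra variable, both $F$ and the Hochschild bifunctor $CH$ are colimit-preserving functors out of $\hsset$ which agree on the point, and then to promote this pointwise identification to an equivalence of bifunctors using the universal property of $\hsset$ as the free cocompletion of the point. The starting observation is Theorem~\ref{T:derivedfunctor}: $CH_{X_\com}(A)\cong X_\com\boxtimes A$ is the tensor of $A$ by $X_\com$, so by the adjunction displayed there the functor $X_\com\mapsto CH_{X_\com}(A)$ is a left adjoint (to $B\mapsto \text{Map}_{\hcdga}(A,B)$) and hence preserves all homotopy colimits.

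First I would check that, for each fixed $A$, the functor $X_\com\mapsto F_{X_\com}(A)$ from $\hsset$ to $\hcdga$ likewise preserves all small homotopy colimits. Since $\hcdga$ arises from a combinatorial model category it is a presentable, in particular cocomplete, $(\infty,1)$-category, and its coproduct is the tensor product of CDGAs. The coproduct axiom~\ref{A:coproduct} and the gluing axiom~\ref{A:pushout} assert precisely that $F_{(-)}(A)$ carries coproducts (including infinite ones, via the displayed filtered colimit) to coproducts and homotopy pushouts to homotopy pushouts. A functor between cocomplete $(\infty,1)$-categories preserves all small colimits as soon as it preserves small coproducts and pushouts, these two classes generating every colimit, since coequalizers, sequential colimits and hence geometric realizations can all be built from coproducts and pushouts, cf.~\cite[\S 4.4.2]{Lu11}. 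It follows that $F_{(-)}(A)$ preserves all small homotopy colimits.

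Next I would invoke the universal property of the $(\infty,1)$-category of spaces: for any cocomplete $(\infty,1)$-category $\mathcal{D}$, evaluation at the point defines an equivalence $\mathrm{ev}_{pt}\colon \mathrm{Fun}^{L}(\hsset,\mathcal{D})\xrightarrow{\;\sim\;}\mathcal{D}$ on the full subcategory of colimit-preserving functors, with inverse $D\mapsto (X_\com\mapsto X_\com\boxtimes D)$, see~\cite[\S 5.1.5]{Lu11}. Applying this with $\mathcal{D}=\hcdga$, both $X_\com\mapsto F_{X_\com}(A)$ and $X_\com\mapsto CH_{X_\com}(A)$ are colimit-preserving functors whose value at the point is $A$ — for $F$ by the value-on-a-point axiom~\ref{A:point}, for $CH$ by Theorem~\ref{T:derivedfunctor} — so they are canonically equivalent for each $A$.

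The main obstacle is to render this identification natural in $A$ as well, that is, to obtain an equivalence of \emph{bifunctors} rather than a family of equivalences indexed by the objects $A$. For this I would curry in the algebra variable, regarding $F$ and $CH$ as functors $\hcdga\to \mathrm{Fun}^{L}(\hsset,\hcdga)$ (both land in the colimit-preserving subcategory by the previous step). Post-composing with the equivalence $\mathrm{ev}_{pt}$ yields two functors $\hcdga\to\hcdga$, each naturally equivalent to the identity by the value-on-a-point axiom. Since $\mathrm{ev}_{pt}$ is an equivalence of $(\infty,1)$-categories, the curried $F$ and $CH$ are themselves equivalent, and uncurrying produces the desired natural equivalence in $Hom_{(\infty,1)-cat}(\hsset\times \hcdga, \hcdga)$. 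The delicate point throughout is that all of the preservation and universal-property statements must be taken in the homotopy-coherent sense supplied by the framework of~\cite{Lu11}; once that machinery is in place, the argument is simply the coherent analogue of the classical fact that a cocontinuous functor out of a free cocompletion is determined by its restriction to the generators.
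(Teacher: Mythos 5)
Your argument is correct and is essentially the paper's own primary justification: the authors dispose of the theorem in one line by citing the fact that a presentable $(\infty,1)$-category is uniquely tensored over $\hsset$ (Lurie, Corollary 4.4.4.9), which is precisely the free-cocompletion/colimit-preservation argument you spell out --- including the reduction of all small colimits to coproducts and pushouts, the identification of both functors as colimit-preserving functors sending $pt_\com$ to $A$, and the currying step to get naturality in $A$. The paper's detailed write-up then follows an alternative, more hands-on route --- an induction over the skeletal filtration using the pushouts $sk_n X_\com \cong sk_{n-1}X_\com \cup^h_{\coprod \partial\Delta^n_\com} \coprod \Delta^n_\com$ --- but that is offered only as a second, self-contained proof, so your proposal matches the intended argument.
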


\begin{proof} This is a consequence of the fact that $\hcdga$ (and actually any presentable $(\infty,1)$-category) is uniquely tensored over $\hsset$, see~\cite[Corollay 4.4.4.9]{Lu11}. 

Alternatively it can be prove   by noticing that any simplicial set $X_\com$ is a (homotopy) colimit of its skeletal filtration
 $sk_n X_\com $, which in turns is obtained by taking a homotopy pushout of $sk_{n-1} X_\com$ with coproducts of standard model $\Delta^n_\com$ of the simplices which are contractible. Then the axioms imply an equivalence $F_{sk_n X_\com}(A) \stackrel{\simeq}\to CH^{\com}_{sk_n X_\com}(A)$  which commutes with the inclusions $sk_{n-1}X_\com \hookrightarrow sk_n X_\com$ so that it induces an natural equivalence $F_{X_\com}(A) \cong CH_{X_\com}(A)$ since the latter satisfies teh axioms by Theorem~\ref{T:derivedfunctor}. 

For the interested reader, we now make this sketch more precise.
{Let $(X_\com,A)\mapsto F_{X_\com}(A)$ be a functor satisfying the assumption of Theorem~\ref{T:deriveduniqueness}. 
If $X_\com$ is a finite discrete simplicial set,  there are natural equivalences in $\hcdga$ $$F_{X_\com}(A)\cong F_{X_0}(A)\cong F_{\coprod_{x\in X_0} \{x\}}(A)\cong A^{\otimes X_0}\cong CH^\com_{X_\com}(A)$$ since both bifunctors $F$ and $CH$ are  homotopy invariant,  commute with finite coproducts and satisfies Axiom~\eqref{A:point}. If $X_\com$ is discrete, non-necessarily finite, then Axiom~\eqref{A:coproduct} implies that $F_{X_\com}(A)\cong \colim_{K \mbox{ finite}} F_{K}(A)\cong \colim_{K \mbox{ finite}} A^{\otimes K} \cong CH_{X_\com}^{\com}(A)$ by definition of Hochschild chain of simplicial sets (Definition~\ref{D:Hoch}).

Let  $sk_n X_\com$ be the $n^{\mbox{th}}$-skeleton of a simplicial set $X_\com$ (see~\cite{GoJa} for instance), \emph{i.e.}, the sub-simpicial set generated by all non-degenerate simplices of dimension less than or equal to $n$.  Note that $X_\com$ is the filtered colimit $X_\com = \bigcup_{n\geq 0} sk_n X_\com$. 
Since $sk_0 X_\com$ is a discrete simplicial set,  we have an natural (in $A$ and $X_\com$) equivalence of algebras $F_{sk_0 X_\com}(A) \stackrel{\simeq}\to CH^{\com}_{sk_0 X_\com}(A)$.

 Now assume $n\geq 1$ and that we have a natural equivalence (induced by a natural zigzag of quasi-isomorphisms) of CDGAs $F_{sk_{n-1} X_\com}(A) \stackrel{\simeq}\to CH^{\com}_{sk_{n-1} X_\com}(A)$ (for any simplicial set $X_\bullet$). 
Let $NX_n$ be the subset of non-degenerate simplices in $X_n$,  $\Delta^n_\com$  be the standard simplicial model $\Delta^n_{k}=Hom_{\Delta}([k], [n])$ of the \emph{topological} $n$-simplex and $ \partial \Delta^n_\com$  its boundary (that is the sub-complex obtained by dropping the only non-degenerate simplex of dimension $n$). Note that $\Delta^n_\com$ is contractible and $\partial \Delta^n_\com$ is a simplicial model for the sphere $S^{n-1}$.
For  $n\geq 1$, one has a pushout diagrams 
\begin{equation} \label{eq:pushoutskn} \xymatrix{ \coprod_{x\in NX_n} \partial \Delta^n_\com \ar[r]^{j_n} \ar@{^{(}->}[d]_{i_n} & sk_{n-1}X_\com\ar@{^{(}->}[d] \\ \coprod_{x\in NX_n}  \Delta^n_\com \ar[r] & sk_n X_\com}\end{equation}
which are homotopy pushouts since the vertical arrows are cofibrations of simplicial sets. Thus Axiom~\eqref{A:pushout} and Theorem~\ref{T:derivedfunctor} yield natural equivalences
$$F_{sk_n X_\com}(A)\cong F_{sk_{n-1}X_\com}(A) \otimes_{F_{\coprod_{x\in NX_n} \partial \Delta^n_\com}(A)}^{\mathbb{L}}  F_{\coprod_{x\in NX_n}  \Delta^n_\com}(A)$$ 
of $F_{\coprod_{x\in NX_n}  \Delta^n_\com}(A)$-algebras as well as a natural equivalence $$CH_{sk_n X_\com}^{\com}(A) \cong CH^{\com}_{sk_{n-1}X_\com}(A) \otimes_{CH^{\com}_{\coprod_{x\in NX_n} \partial \Delta^n_\com}(A)}^{\mathbb{L}}  CH^{\com}_{\coprod_{x\in NX_n}  \Delta^n_\com}(A) $$
of $CH^{\com}_{\coprod_{x\in NX_n} \partial \Delta^n_\com}(A)$-algebras. 
Here the modules structures    are the natural ones induced by the maps in the pushout diagram~\eqref{eq:pushoutskn}. 

We now deduce that $F_{sk_n X_\com}(A)$ is equivalent to $CH_{sk_nX_\com}^{\com}(A)$. 
Recall from Example~\ref{E:Amod}, that an equivalence $A\stackrel{\sim}\to B$ of CDGAs induces an equivalence of their  $(\infty,1)$-categories of modules $B\textit{-}Mod_\infty \stackrel{\sim}\to A\textit{-}Mod_{\infty}$. Note that $\partial \Delta^n_\com$ has no non-degenerate simplices in dimension $n$ and higher. Thus $\partial \Delta^n_\com=sk_{n-1} \partial \Delta^n_\com$ and, by our induction assumption, we have a natural equivalence of CDGAs between $F_{\coprod_{x\in NX_n} \partial \Delta^n_\com}(A)$ and $CH^{\com}_{\coprod_{x\in NX_n} \partial \Delta^n_\com}(A)$  as well as between $F_{sk_{n-1}X_\com}(A)$ and $CH^\com_{sk_{n-1}X_\com}(A)$. Further, as the modules structures are induced by the simplicial set map $\coprod_{x\in NX_n} \partial \Delta^n_\com \to sk_{n-1}X_\com$, it follows that the diagram 
 $$\xymatrix{ F_{\coprod_{x\in NX_n} \partial \Delta^n_\com}(A)\otimes F_{sk_{n-1}X_\com}(A) \ar[r]^{\simeq} \ar[d]_{\mu\circ(j_n\otimes 1)} & CH_{\coprod_{x\in NX_n} \partial \Delta^n_\com}^{\com}(A)\otimes CH_{sk_{n-1}X_\com}^{\com}(A)\ar[d]^{\mu\circ(j_n\otimes 1)} \\ F_{sk_{n-1}X_\com}(A) \ar[r]^{\simeq} & CH_{sk_{n-1} X_\com}^{\com}(A)}$$ is commutative (here $\mu$ is the multiplication and $j_n$ the top map in diagram~\eqref{eq:pushoutskn}). Hence $CH_{sk_{n-1} X_\com}^{\com}(A)$ is equivalent to $F_{sk_{n-1}X_\com}(A)$ as an $F_{\coprod_{x\in NX_n} \partial \Delta^n_\com}(A)$-CDGA. 
 
 We are left to prove that $F_{\coprod_{x\in NX_n}  \Delta^n_\com}(A)$ and $CH^\com_{\coprod_{x\in NX_n}  \Delta^n_\com}(A)$ are equivalent $F_{\coprod_{x\in NX_n} \partial \Delta^n_\com}(A)$-CDGAs. By the homotopy invariance and value on a point axiom, we have a natural equivalence $F_{\coprod_{x\in NX_n}  \Delta^n_\com}(A)\stackrel{\simeq}\to A^{\otimes \# NX_n}$ (induced by the unique map $\Delta^n_\com\to pt_\com$ for each non-degenerate simplex in $NX_n$) and further the $F_{\coprod_{x\in NX_n} \partial \Delta^n_\com}(A)$-algebra structure on $A^{\otimes \# NX_n}$ is induced by the canonical map $\coprod_{x\in NX_n} \partial \Delta^n_\com  \longrightarrow \coprod_{x\in NX_n} pt_\com$. The same argument holds for  Hochschild chains $CH$ instead of $F$ so that  we have a commutative diagram 
 $$\xymatrix{  F_{\coprod_{x\in NX_n} \partial \Delta^n_\com}(A) \ar[rr]^{\simeq} \ar[d]_{i_n}&  & CH_{\coprod_{x\in NX_n} \partial \Delta^n_\com}^{\com}(A) \ar[d]^{i_n}\\ 
 F_{\coprod_{x\in NX_n}  \Delta^n_\com}(A) \ar[r]^{\simeq}  & A^{\otimes \# NX_n} & \ar[l]_{\simeq} CH^\com_{\coprod_{x\in NX_n}  \Delta^n_\com}(A)
 }$$ from which it follows that both $CH_{\coprod_{x\in NX_n}  \Delta^n_\com}^{\com}(A)$ and $F_{\coprod_{x\in NX_n}  \Delta^n_\com}(A)$ are equivalent to $ A^{\otimes \# NX_n}$ as $F_{\coprod_{x\in NX_n} \partial \Delta^n_\com}(A)$-CDGAs. 
 Tensoring the previous equivalence 
 with $ A^{\otimes \# NX_n}$ (over ${F_{\coprod_{x\in NX_n} \partial \Delta^n_\com}(A)}$) we obtain a natural equivalence 
$F_{sk_{n}X_\com}(A) \cong  CH^{\com}_{sk_{n}X_\com}(A)$ in $\hcdga$.

By induction,  we get (for all $n$'s)  natural equivalences $F_{sk_n X_\com}(A) \stackrel{\simeq}\to CH^{\com}_{sk_n X_\com}(A)$ which commutes with the inclusions $sk_{n-1}X_\com \hookrightarrow sk_n X_\com$. Since $X_\com=\bigcup_{n\geq 0} sk_n X_\com$, there is a coequalizer in the $(\infty,1)$-category $\hsset$:
 $$\xymatrix{\coprod_{n\in \N} sk_n X_\com \ar[d]^{\delta} \ar[r]^{\rm id} & \coprod_{n\in \N} sk_n X_\com \ar[d] \\ \coprod_{n\in \N} sk_{n} X_\com\ar[r] & X_\com }$$ where $\delta:\coprod_{n\in \N} sk_n X_\com\to \coprod_{n\in \N} sk_{n} X_\com $ is the map induced by the canonical inclusions $sk_n X_\com \to sk_{n+1} X_\com$. Thus by axiom~{\bf (3)}, there is a natural equivalence \begin{equation} \label{eq:uniquenesspushout} F_{X_\com}(A) \cong F_{\coprod_{n\in \N} sk_n X_\com}(A) \mathop{\otimes}\limits_{F_{\coprod_{n\in \N} sk_{n} X_\com}(A)}^{\mathbb{L}} F_{\coprod_{n\in \N} sk_n X_\com}(A) \end{equation} and similarly for Hochschild chains by Theorem~\ref{T:derivedfunctor}. By axiom~{\bf  (2)} applied to the bifunctors $F$ and $CH$, the natural equivalences $F_{sk_n X_\com}(A) \stackrel{\simeq}\to CH^{\com}_{sk_n X_\com}(A)$ for all $n$ thus yield a natural equivalence $F_{\coprod_{n\in \N} sk_n X_\com}(A) \cong CH^\com_{\coprod_{n\in \N} sk_n X_\com}(A) $.  The natural equivalence of CDGAs  $F_{X_\com}(A) \cong CH^{\com}_{X_\com}(A)$ now follows from equivalence~\eqref{eq:uniquenesspushout} and the analogous equivalence for Hochschild chains.} 
\end{proof}

\begin{remark}\label{R:cdga-}
If $A$ is concentrated in non-positive degrees, then $CH_{X_\com}^{\com}(A)$ is also concentrated in non-positive degrees. This happens for instance if $A$ is the CDGA associated to a simplicial (non-graded) commutative algebra. In that case, it is possible to replace $\cdga$ and $\hcdga$ in Theorem~\ref{T:derivedfunctor} and Theorem~\ref{T:deriveduniqueness} by $\cdga^{\leq 0}$ the category of CDGAs concentrated in non-positive degrees and $\hcdga^{ \leq 0}$ its associated $(\infty,1)$-categories (the proofs being unchanged).  
\end{remark}

\begin{remark}Again, one can replace $\hsset$ by its topological counterpart $\hTop$ in Theorem~\ref{T:deriveduniqueness}, see Proposition~\ref{P:Topderived}.
\end{remark}

\begin{remark} \label{R:limskn} Note that one can deduce from the coproduct axiom~{\bf (2)} in Theorem~\ref{T:deriveduniqueness} and the natural  equivalence~\eqref{eq:uniquenesspushout} that the natural map $\colim F_{sk_n X_\com}(A) \stackrel{\simeq}\to F_{X_\com}(A)$ is an equivalence. This is in particular true for Hochschild chains:
  \begin{equation}\label{eq:limskn} \colim_{n\geq 0} CH_{sk_n X_\com}(A) \stackrel{\simeq}\to CH_{X_\com}(A).\end{equation}
\end{remark}

\begin{remark} \label{R:deriveduniquenessfunctor} If $G:\hcdga \to \hcdga$ is a functor,  one can replace the value on a point axiom by the existence of a natural quasi-isomorphism $F_{pt}(A)\cong G(A)$. The proof of the  Theorem~\ref{T:deriveduniqueness} shows the following
\begin{corollary}\label{C:deriveduniquenessfunctor}
 Let $G:\hcdga \to \hcdga$ be a functor and $(X_\com, A)\mapsto F_{X_\com}(A)$ be a bifunctor $\hsset\times \hcdga \to \hcdga$ which satisfies the axioms~{\bf (2)} and~{\bf (3)} in Theorem~\ref{T:deriveduniqueness} and with axiom~{\bf (1)} replaced by $F_{pt_\com}(A)\cong G(A)$. Then $F_{X_\com}(A)$ is naturally equivalent $CH_{X_\com}^{\com}(G(A))$.
\end{corollary}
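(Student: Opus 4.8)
The plan is to observe that the entire argument proving Theorem~\ref{T:deriveduniqueness} is really an argument \emph{relative} to the value on a point: nowhere does it use the specific identification $F_{pt_\com}(A)\cong A$ beyond the fact that $F_{pt_\com}(A)$ is \emph{some} CDGA depending naturally on $A$, together with homotopy invariance, the coproduct axiom~{\bf (2)} and the pushout axiom~{\bf (3)}. So I would set up the comparison not between $F$ and $CH$, but between $F_{X_\com}(A)$ and the bifunctor $H_{X_\com}(A):=CH^\com_{X_\com}(G(A))$, and then run the same induction.

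First I would check that $H_{X_\com}(A):=CH^\com_{X_\com}(G(A))$ is a legitimate $(\infty,1)$-bifunctor $\hsset\times\hcdga\to\hcdga$ satisfying the three axioms, with value on a point $G(A)$. Indeed $H$ is the composite $CH\circ(\mathrm{id}\times G)$ of the $(\infty,1)$-functors $G:\hcdga\to\hcdga$ and $CH:\hsset\times\hcdga\to\hcdga$ from Theorem~\ref{T:derivedfunctor}, hence a functor. Its axioms are inherited from those of $CH$ in Theorem~\ref{T:derivedfunctor} applied to the fixed CDGA $G(A)$: since the coproduct and homotopy pushout axioms only concern the simplicial-set variable, one has $H_{pt_\com}(A)=CH_{pt_\com}(G(A))\cong G(A)$, together with $H_{\coprod (X_i)_\com}(A)\cong\colim\bigotimes H_{(X_k)_\com}(A)$ and $H_{W_\com}(A)\cong H_{X_\com}(A)\otimes^{\mathbb{L}}_{H_{Z_\com}(A)}H_{Y_\com}(A)$. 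Thus $H$ satisfies axioms~{\bf (2)} and~{\bf (3)}, and has $H_{pt_\com}(A)\cong G(A)\cong F_{pt_\com}(A)$.

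With this in hand I would repeat the skeletal induction of Theorem~\ref{T:deriveduniqueness} verbatim, comparing $F$ with $H$ rather than with $CH$. The base case over a discrete simplicial set now reads $F_{X_\com}(A)\cong F_{pt_\com}(A)^{\otimes X_0}\cong G(A)^{\otimes X_0}\cong H_{X_\com}(A)$, using homotopy invariance, axiom~{\bf (2)} and the new value on a point. For the inductive step one uses the homotopy pushout diagram~\eqref{eq:pushoutskn} expressing $sk_n X_\com$ as the homotopy pushout of $sk_{n-1}X_\com$ and $\coprod\Delta^n_\com$ along $\coprod\partial\Delta^n_\com$; axiom~{\bf (3)} applied to both $F$ and $H$, the contractibility of $\Delta^n_\com$ (so that $F_{\coprod\Delta^n_\com}(A)\cong G(A)^{\otimes\#NX_n}\cong H_{\coprod\Delta^n_\com}(A)$), and the compatibility of the module structures then give $F_{sk_n X_\com}(A)\cong H_{sk_n X_\com}(A)$. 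Passing to the colimit along the skeletal filtration via~\eqref{eq:uniquenesspushout} yields the natural equivalence $F_{X_\com}(A)\cong H_{X_\com}(A)=CH^\com_{X_\com}(G(A))$.

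Alternatively, and more conceptually, I would note that for fixed $A$ the functor $X_\com\mapsto F_{X_\com}(A)$ satisfies the universal property characterizing the tensoring of $\hcdga$ over $\hsset$ (\cite[Corollary 4.4.4.9]{Lu11}), with $F_{pt_\com}(A)\cong G(A)$, so $F_{X_\com}(A)\cong X_\com\boxtimes G(A)\cong CH^\com_{X_\com}(G(A))$ by Theorem~\ref{T:derivedfunctor}, naturality in $A$ following from the naturality of $G$ and of the tensor. The only point requiring care---and where I expect the argument to be most delicate---is the bookkeeping of the $F_{\coprod\partial\Delta^n_\com}(A)$-module structures in the inductive step, exactly as in Theorem~\ref{T:deriveduniqueness}, ensuring the comparison maps are equivalences of modules and not merely of underlying CDGAs; but this is entirely inherited from that proof, since $G$ is applied uniformly to the algebra argument throughout.
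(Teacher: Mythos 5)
Your proposal is correct and follows exactly the route the paper intends: the paper proves this corollary by simply observing that the proof of Theorem~\ref{T:deriveduniqueness} goes through verbatim with the value on a point replaced by $G(A)$, which is precisely your comparison of $F$ with $H_{X_\com}(A)=CH^\com_{X_\com}(G(A))$ via the same skeletal induction. Your verification that $H$ inherits axioms~{\bf (2)} and~{\bf (3)} from Theorem~\ref{T:derivedfunctor} and your attention to the module structures in the inductive step supply exactly the details the paper leaves implicit.
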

For instance, consider  the bifunctor given by $(X_\com, A)\mapsto CH_{X_\com}^{\com}(A)\otimes CH_{X_\com}^{\com}(B)$ whose value on a point is the functor $A\mapsto A\otimes B$.  By Corollary~\ref{C:deriveduniquenessfunctor}, this functor is  isomorphic to $(X,A)\mapsto CH_{X_\com}^\com (A\otimes B)$ which gives another proof of the fact that the Hochschild chains preserve finite coproduct of CDGAs.  The same argument shows that $CH$ also commutes with finite homotopy pushouts of CDGAs, see Corollary~\ref{C:hocolim} below.
\end{remark}

\begin{corollary}\label{C:hocolim}
The Hochschild chain bifunctor $CH:\hsset\times \hcdga \to \hcdga$ commutes with finite colimits in $\hsset$ and all colimits in $\hcdga$, that is  there are natural equivalences
\begin{eqnarray*}
 CH_{\colim_{\mathcal{F}} {X_i}_\com}^{\com}(A) & \cong & \colim_{\mathcal{F}} CH_{{X_i}_\com}^{\com}(A) \quad (\text{for a finite category } \mathcal{F}),\\
 CH^\com_{X_\com}(\colim {A_i}) & \cong & \colim CH^\com_{X_\com}(A_i). 
\end{eqnarray*}
\end{corollary}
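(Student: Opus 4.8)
The plan is to derive both equivalences from the tensoring identification $CH_{X_\com}(A)\cong X_\com\boxtimes A$ established in Theorem~\ref{T:derivedfunctor}, reducing everything to the unconditional fact that colimits commute with colimits. First I would dispose of the simplicial set variable. The natural equivalence
\[ \mathrm{Map}_{\hcdga}\big(CH_{X_\com}(A),B\big)\;\cong\;\mathrm{Map}_{\hsset}\big(X_\com,\mathrm{Map}_{\hcdga}(A,B)\big) \]
of Theorem~\ref{T:derivedfunctor} is precisely the datum exhibiting, for each fixed $A$, the functor $CH_{-}(A)\colon\hsset\to\hcdga$ as a left adjoint to $\mathrm{Map}_{\hcdga}(A,-)\colon\hcdga\to\hsset$. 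Since left adjoints of $(\infty,1)$-categories preserve all colimits, $CH_{-}(A)$ commutes with every colimit of $\hsset$, in particular with the finite ones claimed; this gives the first equivalence.

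Next I would treat the CDGA variable, exploiting the copower description that the first step makes available. In $\hsset$ every object is the colimit of the constant point-diagram over itself, $X_\com\cong\colim_{x\in X_\com} pt$; combining this with colimit preservation in the $\hsset$-variable and the value on a point axiom $CH^\com_{pt}(A)\cong A$ yields
\[ CH^\com_{X_\com}(A)\;\cong\;\colim_{x\in X_\com} CH^\com_{pt}(A)\;\cong\;\colim_{x\in X_\com} A, \]
i.e.\ $CH^\com_{X_\com}(A)$ is the colimit in $\hcdga$ of the constant $A$-valued diagram indexed by $X_\com$ (this is simply $X_\com\boxtimes A$ unwound). Since $\hcdga$ is presentable and hence cocomplete, for any diagram $i\mapsto A_i$ one then has
\[ CH^\com_{X_\com}\big(\colim_i A_i\big)\;\cong\;\colim_{x\in X_\com}\colim_i A_i\;\cong\;\colim_i\colim_{x\in X_\com} A_i\;\cong\;\colim_i CH^\com_{X_\com}(A_i), \]
the two colimits interchanging freely. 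This is the second asserted equivalence, and in fact holds for all colimits in $\hcdga$.

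The point that requires care—and the step I expect to be the genuine obstacle—is the temptation to prove the CDGA-variable statement by writing $CH^\com_{X_\com}(A)$ as an iterated tensor product and arguing one tensor factor at a time. This route fails: for a fixed $B$ the binary coproduct functor $-\otimes B$ in $\hcdga$ does \emph{not} preserve coproducts, so no naive factorwise argument can succeed. The copower formulation circumvents this entirely, because there the dependence on $A$ is carried uniformly by a constant diagram whose index $X_\com$ is held fixed; the interchange above is then nothing more than the commutation of colimits with colimits and needs no multilinearity input. I would therefore flag this subtlety explicitly rather than attempt the factorwise reduction.
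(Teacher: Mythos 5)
Your proof is correct, but it takes a genuinely different route from the paper's. The paper treats the two variables by separate, more axiomatic arguments: for the $\hsset$-variable it decomposes a finite colimit into coproducts and pushouts and applies Axioms (2) and (3) of Theorem~\ref{T:derivedfunctor}; for the $\hcdga$-variable it fixes all but one index $i_0$ of the diagram, forms the functor $G_{\mathcal{I}}(B):=\colim \tilde{B}_i$ and the bifunctor $F_{X_\com}(B):=\colim CH^{\com}_{X_\com}(\tilde{B}_i)$, and then invokes the uniqueness statement of Corollary~\ref{C:deriveduniquenessfunctor} to identify $F_{X_\com}(B)$ with $CH^{\com}_{X_\com}(G_{\mathcal{I}}(B))$ (with a separate remark for the empty colimit). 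You instead read both claims off the copower identification $CH_{X_\com}(A)\cong X_\com\boxtimes A$ of Theorem~\ref{T:derivedfunctor}: preservation of colimits in the first variable because a left adjoint preserves all colimits, and in the second variable by writing $CH^{\com}_{X_\com}(A)\cong\colim_{X_\com}A$ and interchanging the two colimits. Both arguments are valid; yours is shorter, gives naturality for free, and in fact establishes preservation of \emph{all} colimits in $\hsset$, not just the finite ones asserted, while the paper's has the virtue of using nothing beyond the three axioms, so it applies verbatim to any bifunctor satisfying them, in keeping with the axiomatic theme of the section.

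One small remark on your closing caveat: the functor relevant to a ``factorwise'' attempt would be $A\mapsto A^{\otimes n}$ with \emph{every} factor varying, not $A\mapsto A\otimes B$ with $B$ fixed, and the former actually does preserve coproducts of CDGAs, since the coproduct in $\hcdga$ is the tensor product and $(A_1\otimes A_2)^{\otimes n}\cong A_1^{\otimes n}\otimes A_2^{\otimes n}$. So the obstruction you flag is not quite the right diagnosis; in any case the aside is not load-bearing and your copower argument stands without it.
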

Here the colimits are colimits in $\hsset$ or $\hcdga$.
\begin{proof}
Any finite colimits can be obtained by a composition of coproducts and pushouts (or coequalizers). Thus the result for colimits in $\hsset$ follows from Theorem~\ref{T:derivedfunctor}, Axioms~{\bf (2)} and~{\bf (3)}. 
Let $\colim_{i\in \mathcal{I}} {A_i}$ be a non-empty colimit of CDGAs and let $i_0$ be an object in the indexing category $\mathcal{I}$. By functoriality we can define a functor $G_{\mathcal{I}}:\hcdga \to \hcdga$ by the formula $A\mapsto G_{\mathcal{I}}(B) :=\colim \tilde{B}_i$ where $\tilde{B}_i\cong A_i$ if $i\neq i_0$ and $\tilde{B}_{i_0}\cong B$. In other words we fix all the variables but the one indexed by $i_0$.
Now applying Corollary~\ref{C:deriveduniquenessfunctor} to the bifunctor  $F: \hsset \times \hcdga \to \hcdga$  defined by 
$F_{X_\com}(B)\cong \colim CH_{X_\com}^{\com}(\tilde{B}_i)$ we get an natural equivalence $$CH_{X_\com}^{\com}(\colim_{i\in \mathcal{I}} A_i) \cong \colim_{i\in \mathcal{I}} CH^\com_{X_\com}(A_i).$$
Since the simplicial module $n\mapsto CH_{X_n}^{\com}(k)$ is isomorphic to the constant simplicial $k$-algebra $n\mapsto k$, the result also follows for empty colimits. 
\end{proof}

\begin{example}\label{EX:A-pushout}
By Corollary~\ref{C:hocolim},
 given two maps $f:R\to A$ and $g:R\to B$ of CDGAs,  there is a natural equivalence (in $\hcdga$) $$CH_{X_\com}^\com\Big(A\mathop{\otimes}^{\mathbb{L}}_{R} B\Big)\; \cong \; CH_{X_\com}^{\com}(A) \mathop{\otimes}^{\mathbb{L}}_{CH_{X_\com}^{\com}(R)}CH_{X_\com}^{\com}(B).$$
\end{example}

\subsection{Pointed simplicial sets and modules}·\label{SS:modules}
In this section we quickly explain how to add an $A$-module $M$ to the story developed in Section~\ref{S:inftyfunctor}. 

Let $A$ be a CDGA and recall from Example~\ref{E:Amod} the $(\infty,1)$-category $A\textit{-}Mod_\infty$ induced by the (model category) $A\textit{-}Mod$ of $A$-modules.  Similarly, the model category of pointed simplicial sets yields the $(\infty,1)$-category ${\hsset}_*$ of pointed simplicial sets (Example~\ref{E:hsset}).

Since the inclusion $pt_\com \to X_\com$ is always a cofibration, the canonical equivalence $CH_{X_\com}^{\com}(A,M) \cong M\otimes_{A} CH_{X_\com}^{\com}(A)$ given by Proposition~\ref{P:pushout} implies that 
\begin{equation}
\label{eq:Mderivedtensor} CH_{X_\com}^{\com}(A,M) \;\cong \;M\mathop{\otimes}^{\mathbb{L}}_{A} CH_{X_\com}^{\com}(A) \;\cong \;  M\!\!\mathop{\otimes}^{\mathbb{L}}_{CH_{pt_\com}^{\com}(A)} CH_{X_\com}^{\com}(A).
\end{equation}
naturally in $A\textit{-}Mod_\infty$ and $CH_{X_\com}^{\com}(A)\textit{-}Mod_\infty$.

\smallskip

Proposition~\ref{P:tensor}, Proposition~\ref{P:pushout}, Proposition~\ref{P:product}, Theorem~\ref{T:derivedfunctor} and its proof imply 
\begin{theorem}\label{T:Mderivedfunctor}
The Hochschild chain lifts as a bifunctor of $(\infty,1)$-categories $CH_{(-)}(A,-): {\hsset}_* \times A\textit{-}Mod_\infty \to A\textit{-}Mod_\infty$ which satisfies the following axioms
\begin{enumerate}
\item {\bf value on a point:} {there is a natural equivalence} $CH_{pt_\com}^\bullet(A,M)\cong M$ in $A\textit{-}Mod_\infty$.
\item {\bf action of $CH$:} $CH^\com_{X_\com}(A,M)$ is naturally a $CH^\com_{X_\com}(A)$-module, \emph{i.e.},  the Hochschild chain lifts as an $(\infty,1)$-functor $CH_{X_\com}^{\com}(A,-):A\textit{-}Mod_\infty \to CH_{X_\com}^{\com}(A)\textit{-}Mod_\infty$. 
\item {\bf bimonoidal:} there is an natural equivalence
$$CH_{X_\com\coprod Y_\com}^{\com}(A,M)\cong CH_{X_\com}^{\com}(A,M) \otimes CH_{Y_\com}^{\com}(A)$$ in $A\textit{-}Mod_\infty$ as well as in $CH_{X_\com\coprod Y_\com}^{\com}(A)\textit{-}Mod_\infty$,  for any pointed simplicial set $X_\com$ and simplicial set $Y_\com$. For $M\in A\textit{-}Mod_\infty$ and $N\in B\textit{-}Mod_\infty$, there is an natural equivalence 
$$CH_{X_\com}^\com (A\otimes B, M\otimes N)\cong CH_{X_\com}^{\com}(A,M) \otimes  CH_{X_\com}^{\com}(B,N)$$ in $A\otimes B\textit{-}Mod_\infty$ and $CH_{X_\com}^{\com}(A\otimes B)\textit{-}Mod_\infty$.
\item {\bf locality:} let  $f:Z_\com\to X_\com$ and $g:Z_\com\to Y_\com$ be maps in $\hsset*$. There is an natural equivalence  
$$CH^{\com}_{X_\com \bigcup^{h}_{Z_\com} Y_\com }(A,M\otimes^{\mathbb{L}}_{A} N)\cong CH^{\com}_{X_\com}(A,M)\otimes_{CH^{\com}_{Z_\com}(A)}^{\mathbb{L}} CH^{\com}_{Y_\com}(A,N)$$ in $A\textit{-}Mod_\infty$ and $CH_{X_\com \bigcup^{h}_{Z_\com} Y_\com }^{\com}(A)\textit{-}Mod_\infty$. If $N=A$, then only $X_\com$ needs to be pointed and the maps may be in $\hsset$.
\item {\bf product:} {Let $X_\com$, $Y_\com$ be pointed simplicial sets.} There is an natural equivalence
$$CH_{X_\com\times Y_\com}^{\com}(A,M) \stackrel{\sim}\to CH_{X_\com}^\com\left( CH_{Y_\com}^{\com}(A), CH_{Y_\com}^{\com}(A,M))\right)$$ in $A\textit{-}Mod_\infty$ and $CH_{X_\com\times Y_\com}^{\com}(A)\textit{-}Mod_\infty$.
\end{enumerate}
\end{theorem}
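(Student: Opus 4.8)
The plan is to deduce every statement from the corresponding algebra-level result of Theorem~\ref{T:derivedfunctor} together with the base-change formula~\eqref{eq:Mderivedtensor}, namely $CH_{X_\com}^\com(A,M)\cong M\otimes^{\mathbb{L}}_A CH_{X_\com}^\com(A)$. First I would check that $CH_{(-)}(A,-)$ actually lifts to an $(\infty,1)$-functor on ${\hsset}_*\times A\textit{-}Mod_\infty$: by Proposition~\ref{P:homologyinvariance} the naive functor on $\sset_*\times A\textit{-}Mod$ sends quasi-isomorphisms in each variable to quasi-isomorphisms, so, exactly as in the proof of Theorem~\ref{T:derivedfunctor}, it descends to the Dwyer--Kan/complete Segal space localizations. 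The module structure of axiom~{\bf (2)} is then immediate from~\eqref{eq:Mderivedtensor}, since $M\otimes^{\mathbb{L}}_A CH_{X_\com}^\com(A)$ is manifestly a module over the $A$-algebra $CH_{X_\com}^\com(A)$, giving the lift $CH_{X_\com}^\com(A,-):A\textit{-}Mod_\infty\to CH_{X_\com}^\com(A)\textit{-}Mod_\infty$.

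The value-on-a-point axiom~{\bf (1)} follows by taking $X_\com=pt_\com$ in~\eqref{eq:Mderivedtensor} and using the algebra-level identity $CH_{pt_\com}^\com(A)\cong A$, so that $CH_{pt_\com}^\com(A,M)\cong M\otimes^{\mathbb{L}}_A A\cong M$. The bimonoidal axiom~{\bf (3)} is obtained by tensoring the two monoidal equivalences of Theorem~\ref{T:derivedfunctor} with $M$ (resp.\ $M\otimes N$) over $A$ (resp.\ $A\otimes B$) and using $(M\otimes^{\mathbb{L}}_A P)\otimes Q\cong M\otimes^{\mathbb{L}}_A(P\otimes Q)$ together with the canonical isomorphism $A^{\otimes n+m}\cong A^{\otimes n}\otimes A^{\otimes m}$ already exploited in Proposition~\ref{P:tensor}. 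The product axiom~{\bf (5)} follows from Proposition~\ref{P:product}: writing the left-hand side via~\eqref{eq:Mderivedtensor} as $M\otimes^{\mathbb{L}}_A CH^\com_{X_\com}(CH^\com_{Y_\com}(A))$, and the right-hand side, setting $B=CH^\com_{Y_\com}(A)$, as $(M\otimes^{\mathbb{L}}_A B)\otimes^{\mathbb{L}}_{B}CH^\com_{X_\com}(B)$ via~\eqref{eq:Mderivedtensor} over the base $B$, the two agree after the cancellation $(M\otimes^{\mathbb{L}}_A B)\otimes^{\mathbb{L}}_{B}C\cong M\otimes^{\mathbb{L}}_A C$.

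The locality axiom~{\bf (4)} is the one requiring genuine work, and the hard part is a Fubini-type rearrangement of a fourfold derived tensor product. Writing $W_\com=X_\com\cup^h_{Z_\com}Y_\com$, $R=CH^\com_{Z_\com}(A)$, $P=CH^\com_{X_\com}(A)$, $Q=CH^\com_{Y_\com}(A)$, formula~\eqref{eq:Mderivedtensor} and the algebra locality of Theorem~\ref{T:derivedfunctor} rewrite the left-hand side as $(M\otimes^{\mathbb{L}}_A N)\otimes^{\mathbb{L}}_A(P\otimes^{\mathbb{L}}_R Q)$ and the right-hand side as $(M\otimes^{\mathbb{L}}_A P)\otimes^{\mathbb{L}}_R(N\otimes^{\mathbb{L}}_A Q)$. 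I would prove these are equivalent by working inside the symmetric monoidal $(\infty,1)$-category $A\textit{-}Mod_\infty$ (with $R,P,Q$ commutative algebra objects and $A\to R\to P$, $A\to R\to Q$ the structure maps coming from the pointings): presenting $\otimes^{\mathbb{L}}_R$ by the two-sided bar construction, whose $n$-th term is $M\otimes_A P\otimes_A R^{\otimes n}\otimes_A N\otimes_A Q$, the symmetry of $\otimes_A$ lets me move $N$ next to $M$, and the fact that $(M\otimes^{\mathbb{L}}_A N)\otimes^{\mathbb{L}}_A(-)$ preserves geometric realizations identifies the realization with $(M\otimes^{\mathbb{L}}_A N)\otimes^{\mathbb{L}}_A|\mathrm{Bar}(P,R,Q)|=(M\otimes^{\mathbb{L}}_A N)\otimes^{\mathbb{L}}_A(P\otimes^{\mathbb{L}}_R Q)$. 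The care needed here is exactly the compatibility of the various base-change functors recorded in Example~\ref{E:Amod} (in particular \cite[Proposition 1.1.0.8]{ToVe}), which guarantees that all the intermediate module structures agree and that the symmetry isomorphisms are coherent; the case $N=A$, where only $X_\com$ must be pointed, is then the specialization using $M\otimes^{\mathbb{L}}_A A\cong M$, recovering a statement that needs only the algebra-level locality.
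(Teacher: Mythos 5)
Your proof is correct and follows essentially the same route as the paper: the paper's own proof is a one-line reduction to Proposition~\ref{P:tensor}, Proposition~\ref{P:pushout}, Proposition~\ref{P:product} and Theorem~\ref{T:derivedfunctor} (and its proof), which is exactly the reduction you carry out explicitly via the base-change formula~\eqref{eq:Mderivedtensor}. The only point where your execution differs is the locality axiom: where you rearrange the fourfold derived tensor product abstractly by a bar-construction Fubini argument in $A\textit{-}Mod_\infty$, the paper (in the proof of Corollary~\ref{C:MlocalitySpecSeq}, which is the same statement) takes a cofibrant replacement of $M$ and a cofibration replacing $f$ and then invokes the strict point-set isomorphism of Proposition~\ref{P:pushout} with the module $N$ kept on the $CH^{\com}_{Y_\com}(A,N)$ factor; the two arguments are interchangeable.
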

{Note that from the point Axiom~{\bf (1)} and the locality Axiom~{\bf (4)} applied to the canonical maps $pt_\com \to pt_\com$ and $pt_\com \to X_\com$ (given by the base point of $X_\com$) follows immediately the equivalence~\eqref{eq:Mderivedtensor} above. 
This property actually implies that Axioms~{\bf (1)}, {\bf (2)} and {\bf (4)}  characterize $CH_{(-)}(A,-)$:}
\begin{proposition}\label{P:Mderiveduniqueness}
 Let $G: A\textit{-}Mod_\infty \to A\textit{-}Mod_\infty$ be an $(\infty,1)$-functor and let $\mathcal{M}:{\hsset}_* \times A\textit{-}Mod_\infty \to A\textit{-}Mod_\infty$ be any  $(\infty,1)$-bifunctor  which satisfies the following axioms
\begin{description}
\item[i)] {\bf value on a point:} {there is a natural equivalence} $\mathcal{M}(pt_\com,M) \cong G(M)$ naturally in $A\textit{-}Mod_\infty$.
\item[ii)] {\bf action of $CH$:} $\mathcal{M}(X_\com,M)$ is naturally a $CH_{X_\com}(A)$-module. 
\item[iii)]  {\bf locality:}  {Given maps $f:Z_\com\to X_\com$ and $g:Z_\com\to Y_\com$ with $X_\com$ a pointed simplicial set,} There is a natural equivalence  ( in $A\textit{-}Mod_\infty$)  
$$\mathcal{M}(X_\com \cup_{Z_\com}^h Y_\com,M)\cong \mathcal{M}(X_\com,M)\otimes_{CH^{\com}_{Z_\com}(A)}^{\mathbb{L}} CH^{\com}_{Y_\com}(A)$$
\end{description}
Then $\mathcal{M}$ is naturally equivalent to $CH_{(-)}(A,G(-))$ as an $(\infty,1)$-bifunctor.
\end{proposition}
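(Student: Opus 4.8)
The plan is to imitate the uniqueness argument of Theorem~\ref{T:deriveduniqueness}, but the module setting is in fact simpler: the locality axiom iii) already collapses any pointed simplicial set to a point in a single step, so there is no need to run through the skeletal filtration. The whole argument is essentially a one-line application of locality, followed by bookkeeping.

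First I would observe that, for any pointed simplicial set $X_\com$, the base point inclusion $pt_\com \to X_\com$ together with the identity $pt_\com \to pt_\com$ exhibits $X_\com$ as the homotopy pushout $pt_\com \cup_{pt_\com}^{h} X_\com$, since pushing out along an identity (equivalence) returns the other factor. Applying axiom iii) to this homotopy pushout (with $Z_\com = pt_\com$, the pointed factor being $pt_\com$, and $Y_\com = X_\com$) yields a natural equivalence
$$\mathcal{M}(X_\com, M) \;\cong\; \mathcal{M}(pt_\com, M) \otimes_{CH_{pt_\com}^{\com}(A)}^{\mathbb{L}} CH_{X_\com}^{\com}(A).$$
Next I would feed in the value-on-a-point axiom i), $\mathcal{M}(pt_\com, M)\cong G(M)$, together with the identification $CH_{pt_\com}^{\com}(A)\cong A$ from the point axiom of Theorem~\ref{T:derivedfunctor}, to obtain $\mathcal{M}(X_\com, M) \cong G(M) \otimes_A^{\mathbb{L}} CH_{X_\com}^{\com}(A)$. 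Finally I would invoke the canonical equivalence~\eqref{eq:Mderivedtensor}, which identifies $CH_{X_\com}^{\com}(A, N)\cong N \otimes_A^{\mathbb{L}} CH_{X_\com}^{\com}(A)$ for any $A$-module $N$; taking $N = G(M)$ recognizes the right-hand side as $CH_{X_\com}^{\com}(A, G(M))$, producing the desired natural equivalence $\mathcal{M}(X_\com, M) \cong CH_{(-)}(A, G(-))(X_\com, M)$.

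The remaining work is to check that the chain of equivalences is natural in both $X_\com \in {\hsset}_*$ and $M \in A\textit{-}Mod_\infty$, and that it respects the $CH_{X_\com}^{\com}(A)$-module structures supplied by axiom ii). Naturality follows from the naturality clauses built into axioms i) and iii) and the functoriality of the derived tensor product; compatibility with the module structure holds because each equivalence in the chain is a morphism of $CH_{X_\com}^{\com}(A)$-modules, the action on $G(M)\otimes_A^{\mathbb{L}} CH_{X_\com}^{\com}(A)$ being the evident one through the right tensor factor, matching the action on $CH_{X_\com}^{\com}(A, G(M))$ coming from Proposition~\ref{P:shuffleinvariance}.

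The main obstacle is not the computation itself but this bookkeeping. The one point I would verify most carefully is that the pushout along the identity genuinely computes $X_\com$ as a \emph{homotopy} colimit in the pointed $(\infty,1)$-category ${\hsset}_*$ (not merely up to weak equivalence of underlying unpointed spaces), so that axiom iii) applies verbatim; once this is secured, the module-structure compatibility is the only other place demanding attention, and it reduces to tracking the right tensor factor through the equivalence~\eqref{eq:Mderivedtensor}.
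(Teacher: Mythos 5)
Your proposal is correct and follows essentially the same route as the paper's own proof: the paper likewise applies the locality axiom to the pushout $pt_\com \leftarrow pt_\com \to Y_\com$ (taking the pointed factor to be $pt_\com$), then invokes axiom i) and the equivalence~\eqref{eq:Mderivedtensor} to identify the result with $CH_{Y_\com}^{\com}(A,G(M))$. The extra care you take over naturality and the $CH_{X_\com}^{\com}(A)$-module structure is bookkeeping the paper leaves implicit, but it is the right thing to check.
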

Note that Axiom~{\bf ii)} is needed to make sense of Axiom~{\bf iii)}.
\begin{proof}
 Let $Y_\com$ be in ${\sset}_*$ and $g:pt_\com\to Y_\com$ be the structure map. The locality axiom for the pushout $pt_\com \leftarrow pt_\com \stackrel{g}\to Y_\com $ (where we take $X_\com=pt_\com$) gives natural equivalences
 $$\mathcal{M}(X_\com,M) \;\cong \;\mathcal{M}(pt_\com,M) \! \mathop{\otimes}^{\mathbb{L}}_{CH_{pt_\com}^{\com}(A)}  \! CH_{X_\com}^{\com}(A)\;\cong \;G(M) \! \mathop{\otimes}^{\mathbb{L}}_{CH_{pt_\com}^{\com}(A)} \! CH_{X_\com}^{\com}(A)  $$ where the last equivalence follows from Axiom~{\bf i)}. {The result now follows from the natural equivalences~\eqref{eq:Mderivedtensor}.}
\end{proof}

One can specialize the locality Axiom~{\bf (4)} in Theorem~\ref{T:Mderivedfunctor} a bit more by considering a pointed simplicial set $Z_\com$ and pointed maps   $f:Z_\com\to X_\com$ and $g:Z_\com\to Y_\com$. In that case,  $CH_{X_\com}^{\com}(A,M)$ and $CH_{Y_\com}(A,N)$ inherit natural $CH_{Z_\com}^{\com}(A)$-modules structures (induced by $f$ and $g$).  Then
Theorem~\ref{T:Mderivedfunctor}  yields a relative version of the Eilenberg-Moore spectral sequence.
\begin{corollary} \label{C:MlocalitySpecSeq}Let $W_\com \cong X_\com \cup_{Z_\com}^h Y_\com$ be a homotopy pushout, 
there is an natural equivalence $$CH_{X_\com \cup_{Z_\com}^h Y_\com}^\com\Big(A, M\mathop{\otimes}^{\mathbb{L}}_A N\Big) \;\cong \; CH^{\com}_{X_\com}(A,M)\mathop{\otimes}_{CH^{\com}_{Z_\com}(A)}^{\mathbb{L}} CH^{\com}_{Y_\com}(A,N)$$ in $A\textit{-}Mod_\infty$ and $CH_{X_\com \cup_{Z_\com}^h Y_\com}^{\com}(A)\textit{-}Mod_\infty$. 

In particular there is a natural strongly convergent spectral sequence of cohomological type of the form
 $$E_2^{p,q}:= {Tor}_{p,q}^{HH_{Z_\com}^{\com}(A)}\left(HH_{X_\com}^{\com}(A,M),HH_{Y_\com}^{\com}(A,N)\right) \Longrightarrow HH_{W_\com}^{p+q}\Big(A, M\mathop{\otimes}_A^{\mathbb{L}} N\Big) $$ where $q$ is the \emph{internal} grading.  The spectral sequence is furthermore a spectral sequence of differential $HH_{Z_\com}^{\com}(A)$-modules.
\end{corollary}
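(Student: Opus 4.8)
The plan is to deduce both assertions directly from the locality axiom, exactly paralleling the passage from Theorem~\ref{T:derivedfunctor} to Corollary~\ref{C:HlocalitySpecSeq}, but now carrying along module coefficients. First I would observe that the asserted equivalence
$$CH_{X_\com \cup_{Z_\com}^h Y_\com}^\com\Big(A, M\mathop{\otimes}^{\mathbb{L}}_A N\Big) \;\cong \; CH^{\com}_{X_\com}(A,M)\mathop{\otimes}_{CH^{\com}_{Z_\com}(A)}^{\mathbb{L}} CH^{\com}_{Y_\com}(A,N)$$
is exactly the locality Axiom~\textbf{(4)} of Theorem~\ref{T:Mderivedfunctor}. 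The only point requiring attention is that the right-hand derived tensor product is formed over $CH^{\com}_{Z_\com}(A)$, which presupposes that both factors carry $CH^{\com}_{Z_\com}(A)$-module structures. This is precisely where the pointedness of $Z_\com$ and of the maps $f,g$ enters: the pointed maps $f:Z_\com\to X_\com$ and $g:Z_\com\to Y_\com$ induce, via functoriality in the first variable together with the action Axiom~\textbf{(2)} of Theorem~\ref{T:Mderivedfunctor}, the required $CH^{\com}_{Z_\com}(A)$-actions on $CH^{\com}_{X_\com}(A,M)$ and $CH^{\com}_{Y_\com}(A,N)$. With these structures in place the first statement is a special case of Axiom~\textbf{(4)}.

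Next I would construct the spectral sequence by passing to homology. Having realized $CH_{W_\com}^\com(A,M\otimes_A^{\mathbb{L}} N)$ as the derived tensor product of the two modules over the differential graded algebra $CH^{\com}_{Z_\com}(A)$, the homology $HH_{W_\com}^{\com}(A,M\otimes_A^{\mathbb{L}} N)$ is computed by the standard Eilenberg--Moore (Tor) spectral sequence for derived tensor products of modules over a DGA, as in~\cite[Theorem 4.7]{KM}, exactly as was invoked in the proof of Corollary~\ref{C:HlocalitySpecSeq}. Choosing a semi-free resolution of $CH^{\com}_{X_\com}(A,M)$ as a $CH^{\com}_{Z_\com}(A)$-module, filtering by resolution degree, and identifying the $E_2$-term with $\mathrm{Tor}$ over $H^\com(CH^{\com}_{Z_\com}(A))=HH^{\com}_{Z_\com}(A)$ yields
$$E_2^{p,q}= {Tor}_{p,q}^{HH_{Z_\com}^{\com}(A)}\left(HH_{X_\com}^{\com}(A,M),HH_{Y_\com}^{\com}(A,N)\right) \Longrightarrow HH_{W_\com}^{p+q}\Big(A, M\mathop{\otimes}_A^{\mathbb{L}} N\Big),$$
with $q$ the internal grading and strong convergence in the left half-plane, as dictated by our cohomological convention.

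Finally, to see that this is a spectral sequence of differential $HH^{\com}_{Z_\com}(A)$-modules, I would use that the entire construction is compatible with the residual $CH^{\com}_{Z_\com}(A)$-action: the semi-free resolution can be taken in the category of $CH^{\com}_{Z_\com}(A)$-modules, so the filtration and each page $E_r$ inherit an $HH^{\com}_{Z_\com}(A)$-module structure for which the differentials are $HH^{\com}_{Z_\com}(A)$-linear. I do not anticipate a genuine obstacle here; the only mild subtlety---exactly as in Corollary~\ref{C:HlocalitySpecSeq}---is verifying that the module structures transported across the equivalence of Axiom~\textbf{(4)} agree with those used to form the $\mathrm{Tor}$ groups. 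This is a bookkeeping matter, settled once the semi-free resolution is chosen functorially in the triple $(A,M,N)$, and is the main (minor) point to be checked.
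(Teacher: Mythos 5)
Your proposal is correct and follows essentially the same route as the paper: the equivalence is the locality axiom~\textbf{(4)} of Theorem~\ref{T:Mderivedfunctor} specialized to pointed maps, and the spectral sequence is the standard Tor spectral sequence of~\cite[Theorem 4.7]{KM}, exactly as in Corollary~\ref{C:HlocalitySpecSeq}. The one point you defer as bookkeeping---that the $CH^{\com}_{Z_\com}(A)$-module structures entering the derived tensor product are the ones induced by $f$ and $g$---is precisely what the paper's proof checks explicitly, by passing to cofibrant/fibrant replacements and reducing to the point-set identification of Proposition~\ref{P:pushout}.
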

Recall that we are considering a cohomological grading; thus the spectral sequence lies in the left half-plane with respect to this grading (and $p$ is negative). 
\begin{proof}
Let $\widehat{M}\to M$ be a cofibrant replacement of $M$ in $A\textit{-}Mod$,  $\widehat{X}_\com\to X_\com$ a fibrant replacement of $X_\com$ and $\widehat{f}:Z_\com\to \widehat{X}_\com$ be  a cofibration lifting $f$. By Theorem~\ref{T:Mderivedfunctor},  we have  $CH_{W_\com}\Big(A,M\mathop{\otimes}^{\mathbb{L}}_A N\Big)\cong CH_{W_\com}\Big(A,\widehat{M}\otimes_A N\Big)$  (in $CH_{W_\com}(A)\text{-}Mod$)  and $$CH_{X_\com}^{\com}(A,M)\cong CH_{\widehat{X}_\com}^{\com}(A,\widehat{M})\cong \widehat{M}\otimes_A CH_{\widehat{X}_\com}^{\com}(A).$$  Since $\widehat{f}:Z_\com\to \widehat{X}_\com$ is a degree wise injection,   it suffices to prove that $$CH_{\widehat{X}_\com}^{\com}(A)\mathop{\otimes}_{CH_{Z_\com}^{\com}(A)} CH_{Y_\com}(A,N) \cong CH_{\widehat{X}_\com\cup_{Z_\com} Y_\com}^{\com}(A,N)$$ as a $CH_{\widehat{X}_\com\cup_{Z_\com} Y_\com}^{\com}(A)\cong CH_{\widehat{X}_\com}^{\com}(A)\mathop{\otimes}_{CH_{Z_\com}^{\com}(A)} CH_{Y_\com}(A)$-module which is Proposition~\ref{P:pushout}. 
Now the spectral sequence is obtained as in the proof of Corollary~\ref{C:HlocalitySpecSeq} (using Theorem~\ref{T:Mderivedfunctor} instead of Theorem~\ref{T:derivedfunctor}). 
\end{proof}
\begin{example}
If $X_\com$, $Y_\com$ and $Z_\com$ are contractible, the spectral sequence in Corollary~\ref{C:MlocalitySpecSeq} boils down to the usual Eilenberg-Moore spectral sequence  $${Tor}_{p,q}^{H^\com(A)}\Big(H^\com(M), H^{\com}(N)\Big)\Longrightarrow H^{p+q}\Big(M\otimes_A^{\mathbb{L}} N\Big)$$ of differential $H^\com(A)$-modules  (see~\cite{KM}). 
\end{example}

\begin{remark}\label{R:2othersSpecSeq}
 Besides the Eilenberg Moore spectral sequence~\ref{C:MlocalitySpecSeq}, there is also an Atiyah-Hirzebruch kind of spectral sequence for higher Hochschild chains:  the skeletal filtration of a simplicial set $X_\com$ induces a decreasing filtration $ \cdots \supset F^{p} \cdots \supset F^{-1} \supset F^{0} \supset \{0\} $ of $CH_{X_\com}^{\com}(A,M)$, where  $F^{p}:=\bigoplus_{n\leq -p} CH_{X_n}^{\com}(A,M)$. 
This filtration yields a left half-plane spectral sequence of cohomological type with exiting differential and further, the cohomology of the associated graded $\bigoplus_{p} F^p/F^{p+1}$ is the Hochschild chain complex over $X_\com$ of the CGA $H^\com(A)$ with value in $H^\com(M)$. Hence we get from~\cite{Bo}:
\begin{proposition}\label{P:AHSpecSeq}
There is a strongly convergent spectral sequence of cohomological type
$$ E^2_{p,q}:= HH_{X_\com}^{p+q}(H^\com(A),H^\com(M))^q \Longrightarrow HH_{X_\com}^{p+q}(A,M)$$ where $q$ is the internal degree. If $M=A$, this is  a spectral sequence of CDGAs. 
\end{proposition}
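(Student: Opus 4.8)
The plan is to identify this with the spectral sequence of the decreasing, multiplicative skeletal filtration on the Hochschild bicomplex and to read off convergence from Boardman~\cite{Bo}. First I would recall from Definition~\ref{D:Hoch} that $CH_{X_\com}^\com(A,M)$ is the total complex of the double complex whose $k$-th column is $\big(M\otimes A^{\otimes x_k},d\big)$, with $x_k:=X_k\setminus\{*\}$ and $d$ the \emph{internal} differential coming from $A$ and $M$, and whose horizontal differential is the alternating sum of faces $\sum_{i=0}^{k}(-1)^i d_i$; the simplicial identities are exactly what makes $D^2=0$, so this is a genuine double complex. I then filter it by (minus) the external simplicial degree, setting $F^{p}:=\bigoplus_{n\leq -p}CH_{X_n}^\com(A,M)$ as in Remark~\ref{R:2othersSpecSeq}. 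Since $d$ preserves the simplicial degree while the faces lower it, each $F^p$ is a subcomplex; moreover the filtration is exhaustive (every element has finite simplicial degree, so $\bigcup_p F^p=CH_{X_\com}^\com(A,M)$) and Hausdorff ($F^p=0$ for $p\geq 1$, whence $\bigcap_p F^p=0$), and it is concentrated in the left half-plane $p\leq 0$.

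Next I would compute the first pages. On the associated graded $E_0^{p,q}=F^p/F^{p+1}$ the face differential maps into $F^{p+1}$ and therefore vanishes, so $d_0$ is the internal differential and $E_1^{p,q}=H^\com\big(M\otimes A^{\otimes x_{-p}},d\big)$ taken in internal degree $q$. Here the hypothesis that $k$ is a field is essential: K\"unneth gives $E_1^{p,q}\cong\big(H^\com(M)\otimes H^\com(A)^{\otimes x_{-p}}\big)^q$, i.e.\ precisely the simplicial-degree-$(-p)$, internal-degree-$q$ summand of the Hochschild complex $CH_{X_\com}^\com\big(H^\com(A),H^\com(M)\big)$ of the graded cohomology algebra. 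Because the faces $d_i$ descend to cohomology, the induced $d_1$ is the Hochschild face differential of $H^\com(A)$, so that $E_2^{p,q}\cong HH_{X_\com}^{p+q}\big(H^\com(A),H^\com(M)\big)^q$ as asserted; the bookkeeping is that simplicial degree $-p$ and internal degree $q$ produce total Hochschild degree $p+q$.

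The main obstacle is convergence, because the filtration is unbounded below: when $A$ or $M$ is unbounded, infinitely many $E_2^{p,q}$ with $p\to-\infty$ can contribute to a fixed total degree. This is exactly a left half-plane cohomological spectral sequence with \emph{exiting} differentials: from $(p,q)$ the differential $d_r$ targets $(p+r,q-r+1)$, which leaves the region $p\leq 0$ once $r>-p$, so each group has only finitely many nonzero outgoing differentials. For such spectral sequences attached to an exhaustive Hausdorff filtration, Boardman's theorem~\cite{Bo} yields strong convergence to the induced filtration on $HH_{X_\com}^\com(A,M)$; I would invoke~\cite{Bo} for this step rather than reprove the $\lim^1$-vanishing by hand, which is the one genuinely technical point.

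Finally, for $M=A$ I would promote the statement to a multiplicative spectral sequence. The shuffle product of Proposition~\ref{P:shuffleinvariance} carries simplicial degrees $s$ and $t$ into $s+t$, so $F^{a}\cdot F^{b}\subseteq F^{a+b}$ and the filtration is multiplicative; the resulting spectral sequence is therefore one of CDGAs, with the product on $E_2$ being the algebra structure of $HH_{X_\com}^\com(H^\com(A))$. The only remaining checks — compatibility of Koszul signs between the product and the two differentials, and multiplicativity of the K\"unneth isomorphism on $E_1$ — are routine and I would leave them to the reader.
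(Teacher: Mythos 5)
Your proposal is correct and follows exactly the route the paper takes: filtering $CH_{X_\com}^{\com}(A,M)$ by (minus) the simplicial degree, identifying the cohomology of the associated graded with $CH_{X_\com}^{\com}(H^\com(A),H^\com(M))$, and invoking Boardman~\cite{Bo} for strong convergence of the resulting left half-plane spectral sequence with exiting differentials. The paper states this argument in two sentences inside Remark~\ref{R:2othersSpecSeq}; your write-up simply supplies the details (the K\"unneth step at $E_1$, the degree bookkeeping, and the multiplicativity of the filtration under the shuffle product), all of which are accurate.
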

For the sake of completeness, we also mention that there is another spectral sequence to compute higher Hochschild due to Pirashvili which is the Grothendieck  spectral sequence associated to the composition of functors $X_\com\mapsto k_{X_\com}\mapsto k_{X_\com}\otimes_{\Gamma}^{\mathbb{L}}\mathcal{L}(A)$ that was defined in Proposition~\ref{P:homologyinvariance}. See~\cite[Theorem 2.4]{P} for details.
\end{remark}

\section{Factorization algebras and derived Hochschild functor over spaces}\label{S:factor-alg}
The main goal of this section  is to prove that Hochschild chains are a special kind of \emph{factorization algebras} in the sense of~\cite{CG} (allowing to compute it  using covers or CW-decomposition).

\subsection{The Hochschild  $(\infty,1)$-functor in $\Top$}
 The Quillen equivalence between the  model categories of simplicial sets and  topological spaces induces an equivalence $\hTop \stackrel{S_\infty}\longrightarrow \hsset$ of $(\infty,1)$-categories (Example~\ref{E:hsset}).  Here $S_\infty$ is the $(\infty,1)$-functor lifting the singular set functor $X\mapsto S_\com(X)=\mathop{Map}(\Delta^\bullet, X)$.   Recall that to any space $X$  we naturally associate the CDGA $CH^\com_X(A) = CH^\com_{S_\bullet(X)}(A)$, the Hochschild chains of $A$ over $X$. The canonical adjunction map $X_\com\to S_\com(|X_\com|)$ yields a natural quasi-isomorphism $CH_{X_\com}^{\com}(A)\to CH_{|X_\com|}^{\com}(A)$ of CDGAs by Proposition~\ref{P:homologyinvariance}.

From the above equivalence $\hTop \stackrel{\sim}\longrightarrow \hsset$ (or alternatively by changing $\hsset$ to $\hTop$ in all the proofs in Section~\ref{S:inftyfunctor} and~\ref{SS:modules}), we deduce the following   topological counterpart to the results of Section~\ref{S:inftyfunctor} and Section~\ref{SS:modules}. 
\begin{proposition}\label{P:Topderived}\begin{itemize}
\item[i)] The Hochschild chain over spaces functor $(X,A)\mapsto CH_X^\com(A)$ lifts as an $(\infty,1)$-bifunctor $CH: \hTop\times \hcdga\to \hcdga$ fitting into the commutative diagram
$${\small \xymatrix{ \hsset \times \hcdga \ar[rr]^{CH} & & \hcdga \\
\hTop\times \hcdga \ar[u]^{S_\infty}_{\simeq} \ar[rru]_{CH} && }} $$ that satisfies all  the axioms of Theorem~\ref{T:derivedfunctor} (with $\hTop$ instead of $\hsset$).
\item[ii)]
Further, up to natural equivalences of $(\infty,1)$-bifunctors, it is the only bifunctor $\hTop\times \hcdga\to \hcdga$ satisfying the axioms of Theorem~\ref{T:deriveduniqueness} (with $\hTop$ instead of $\hsset$).
\item[iii)] Replacing $\hsset$ by $\hTop$ and ${\hsset}_*$ by ${\hTop}_*$, the analogs of Corollary~\ref{C:HlocalitySpecSeq}, Proposition~\ref{P:product}, Corollary~\ref{C:hocolim},  Theorem~\ref{T:Mderivedfunctor} and  Corollary~\ref{C:MlocalitySpecSeq} hold.
\end{itemize}
\end{proposition}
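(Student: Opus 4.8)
The plan is to transport every statement from the simplicial side to the topological side along the equivalence of $(\infty,1)$-categories $S_\infty:\hTop\stackrel{\sim}\to\hsset$ recalled in Example~\ref{E:hsset}, rather than rerunning the arguments of Section~\ref{S:inftyfunctor}. Concretely, I would \emph{define} the topological Hochschild bifunctor as the composite $CH\circ(S_\infty\times\mathrm{id}_{\hcdga})$, so that $CH_X^\com(A)=CH^\com_{S_\bullet(X)}(A)$ and the commutative triangle in part i) holds by construction. As a composite of $(\infty,1)$-functors it is automatically an $(\infty,1)$-bifunctor $\hTop\times\hcdga\to\hcdga$; the content of the proposition is then that this composite still satisfies Axioms (1)--(3) of Theorem~\ref{T:derivedfunctor} and enjoys the same uniqueness and corollaries.

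To verify the three axioms I would combine three elementary properties of the singular functor with the fact that an equivalence of $(\infty,1)$-categories preserves all homotopy (co)limits. First, $S_\bullet(pt)$ is the terminal, hence contractible, simplicial set, so homotopy invariance and the point axiom on $\hsset$ give $CH_{pt}^\com(A)\cong CH^\com_{S_\bullet(pt)}(A)\cong A$. Second, $S_\bullet$ commutes strictly with disjoint unions (a singular simplex of $X\coprod Y$ factors through one summand) and with finite products (a singular simplex of $X\times Y$ is exactly a pair of singular simplices); together with the monoidal axiom on $\hsset$ this yields the monoidal axiom here and, later, Proposition~\ref{P:product}. Third, since $S_\infty$ is an equivalence it sends a homotopy pushout $X\cup^h_Z Y$ in $\hTop$ to a homotopy pushout $S_\bullet(X)\cup^h_{S_\bullet(Z)}S_\bullet(Y)$ in $\hsset$, which $CH$ then sends to a homotopy pushout of CDGAs by Axiom (3) on the simplicial side; this establishes the gluing axiom.

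For part ii), given any bifunctor $F:\hTop\times\hcdga\to\hcdga$ satisfying the axioms of Theorem~\ref{T:deriveduniqueness}, I would precompose with a weak inverse $|\cdot|_\infty:\hsset\stackrel{\sim}\to\hTop$ of $S_\infty$ (induced by geometric realization) to obtain $\widetilde F:=F\circ(|\cdot|_\infty\times\mathrm{id})$ on $\hsset\times\hcdga$. Since $|\cdot|_\infty$ is an equivalence it preserves coproducts and homotopy pushouts, and $|pt_\com|_\infty$ is contractible, so $\widetilde F$ satisfies the hypotheses of Theorem~\ref{T:deriveduniqueness} and is therefore naturally equivalent to $CH$ on $\hsset$. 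Transporting this equivalence back along $S_\infty$ identifies $F$ with $CH\circ(S_\infty\times\mathrm{id})$, giving uniqueness. Part iii) is then purely formal: each of Corollary~\ref{C:HlocalitySpecSeq}, Proposition~\ref{P:product}, Corollary~\ref{C:hocolim}, Theorem~\ref{T:Mderivedfunctor} and Corollary~\ref{C:MlocalitySpecSeq} is a statement about colimits, finite products, or homotopy pushouts, all preserved by the equivalence $S_\infty$; for the pointed and module statements one uses the pointed analogue ${\hTop}_*\stackrel{\sim}\to{\hsset}_*$ together with the contractibility of $S_\bullet(pt)$ to match base points.

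The main obstacle is bookkeeping rather than genuine mathematics: one must check that $S_\infty$ is not merely an equivalence of underlying $(\infty,1)$-categories but a \emph{symmetric monoidal} one for both the coproduct and the finite-product structures, and that it restricts to an equivalence of pointed categories compatible with $A$-modules. These compatibilities follow from the strict preservation of coproducts and finite products by $S_\bullet$ noted above, so no new computation is needed; the alternative route of rerunning the proofs of Sections~\ref{S:inftyfunctor}--\ref{SS:modules} verbatim with $\hTop$ in place of $\hsset$ yields the same conclusions.
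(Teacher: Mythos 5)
Your proposal is correct and follows essentially the same route as the paper: the paper deduces the proposition precisely by transporting the results of Sections~\ref{S:inftyfunctor} and~\ref{SS:modules} along the equivalence $S_\infty:\hTop\stackrel{\sim}\to\hsset$ induced by the Quillen equivalence (noting, as you do, that the alternative is to rerun the simplicial proofs verbatim with $\hTop$ in place of $\hsset$). Your additional bookkeeping — contractibility of $S_\bullet(pt)$, strict compatibility of $S_\bullet$ with disjoint unions and finite products, and preservation of homotopy pushouts by an equivalence — is exactly the content implicit in the paper's one-line deduction.
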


The fact that Hochschild chains are computed by taking colimits over finite simplicial sets has the following translation for topological spaces.
\begin{proposition}[Compact support] Let $X$ be (weakly homotopic to) a $CW$-complex, $A$ a CDGA and $M$ an $A$-module. There are natural equivalences 
 $$\colim_{ \scriptsize \begin{array}{l} K\to X\\ K\mbox{ compact}\end{array}}\!\! \!\!\!\!\! \Big(CH_{K}^{\com}(A) \Big)\stackrel{\simeq} \longrightarrow CH_{X}^{\com}(A) \;\; \text{ and } \;\colim_{ \scriptsize \begin{array}{l} K\to X\\ K\mbox{ compact}\end{array}}\!\!\!\!\!\!\! \Big( CH_{K}^{\com}(A,M) \Big)\stackrel{\simeq} \longrightarrow CH_{X}^{\com}(A,M)$$ in $\hcdga$ and $CH_{X}^{\com}(A)\textit{-}Mod_\infty$ respectively.
\end{proposition}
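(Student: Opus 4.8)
The plan is to deduce the statement from the very definition of $CH$ as a filtered colimit of its values on \emph{finite} simplicial sets, together with the exactness of filtered colimits. Since $X$ is a CW-complex I work with its singular set, so that $CH_X^\com(A)=CH_{S_\com(X)}^\com(A)$, and I recall from Definition~\ref{D:Hoch} that for an arbitrary simplicial set $Y_\com$ the \emph{strict} Hochschild complex is $CH_{Y_\com}^\com(A)=\colim_{L_\com\to Y_\com,\, L_\com\text{ finite}} CH_{L_\com}^\com(A)$, the colimit running over the filtered category of finite sub-simplicial sets of $Y_\com$. The only homotopical input needed is that filtered colimits of chain complexes (hence of CDGAs, since the symmetric-algebra monad is finitary, and of $A$-modules) are exact: they commute with homology and therefore preserve quasi-isomorphisms. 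This is exactly the principle already used in Remark~\ref{R:limskn}. Consequently the strict filtered colimit above is automatically homotopical and already represents the homotopy colimit in $\hcdga$ (resp. in $A\textit{-}Mod_\infty$), so that
$$CH_X^\com(A)\;\simeq\;\hocolim_{\substack{L_\com\to S_\com(X)\\ L_\com\text{ finite}}} CH_{L_\com}^\com(A)$$
with no cofibrant replacement required.

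It then remains to compare this indexing category with that of compact spaces over $X$. First I would produce the comparison map: for a compact $K\to X$ one has $CH_K^\com(A)=CH_{S_\com(K)}^\com(A)=\colim_{L_\com\text{ finite}\subset S_\com(K)} CH_{L_\com}^\com(A)$, and postcomposing each $L_\com\to S_\com(K)$ with $S_\com(K)\to S_\com(X)$ assembles these into the canonical map $\colim_{K\text{ compact}} CH_K^\com(A)\to CH_X^\com(A)$. To see it is an equivalence I would argue by cofinality in the opposite direction: every finite $L_\com\to S_\com(X)$ has compact realization $|L_\com|$, and via the unit $L_\com\to S_\com(|L_\com|)$ together with the counit $|L_\com|\to |S_\com(X)|\to X$ (a weak equivalence, hence harmless by Proposition~\ref{P:homologyinvariance}) the original map factors, up to the natural equivalence $CH_{L_\com}^\com(A)\simeq CH_{|L_\com|}^\com(A)$, through the compact space $|L_\com|$. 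Thus the finite simplicial sets are cofinal among the compact pieces; since both sides are \emph{filtered} colimits, the cofinality theorem identifies them and the comparison map is an equivalence.

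The module statement is proved word for word in the same way, replacing $\hcdga$ by $A\textit{-}Mod_\infty$ and invoking exactness of filtered colimits of $A$-modules together with the module versions of the quoted results, available through Proposition~\ref{P:Topderived}(iii). The step I expect to be the genuine obstacle is the cofinality argument of the second paragraph: because $S_\com(K)$ is itself an \emph{infinite} simplicial set, the left-hand side is really a colimit of colimits, and one must check both that interchanging these filtered colimits is legitimate and that the finite sub-simplicial sets of $S_\com(X)$ form a homotopy cofinal subcategory of the filtered category of compact spaces over $X$, i.e. that the relevant comma categories are connected. For a CW-complex this reduces to the standard fact that every compact subset lies in a finite subcomplex, so that finite subcomplexes are cofinal among compact subspaces; the remainder is bookkeeping with the units and counits of the $|\,\cdot\,|\dashv S_\com$ adjunction.
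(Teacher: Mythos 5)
Your overall strategy --- write $CH^\com_{X}(A)$ as the filtered colimit over finite simplicial sets mapping to a simplicial model of $X$, use exactness of filtered colimits to identify strict and homotopy colimits, reduce the module statement to the algebra statement via $CH^\com_X(A,M)\cong M\otimes^{\mathbb{L}}_A CH^\com_X(A)$, and then compare indexing categories by cofinality --- is the same as the paper's. The problem is that your central cofinality argument is run in the wrong direction. Showing that every finite $L_\com\to S_\com(X)$ realizes to a compact space $|L_\com|\to X$ only constructs the comparison functor from finite simplicial sets over $S_\com(X)$ to compact spaces over $X$ (equivalently, it shows that each structure map $CH^\com_{L_\com}(A)\to CH^\com_X(A)$ factors through a term of the compact colimit, which at best splits the comparison map). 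Cofinality for these filtered colimits requires the converse: every compact $K\to X$ must admit a factorization $K\to |L_\com|\to X$, on the nose and over $X$, through the realization of some finite simplicial set, and any two such factorizations must be coequalized further along. That is where the content of the proposition lies, and it is exactly the part you defer to ``bookkeeping'' in your last paragraph.

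That bookkeeping is not routine, and it is what the paper's proof actually consists of: (i) every compact $K\to X$ factors through its image $f(K)$, a compact subset of $X$ (using that $X$ is Hausdorff), so compact subsets are cofinal among compact spaces over $X$, with connectivity of the comma categories coming from unions of compact subsets; (ii) every compact subset of a CW-complex lies in a finite subcomplex; (iii) finite subcomplexes must then be compared with finite sub-simplicial sets of a chosen model $X_\bullet$ --- and since a finite subcomplex is in general not homeomorphic to the realization of a finite simplicial set, one cannot produce the strict factorization $K\to|L_\com|\to X$ directly; the paper instead uses a zigzag of comparison maps, each an equivalence by homotopy invariance (Proposition~\ref{P:homologyinvariance}), rather than a single cofinal functor; and (iv) the paper first treats finite-dimensional CW-complexes, where $X_\bullet$ can be chosen with no nondegenerate simplices above the dimension, and then passes to the general case using $\colim_n CH^\com_{sk_n X_\com}(A)\simeq CH^\com_{X_\com}(A)$ from Remark~\ref{R:limskn}. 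Your proposal collapses (i)--(iii) into a single cofinality claim whose stated justification proves only the easy implication, and omits (iv) entirely. The ingredients you name at the end (compact image, finite subcomplexes) are the right ones, but until the factorization of an arbitrary compact $K\to X$ through the realization of a finite simplicial set over $X$ is actually produced --- or replaced, as in the paper, by a zigzag through compact subsets --- the argument has a gap precisely at its load-bearing step.
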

\begin{proof} Since $CH_{X}^{\com}(A,M)\cong M\mathop{\otimes}^{\mathop{L}}_{A} CH_{X}^{\com}(A)$ in $CH_{X}^{\com}(A)\textit{-}Mod_\infty$, we only need to prove that the first map is an equivalence.

 We first assume $X$ to be a $CW$-complex of finite dimension $n$. Let $X_\bullet$ be a simplicial set model of $X$ with no non-degenerate simplices in dimension $m>n$. Then, given a finite simplicial set $K_\com$,  any map $f: K_\bullet \to X_\bullet$ factors through a finite simplicial set $\tilde{K}_\bullet$ with no  non-degenerate simplices in dimension $m>n$. Thus,  the realization $|\tilde{K}_\bullet|$ is compact.
Conversely, if $\tilde{K}$ is a compact subset of the $CW$-complex $X$,  it has a simplicial model $K_\bullet$ with finitely many non-degenerate simplices. Further, any map $K\to X$ has a compact image, since $X$ is Hausdorff, and thus factors through a compact subset of $X$. We get a zigzag  
\begin{multline*}
 \colim_{ \scriptsize \begin{array}{l} K\to X \\ K\mbox{ compact}\end{array}} CH^\com_{K}(A) \stackrel{\simeq}\longleftarrow \colim_{ \scriptsize \begin{array}{l} \tilde{K}\subset X  \\ \tilde{K}\mbox{ compact}\end{array}} CH^\com_{\tilde{K}} (A) \stackrel{\simeq}\longrightarrow \colim_{ \scriptsize  \tilde{K}_\bullet \in \mbox{ FNDS}(X_\bullet) } CH^\com_{\tilde{K}_\bullet}(A) \\
 \stackrel{\simeq}\longrightarrow \colim_{ \scriptsize \begin{array}{l} K\bullet \to X_\bullet \\ {K}_\bullet \mbox{ finite} \end{array}  } CH^\com_{{K}_\bullet}(A)
 \stackrel{\simeq}\longrightarrow CH^\com_{X_\bullet}(A)   \stackrel{\simeq}\longrightarrow  CH^\com_{X}(A)
\end{multline*}
where the first and third arrows are equivalences since they are induced by cofinal functors. 
  Here $FNDS(X_\bullet)$ is the set of simplicial subsets of $X_\bullet$ with finitely many non-degenerate simplices. That the other arrows in the zigzag are equivalences follows from Proposition~\ref{P:Topderived} and thus the result is proved  for finite dimensional $CW$-complexes.
  
 We now reduce the general case to the finite dimensional one. Let $X_\com$ be a simplicial set model of $X$. The geometric realization $|sk_n X_\com|$ of  $sk_nX_\com$ is a finite dimensional $CW$-complex, and, if $K$ is compact, any map $f: K\to X$ factors as the composition $K\to |sk_n X_\com| \hookrightarrow X$ for some $n$. Hence the natural map
 $$\colim_{n} \Big(\colim_{ \scriptsize \begin{array}{l} \tilde{K}\to |sk_n X_\com| \\ \tilde{K}\mbox{ compact}\end{array}} CH^\com_{K}(A)\Big)  \quad\longrightarrow \quad  \colim_{ \scriptsize \begin{array}{l} K\to X \\ K\mbox{ compact}\end{array}} CH^\com_{K}(A) $$ is a natural equivalence in $\hcdga$. The result now follows from the finite dimensional case, the natural equivalence  $\colim_{n} CH_{sk_n X_\com}^{\com}(A) \; \stackrel{\sim} \to CH_{X_\com}^{\com}(A)$  (see Remark~\ref{R:limskn}), and Proposition~\ref{P:Topderived}.i). 
\end{proof}
{Specifying Corollary~\ref{C:hocolim} to the case of topological spaces that are obtained by attaching cells, we get the following lemma.}
\begin{lemma}\label{L:CHhandling}
 Let $X_0$ be (weakly homotopic to) a $CW$-complex and $X$ be (weakly homotopic to) a $CW$-complex obtained from $X_0$ by attaching a countable family $ (C_n)_{n\in \N}$ of cells. We let  $X_n$ be the result of attaching the first $n$ cells. For any CDGA $A$, one has a natural equivalence
$$\colim_{n\in \N} CH_{X_n}^{\com}(A) \stackrel{\simeq}\longrightarrow CH_{X}^{\com}(A) $$ in $\hcdga$, as well as $\colim_{n\in \N} CH_{X_n}^{\com}(A,M) \stackrel{\simeq}\longrightarrow CH_{X}^{\com}(A) $ in $CH_{X}^{\com}(A)\textit{-}Mod_\infty$.
\end{lemma}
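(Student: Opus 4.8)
The plan is to realize $X=\colim_{n} X_n$ as a sequential (filtered) colimit and to bootstrap the finite-colimit statement of Corollary~\ref{C:hocolim} to this infinite colimit by the same coequalizer device already used to prove~\eqref{eq:limskn} in Theorem~\ref{T:deriveduniqueness}. Since each $X_{n+1}$ is obtained from $X_n$ by attaching a single cell, the inclusions $i_n\colon X_n\hookrightarrow X_{n+1}$ are cofibrations, so $X=\bigcup_n X_n$ is already a homotopy colimit of the sequence $X_0\hookrightarrow X_1\hookrightarrow\cdots$ in $\hTop$, and correspondingly we may read the $\colim$ on the left-hand side as the genuine $(\infty,1)$-colimit in $\hcdga$.

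First I would present this sequential colimit as a homotopy coequalizer, hence a homotopy pushout, exactly as in the proof of Theorem~\ref{T:deriveduniqueness}: there is a coequalizer diagram $\coprod_{n\in\N} X_n \rightrightarrows \coprod_{n\in\N} X_n \to X$ whose two maps are the identity and the map $\delta$ induced on the $n$-th summand by $i_n\colon X_n\hookrightarrow X_{n+1}$. Applying the Hochschild functor and using that, by Corollary~\ref{C:hocolim} and Proposition~\ref{P:Topderived}, $CH$ carries this (finite) coequalizer to the corresponding coequalizer in $\hcdga$, together with the coproduct axiom~{\bf (2)} (in the form given in Theorem~\ref{T:deriveduniqueness}) for the countable coproducts $\coprod_n X_n$, yields $CH^\com_X(A)\cong \mathrm{coeq}\big(\mathrm{id},\delta_*\big)$, where $\delta_*$ is the shift map on $\bigotimes_n CH^\com_{X_n}(A)$ induced by the $(i_n)_*$.

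The key step is then to identify this coequalizer with $\colim_{n}CH^\com_{X_n}(A)$. Since coproducts in $\hcdga$ are given by $\otimes$ and $\hcdga$ is presentable, hence cocomplete, the standard presentation of a sequential colimit as the coequalizer of the identity and the structure-map shift on the coproduct of the terms applies verbatim in $\hcdga$; this gives $\mathrm{coeq}(\mathrm{id},\delta_*)\cong \colim_{n}CH^\com_{X_n}(A)$ and hence the first asserted equivalence $\colim_{n}CH^\com_{X_n}(A)\stackrel{\simeq}\to CH^\com_X(A)$. This is literally the argument behind~\eqref{eq:limskn}, with the skeletal filtration $sk_nX_\com$ replaced by the cell-attachment filtration $X_n$, the only inputs being that the filtration is an exhaustive increasing sequence of cofibrations and that $CH$ satisfies axioms~{\bf (2)} and~{\bf (3)}.

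For the module statement I would reduce to the algebra case via~\eqref{eq:Mderivedtensor}: there are natural equivalences $CH^\com_{X_n}(A,M)\cong M\otimes^{\mathbb{L}}_A CH^\com_{X_n}(A)$ in $CH^\com_{X_n}(A)\textit{-}Mod_\infty$. The derived base-change functor $M\otimes^{\mathbb{L}}_A(-)$ is a left adjoint and so commutes with the sequential colimit; combining this with the first part gives $\colim_n CH^\com_{X_n}(A,M)\cong M\otimes^{\mathbb{L}}_A \colim_n CH^\com_{X_n}(A)\cong M\otimes^{\mathbb{L}}_A CH^\com_X(A)\cong CH^\com_X(A,M)$ in $CH^\com_X(A)\textit{-}Mod_\infty$. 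I expect the main obstacle to be precisely the passage from finite to infinite colimits in the space variable, which is exactly what the coequalizer-of-coproducts presentation (invoking the countable coproduct axiom) is designed to handle; once that reduction is in place, everything else is a formal consequence of the cocompleteness of $\hcdga$ and of~\eqref{eq:Mderivedtensor}.
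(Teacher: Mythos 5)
Your proposal is correct, but it takes a genuinely different route from the paper's own proof. You argue formally: present the exhaustive filtration $X_0\subset X_1\subset\cdots$ as the coequalizer of the identity and the shift on $\coprod_n X_n$, then apply the homotopy pushout axiom together with the countable coproduct axiom of Theorem~\ref{T:deriveduniqueness} to identify $CH^\com_X(A)$ with the corresponding coequalizer in $\hcdga$, i.e.\ with $\colim_n CH^\com_{X_n}(A)$ — this is exactly the mechanism behind~\eqref{eq:limskn} in Remark~\ref{R:limskn}, transplanted from the skeletal filtration to the cell-attachment filtration. The paper instead works at the level of the \emph{definition} of higher Hochschild chains: it chooses simplicial models in which each $(X_n)_\com$ is obtained from $(X_0)_\com$ by adding finitely many non-degenerate simplices, recalls that $CH^\com_{X_\com}(A)$ is by definition the colimit of $CH^\com_{K_\com}(A)$ over finite $K_\com\to X_\com$, and then observes that every such finite $K_\com$ factors through some $(X_n)_\com$ (a cofinality argument), which directly identifies the two colimits. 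Your route is cleaner in that it would apply verbatim to any bifunctor satisfying axioms~{\bf (2)} and~{\bf (3)}, but note that the countable coproduct axiom for $CH$ is itself verified via the same colimit-over-finite-subcomplexes description, so the two arguments ultimately rest on the same ingredient; the paper's version is the more elementary and self-contained of the two. Your reduction of the module statement to the algebra statement via~\eqref{eq:Mderivedtensor} and commutation of $M\otimes^{\mathbb{L}}_A(-)$ with sequential colimits coincides with the paper's first line, so that part is the same. One small point worth making explicit in your write-up is that the coequalizer presentation of a sequential colimit is being used in the $(\infty,1)$-categorical sense (as the paper itself does in the proof of Theorem~\ref{T:deriveduniqueness}); this is standard but is the one non-formal input of your argument.
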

\begin{proof} Since $CH_{X}^{\com}(A,M)\cong M\mathop{\otimes}^{\mathop{L}}_{A} CH_{X}^{\com}(A)$ as $CH_{X}^{\com}(A)$-modules, we only need to consider the case of $CH_{X}^{\com}(A)$.
Further, the cells $C_i$ are homeomorphic to euclidean balls and the attaching maps have domain given by their boundaries. Thus we may assume that each $X_n$ is obtained from a simplicial set model $(X_0)_\bullet$ of $X_0$ by adding finitely many non-degenerates simplices. Thus  we get a sequence of  cofibrations of simplicial sets (\emph{i.e.} degree wise injective maps) 
$(X_0)_\bullet \hookrightarrow (X_1)_\bullet \cdots \hookrightarrow (X_n)_\bullet \cdots \hookrightarrow X_\bullet=\colim_{n\in \N} (X_n)_\bullet$ which are (homotopy) models for the sequence of maps $X_0\to X_1\to \cdots \to X$. By definition of Hochschild chains, there is a canonical equivalence
$ \colim_{\scriptsize \begin{array}{l} K_\bullet\to X_\bullet\\
K_\bullet\mbox{ finite}\end{array}} \!\!CH_{K_\bullet}^{\com}(A) \cong CH_{X_\bullet}^{\com}(A)$ of CDGAs. The maps $(X_n)_\bullet \to X_\bullet$ assemble to give a map of colimits
\begin{equation}\label{eq:CHhandling}
 \colim_{n\in \N} \Big( \colim_{\scriptsize \begin{array}{l} K_\bullet\to (X_n)_\bullet\\
K_\bullet\mbox{ finite}\end{array}} \!\!CH_{K_\bullet}^{\com}(A)\Big) \longrightarrow \colim_{\scriptsize \begin{array}{l} K_\bullet\to X_\bullet\\
K_\bullet\mbox{ finite}\end{array}} \!\!CH_{K_\bullet}^{\com}(A) .
\end{equation}
Given a finite set $K_i$ and a map $f_i: K_i\to X_i$, the image $f_i(K_i)$ lies in some $(X_n)_i$ since $X_\bullet=\colim_{n\in \N} (X_n)_\bullet$ and $f_i(K_i)$ is finite. This proves that the family $K_\bullet \to (X_n)_\bullet$ of maps from a pointed set into some $(X_n)_\bullet$ is cofinal and thus the map~\eqref{eq:CHhandling} is an equivalence in $\hcdga$.
\end{proof}
\begin{remark} It is possible to enhance the result of Lemma~\ref{L:CHhandling} in the following way.  One can take any space $X_\emptyset$ and a space $X$ obtained by attaching a family $(C_i)_{i\in I}$ of other spaces to it. Then, essentially the same argument as the one in  Lemma~\ref{L:CHhandling} shows that $CH^\com_{X}(A)$ is the colimit $\colim CH^\com_{X_F}(A)$ over all possible subspaces $X_F\subset X$ obtained by attaching finitely many $C_i$'s. 
\end{remark}

\smallskip

Let us conclude this section by giving an analog of Leray acyclic cover theorem/Mayer Vietoris principle for Hochschild chains. 

Let $X$ be a topological space and $\mathcal{U} =(U_i)_{i\in I}$ be a \emph{good cover}  for $X$, \emph{i.e.} a cover such that the $U_i$ and all of their nonempty finite intersections are contractible. We denote $N_\com(I)$ the nerve of the cover, that is $N_0(I)=I$, $N_1(I)$ is the set of pairs of indices $i_0,i_1$ such that $U_{i_0}\cap U_{i_1} \neq \emptyset$ and so on. 
\begin{corollary} Let $X$ be a topological space and $\mathcal{U} =(U_i)_{i\in I}$ be a \emph{good cover} for $X$ such that the inclusions $U_i\cap U_j \to U_i$ are cofibrations. Then there is a natural equivalence
 $$ CH^\com_{X}(A)\stackrel{\simeq}\longrightarrow  A^{\otimes I} \mathop{\otimes}\limits_{A^{\otimes N_1(I)}}^{\mathbb{L}} A^{\otimes I}$$ in $\hcdga$. Here the left and right module structure are induced by the two canonical projections $N_1(I)\to N_0(I)=I$ given by $ (i,j)\mapsto i$, $(i,j)\mapsto j$.
\end{corollary}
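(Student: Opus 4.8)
The plan is to reduce the computation to the combinatorics of the nerve and then feed it into the locality and monoidal axioms. First I would invoke the nerve theorem: since $\mathcal{U}=(U_i)_{i\in I}$ is a good cover, the canonical map from the (realization of the) \v{C}ech nerve of $\mathcal{U}$ to $X$ is a weak homotopy equivalence, so by homotopy invariance of Hochschild chains (Proposition~\ref{P:homologyinvariance}, in its topological form Proposition~\ref{P:Topderived}) there is a natural equivalence $CH^\com_X(A)\cong CH^\com_{N_\com(\mathcal{U})}(A)$. This replaces the geometric object $X$ by the simplicial set $N_\com(\mathcal{U})$, whose set of $n$-simplices is the set $N_n(I)$ of tuples $(i_0,\dots,i_n)$ with $U_{i_0}\cap\cdots\cap U_{i_n}\neq\emptyset$; goodness of the cover guarantees that each such intersection is contractible, hence $CH^\com_{U_{i_0\cdots i_n}}(A)\cong A$ by homotopy invariance.

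Next I would recognize the right-hand side as the Hochschild chains of a homotopy pushout. By the value-on-a-point and monoidal axioms (Theorem~\ref{T:derivedfunctor}~(1),(2)), for a discrete set $S$ one has $CH^\com_S(A)\cong A^{\otimes S}$; in particular $CH^\com_{N_0(I)}(A)\cong A^{\otimes I}$ and $CH^\com_{N_1(I)}(A)\cong A^{\otimes N_1(I)}$. The two canonical projections $(i,j)\mapsto i$ and $(i,j)\mapsto j$ from $N_1(I)$ to $N_0(I)=I$ induce, by functoriality of $CH$, the two algebra maps $A^{\otimes N_1(I)}\to A^{\otimes I}$ that define the left and right module structures in the statement. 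Thus the locality axiom (Theorem~\ref{T:derivedfunctor}~(3)) identifies the target $A^{\otimes I}\otimes^{\mathbb{L}}_{A^{\otimes N_1(I)}}A^{\otimes I}$ precisely with $CH^\com_{W_\com}(A)$ for the homotopy pushout $W_\com=N_0(I)\cup^{h}_{N_1(I)}N_0(I)$ taken along these two projections.

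With these two identifications in place, the corollary reduces to producing a natural equivalence $N_\com(\mathcal{U})\simeq W_\com$ in $\hsset$ (equivalently in $\hTop$), i.e.\ exhibiting $X$ as the homotopy pushout of the two projections $N_1(I)\rightrightarrows N_0(I)$. Here the hypothesis that the inclusions $U_i\cap U_j\hookrightarrow U_i$ are cofibrations is what I would use to guarantee that the pushout assembled from the cover is a genuine \emph{homotopy} pushout, so that the strict isomorphism of Proposition~\ref{P:pushout} upgrades to a derived tensor product exactly as in the passage from~\eqref{eq:homotopypushout} to~\eqref{eq:homotopypushout2} in the proof of Theorem~\ref{T:derivedfunctor}; this is also what lets one track the two module structures through the equivalence. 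Everything apart from this geometric identification is a formal consequence of the three axioms of Theorem~\ref{T:derivedfunctor} together with Corollary~\ref{C:hocolim} (commutation of $CH$ with the relevant colimits).

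The main obstacle, and the step deserving the most care, is exactly this last identification of $X$ with $N_0(I)\cup^{h}_{N_1(I)}N_0(I)$: one must verify that, for a good cover, the contribution of the higher intersections is already encoded by $N_0(I)$ and $N_1(I)$ (so that the \v{C}ech data beyond the $1$-truncation does not change the answer after applying $CH$), and that the two legs of the homotopy pushout match the two projections $(i,j)\mapsto i,j$. This is where both goodness of the cover and the cofibration hypothesis are genuinely used; the rest of the argument is bookkeeping with the monoidal, locality, and homotopy-invariance properties established earlier.
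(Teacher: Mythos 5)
Your setup is correct as far as it goes: by the monoidal and locality axioms the right-hand side is $CH^\com_{W_\com}(A)$ for the homotopy pushout $W_\com=N_0(I)\cup^{h}_{N_1(I)}N_0(I)$ of \emph{discrete} simplicial sets, and you have correctly isolated the entire remaining content in the assertion that $X$ is weakly equivalent to $W_\com$. But you never prove that assertion, and it is precisely the step that fails. Since $N_0(I)$ and $N_1(I)$ are discrete, $W_\com$ is the double mapping cylinder of two maps of sets, i.e.\ (the realization of) a graph; it has no homology above degree $1$. So it cannot be weakly equivalent to $X$ once $X$ has higher homology: take $X=S^2$ with the tetrahedral good cover (four open sets, all pairwise and triple intersections contractible, and the inclusions can be arranged to be cofibrations). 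Then $CH^\com_{S^2}(A)$ detects $H_2(S^2)$ (e.g.\ for a free CDGA, via the Hodge-type decomposition as in Example~\ref{E:Surface}), whereas $A^{\otimes I}\otimes^{\mathbb{L}}_{A^{\otimes N_1(I)}}A^{\otimes I}$ is the Hochschild complex over a $1$-complex. Even the two-element case already goes wrong: for contractible $X=U_1\cup U_2$ with contractible intersection, the double mapping cylinder of $\{1,2\}\leftarrow N_1(I)\rightarrow\{1,2\}$ is a disjoint union of contractible pieces (or a circle, depending on the convention for $N_1(I)$), and in neither case does one recover $A\cong CH^\com_X(A)$. So the gap in your step (3) is not a matter of more care with the cofibration hypothesis or the nerve theorem; the higher intersection data $N_k(I)$, $k\geq 2$, genuinely contributes, and the honest acyclic-cover statement is the full \v{C}ech-complex one established later in Theorem~\ref{T:CH=FH}.

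For comparison, the paper's own proof takes a different and terser route: it does not use the nerve theorem at all, but writes $X$ as the strict coequalizer $\coprod_{N_1(I)}U_i\cap U_j\rightrightarrows\coprod_{I}U_i\to X$ of the \emph{open sets themselves} and then invokes axioms (2) and (3) of Theorem~\ref{T:derivedfunctor}. That argument has the same hole, twice over: it identifies a strict coequalizer with a homotopy colimit (which the $1$-truncated \v{C}ech diagram is not, for the same $S^2$ reason), and it conflates a coequalizer with a pushout (the homotopy coequalizer of $\phi,\psi:C\rightrightarrows B$ in $\hcdga$ is $B\otimes^{\mathbb{L}}_{C\otimes C}C$, not $B\otimes^{\mathbb{L}}_{C}B$). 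In that sense your write-up is the more honest of the two, since it names the dangerous step explicitly; but the fix is not to sharpen that step, it is to replace the $1$-truncated nerve data by the full nerve and argue as in the proof of Theorem~\ref{T:CH=FH}.
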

\begin{proof}
 Since each $U_i$ is contractible, the natural map $CH_{U_i}^{\com}(A) \to CH_{pt}^{\com}(A)\cong A$ is an equivalence by Theorem~\ref{T:derivedfunctor} and similarly (when $U_i\cap U_j$ is not empty) for the natural map $CH_{U_i\cap U_j}^{\com}(A)\to A$ because $\mathcal{U}$ is a good cover. Since $X$ is the coequalizer $\coprod_{N_1(I)} U_i\cap U_j \rightrightarrows \coprod_{I} U_i \rightarrow X$,  the result follows from the coproduct axiom (2) and the gluing axiom (3) in Theorem~\ref{T:derivedfunctor} (or Proposition~\ref{P:Topderived}).
\end{proof}

\subsection{(Pre)Factorization algebras}\label{S:Factorization}

We now explain a relationship between factorization algebras (as defined by Costello and Gwilliam~\cite{CG,Co}) and Higher Hochschild chains. 

Let $A$ be a CDGA and $X$ be a topological space. We denote $Op(X)$ the set of open subsets of $X$. For every open subset $V$ of $X$ and a family of disjoint open subsets $U_1,\dots, U_n \subset V$, there is a canonical morphism of CDGAs 
 $$\mu_{U_1,\dots, U_n,V}: CH_{U_1}^{\com}(A) \otimes \cdots \otimes CH_{U_n}^{\com}(A) \to \big(CH_{V}^{\com}(A)\big)^{\otimes n} \to CH_{V}^{\com}(A) $$ induced by functoriality by the inclusions  $U_i \hookrightarrow V$ and the multiplication in $CH_{V}^{\com}(A)$. 
 
 \smallskip
 
 These maps are the structure maps of a prefactorization algebra on $X$ in the sense of~\cite{CG,Co}. Note that for a (possibly $(\infty,1)$-) symmetric monoidal category $(\mathcal{C},\otimes)$, we denote by $\mathop{PFac}_{X}(\mathcal{C})$ the ($(\infty,1)$-) category of \textbf{prefactorization algebras on $X$ taking values in $\mathcal{C}$} (see~\cite{CG}).  In particular, $\mathop{PFac}_{X}(\cdga)$ is the category of \emph{commutative} prefactorization algebras on $X$ as defined in~\cite{CG}. We will actually be only interested in the case where $\mathcal{C}$ is an ($(\infty,1)$-)category of algebras over an ($\infty$)-Hopf operad. We have:
 \begin{lemma}\label{P:prefactorization} The rule $U\mapsto CH_U^\com(A)$ together with the maps $\mu_{U_1,\dots, U_n,V}$ define a natural structure of a \emph{prefactorization algebra} on $X$. Further
 \begin{enumerate}
  \item The above rule $A\mapsto \Big( \big(CH_{(U)}^{\com}(A)\big)_{ U\in Op(X)}; \, \big(\mu_{U_1,\dots,U_n,V}\big)\Big)$ defines a functor $\mathcal{CH}_X: \cdga \to \mathop{PFac}_{X}(\cdga)$.
  \item $\mathcal{CH}_X$ lifts as an $(\infty,1)$-functor $\mathcal{CH}_X:\hcdga \to \mathop{PFac}_{X}(\hcdga)$.
  \end{enumerate}
 \end{lemma}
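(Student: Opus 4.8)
The plan is to establish the three assertions in order: that the data $(CH_U^\com(A))_U$ together with the maps $\mu_{U_1,\dots,U_n,V}$ forms a prefactorization algebra, that this is functorial in $A$, and that the construction descends to the $(\infty,1)$-level.

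A prefactorization algebra on $X$ valued in $(\mathcal{C},\otimes)$ is an algebra over the (discrete) colored operad $\mathsf{Disj}_X$ whose colors are the opens of $X$ and whose space of operations $(U_1,\dots,U_n)\to V$ is a point if the $U_i$ are pairwise disjoint and contained in $V$, and empty otherwise. So verifying the first assertion amounts to checking symmetry, compatibility with operadic composition, and units. Each $\mu_{U_1,\dots,U_n,V}$ is by construction the composite of the map induced by the inclusions $U_i\hookrightarrow V$ (functoriality of $CH$, Corollary~\ref{C:Hfunctor}) with the $n$-fold multiplication of the CDGA $CH_V^\com(A)$ (Proposition~\ref{P:shuffleinvariance}). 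Symmetry is then immediate from \emph{commutativity} of this multiplication; the nullary operations give the unit morphisms $k\to CH_V^\com(A)$ of the unital CDGAs, with $CH_\emptyset^\com(A)\cong k$ coming from the empty-coproduct case of the monoidal axiom. For compatibility with composition I would look at a two-stage family $U_{ij}\subset V_i\subset W$ and verify that multiplying inside each $CH_{V_i}^\com(A)$ before pushing forward to $CH_W^\com(A)$ agrees with pushing every $CH_{U_{ij}}^\com(A)$ directly into $CH_W^\com(A)$ and then multiplying. This reduces to three elementary facts: inclusions compose, so by functoriality the pushforward $CH_{U_{ij}}^\com(A)\to CH_W^\com(A)$ factors through $CH_{V_i}^\com(A)$; inclusion-induced maps are \emph{algebra} homomorphisms, hence intertwine the multiplications; and the multiplication of $CH_W^\com(A)$ is associative and commutative.

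For functoriality in $A$ (assertion (1)), a CDGA morphism $g\colon A\to B$ induces algebra maps $CH_U^\com(g)\colon CH_U^\com(A)\to CH_U^\com(B)$ for all opens $U$ by functoriality of $CH$ in the algebra variable (Corollary~\ref{C:Hfunctor}). These are natural in the space variable and preserve multiplications, so they assemble into a map of prefactorization algebras, yielding the functor $\mathcal{CH}_X\colon\cdga\to\mathop{PFac}_X(\cdga)$.

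For the $(\infty,1)$-lift (assertion (2)), the essential input is homotopy invariance: by Proposition~\ref{P:homologyinvariance} a quasi-isomorphism $A\stackrel{\simeq}\to B$ induces a quasi-isomorphism $CH_U^\com(A)\stackrel{\simeq}\to CH_U^\com(B)$ for every $U$, i.e.\ a weak equivalence of prefactorization algebras (these being defined objectwise). Thus $\mathcal{CH}_X$ preserves weak equivalences and, by the Dwyer--Kan localization of Section~\ref{S:DKL}, induces an $(\infty,1)$-functor $\hcdga\to L_\infty(\mathop{PFac}_X(\cdga))$. The remaining and most delicate point is to identify $L_\infty(\mathop{PFac}_X(\cdga))$ with $\mathop{PFac}_X(\hcdga)$, since forming algebras over an operad does not in general commute with localization. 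What saves us is that $\mathsf{Disj}_X$ is discrete and the values lie in \emph{commutative} algebras: every multi-ary structure map of our prefactorization algebra is forced to be an inclusion map followed by the multiplication of the target, so the whole structure is determined by the diagram $Op(X)\to\cdga$ together with the commutative structure already carried by its values, whence the localization may be computed objectwise and matches $\mathop{PFac}_X(\hcdga)$. Alternatively, one may build the $(\infty,1)$-functor directly by restricting $CH\colon\hTop\times\hcdga\to\hcdga$ (Proposition~\ref{P:Topderived}) along $Op(X)\hookrightarrow\hTop$ in the space variable and using the $E_\infty$-structure on the values; I expect this identification of the two descriptions to be the main obstacle in either route.
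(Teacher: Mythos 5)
Your verification of the prefactorization structure and of functoriality in $A$ is essentially the paper's argument: the paper checks exactly the two-stage compatibility diagram you describe, reducing it to functoriality of Hochschild chains and the fact that the structure maps are CDGA morphisms, and it likewise notes $CH_\emptyset^\com(A)\cong k$ and derives naturality in $A$ from naturality of $CH$ in the algebra variable. The only divergence is in part (2), where your primary route --- localizing $\mathop{PFac}_X(\cdga)$ at objectwise quasi-isomorphisms via Proposition~\ref{P:homologyinvariance} and then identifying $L_\infty(\mathop{PFac}_X(\cdga))$ with $\mathop{PFac}_X(\hcdga)$ --- is not what the paper does; the paper takes exactly the route you offer as an alternative, namely invoking the already-established $(\infty,1)$-bifunctor $CH:\hTop\times\hcdga\to\hcdga$ of Proposition~\ref{P:Topderived} and Theorem~\ref{T:derivedfunctor} and restricting it in the space variable. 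This sidesteps the ``does localization commute with forming operadic algebras'' issue you rightly flag as the delicate point of your first route; your heuristic for why it should nonetheless work (discreteness of the colored operad, structure maps determined by inclusions plus the commutative multiplication) is plausible but is more than the paper needs or proves, so if you want an argument at the level of rigor of the paper, take your second route.
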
  
\begin{proof}
 First we note that $CH^\com_\emptyset (A)\cong k$ and that the maps $\mu_{U_1,\dots, U_n,V}$ are CDGAs morphisms. Further, if $V_1,\dots, V_l$ is a collection of pairwise disjoint open subsets of $V\in Op(X)$ and $U_1,\dots, U_n$ is another family of pairwise disjoint open subsets of $V$ such that  each $U_i$ is contained in some $V_j$, we can form the diagram
 \begin{equation*}
 \xymatrix{ \bigotimes_{j=1}^l \left(\bigotimes_{U_i \subset V_j} CH_{U_i}^{\com}(A)\right) \ar[rr]^{\quad \qquad \mu_{U_1,\dots, U_n, V}} \ar[d]_{ \bigotimes_{j=1}^l\Big(\mu_{(U_i\subset V_j), V_j}\Big)} && CH_{V}^{\com}(A)\\
 \bigotimes_{j=1}^l \big( CH_{V_j}^{\com}(A)\big) \ar[urr]_{\quad \mu_{V_1,\dots, V_j,V}} && } 
 \end{equation*}
which is commutative by functoriality of Hochschild chains. This proves that the rule $U\mapsto CH_U^\com(A)$ is a prefactorization algebra with value in the category $\cdga$. The naturality follows from the naturality of Hochschild chains in the algebra variable and the lift to the $(\infty,1)$-framework follows from (the proof of) Proposition~\ref{P:Topderived} and Theorem~\ref{T:derivedfunctor}.
\end{proof}

In particular, the prefactorization algebra $U\mapsto CH_{U}^{\com}(A)$ is a \emph{commutative} prefactorization algebra.

\smallskip

Following the terminology of~\cite{CG,Co}, we said that an open cover $\mathcal{U}$ of $U\in Op(X)$ is \textbf{factorizing} if,  for all finite collections $x_1,\dots, x_n$ of distincts points in $U$, there are \emph{pairwise disjoint} open subsets $U_1,\dots, U_k$ in $\mathcal{U}$ such that $\{x_1,\dots, x_n\} \subset \bigcup_{i=1}^k U_i$. To define a factorization algebra, we need to introduce the \v{C}ech complex of a prefactorization algebra $\mathcal{F}$. 
Let $\mathcal{U}$ be a cover and denote $P\mathcal{U}$ the set of finite pairwise disjoint open subsets $\{U_1,\dots,U_n \, ,\, U_i\in \mathcal{U}\}$. Now the \textbf{\v{C}ech complex $\check{C}(\mathcal{U},\mathcal{F})$} is the chain (bi-)complex
$$\check{C}(\mathcal{U},\mathcal{F})= \bigoplus_{P\mathcal{U}} \mathcal{F}(U_1) \otimes \cdots \otimes \mathcal{F}(U_n)  \leftarrow \bigoplus_{P\mathcal{U} \times P\mathcal{U}} \mathcal{F}({U_1}\cap {V_1}) \otimes \cdots \otimes \mathcal{F}({U_n}\cap {V_m})  \leftarrow \cdots$$
where the horizontal arrows are induced by the alternate sum of the natural inclusions as for the usual \v{C}ech complex of a cosheaf (see~\cite{CG}). Let us introduce a convenient notation for the \v{C}ech complex: given $\alpha_1,\dots,\alpha_k \in P\mathcal{U}$, we denote $$\mathcal{F}(\alpha_1,\dots,\alpha_k) = \bigotimes_{U_{i_1}\in \alpha_1,\dots, U_{i_k}\in \alpha_k} \mathcal{F}(U_{i_1}\cap \cdots \cap U_{i_k}),.$$
The prefactorization algebra structure yields,  for all $j=1,\dots, k$,  natural maps $\mathcal{F}(\alpha_1,\dots,\alpha_k)\to \mathcal{F}(\alpha_1,\dots,\widehat{\alpha_j},\cdots,\alpha_k)$. 
The \v{C}ech complex of $\mathcal{F}$ can be simply written as \begin{equation}\label{eq:Cechcomplex}\check{C}(\mathcal{U},\mathcal{F})=\bigoplus_{k>0} \bigoplus_{\alpha_1,\dots,\alpha_k \in P\mathcal{U}} \mathcal{F}(\alpha_1,\dots,\alpha_k) [k-1].\end{equation}
The prefactorization algebra structure also induce a canonical map $\check{C}(\mathcal{U},\mathcal{F})\to \mathcal{F}(U)$.

A prefactorization algebra $\mathcal{F}$ on $X$ (with value in $\cdga$ or $k\textit{-}Mod$) is said to be a \textbf{factorization algebra} if, for all open subset $U\in Op(X)$ and every factorizing cover $\mathcal{U}$ of $U$, the canonical map
$$\check{C}(\mathcal{U},\mathcal{F})\to \mathcal{F}(U)$$ is a quasi-isomorphism (see~\cite{Co,CG}). 
 When $\mathcal{F}$ is a commutative factorization algebra, the sequence $\bigoplus_{\alpha_1,\dots,\alpha_k \in P\mathcal{U}} \mathcal{F}(\alpha_1,\dots,\alpha_k) [k-1]$ is naturally a simplicial CDGA and thus the \v{C}ech complex $\check{C}(\mathcal{U},\mathcal{F})$ has a natural structure of CDGA.

\smallskip

Note that $X$ itself, is always a factorizing cover. A  Hausdorff space usually admits many different factorizing covers. This is in particular true for manifolds. Indeed, choosing a Riemannian metric on a manifold $X$ yields a nice factorizing cover  given by the set of geodesically convex neighborhoods of every point in $X$. 

It is shown in~\cite{CG} that, if $\mathcal{U}$ is a \emph{basis}  for the topology of a space $X$ which is also a \emph{factorizing cover}, and $\mathcal{F}$ is a \emph{$\mathcal{U}$-factorization algebra}, then one obtains a factorization algebra $i_*^{\mathcal{U}}(\mathcal{F})$ on $X$ defined by \begin{equation}\label{eq:DFacAlginducedbyaBasis} i_*^{\mathcal{U}}(\mathcal{F})(V):=\check{C}(\mathcal{U}_V,\mathcal{F})\end{equation} where $\mathcal{U}_V$ is the cover of $V$ consisting of open subsets in $\mathcal{U}$ which are also subsets of $V$.
We recall that a \textbf{$\mathcal{U}$-factorization algebra} is like a factorization algebra, except that $\mathcal{F}(U)$ is only defined for $U\in \mathcal{U}$ and further that we only require a quasi-isomorphism $\check{C}(\mathcal{V},\mathcal{F}) \stackrel{\sim}\to \mathcal{F}(U) $ for factorizing covers $\mathcal{V}$ of $U$ consisting of open sets in $\mathcal{U}$.

A \textbf{factorizing good cover}  is a good cover which is also a factorizing cover. For instance, any CW-complex has a factorizing good cover. Admitting a basis of factorizing good cover is a sufficient condition to prove that the Hochschild prefactorization algebra $\mathcal{CH}_X$ is a factorization algebra:

\begin{theorem}\label{T:CH=FH} Let $X$ be a topological space with a factorizing good cover and $A$ be a CDGA.
 Assume further that there is a basis of open sets in $X$ which is also a factorizing good cover.
The prefactorization algebra  $\mathcal{CH}_X: U\mapsto CH_U^\com(A)$ given by Lemma~\ref{P:prefactorization} is a \emph{factorization algebra} on $X$. 

In particular, for any factorizing cover $\mathcal{U}$ of $X$, there is a canonical equivalence of CDGAs $$CH_{X}^{\com}(A) \cong  \check{C}(\mathcal{U},\mathcal{CH}_X).$$
 \end{theorem}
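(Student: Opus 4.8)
The plan is to deduce the factorization property from the gluing and monoidal axioms of Theorem~\ref{T:derivedfunctor} (in their topological form, Proposition~\ref{P:Topderived}) together with the basis reduction of~\cite{CG}. Since we are given a basis $\mathcal{B}$ of open sets that is a factorizing good cover, it suffices, by the quoted result of~\cite{CG} on $\mathcal{U}$-factorization algebras and the functor $i_*^{\mathcal{B}}$, to establish two things: first, that the restriction of $\mathcal{CH}_X$ to $\mathcal{B}$ is a $\mathcal{B}$-factorization algebra; and second, that the induced factorization algebra $i_*^{\mathcal{B}}(\mathcal{CH}_X)$, whose value on $V$ is $\check{C}(\mathcal{B}_V,\mathcal{CH}_X)$, is naturally equivalent to $\mathcal{CH}_X$ itself. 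Both reduce to a single computation, which I would isolate as a key lemma: for every open $V\subset X$ and every factorizing good cover $\mathcal{U}$ of $V$, the canonical map $\check{C}(\mathcal{U},\mathcal{CH}_X)\to CH_V^\com(A)$ is a quasi-isomorphism.

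To prove this lemma I would first reduce the \v{C}ech complex to a purely combinatorial object. Because $\mathcal{U}$ is a \emph{good} cover, every nonempty finite intersection $U_{i_0}\cap\cdots\cap U_{i_j}$ is contractible, so the value-on-a-point axiom together with homotopy invariance (Proposition~\ref{P:homologyinvariance}) gives natural equivalences $CH^\com_{U_{i_0}\cap\cdots\cap U_{i_j}}(A)\cong A$, while empty intersections contribute the unit $CH^\com_\emptyset(A)\cong k$. Rewriting each term $\mathcal{F}(\alpha_0,\dots,\alpha_j)=\bigotimes CH^\com_{U_{i_0}\cap\cdots\cap U_{i_j}}(A)$ via the monoidal axiom, the whole \v{C}ech complex~\eqref{eq:Cechcomplex} becomes quasi-isomorphic to one depending only on the nerve $N(\mathcal{U})$ of the cover.

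The heart of the argument is to identify this combinatorial complex with $CH^\com_V(A)$. The factorizing \v{C}ech complex is the realization of a simplicial CDGA, hence computes a homotopy colimit; the strategy is to present $V$ as the homotopy colimit of the factorizing diagram of disjoint opens $\alpha\mapsto \bigsqcup_{U\in\alpha}U$, $\alpha\in P\mathcal{U}$, and to move $CH$ past it. Concretely, $CH$ commutes with finite homotopy colimits and arbitrary coproducts by Corollary~\ref{C:hocolim} and the coproduct axiom, and with the filtered colimits needed for infinite covers by the compact-support property; combined with the nerve theorem, which realizes $|N(\mathcal{U})|\simeq V$ for a good cover, this should yield $\check{C}(\mathcal{U},\mathcal{CH}_X)\cong CH^\com_{|N(\mathcal{U})|}(A)\cong CH_V^\com(A)$ by homotopy invariance.

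The main obstacle I anticipate is exactly this identification: the factorizing \v{C}ech complex is built from \emph{direct sums} over tuples of pairwise-disjoint collections with \emph{tensor products} inside each term, whereas $CH$ turns disjoint unions of spaces into tensor products, so one cannot naively write the \v{C}ech complex as $CH$ applied to a \v{C}ech nerve. The real content is to show that, for the \emph{commutative} factorization algebra $\mathcal{CH}_X$, factorizing (Weiss) descent reduces to the ordinary homotopy descent that $CH$ already satisfies through its gluing axiom---equivalently, that $\hocolim_{\alpha\in P\mathcal{U}} A^{\otimes |\alpha|}$ reassembles to $CH_V^\com(A)$. Once the key lemma is in hand, the two reduction statements make $\mathcal{CH}_X$ a factorization algebra, and the final displayed equivalence follows by applying the now-established descent property to $U=X$ and the given factorizing cover $\mathcal{U}$, with no goodness hypothesis needed on $\mathcal{U}$ at that last step.
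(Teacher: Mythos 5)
Your plan coincides with the paper's proof in all its essentials: reduce to a $\mathcal{B}$-factorization algebra on the good factorizing basis, extend by $i_*^{\mathcal{B}}$, and drive everything through a key lemma identifying $\check{C}(\mathcal{U},\mathcal{CH}_X)$ with $CH_V^\com(A)$ via the nerve of the cover, the value-on-a-point axiom, and the Nerve Theorem. The final remark that no goodness is needed on $\mathcal{U}$ in the last displayed equivalence is also exactly how the paper concludes.

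The one place you stop short is the step you yourself flag as ``the real content'': showing that the factorizing (Weiss) \v{C}ech complex, which sums over tuples of \emph{pairwise disjoint} collections, actually computes $CH^\com_{N_\com(\mathcal{U})}(A)$ rather than some smaller object. The paper closes this with a cofinality argument that you should make explicit. After rewriting each term $\mathcal{F}(\alpha_1,\dots,\alpha_k)$ as $CH^\com$ of a disjoint union of intersections (monoidal axiom), the whole \v{C}ech complex identifies with
$$\colim_{\scriptsize \begin{array}{l} K_\bullet\stackrel{disj}\hookrightarrow N_\com(\mathcal{U}) \\ K_\bullet\mbox{ finite}\end{array}} CH_{K_\bullet}^{\com}(A),$$
the colimit over finite simplicial sets mapping into the nerve with images realized by pairwise disjoint opens of the cover. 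The factorizing hypothesis is used precisely here: any finite collection of simplices of $N_\com(\mathcal{U})$ (equivalently, any finite set of points of $V$) can be separated by pairwise disjoint members of $\mathcal{U}$, so the disjoint-image subcategory is cofinal in all finite simplicial sets over $N_\com(\mathcal{U})$, and the colimit above maps by an equivalence to $CH^\com_{N_\com(\mathcal{U})}(A)$ by Definition~\ref{D:Hoch}. From there the Nerve Theorem and homotopy invariance finish the lemma as you describe. Without this cofinality step the argument is incomplete, since $CH$ commuting with homotopy colimits does not by itself convert the disjointness-constrained colimit into the unconstrained one; with it, your proof is the paper's proof.
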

For instance the theorem applies to all manifolds (that we always assume to be paracompact) and more generally to  CW-complexes.
\begin{proof}
 Let $\mathcal{U}$  be  a factorizing  good cover.
We first prove that the rule  $U\mapsto CH_{U}^{\com}(A)$ is a \emph{$\mathcal{U}$-factorization algebra}. 
Since, we already know that $U\mapsto CH_{U}^{\com}(A)$ is a prefactorization algebra (Lemma~\ref{P:prefactorization}), we only need to prove that, for any $U\in \mathcal{U}$ and any factorizing cover $\mathcal{V}$ of $U$ consisting of open sets in $\mathcal{U}$, the canonical map  
$\check{C}(\mathcal{V},\mathcal{CH}_X) \stackrel{\sim}\to CH_{U}^{\com}(A)$ 
is a quasi-isomorphism. {Since $U$ is contractible, $CH^U_\com(A)\cong A$ by Theorem~\ref{T:derivedfunctor}.} Let us denote $P\mathcal{V}$ the set of finite pairwise disjoint open subsets $\{(U_1,\dots,U_n \, ,\, U_i\in \mathcal{V}\}$. Now the \v{C}ech complex $\check{C}(\mathcal{V},\mathcal{CH}_X)$ is the chain (bi-)complex
$$ \bigoplus_{P\mathcal{V}} CH_{U_1}^{\com}(A) \otimes \cdots \otimes CH_{U_n}^{\com}(A)  \leftarrow \bigoplus_{P\mathcal{V} \times P\mathcal{V}} CH_{{U_1}\cap {V_1}}^{\com}(A) \otimes \cdots \otimes CH_{{U_n}\cap {V_m}}^{\com}(A)  \leftarrow \cdots$$
where the horizontal arrows are induced by the alternate sum of the natural inclusions (see~\cite{CG}). Since $\mathcal{U}$ is a good cover, Theorem~\ref{T:derivedfunctor} and the prefactorization algebra structure of $\mathcal{CH}_X$ gives a natural equivalence of chain complexes
\begin{equation}\label{eq:diagFH=HH} {\small
\xymatrix@R=2pc{ \mathop{\bigoplus}\limits_{P\mathcal{V}} CH_{U_1}^{\com}(A) \otimes \cdots \otimes CH_{U_n}^{\com}(A)  \ar[d]_{\simeq} & \bigoplus\limits_{P\mathcal{V} \times P\mathcal{V}} \Big(\bigotimes CH_{{U_i}\cap {V_j}}^{\com}(A) \Big) \ar[l] \ar[d]_{\simeq} & \ar[l]\cdots \\
\bigoplus\limits_{P\mathcal{V}} A\otimes \cdots \otimes A & \bigoplus\limits_{P\mathcal{V} \times P\mathcal{V}} A\otimes \cdots \otimes A \ar[l]& \cdots \ar[l] }}
\end{equation}
We can form a simplicial set  $N_\com(\mathcal{V})$ given by the nerve of the cover $\mathcal{V}$. Since $\mathcal{V}$ is factorizing, the canonical map \begin{equation} \label{eq:diagFH=HH2}\colim_{\scriptsize \begin{array}{l} K_\bullet\stackrel{disj}\hookrightarrow N_\com(\mathcal{V}) \\
K_\bullet\mbox{ finite}\end{array}} \!\!\!\!CH_{K_\bullet}^{\com}(A) \;\; \; \longrightarrow \colim_{\scriptsize \begin{array}{l} K_\bullet\to N_\com(\mathcal{V})\\
K_\bullet\mbox{ finite}\end{array}} \!\!CH_{K_\bullet}^{\com}(A)
\cong CH_{N_\com(\mathcal{V})}(A)\end{equation} (where the the left colimit is over maps whose images are required to be disjoint open subsets) is an equivalence.  
The bottom line of diagram~\eqref{eq:diagFH=HH} now identifies with the left colimit of the map~\eqref{eq:diagFH=HH2}, hence with Hochschild chain complex of nerve $N_\com(\mathcal{V})$. 
 Since $\mathcal{U}$ is a good cover, the intersections $(U_1 \coprod \cdots \coprod U_n) \cap (V_1\coprod \cdots \coprod V_m)$ are contractible. Thus by the Nerve Theorem (or Leray acyclic cover), the geometric realization of $N_\com(\mathcal{V})$ is quasi-isomorphic  to the reunion $\bigcup_{\mathcal{V}} U_i = U$. Since $U$ is assumed to be contractible, we get from above  and Proposition~\ref{P:homologyinvariance}
  a natural equivalence (of prefactorization algebras) $ \check{C}(\mathcal{V},\mathcal{CH}_X)(U) \stackrel{\simeq}\to  CH_{U}^{\com}(A) \,(\cong A)$. Thus  the $\mathcal{U}$-prefactorization algebra $U\mapsto CH_{U}^{\com}(A)$ is a $\mathcal{U}$-factorization algebra. We denote $CH_{\mathcal{U}}$ this $\mathcal{U}$-factorization algebra.

\smallskip

To conclude, we are left to prove that the induced factorization algebra $i_*^{\mathcal{U}}(CH_{\mathcal{U}})$ on $X$ is equivalent to $\mathcal{CH}_X$ as a prefactorization algebra.  Let $V$ be an open subset of $X$.  By~\cite{CG}, there is a natural equivalence $i_*^{\mathcal{U}}(CH_{\mathcal{U}})(V)\cong \check{C}(\mathcal{U}_V,CH_{\mathcal{U}})$ (where $\mathcal{U}_V$ is the cover of $V$ consisting of open subsets in $\mathcal{U}$ which are also subsets of $V$). Since the cover $\mathcal{U}_V$ is a good cover, as in the case where $V$ was in $\mathcal{U}$ above, there is a natural equivalence
$$ 
{\small
\xymatrix@R=2pc{ \mathop{\bigoplus}\limits_{P\mathcal{U}_V} CH_{U_1}^{\com}(A) \otimes \cdots \otimes CH_{U_n}^{\com}(A)  \ar[d]_{\simeq} & \bigoplus\limits_{P\mathcal{U}_V \times P\mathcal{U}_V} \Big(\bigotimes CH_{{U_i}\cap {V_j}}^{\com}(A)\Big)   \ar[l] \ar[d]_{\simeq} & \ar[l]\cdots \\
\bigoplus\limits_{P\mathcal{U}_V} A\otimes \cdots \otimes A & \bigoplus\limits_{P\mathcal{U}_V \times P\mathcal{U}_V} A\otimes \cdots \otimes A \ar[l]& \cdots \ar[l] }}
$$
where, as for diagram~\eqref{eq:diagFH=HH} above, the bottom line is  equivalent to the Hochschild chain complex $CH_{N_\com(\mathcal{U}_V)}^\com(A)$ of the simplicial set $N_\com(\mathcal{U}_V)$ given by the nerve of $\mathcal{U}_V$.   Since $\mathcal{U}$ is a good cover, we can again use the Nerve Theorem to see that 
$CH_{N_\com(\mathcal{U}_V)}^\com(A)\cong CH_{\bigcup_{\mathcal{U}_V}U_i}^\com(A)\cong CH_{V}^\com(A)$ and thus to
get  the  natural equivalence (of prefactorization algebras) $ \check{C}(\mathcal{V},i_*^{\mathcal{U}}(CH_{\mathcal{U}}))(U) \stackrel{\simeq}\to CH_{V}^{\com}(A)\cong \mathcal{CH}_X(V)$. 
\end{proof}

If $\mathcal{F}$ is a factorization algebra on $X$ (with value in $k\textit{-}Mod$ or $\hkmod$), and $f:X\to Y$ is a continuous map, one can define the \textbf{pushforward $f_*(\mathcal{F})$} by the formula $f_*(\mathcal{F})(V) =\mathcal{F}(f^{-1}(V))$ which actually is a factorization algebra on $Y$,  see~\cite{CG}. Costello and Gwilliam~\cite[Section 3.a]{CG} have defined  the \textbf{factorization homology} $HF(\mathcal{F})$ of $\mathcal{F}$ as the pushforward $p_*(\mathcal{F})$ where $p:X\to pt$ is the unique map. In other words, we have natural equivalences \begin{equation}HF(\mathcal{F})\;\cong\; p_*(\mathcal{F})\;\cong\; \mathcal{F}(X)\;\cong\; \check{C}(\mathcal{U},\mathcal{F})\end{equation} in $\hkmod$ (for any factorizing cover $\mathcal{U}$ of $X$). Note that, despite its name, $HF(\mathcal{F})$ is a cochain complex (up to equivalences) and, in particular  a \emph{derived} object (which may be thought as the \lq\lq{}derived global sections\rq\rq{} of $\mathcal{F}$). If $\mathcal{F}$ has value in $\cdga$, then $HF(\mathcal{F})$ is an object in $\hcdga$ too. 
Theorem~\ref{T:CH=FH} and Lemma~\ref{P:prefactorization} yields
\begin{corollary} \label{C:HFoCH=CH}
Let $X$ be a CW-complex. 
The Hochschild prefactorization functor $\mathcal{CH}_X$ is actually a functor 
$\mathcal{CH}_X: \hcdga \to \mathop{Fac}_X(\hcdga)$.
Further, the factorization homology of $\mathcal{CH}_X(A)$ is equivalent to  $CH^X_\com(A)$ (as an object of $\hcdga$); in other words the following diagram commutes:
$$\xymatrix{\hcdga \ar[rr]^{CH_{X}^{\com}(-)} \ar[d]_{\mathcal{CH}_X} & &\hcdga  \\ 
\mathop{Fac}_X(\hcdga) \ar[rru]_{HF(-)} && } $$
\end{corollary}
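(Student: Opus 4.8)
The plan is to combine Theorem~\ref{T:CH=FH} with the description of factorization homology as derived global sections; essentially all of the content is already contained in that theorem, so the task here is one of assembly and bookkeeping.

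First I would establish the first assertion, that $\mathcal{CH}_X$ lands in genuine factorization algebras. By Lemma~\ref{P:prefactorization} the assignment $A\mapsto \mathcal{CH}_X(A)$ is already an $(\infty,1)$-functor $\hcdga \to \mathop{PFac}_X(\hcdga)$, so it suffices to check that, for each $A$, the prefactorization algebra $U\mapsto CH_U^\com(A)$ satisfies descent along factorizing covers. Since $X$ is a CW-complex, it admits a factorizing good cover and, moreover, a basis of open sets which is a factorizing good cover (as recalled just before Theorem~\ref{T:CH=FH}). Thus the hypotheses of Theorem~\ref{T:CH=FH} hold, and that theorem yields precisely that $\mathcal{CH}_X(A)$ is a factorization algebra. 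As all the structure maps are natural in $A$, this promotes $\mathcal{CH}_X$ to a functor $\hcdga \to \mathop{Fac}_X(\hcdga)$.

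Next I would compute the factorization homology. By definition $HF(\mathcal{F})\cong p_*(\mathcal{F})\cong \mathcal{F}(X)$ for $p:X\to pt$ the terminal map, an equivalence in $\hkmod$ that upgrades to $\hcdga$ whenever $\mathcal{F}$ takes values in $\cdga$. Applying this to $\mathcal{F}=\mathcal{CH}_X(A)$, and using that the value of $\mathcal{CH}_X(A)$ on the open set $X$ is by construction $CH_X^\com(A)$, I obtain a natural equivalence $HF(\mathcal{CH}_X(A))\cong \mathcal{CH}_X(A)(X)=CH_X^\com(A)$. Equivalently, one may take the tautological factorizing cover $\mathcal{U}=\{X\}$ and read off the identification $CH_X^\com(A)\cong \check{C}(\mathcal{U},\mathcal{CH}_X)$ directly from the final statement of Theorem~\ref{T:CH=FH}. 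The naturality in $A$ of every step gives the commutativity of the displayed triangle.

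Since the hard input (the descent computation via the \v{C}ech complex and the Nerve Theorem) is already carried out in Theorem~\ref{T:CH=FH}, no genuinely new obstacle arises. The only point requiring care is ensuring that the equivalence $HF(\mathcal{CH}_X(A))\cong CH_X^\com(A)$ is one of CDGAs and natural in $A$, rather than merely an equivalence of underlying complexes in $\hkmod$; this is guaranteed because $\mathcal{CH}_X$ is a functor into \emph{commutative} factorization algebras and because $HF$ of a $\cdga$-valued factorization algebra lands in $\hcdga$.
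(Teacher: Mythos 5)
Your proposal is correct and follows essentially the same route as the paper: the paper derives this corollary directly from Lemma~\ref{P:prefactorization} (for the prefactorization structure and its functoriality in $A$) together with Theorem~\ref{T:CH=FH} (for the descent/\v{C}ech condition on a CW-complex, which admits a basis that is a factorizing good cover), and then identifies $HF(\mathcal{CH}_X(A))\cong \mathcal{CH}_X(A)(X)=CH_X^\com(A)$ exactly as you do. No gaps.
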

For manifolds, we will give below another geometric interpretation of the functor $\mathcal{CH}_X$
in terms of embeddings of manifolds in euclidean spaces (see Example~\ref{E:cstFac}, Corollary~\ref{C:HFact=CH} and Remark~\ref{R:cstFac}).

\smallskip

A factorization algebra $\mathcal{F}$ on a manifold is said to be \textbf{locally constant}, if, the  natural map $ \mathcal{F}(U) \to \mathcal{F}(V)$ is a quasi-isomorphism when $U\subset V$ are homeomorphic to a ball (see~\cite{CG,Co}).
Furthermore,  we call $\mathcal{F}$ a \textbf{commutative constant factorization algebra} (on $X$), if there is a CDGA $A$, and natural  quasi-isomorphisms $ \mathcal{F}(U) \to A$ for any open $U\subset X$ homeomorphic to a ball.  
Here, natural means, that for any pairwise disjoint open subsets homeomorphic to a ball $U_1,\dots, U_n \in V$ of a contractible open subset $V\in X$ also homeomorphic to a ball, the following diagram is commutative in $\hkmod$
\begin{equation}\label{eq:cstFacAlg}\xymatrix{ \mathcal{F}({U_1})  \otimes \cdots \otimes \mathcal{F}({U_n}) \ar[d] \ar[rr]^{\qquad \quad \mu_{U_1,\dots, U_n,V}}  & & \mathcal{F}({V})  \ar[d]\\ A^{^{\otimes n}} \ar[rr]_{m^{(n)}} & & A}\end{equation}
where $m^{(n)}$ is the $(n-1)$-times iterated multiplication of $A$.
\begin{example}\label{E:cstFac}
 A class of examples of locally constant factorization algebras occurs as follows. By results of Lurie~\cite{L-VI} (also see Proposition~\ref{P:Fac=En} below), the data of a locally constant factorization algebra on $\R^n$ is the same as the data of an $E_n$-algebra. Thus any CDGA yields a locally constant factorization algebra (denoted $\mathcal{A}$ for the moment) on $\R^n$ (for any $n\geq 1$) and also, by restriction,  on any open subset of $\R^n$. 

Now, let $X$ be any manifold, and let $i:X\hookrightarrow \R^n$ be an embedding of $X$ in $\R^n$. Let $NX$ be an open tubular neighborhood of $X$ in $\R^n$. We write $p:NX\to X$ for the bundle map. Any factorization algebra $\mathcal{F}$ on $\R^n$ restricts to a factorization algebra $\mathcal{F}_{|NX}$ on $NX$ and  the pushforward $p_*(\mathcal{F}_{|NX})$ is a factorization algebra on $X$. Thus, a CDGA $A$ yields a   locally constant factorization algebra $p_*(\mathcal{A}_{|NX})$ on $X$ for any manifold $X$. Since $p_*(\mathcal{F}_{|NX})(U)\cong \mathcal{F}(p^{-1}(U))$, it is easy to check that   $p_*(\mathcal{A}_{|NX})$ is indeed locally constant. Since by construction there is a natural quasi-isomorphism $\mathcal{A}(B) \stackrel{\sim}\to A=\mathcal{A}(\R^n)$ for any open ball $B\subset \R^n$, the induced locally constant factorization algebra $p_*(\mathcal{A}_{|NX})$ satisfies that there exists a natural equivalence $\mathcal{A}_{|NX}(B)\stackrel{\sim} \to A$ for any open set $B\in Op(X)$ homeomorphic to a ball.
In other words, we can think to the factorization algebra $p_*(\mathcal{A}_{|NX})$ as being \emph{constant} (and commutative). 

\smallskip

Note that the previous analysis can be extended to any space $X$ which embeds as the base of locally trivial fibration $U\to X$ where $U$ is an open in some $\R^n$ and the fibers are homeomorphic to a ball. 
\end{example}

\begin{example}
 Let $G$ be  a discrete group acting properly discontinuously on a manifold $X$. According to~\cite{CG}, any $G$-equivariant factorization algebra $\mathcal{F}$ on $X$ yields a factorization algebra $\mathcal{F}^G$ on $X/G$. Further, it is easy to check that, if $\mathcal{F}$ is locally constant then so is $\mathcal{F}^G$ too. In particular any CDGA $A$ yields a locally constant factorization algebra on $\R^n/G$ 
for any discrete group acting  properly discontinuously on $\R^n$.
\end{example}

The next corollary describes what \emph{constant} commutative factorization algebra are; namely they all are equivalent to  derived Hochschild chains for some CDGA.

\begin{corollary}\label{C:HFact=CH} Let $X$ be a manifold and $\mathcal{F}$ a commutative factorization algebra  such that there exists a CDGA $A$ and a natural equivalence $\mathcal{F}(B)\stackrel{\sim} \to A$ for any open set $B\in Op(X)$ homeomorphic to a ball. 
 Then $\mathcal{F}$ is equivalent to the Hochschild chain factorization algebra $\mathcal{CH}_X(A)$ (given by Lemma~\ref{P:prefactorization}).
\end{corollary}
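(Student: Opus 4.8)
The plan is to show that, after restricting to a sufficiently fine factorizing good cover, $\mathcal{F}$ and $\mathcal{CH}_X(A)$ agree termwise and compatibly with the structure maps, and then to extend this agreement to all of $X$ using that both are genuine factorization algebras. Concretely, I would first fix a Riemannian metric on $X$ and let $\mathcal{U}$ be the factorizing good cover given by the geodesically convex open subsets, as in the discussion preceding Theorem~\ref{T:CH=FH}. This $\mathcal{U}$ is a basis for the topology, every $U\in\mathcal{U}$ is homeomorphic to a ball, and, crucially, every finite intersection of members of $\mathcal{U}$ is again geodesically convex, hence empty or homeomorphic to a ball. On such opens both theories take the value $A$: for $\mathcal{CH}_X(A)$ one has $CH^\com_U(A)\cong A$ by Theorem~\ref{T:derivedfunctor} since $U$ is contractible, and for $\mathcal{F}$ this is the hypothesis $\mathcal{F}(U)\stackrel{\sim}\to A$.

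Second, I would carry out for $\mathcal{F}$ exactly the \v{C}ech computation that establishes Theorem~\ref{T:CH=FH} for $\mathcal{CH}_X(A)$. Since $\mathcal{F}$ is a factorization algebra and $\mathcal{U}_V$ is a factorizing cover of any open $V$, the canonical map $\check{C}(\mathcal{U}_V,\mathcal{F})\stackrel{\sim}\to \mathcal{F}(V)$ is an equivalence. The natural equivalences $\mathcal{F}(W)\stackrel{\sim}\to A$ on the balls $W$ (namely the finite intersections $U_{i_1}\cap\cdots\cap U_{i_k}$ of members of $\mathcal{U}_V$ that enter the \v{C}ech complex, which are balls by the previous paragraph), together with the compatibility diagram~\eqref{eq:cstFacAlg} identifying the prefactorization structure maps of $\mathcal{F}$ with iterated multiplication in $A$, yield a termwise equivalence of \v{C}ech bicomplexes exactly as in diagram~\eqref{eq:diagFH=HH}. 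Its bottom row is identified, via the nerve $N_\com(\mathcal{U}_V)$ and the factorizing property (the equivalence~\eqref{eq:diagFH=HH2}), with the Hochschild chains $CH^\com_{N_\com(\mathcal{U}_V)}(A)$; by the Nerve Theorem and Proposition~\ref{P:homologyinvariance} the latter is equivalent to $CH^\com_V(A)=\mathcal{CH}_X(A)(V)$. Chaining these gives a natural equivalence $\mathcal{F}(V)\cong \mathcal{CH}_X(A)(V)$, and the identical computation applied to $\mathcal{CH}_X(A)$ (which is the content of Theorem~\ref{T:CH=FH}) shows these identifications are the compatible ones.

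Third, I would upgrade these objectwise equivalences to an equivalence of (pre)factorization algebras. The key structural input is that both $\mathcal{F}$ and $\mathcal{CH}_X(A)$ are factorization algebras, hence each is recovered from its restriction to the factorizing basis $\mathcal{U}$ via the extension functor $i_*^{\mathcal{U}}$ of~\cite{CG}: $\mathcal{F}\cong i_*^{\mathcal{U}}(\mathcal{F}|_{\mathcal{U}})$ and $\mathcal{CH}_X(A)\cong i_*^{\mathcal{U}}(\mathcal{CH}_X(A)|_{\mathcal{U}})$. Since $\mathcal{F}|_{\mathcal{U}}$ and $\mathcal{CH}_X(A)|_{\mathcal{U}}$ are both equivalent, as $\mathcal{U}$-prefactorization algebras, to the constant commutative prefactorization algebra $\underline{A}$ with value $A$ and structure maps given by multiplication (this is precisely the compatibility with structure maps recorded above), applying $i_*^{\mathcal{U}}$, which sends equivalences of $\mathcal{U}$-factorization algebras to equivalences, produces the desired equivalence $\mathcal{F}\cong\mathcal{CH}_X(A)$ in $\mathop{Fac}_X(\hcdga)$.

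The main obstacle I expect is this last step: producing a \emph{coherent} morphism of prefactorization algebras, carrying all the multilinear structure maps and their higher compatibilities in the $(\infty,1)$-setting, out of the objectwise equivalences rather than merely a levelwise one. The cleanest way around this is to package the data as a genuine morphism to the constant prefactorization algebra $\underline{A}$ on the basis $\mathcal{U}$ --- for which diagram~\eqref{eq:cstFacAlg} supplies exactly the required coherence --- and then to invoke that a morphism of factorization algebras which is an equivalence on a factorizing basis, and on the contractible finite intersections entering the \v{C}ech complex, is already an equivalence, so that no information beyond the basis is needed.
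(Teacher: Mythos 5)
Your proposal is correct and follows essentially the same route as the paper's proof: fix a Riemannian metric, use the factorizing cover by geodesically convex opens (the paper's $\Ball^g(X)$), identify the \v{C}ech complex of $\mathcal{F}$ termwise with that of $\mathcal{CH}_X(A)$ via the constancy hypothesis and diagram~\eqref{eq:cstFacAlg}, and conclude by the \v{C}ech descent established in Theorem~\ref{T:CH=FH}, with naturality in the open set $V$ giving the equivalence of factorization algebras. Your third step, packaging the comparison as a morphism to the constant prefactorization algebra on the basis and extending via $i_*^{\mathcal{U}}$, is in fact a more explicit treatment of the coherence point that the paper dispatches with the phrase ``the naturality of Hochschild chains ensures the equivalence is natural in $U$.''
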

In particular, there is a natural equivalence $HF(\mathcal{F}) \cong CH_{X}^{\com}(A)$.
\begin{proof}
 Choosing a Riemannian metric on $X$, the set $\Ball^g(X)$ of geodesic open balls is \emph{factorizing}. Further, $\mathcal{F}$ being constant, we have, for any $\alpha=\{B_1,\dots, B_k\}\in \Ball^g(X)$,  natural equivalences
 \begin{eqnarray*}
 \mathcal{F}(\alpha)=\mathcal{F}(B_1)\otimes \cdots \otimes \mathcal{F}(B_k) &\cong & \bigotimes_{i=1}^k A 
\cong  CH_{B_1}^\com(A) \otimes \cdots \otimes CH_{B_k}^\com(A) 
\cong \mathcal{CH}_X(\alpha)\end{eqnarray*}
Similarly, there are natural equivalences $$\mathcal{F}(\alpha_1,\dots,\alpha_j)\; \cong \; \mathcal{CH}_X(\alpha_1,\dots,\alpha_j)$$ for any  $\alpha_1,\dots,\alpha_j\in P\Ball^g(X)$.
Thus, by Theorem~\ref{T:CH=FH}, we have
\begin{eqnarray*}
 HF(\mathcal{F})&\cong & \check{C}(\Ball^g(X),\mathcal{F}) \\ 
&\cong & \bigoplus_{k>0} \bigoplus_{\alpha_1,\dots,\alpha_k \in P\Ball^g(X)} \mathcal{F}(\alpha_1,\dots,\alpha_k) [k-1]\\
 &\cong &\bigoplus_{k>0} \bigoplus_{\alpha_1,\dots,\alpha_k \in P\Ball^g(X)} \mathcal{CH}_X(\alpha_1,\dots,\alpha_k) [k-1] \; \cong \; CH_{X}^{\com}(A).\end{eqnarray*}  It follows that we have an equivalence $\check{C}(\Ball^g(X),\mathcal{F}) \cong\check{C}(\Ball^g(X),\mathcal{CH}_X)$. The same analysis can be made for any open  subset $U\in Op(X)$ instead of $X$ and the naturality of Hochschild chains ensures that the equivalence $\mathcal{F}(U)\cong CH_U^\com(A)$ is natural in $U$.
\end{proof}
\begin{remark}
Note that the above Corollary~\ref{C:HFact=CH} can be extended  to manifolds with corners, where, by a locally constant factorization algebra on a manifold  with corners, we mean a factorization algebra which is locally constant if, whenever restricted to the strata (which are manifolds), it is locally constant see~\cite{CG,AFT}. One can extend the definition of constant factorization algebra in the same way. 

\smallskip

Let us also sketch how Corollary~\ref{C:HFact=CH} can be used in the general case of \emph{locally constant commutative factorization algebras}.
Let $\mathcal{A}$ be a locally constant factorization algebra on a manifold $M$ and assume that  there is a codimension $1$ submanifold (possibly with corners) $N$ of $M$  with a trivialization $N\times I$ of its neighborhood  such that $M$ is decomposable as $M=X\cup_{N\times I}Y$ where $X,Y$ are submanifolds (with corners) of $M$ glued along  $N \times I$. The inclusion $i:N\times I\to X$ induces a map of factorization algebras $i_\ast(\mathcal{A}_{|N\times I}) \to \mathcal{A}_{|X}$, which gives a structure of $\mathcal{A}_{|N\times I}$-module to $\mathcal{A}_{|X}$ since $\mathcal{A}$ is commutative.
\begin{lemma}\label{L:HFact=CHloc} If $\mathcal{A}_{|Y}$ is constant (say $\mathcal{A}_{|Y}(B)\cong A$ for a \cdga{ } $A$ and any ball $B\in Op(Y)$),  $\mathcal{A}$ is equivalent to $\mathcal{A}_{|X} \mathop{\otimes}\limits^{\mathbb{L}}_{\mathcal{CH}_{N\times I}(A)} \mathcal{CH}_{Y}(A)$
in $\mathop{Fac}_M(\cdga_\infty)$ (where the factorization algebras are pushforward to $M$ along the natural inclusions). In particular,
$$HF(\mathcal{A})\; \cong\; HF(\mathcal{A}_{|X})\mathop{\otimes}^{\mathbb{L}}_{CH_{N\times I}^\com(A)} CH_{Y}^\com(A). $$
\end{lemma}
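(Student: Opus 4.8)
The plan is to reduce the statement to the homotopy gluing axiom for Hochschild chains (Theorem~\ref{T:derivedfunctor}(3)) after identifying all the relevant restrictions of $\mathcal{A}$ with Hochschild chain factorization algebras. First I would note that the collar $N\times I$ is an open subset of both $X$ and $Y$, so its open balls are balls of $Y$; since $\mathcal{A}_{|Y}$ is constant of type $A$ and $\mathcal{A}$ is locally constant, the restriction $\mathcal{A}_{|N\times I}$ is constant of type $A$ as well, i.e. there is a natural equivalence $\mathcal{A}(B)\stackrel{\sim}\to A$ for every ball $B\in Op(N\times I)$. Applying Corollary~\ref{C:HFact=CH} to $Y$ and to $N\times I$ then furnishes equivalences of factorization algebras $\mathcal{A}_{|Y}\cong \mathcal{CH}_Y(A)$ and $\mathcal{A}_{|N\times I}\cong \mathcal{CH}_{N\times I}(A)$. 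Since restriction is transitive, $(\mathcal{A}_{|X})_{|N\times I}=\mathcal{A}_{|N\times I}\cong \mathcal{CH}_{N\times I}(A)$, and the structure map $i_*(\mathcal{A}_{|N\times I})\to\mathcal{A}_{|X}$ endows $\mathcal{A}_{|X}$ with a $\mathcal{CH}_{N\times I}(A)$-module structure; this is exactly the data needed to form the relative tensor product $\mathcal{A}_{|X}\mathop{\otimes}^{\mathbb{L}}_{\mathcal{CH}_{N\times I}(A)}\mathcal{CH}_Y(A)$ sectionwise as a prefactorization algebra on $M$ (after pushforward along the inclusions).

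Next I would compute global sections through the \v{C}ech model. Recall from Theorem~\ref{T:CH=FH} and the discussion around \eqref{eq:Cechcomplex} that $HF(\mathcal{F})\cong \mathcal{F}(M)\cong \check{C}(\mathcal{U},\mathcal{F})$ for any factorizing cover $\mathcal{U}$, and that $\check{C}(\mathcal{U},\mathcal{CH}_Y)\cong CH_Y^\com(A)$ and $\check{C}(\mathcal{U},\mathcal{CH}_{N\times I})\cong CH_{N\times I}^\com(A)$. Choosing a Riemannian metric as in the proof of Corollary~\ref{C:HFact=CH}, I would take $\mathcal{U}=\Ball^g(M)$ adapted to the collar, so that each geodesic ball lies in $X$ or in $Y$ (the balls inside $N\times I$ being exactly those counted on both sides), and so that $\mathcal{U}$ restricts to factorizing good covers $\mathcal{U}_X$, $\mathcal{U}_Y$ and $\mathcal{U}_0$ of $X$, $Y$ and of the collar respectively. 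Then a finite pairwise disjoint family of balls from $\mathcal{U}$ splits into those contained in $X$ and those contained in $Y$, with their overlaps recorded by $\mathcal{U}_0$; consequently the multicomplex \eqref{eq:Cechcomplex} computing $\check{C}(\mathcal{U},\mathcal{A})$ reorganizes as the two-sided bar construction of $\check{C}(\mathcal{U}_X,\mathcal{A}_{|X})$ and $\check{C}(\mathcal{U}_Y,\mathcal{A}_{|Y})$ over $\check{C}(\mathcal{U}_0,\mathcal{A}_{|N\times I})$.

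Finally I would identify the three \v{C}ech factors using the first paragraph and Theorem~\ref{T:CH=FH}: $\check{C}(\mathcal{U}_0,\mathcal{A}_{|N\times I})\cong CH_{N\times I}^\com(A)$, $\check{C}(\mathcal{U}_Y,\mathcal{A}_{|Y})\cong CH_Y^\com(A)$, and $\check{C}(\mathcal{U}_X,\mathcal{A}_{|X})\cong HF(\mathcal{A}_{|X})$. Because $N\times I$ injects (after triangulation, as a genuine sub-simplicial-set) into $X$ and into $Y$, the bar construction is semifree over $CH_{N\times I}^\com(A)$ exactly as in the proof of Theorem~\ref{T:derivedfunctor}, so it computes the derived tensor product, giving
$$HF(\mathcal{A})\;\cong\;\check{C}(\mathcal{U},\mathcal{A})\;\cong\;HF(\mathcal{A}_{|X})\mathop{\otimes}^{\mathbb{L}}_{CH_{N\times I}^\com(A)}CH_Y^\com(A).$$
Running the identical argument with $\mathcal{U}$ replaced by its restriction $\mathcal{U}_V$ to an arbitrary open $V\subset M$ upgrades this to the sectionwise equivalence $\mathcal{A}\cong \mathcal{A}_{|X}\mathop{\otimes}^{\mathbb{L}}_{\mathcal{CH}_{N\times I}(A)}\mathcal{CH}_Y(A)$ in $\mathop{Fac}_M(\cdga_\infty)$. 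The main obstacle I anticipate is the combinatorial heart of the second paragraph: proving that the factorizing \v{C}ech multicomplex genuinely reassembles as the two-sided bar construction over the collar, i.e. that disjoint families never straddle $N$ except through the cover $\mathcal{U}_0$. This is precisely where the collar trivialization and local constancy are indispensable, and where one must check, via a metrically small adapted cover, that the overlaps contribute exactly $CH_{N\times I}^\com(A)$ and nothing more.
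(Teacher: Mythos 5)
Your overall strategy is sound and your first paragraph coincides with the paper's: both proofs begin by applying Corollary~\ref{C:HFact=CH} to identify $\mathcal{A}_{|Y}\cong\mathcal{CH}_Y(A)$ and $\mathcal{A}_{|N\times I}\cong\mathcal{CH}_{N\times I}(A)$ (your observation that constancy of $\mathcal{A}_{|N\times I}$ is inherited from $\mathcal{A}_{|Y}$ because the collar sits inside $Y$ is exactly right). After that the two arguments diverge in organization. You work \emph{globally first}: you compute $HF(\mathcal{A})$ by reorganizing the \v{C}ech multicomplex of an adapted cover as a two-sided bar construction over the collar, and then claim the sectionwise statement by ``running the identical argument'' over each open $V\subset M$. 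The paper works \emph{locally first}: it defines the candidate $\mathcal{A}_{|X}\mathop{\otimes}^{\mathbb{L}}_{\mathcal{CH}_{N\times I}(A)}\mathcal{CH}_Y(A)$ as a prefactorization algebra via the two-sided bar construction, checks that its restrictions to opens of $X$, opens of $Y$, and to $N\times I$ reproduce $\mathcal{A}_{|X}$, $\mathcal{A}_{|Y}$ and $\mathcal{A}_{|N\times I}$ respectively, and then concludes by the descent/gluing property of factorization algebras; the displayed formula for $HF(\mathcal{A})$ is then an immediate consequence of Lemma~\ref{L:Enmodule}. The paper's order is the more economical one for the statement actually being proved, since a factorization algebra is determined by its restriction to a factorizing basis: one never has to redo the \v{C}ech combinatorics over an arbitrary open $V$ straddling the collar, which is where your ``identical argument'' would become delicate.

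The one genuine issue is the step you yourself flag as the ``combinatorial heart'': that the \v{C}ech complex of a cover adapted to the decomposition $M=X\cup_{N\times I}Y$ reassembles as the two-sided bar construction of $\mathcal{A}(X)$ and $\mathcal{A}(Y)$ over $\mathcal{A}(N\times I)$. This is not something you can get just by splitting geodesic balls into those lying in $X$ and those lying in $Y$ (a disjoint family can meet $N\times\{0\}$ in balls that are not collar-shaped), and it is precisely the content of the paper's Lemma~\ref{L:Enmodule}, which handles it by using the cover by the opens $X\setminus(N\times[t,1))$, $Y\setminus(N\times(-1,s])$ and $N\times(a,b)$, pushing forward along $M\to[-1,1]$, and identifying the resulting one-dimensional \v{C}ech complex with the bar construction. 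So the obstacle you anticipate is real but already disposed of elsewhere in the paper; citing that excision lemma (rather than re-deriving it) closes the gap and makes your argument complete. A minor further point: the bar construction computes the derived tensor product by virtue of being the bar construction; the semifreeness argument from the proof of Theorem~\ref{T:derivedfunctor} that you invoke is not needed here.
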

Using a handle decomposition of $M$, one can use the lemma above to compute the homology of a locally constant commutative factorization algebra $\mathcal{A}$ in terms of (iterated) derived tensor products of derived Hochschild functors (see Section~\ref{S:locality} for a related construction).
\begin{proof} It essentially follows from Corollary~\ref{C:HFact=CH} and Lemma~\ref{L:Enmodule}.
{Indeed, since $\mathcal{A}_{|Y}$ is constant of type $A$, it is isomorphic to $\mathcal{CH}_{Y}(A)$ and similarly, $\mathcal{A}_{|N\times I}\cong \mathcal{CH}_{N\times I}(A)$ (by Corollary~\ref{C:HFact=CH}). Further, there is a quasi-isomorphism of prefactorization algebras:
\begin{equation} \label{eq:HFact=CHloc}\mathcal{A}_{|X} \mathop{\otimes}\limits^{\mathbb{L}}_{\mathcal{CH}_{N\times I}(A)} \mathcal{CH}_{Y}(A) \; \cong \; \mathcal{A}_{|X} \otimes \bigoplus_{\ell\geq 0} \mathcal{A}_{|N\times I}[\ell] \otimes \mathcal{A}_{|Y}  \end{equation}
where the middle term is the bar construction on $\mathcal{A}_{|N\times I}$. Applying the same construction to the \v{C}ech complex of a factorizing cover yields that $\mathcal{A}_{|X} \mathop{\otimes}^{\mathbb{L}}_{\mathcal{CH}_{N\times I}(A)} \mathcal{CH}_{Y}(A)$ is indeed a factorization algebra.

By the identity~eq\ref{eq:HFact=CHloc} above, for any open $U$ in $X$ or open $V$ in $Y$, we have,  $$\mathcal{A}_{|X} \mathop{\otimes}^{\mathbb{L}}_{\mathcal{CH}_{N\times I}(A)} \mathcal{CH}_{Y}(A)\,\Big(U\Big) \;\cong \;\mathcal{A}_{|X}\big(U\big), \quad \mathcal{A}_{|Y} \mathop{\otimes}^{\mathbb{L}}_{\mathcal{CH}_{N\times I}(A)} \mathcal{CH}_{Y}(A)\,\Big(V\Big) \;\cong \;\mathcal{A}_{|Y}\big(V\big)$$
Similarly, using that the two-sided bar construction of $CH_{W}^\com(A)$ is naturally isomorphic to $CH_{W}^\com(A)$ for any open $W$ in $N\times I$, we get that the restriction (to $N\times I$) $\big(\mathcal{A}_{|X} \mathop{\otimes}^{\mathbb{L}}_{\mathcal{CH}_{N\times I}(A)} \mathcal{CH}_{Y}(A)\big)_{|N\times I}$  is isomorphic to $\mathcal{CH}_{N\times I}(A)$. Thus  the factorization algebra $\mathcal{A}_{|X} \mathop{\otimes}^{\mathbb{L}}_{\mathcal{CH}_{N\times I}(A)} \mathcal{CH}_{Y}(A)$ is equivalent to the one obtained by gluing together $\mathcal{A}_{X}$ and $\mathcal{A}_{Y}$ along their intersection $\mathcal{A}_{N\times I}$, that is $\mathcal{A}$.}
\end{proof} 
\end{remark}

\begin{remark}\label{R:cstFac}
  Corollary~\ref{C:HFact=CH} implies that the factorization algebras $p_*(\mathcal{A}_{|NX})$ are independent (up to equivalences in $\mathop{Fac}_X(\hkmod)$) of the choices of the embedding and of the tubular neighborhood  made in Example~\ref{E:cstFac}.  Indeed, if $i_1:X\hookrightarrow \R^{n_1}$ and $i_2:X\hookrightarrow \R^{n_2}$ are two embeddings of a manifold $X$ in an euclidean space, and given two choices $\R^{n_1}\supset N_1 X \stackrel{p_1}\to X$,$\R^{n_2}\supset N_2 X \stackrel{p_2}\to X$ of tubular neighborhoods, Corollary~\ref{C:HFact=CH} implies that there are  natural equivalences   $${p_1}_*(\mathcal{A}_{|N_1 X})(U)\stackrel{\simeq} \longrightarrow CH^U_\com(A)  \stackrel{\simeq}\longleftarrow {p_2}_*(\mathcal{A}_{|N_2 X})(U)$$ for open subsets $U\subset X$.
\end{remark}

\begin{example}\label{E:Surface}
 Let $M$ be a manifold and $A=C^{\infty}(M)$ its algebra of functions. Also let $\Sigma^g$ be a compact Riemann surface of genus $g$ and $\mathcal{F}$  the factorization algebra (see Theorem~\ref{T:CH=FH}) on $\Sigma^g$ defined by the rule $U\mapsto C_{U}^{\com}(C^{\infty}(M))$ (here, in the definition of Hochschild chains, the tensor product over $k$ is the \emph{completed} tensor product so that $C^\infty(M)\otimes C^\infty(M)\cong C^\infty(M\times M)$). Let $\Omega^n(M)$ denote the de Rham $n$-forms on $M$, viewed as complex concentrated in degree $0$.
 
 
An analogue of Hochschild-Kostant-Rosenberg theorem for Hochschild chains over surfaces~\cite[Theorem 4.3.3]{GTZ} implies that the factorization homology of $\mathcal{F}$ on  $\Sigma^g$ is given by
$$HF(\mathcal{F}) \cong S_{C^\infty(M)}\Big(\Omega^1(M)[2]\Big)\mathop{\otimes}_{C^\infty(M)} S_{C^\infty(M)}\Big(\Omega^1(M)\Big) \mathop{\otimes}_{C^\infty(M)} \!\bigotimes_{C^\infty(M)}^{i=1\dots 2g}\! \Omega^{\com}(M)[\bullet]$$ where $V[n]$ is the graded space $(V[n])^i=V^{i+n}$ (\emph{i.e.} with cohomological degree shifted down by $n$) and $S_{C^\infty(M)}(W)$ is the symmetric graded algebra of a graded $C^\infty(M)$-module $W$. In terms of graded-geometry the above isomorphism is equivalent to saying that $HF(\mathcal{F})$ is (equivalent to) the algebra of functions of the graded manifold $$HF(\mathcal{F}) \; \cong \;T[2](M)\oplus  \bigoplus_{i=1}^{2g} T[1]M. $$
\end{example}

   We now study a homotopical strengthening of the locally constant condition. We said that a factorization algebra on $X$ is \textbf{strongly locally constant} if the natural map $  \mathcal{F}(U) \to \mathcal{F}(V)$ is a  quasi-isomorphism when $U\subset V$ are contractibles. Let $A$ be a CDGA; we say that a factorization algebra $\mathcal{F}$ is \textbf{strongly constant of type $A$}, if there exists a natural quasi-isomorphism $\mathcal{F}(U) \to A$ for any contractible $U$. Here, natural means, that for any pairwise disjoint contractible open subsets $U_1,\dots, U_n \in V$ of a contractible open subset $V\in X$, the following diagram (similar to diagram~\ref{eq:cstFacAlg}) is commutative in $\hkmod$
$$\xymatrix{ \mathcal{F}({U_1})  \otimes \cdots \otimes \mathcal{F}({U_n}) \ar[d] \ar[rr]^{\qquad \quad \mu_{U_1,\dots, U_n,V}}  & & \mathcal{F}({V})  \ar[d]\\ A^{^{\otimes n}} \ar[rr]_{m^{(n)}} & & A}$$

\begin{example}
 The factorization algebras given by Theorem~\ref{T:CH=FH} are strongly constant of type $A$. 
\end{example}

\begin{example}\label{E:stcstFac}
Let $X$ be a manifold, $A$ a CDGA and $p_*(\mathcal{A}_{|NX})$ be the factorization algebra on $X$ constructed in Example~\ref{E:cstFac}.  Corollary~\ref{C:HFact=CH} {and the homotopy invariance of Hochschild chains} implies that $p_*(\mathcal{A}_{|NX})$ is a \emph{strongly constant} factorization algebra.

\smallskip

Now let $X$ be a topological space that embeds  as a  retract $i:X \stackrel{\hookrightarrow}{\leftarrow} UX:r$, where $UX$ is an open subset of some $\R^n$ (where $n$ can be infinite). Then, similarly to the manifold case, the CDGA $A$ yields a factorization algebra $\mathcal{A}$ on $\R^n$ and a factorization algebra $r_*(\mathcal{A}_{|UX})$ on $X$. The above paragraph implies that $\mathcal{A}$ is strongly constant. Further if the fibers of $r$ are contractible, then  $r_*(\mathcal{A}_{|UX})$ is \emph{strongly constant of type $A$}.
\end{example}

\smallskip

We have the following analogue of Corollary~\ref{C:HFact=CH} for strongly constant factorization algebras on   a topological space that admits a factorizing good cover (for instance those given by Example~\ref{E:stcstFac} when $X$ is a CW-complex) 
\begin{corollary}\label{C:CH=FH} Let $X$ be a topological space with a factorizing good cover and $A$ be a CDGA.
 Let $\mathcal{F}$ be a  factorization algebra on $X$. 
\begin{enumerate}
\item Assume $\mathcal{F}$ is strongly constant of type $A$,  then one has a natural equivalence $HF(\mathcal{F})\cong CH_X^\com(A)$  in $\hkmod$.
\item Assume  that there is a basis $\mathcal{B}$ of open sets which is a factorizing good cover and that $\mathcal{F}$ is a factorization algebra which satisfies the strongly constant condition (of type $A$)  with respect to opens in $\mathcal{B}$, \emph{i.e.}, there exists a natural quasi-isomorphism $\mathcal{F}(U) \to A$ for  $U\in \mathcal{B}$ (which is automatically contractible).  Then, there is a natural equivalence of 
 factorization algebras $\mathcal{F} \cong \mathcal{CH}_X(A)$ between $\mathcal{F}$ and the Hochschild prefactorization algebra given by Lemma~\ref{P:prefactorization} (in particular, $\mathcal{F}$ is strongly constant of type $A$).
\end{enumerate}
\end{corollary}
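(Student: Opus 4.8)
The plan is to reduce everything to the factorization-algebra Theorem~\ref{T:CH=FH}, following the proof of the manifold statement Corollary~\ref{C:HFact=CH} almost verbatim, with the class of opens \emph{homeomorphic to a ball} replaced throughout by the class of \emph{contractible} opens. For part~(1), fix a factorizing good cover $\mathcal{U}$ of $X$. Because $\mathcal{U}$ is a \emph{good} cover, every nonempty finite intersection $W=U_{i_1}\cap\dots\cap U_{i_k}$ of its members is contractible, so strong constancy supplies a natural quasi-isomorphism $\mathcal{F}(W)\stackrel{\simeq}\to A$, while Theorem~\ref{T:derivedfunctor} supplies a natural quasi-isomorphism $\mathcal{CH}_X(W)=CH^\com_W(A)\stackrel{\simeq}\to A$. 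The naturality of these maps with respect to the inclusions $W\hookrightarrow W'$ of contractible opens, together with their compatibility with the multiplications $\mu_{U_1,\dots,U_n,V}$ (this is precisely the commuting square in the definition of \lq\lq{}strongly constant of type $A$\rq\rq{} for $\mathcal{F}$, and Lemma~\ref{P:prefactorization} for $\mathcal{CH}_X$), shows that they are compatible, term by term, with the \v{C}ech differential of~\eqref{eq:Cechcomplex}. Hence they induce a zigzag of levelwise quasi-isomorphisms of \v{C}ech bicomplexes whose totalizations give an equivalence $\check{C}(\mathcal{U},\mathcal{F})\cong \check{C}(\mathcal{U},\mathcal{CH}_X)$ in $\hkmod$. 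By Theorem~\ref{T:CH=FH} the right-hand side is $CH^\com_X(A)$, and since $\mathcal{F}$ is a factorization algebra and $\mathcal{U}$ is factorizing we have $HF(\mathcal{F})\cong \mathcal{F}(X)\cong \check{C}(\mathcal{U},\mathcal{F})$; concatenating these equivalences proves~(1).

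For part~(2) I would run the same comparison one open set at a time and then invoke the reconstruction of a factorization algebra from a factorizing basis. For every $V\in Op(X)$ the restricted family $\mathcal{B}_V$ is again a factorizing good cover of $V$, so $\mathcal{F}(V)\cong \check{C}(\mathcal{B}_V,\mathcal{F})$ and, by Theorem~\ref{T:CH=FH} applied to $V$, $\mathcal{CH}_X(V)=CH^\com_V(A)\cong \check{C}(\mathcal{B}_V,\mathcal{CH}_X)$. If the term-by-term comparison of part~(1) applies to these two \v{C}ech complexes it yields an equivalence $\mathcal{F}(V)\cong \mathcal{CH}_X(V)$ that is natural in $V$ and compatible with the structure maps, i.e. an equivalence of factorization algebras $\mathcal{F}\cong \mathcal{CH}_X(A)$ exactly realizing the reconstruction $i_*^{\mathcal{B}}$ of~\eqref{eq:DFacAlginducedbyaBasis}; the last assertion of the statement, that $\mathcal{F}$ is then strongly constant of type $A$ on all contractible opens, would follow from~(1).

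The main obstacle is that the hypothesis now only provides strong constancy on the basis $\mathcal{B}$, whereas the \v{C}ech complexes $\check{C}(\mathcal{B}_V,\mathcal{F})$ involve $\mathcal{F}$ evaluated on the finite intersections $U_{i_1}\cap\dots\cap U_{i_k}$ of members of $\mathcal{B}$, which are contractible (good cover) but need not lie in $\mathcal{B}$; note that a naive Mayer--Vietoris reduction is unavailable, since a two-element cover $\{U,U'\}$ is in general not factorizing. The way around this is to first upgrade $\mathcal{B}$-strong constancy to strong constancy on \emph{every} contractible open $W$: covering $W$ by $\mathcal{B}_W$ and computing $\mathcal{F}(W)\cong\check{C}(\mathcal{B}_W,\mathcal{F})$, one identifies the $\mathcal{B}$-level entries with tensor powers of $A$ and then runs the Nerve-theorem argument of Theorem~\ref{T:CH=FH} to obtain $\check{C}(\mathcal{B}_W,\mathcal{F})\cong CH^\com_{N_\com(\mathcal{B}_W)}(A)\cong CH^\com_W(A)\cong A$. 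Carrying this out rigorously requires organizing the (contractible) intersections of basis opens into a well-founded induction---precisely the subtlety that does not arise in the manifold Corollary~\ref{C:HFact=CH}, where the class \lq\lq{}homeomorphic to a ball\rq\rq{} already contains all the geodesically convex intersections produced by $\Ball^g(X)$. Once full strong constancy of $\mathcal{F}$ is in hand, part~(1) applies to each $\mathcal{B}_V$ and the equivalence $\mathcal{F}\cong\mathcal{CH}_X(A)$ follows.
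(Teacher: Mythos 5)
Your treatment of part~(1) is correct and is essentially the paper's own argument: run the proof of Corollary~\ref{C:HFact=CH} with the factorizing good cover $\mathcal{U}$ in place of $\Ball^g(X)$, using strong constancy on the (contractible) finite intersections to identify both \v{C}ech complexes termwise with tensor powers of $A$, and the Nerve-theorem computation from the proof of Theorem~\ref{T:CH=FH} to identify $\check{C}(\mathcal{U},\mathcal{CH}_X)$ with $CH^\com_X(A)$. (Strictly speaking you should cite the \emph{proof} of Theorem~\ref{T:CH=FH} rather than its statement, since part~(1) does not assume the cover is a basis; the paper is careful about this, but the relevant portion of that proof only uses that $\mathcal{U}$ is a factorizing good cover.)

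For part~(2) you have correctly put your finger on the delicate point --- the hypothesis controls $\mathcal{F}$ only on elements of $\mathcal{B}$, not on their intersections --- but your proposed repair does not close the gap, and it is not how the paper proceeds. Your plan is to first upgrade $\mathcal{B}$-strong constancy to strong constancy on every contractible $W$ by computing $\mathcal{F}(W)\cong\check{C}(\mathcal{B}_W,\mathcal{F})$; but that \v{C}ech complex again contains terms $\mathcal{F}(U_{i_1}\cap\dots\cap U_{i_k})$ with the $U_{i_j}\in\mathcal{B}_W\subset\mathcal{B}$, i.e.\ exactly the unknown values you are trying to compute. There is no ``well-founded induction'' available here: intersections of basis elements do not become simpler, so the argument as sketched is circular, and you acknowledge but do not resolve this. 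The paper avoids the issue entirely: since any factorization algebra is uniquely determined by its restriction to a factorizing basis (the Costello--Gwilliam extension/uniqueness statement recalled around~\eqref{eq:DFacAlginducedbyaBasis}), it suffices to compare $\mathcal{F}$ and $\mathcal{CH}_X(A)$ as $\mathcal{B}$-factorization algebras, which only involves their values and structure maps on elements of $\mathcal{B}$ themselves. There the hypothesis gives $\mathcal{F}(V)\cong A\cong CH^\com_V(A)$ for $V\in\mathcal{B}$ directly (each $V$ is contractible), compatibly with the maps $\mu_{U_1,\dots,U_n,V}$ by the defining commutative square; both sides are honest factorization algebras on $X$ (Theorem~\ref{T:CH=FH} for $\mathcal{CH}_X(A)$), so they agree. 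The assertion that $\mathcal{F}$ is strongly constant on all contractible opens is then a \emph{consequence} of the equivalence $\mathcal{F}\cong\mathcal{CH}_X(A)$ and homotopy invariance of Hochschild chains, not an input to it. In short: part~(1) is fine; in part~(2) replace the attempted bootstrap by an appeal to uniqueness of the extension from a factorizing basis.
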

\begin{proof} The first assertion is proved as in Corollary~\ref{C:HFact=CH} using
a factorizing good cover  $\mathcal{U}$ for $X$ instead of $\Ball^g(X)$ (and using a proof similar to  the proof of Theorem~\ref{T:CH=FH} to get that $\bigoplus_{k>0} \bigoplus_{\alpha_1,\dots,\alpha_k \in P\mathcal{U}} \mathcal{CH}_X(\alpha_1,\dots,\alpha_k) [k-1] \; \cong \; CH_{X}^{\com}(A)$).

Since any factorization algebra is uniquely defined by its restriction to a factorizing basis,  the second assertion follows easily from the first one applied to all $V\in \mathcal{B}$.
\end{proof}

As a further corollary to Theorem \ref{T:CH=FH}, we can extend the Hochschild construction as a pullback and pushforward of factorization algebras in a particular setting. The pushforward construction of factorization algebras was described above Corollary \ref{C:HFoCH=CH}. Following \cite{CG}, there is also a \textbf{pullback} construction for factorization algebras given for an \emph{open immersion} $f:N\to M$. For a factorization algebra $\mathcal F$ on $M$, let $f^*\mathcal F$ be the factorization algebra on $N$ given by $f^*\mathcal F(U)=\mathcal F(f(U))$ for all open subsets $U\subset N$ such that $f|_U:U\to f(U)$ is a homeomorphism, extended to a full factorization algebra.

Now, assume that $X, Y, Z$ are topological spaces, and that there is an open immersion of $X\times Y\hookrightarrow Z$ of $X\times Y$ into $Z$. For a factorization algebra $\mathcal F$ on $Z$, we define the Hochschild factorization algebra with respect to $X$ to be the factorization algebra ${\bf CH}_X(\mathcal F)$ of $\mathcal F$ on $Y$ given by
\begin{equation*}
 {\bf CH}_X(\mathcal F) := (pr_Y)_*\circ f^*(\mathcal F), \text{ where } \quad
 Y \stackrel {pr_Y} \longleftarrow X\times Y \stackrel f \longrightarrow  Z.
\end{equation*}
Here $pr_Y:X\times Y\to Y$ denotes the projection to $Y$. This construction induces a functor, called ${\bf CH}_X: \mathop{Fac}_Z(\hcdga) \to \mathop{Fac}_Y(\hcdga) $, $\mathcal F\mapsto (pr_Y)_*\circ f^*(\mathcal F)$ which satisfies the following naturality condition.
\begin{corollary}\label{L:CHYoCHX=CHXoCHZ}
Assume that $X$, $Y$ and $Z$ all admit a basis of open sets which is a factorizing good cover.
Under the functors $\mathcal{CH}_Y: \hcdga \to \mathop{Fac}_Y(\hcdga)$ and $\mathcal{CH}_Z: \hcdga \to \mathop{Fac}_Z(\hcdga)$ from Corollary \ref{C:HFoCH=CH}, the functor ${\bf CH}_X$ represents the functor $CH_X^\bullet$ on $CDGA_\infty$, \emph{i.e.}, the following diagram commutes:
$$\xymatrix{\hcdga \ar[rr]^{CH_{X}^{\com}} \ar[d]_{\mathcal{CH}_Z} & &\hcdga \ar[d]^{\mathcal{CH}_Y} \\ 
\mathop{Fac}_Z(\hcdga) \ar[rr]^{{\bf CH}_X} && \mathop{Fac}_Y(\hcdga) } $$
\end{corollary}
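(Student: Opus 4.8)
The plan is to establish the claimed commutativity by fixing a CDGA $A$, evaluating both composite functors on $A$, and comparing the two resulting factorization algebras on $Y$ open-set by open-set before assembling the comparison into a genuine equivalence of factorization algebras. First I would unwind the definitions of pushforward and pullback: for an open $V\subset Y$, since $pr_Y^{-1}(V)=X\times V$ is always of product form and $f$ is an open immersion (hence restricts to a homeomorphism on $X\times V$, so that no extension of the pullback is needed), one gets
\begin{equation*}
{\bf CH}_X(\mathcal{CH}_Z(A))(V)=\big((pr_Y)_*\circ f^*(\mathcal{CH}_Z(A))\big)(V)=f^*(\mathcal{CH}_Z(A))(X\times V)=CH^\com_{f(X\times V)}(A).
\end{equation*}
Because $f$ identifies $X\times V$ with its open image, functoriality together with homotopy invariance of Hochschild chains (Proposition~\ref{P:homologyinvariance}, cf. Proposition~\ref{P:Topderived}) supplies a natural equivalence $CH^\com_{f(X\times V)}(A)\cong CH^\com_{X\times V}(A)$.

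Next I would apply the exponential law of Proposition~\ref{P:product}, taking the outer variable to be $V$ and the inner variable to be $X$, to obtain natural equivalences
\begin{equation*}
CH^\com_{X\times V}(A)\;\cong\;CH^\com_{V\times X}(A)\;\cong\;CH^\com_V\big(CH^\com_X(A)\big)=\mathcal{CH}_Y\big(CH^\com_X(A)\big)(V).
\end{equation*}
This identifies the two functors objectwise, since the right-hand composite sends $A$ to the factorization algebra $V\mapsto CH^\com_V(CH^\com_X(A))$. Specialising to a contractible $V$ and invoking the value-on-a-point axiom of Theorem~\ref{T:derivedfunctor} gives ${\bf CH}_X(\mathcal{CH}_Z(A))(V)\cong CH^\com_X(A)$, so that ${\bf CH}_X(\mathcal{CH}_Z(A))$ is \emph{strongly constant of type} $CH^\com_X(A)$.

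To promote these objectwise equivalences to an equivalence of factorization algebras, I would first observe that ${\bf CH}_X(\mathcal{CH}_Z(A))$ is indeed a factorization algebra on $Y$: $\mathcal{CH}_Z(A)$ is one by Theorem~\ref{T:CH=FH}, and both pullback along an open immersion and pushforward preserve factorization algebras (\cite{CG}). By the previous paragraph it satisfies the strongly-constant condition of type $CH^\com_X(A)$ with respect to the assumed factorizing good basis of $Y$, so Corollary~\ref{C:CH=FH}(2) yields a natural equivalence of factorization algebras ${\bf CH}_X(\mathcal{CH}_Z(A))\cong\mathcal{CH}_Y(CH^\com_X(A))$. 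Naturality in $A$ is inherited from the naturality of each equivalence used, and this is exactly the asserted commutativity of the square.

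The hard part will be the last step: ensuring that the objectwise quasi-isomorphisms are compatible with the prefactorization structure maps $\mu_{U_1,\dots,U_n,V}$, so that one genuinely obtains a morphism of factorization algebras rather than just a family of quasi-isomorphisms. This is precisely what the strongly-constant formalism of Corollary~\ref{C:CH=FH} is built to absorb: the commuting square~\eqref{eq:cstFacAlg} for ${\bf CH}_X(\mathcal{CH}_Z(A))$ must be verified, which reduces to the compatibility of the product equivalence of Proposition~\ref{P:product} and of the Hochschild multiplication with the inclusion-induced maps $X\times V_i\hookrightarrow X\times V$. The naturality already built into Proposition~\ref{P:product} and into the pushforward and pullback constructions makes this routine, after which the principle that a factorization algebra is determined by its restriction to a factorizing basis closes the argument.
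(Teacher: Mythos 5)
Your proof is correct and follows essentially the same route as the paper: both arguments reduce to showing that ${\bf CH}_X(\mathcal{CH}_Z(A))$ is strongly constant of type $CH^\com_X(A)$ on the assumed factorizing good basis of $Y$ and then invoke Corollary~\ref{C:CH=FH}.(2). The only differences are cosmetic: you detour through the exponential law of Proposition~\ref{P:product}, where the paper simply evaluates on contractible $V$ and uses homotopy invariance $CH^\com_{X\times V}(A)\cong CH^\com_X(A)$; and you evaluate $f^*(\mathcal{CH}_Z(A))$ directly on the large open $X\times V$ by asserting that $f$ restricts there to a homeomorphism onto its image, whereas the paper is more careful on this point, first establishing that $f^*(\mathcal{CH}_Z(A))$ is strongly constant of type $A$ using a basis of small product opens $U\times V$ on which $f\vert_{U\times V}$ is a homeomorphism (as required by the definition of the pullback), and only then deducing $f^*(\mathcal{CH}_Z(A))(X\times V)\cong CH^\com_{X\times V}(A)$.
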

\begin{proof} Let $A\in \hcdga$ and  $\mathcal{W}$ be a basis and factorizing good cover by open subsets $U\times V\subset X\times Y$ such that $U,V$ are contractible and $f|_{U\times V}:U\times V\to f(U\times V)$ is a homeomorphism. In particular, for $U\times V\in \mathcal{W}$, we have natural equivalences
$$f^*( \mathcal{CH}_Z (A)) (U\times V) \cong  \mathcal{CH}_Z (A)(f(U\times V)) \cong CH^{\com}_{f(U\times V)}(A)\cong A$$
since $f(U\times V)$ is contractible. Hence Corollary~\ref{C:CH=FH} implies that 
$f^*( \mathcal{CH}_Z (A))$ is strongly constant of type $A$ and further that, for any
contractible open $U\subset Y$, 
\begin{equation*}
{\bf CH}_X ( \mathcal{CH}_Z (A)) (U) 
\;\cong \; f^*( \mathcal{CH}_Z (A)) (X\times U)\; \cong \;CH_{X\times U}^\com(A)\;\cong \; CH_X^\com(A).
\end{equation*}
Thus $\mathcal{CH}_X ( \mathcal{CH}_Z (A))$ is a strongly constant factorization algebra on $Y$ of type $CH^{\com}_{X}(A)$, hence,  by Corollary~\ref{C:CH=FH}.(2),  is naturally equivalent to $\mathcal {CH}_Y(CH^{\com}_X(A))$. 
\end{proof}

As an application of the above Corollary \ref{L:CHYoCHX=CHXoCHZ}, we look at the particular case where $X\times D^{n-k}\hookrightarrow \R^n$ is an open immersion. In this case, ${\bf CH}_X$ maps factorization algebras in $\R^n$ to factorization algebras in $D^{n-k}$, and thus via the inclusion $D^{n-k}\hookrightarrow \R^{n-k}$ also to factorization algebras in $\R^{n-k}$,
$$ {\bf CH}_X: \mathop{Fac}\, _{\R^{n}}(\hcdga)\to \mathop{Fac}\, _{\R^{n-k}}(\hcdga). $$

\subsection{Locally constant factorization algebras and $E_n$-algebras}

If $\mathcal{A}$ is a locally constant factorization algebra on $M\times D^{n-m}$,  pushforward along the natural projection ${\pi_1}: M\times D^{m-n}\to M$ induces a  factorization algebra ${\pi_1}_*(\mathcal{A})$ on $M$, which is locally constant. Given a monoidal $(\infty,1)$-category $\mathcal{C}$ and a manifold $X$, we denote by \textbf{$Fac_X^{lc}(\mathcal{C})$ the (sub)category of locally constant factorization algebra} on $X$ taking value in $\mathcal{C}$.

We recall the following Proposition which is essentially due to Lurie~\cite{L-VI} and Costello~\cite{Co}.
\begin{proposition} \label{P:Fac=En}
 The pushforward along $p: \R^n\to pt$  induces an equivalence of $(\infty,1)$-categories $$p_*:{\mathop{Fac}}^{lc}_{\R^n}(\hkmod) \stackrel{\simeq}\longrightarrow E_n\textit{-}Alg_\infty.$$
\end{proposition}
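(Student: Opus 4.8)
The plan is to exhibit $p_*$ as a composite of two equivalences: first, restriction of a locally constant factorization algebra to the basis $\Ball$ of open balls (equivalently convex open subsets) of $\R^n$; and second, the identification of locally constant factorization algebras on this basis with algebras over the little $n$-disks operad. Since the balls form a factorizing basis for the topology of $\R^n$, the descent property packaged in~\eqref{eq:DFacAlginducedbyaBasis} shows that a factorization algebra $\mathcal{F}$ is recovered from its restriction to $\Ball$ by the rule $\mathcal{F}(V)\cong \check{C}(\Ball_V,\mathcal{F})$, where $\Ball_V$ is the cover of $V$ by balls contained in $V$. First I would use this to argue that restriction to $\Ball$ is an equivalence from ${\mathop{Fac}}^{lc}_{\R^n}(\hkmod)$ onto the $(\infty,1)$-category of locally constant $\Ball$-factorization algebras, with homotopy inverse given by the \v{C}ech extension $i_*^{\Ball}$.

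For the second step, observe that a prefactorization algebra restricted to $\Ball$ is precisely an algebra over the colored operad whose colors are open balls and whose multimorphisms $B_1,\dots,B_k \to B$ are the inclusions of pairwise disjoint balls into a ball. The locally constant condition forces every structure map $\mathcal{F}(B_i)\to \mathcal{F}(B)$ attached to a single inclusion $B_i\subset B$ to be a quasi-isomorphism, so all colors are canonically identified with a single object $A:=\mathcal{F}(B)$. Consequently the colored operad collapses to the single-colored operad whose space of $k$-ary operations is the space of disjoint smooth embeddings of $k$ balls into a ball. This space is homotopy equivalent to the $k$-th space of the little $n$-disks operad, so the collapsed operad is equivalent to $E_n$ and an algebra over it is exactly an $E_n$-algebra in $(\hkmod,\otimes)$ (Example~\ref{E:EnOperad}). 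Composing the two steps, $p_*(\mathcal{F})\cong \mathcal{F}(\R^n)\cong A$ inherits precisely this $E_n$-structure, and the inverse functor sends an $E_n$-algebra $A$ to the locally constant factorization algebra obtained by placing $A$ on each ball, reading off the multiplications from the $E_n$-operations, and extending by $i_*^{\Ball}$.

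The main obstacle is the descent statement of the first step, carried out coherently at the level of $(\infty,1)$-categories rather than merely on objects: one must verify that $i_*^{\Ball}$ is genuinely a homotopy inverse to restriction, \emph{i.e.} that the factorizing-cover axiom for $\mathcal{F}$ is equivalent to $\mathcal{F}$ being (derived) left Kan extended from $\Ball$, and that this comparison is natural in $\mathcal{F}$. This is exactly the content of Lurie's equivalence between locally constant factorizable cosheaves and $\mathbb{E}_n$-algebras~\cite{L-VI}, with the operadic side treated in~\cite{Co,CG}. Rather than reprove it, I would cite this result and restrict the work to checking that our conventions match theirs: namely that the notion of locally constant factorization algebra, the factorizing good covers, and the \v{C}ech complex~\eqref{eq:Cechcomplex} used here coincide with the hypotheses of the cited comparison, so that the abstract equivalence applies verbatim to $\hkmod$-valued algebras and yields the stated equivalence $p_*:{\mathop{Fac}}^{lc}_{\R^n}(\hkmod)\stackrel{\simeq}\to E_n\textit{-}Alg_\infty$.
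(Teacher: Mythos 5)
Your proposal is correct and follows essentially the same route as the paper: restrict to a factorizing basis of balls/convex opens, invoke Lurie's identification of locally constant disk-algebras with $E_n$-algebras for the algebraic step, and use his Ran-space cosheaf theorem to verify the \v{C}ech/descent condition for the extension back to all opens. The only small point the paper makes explicit that you elide is checking that the extended algebra is locally constant on arbitrary opens homeomorphic to a ball (not just convex ones), which it does via the Kister--Mazur theorem.
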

\begin{proof} We sketch the proof. Details will appear elsewhere.
 Restricting to open sets homeomorphic to an euclidean disk, we obtain a tautological functor
 $\mathop{for}: {\mathop{Fac}}^{lc}_{\R^n}(\hkmod)\to N(\text{Disk}^{lc}(\R^n))\textit{-}Alg$ where $N(\text{Disk}^{lc}(\R^n))\textit{-}Alg$ stands for the full subcategory spanned by  the locally constant $N(\mathop{Disk}(\R^n))$-algebras in the sense of~\cite[Definition 5.2.4.7]{L-VI}. By ~\cite[Theorem 5.2.4.9 and Example 5.2.4.3]{L-VI}, the latter category is equivalent 
 to $E_n\textit{-}Alg_\infty$ and, under this equivalence, $\mathop{for}(\mathcal{F})$ identifies with the global section $\mathcal{F}(\R^n)\cong p_*(\mathcal{F})$ for any locally constant factorization algebra $\mathcal{F}$.
 
 We define an inverse to $p_*$ as follows.  Let $\mathcal{CVX}$ be the set of open convex subsets of $\R^n$, which is a factorizing basis.  To an $E_n$-algebra $E$, one associates a $\mathcal{CVX}$-factorization algebra $\mathcal{E}$ defined by $C\mapsto \mathcal{E}(C):= E$.  As shown in~\cite{CG}, the $\mathcal{CVX}$-factorization algebra $\mathcal{E}$ has an \emph{unique} extension to a factorization algebra on $\R^n$ iff it satisfies the \v{C}ech condition
$\check{C}(\mathcal{U},\mathcal{E})\to \mathcal{E}(U)$ for any factorizing subcover $\mathcal{U}\subset \mathcal{CVX}$ of a convex open $U$.
This follows again from \cite[Section 5]{L-VI}. Indeed, by Theorem~5.3.4.10 in~\cite{L-VI}, the $E_n$-algebra $E$ gives rise to a factorizable cosheaf $\underline{E}$ on the Ran space $\text{Ran}(\R^n)$. It is easy to check that 
the factorizing cover $\mathcal{U}$ gives rise to a cover of $\text{Ran}(U)$ precisely given by $P\mathcal{U}$.  Since every open set in $\mathcal{U}$ is convex, and $\int_C E \cong E$ for any convex open subset $C\subset U$,  by~\cite[Theorem 5.3.4.14]{L-VI}, we get a canonical equivalence $\check{C}(P\mathcal{U},\underline{E}) \stackrel{\simeq}\longrightarrow \check{C}(\mathcal{U},\mathcal{E})$ which makes the diagram
  $$\xymatrix{  \underline{E}(\text{Ran}(U)) \ar[rd]_{\simeq}&  \check{C}(P\mathcal{U},\underline{E}) \ar[l]_{\simeq} \ar[r]^{\simeq} & \check{C}(\mathcal{U},\mathcal{E}) \ar[ld]\\ & E & }$$
commutative (the top left equivalence follows from the fact that $\underline{E}$ is a cosheaf on $\text{Ran}(U)$). Thus, $\check{C}(\mathcal{U},\mathcal{E})\to \mathcal{E}(U)$ is an equivalence and  $\mathcal{E}$ a $\mathcal{CVX}$-factorization algebra. We denote $q(E):=\mathcal{E}$ the induced factorization algebra over $\R^n$. 

We need to check that $q(E)$ is locally constant. It is sufficient to prove that for any open $D$ homeomorphic to an euclidean disk, the map $\mathcal{E}(D) \to \mathcal{E}(\R^n)\cong E$ is an equivalence. This follows from the Kister-Mazur Theorem~\cite[Theorem 5.2.1.5]{L-VI} which yields an isotopy between the inclusion $D\hookrightarrow \R^n$ and an homeomorphism of $\R^n$.

By construction, $p_*\circ q(E) = p_*(\mathcal{E}) \cong E$. Conversely,  $\mathcal{F}$ being locally constant,  for any convex open set $C$, we have a canonical equivalence $\mathcal{F}(C) \cong \mathcal{F}(\R^n)$. It follows that the $\mathcal{CVX}$-factorization algebra defined by $p_*(\mathcal{F})$ is canonically equivalent to the restriction of $\mathcal{F}$ to convex open sets. By uniqueness of the extension of factorization algebra defined on a factorization basis, $q\circ p_* \cong \text{id}$.
\end{proof}

\begin{lemma}\label{L:EnFact} Let $M$ be a manifold and $\pi_1: M\times \R^d \to M$ the canonical projection. 
The pushforward by $\pi_1$ induces an equivalence of $(\infty,1)$-categories $${\pi_1}_*: {\mathop{Fac}}^{lc}_{M\times \R^d}(\hkmod) \stackrel{\simeq}\longrightarrow {\mathop{Fac}}^{lc}_M({E_d}\textit{-}Alg_\infty)$$
\end{lemma}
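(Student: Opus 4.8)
The plan is to deduce the lemma from the absolute case $M=pt$, namely Proposition~\ref{P:Fac=En}, by an essentially fiberwise argument over the base $M$. Fix a Riemannian metric on $M$ and let $\Ball^g(M)$ be the associated factorizing good cover by geodesic balls (as in the proof of Corollary~\ref{C:HFact=CH}). The products $U\times C$, with $U\in \Ball^g(M)$ and $C$ a convex open subset of $\R^d$, form a factorizing basis $\mathcal{W}$ of $M\times \R^d$; since every $U\times C$ is homeomorphic to a ball, a locally constant factorization algebra on $M\times \R^d$ is determined, up to equivalence, by its restriction to $\mathcal{W}$ together with the structure maps of $\mathcal{W}$.

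First I would upgrade the target of the pushforward. Given $\mathcal{A}\in \mathop{Fac}^{lc}_{M\times \R^d}(\hkmod)$ and $U\in \Ball^g(M)$, the assignment $\mathcal{A}_U\colon C\mapsto \mathcal{A}(U\times C)$ is a locally constant factorization algebra on $\R^d$: local constancy holds because $U\times C\hookrightarrow U\times C'$ is an inclusion of balls whenever $C\subset C'$ are convex, so $\mathcal{A}$ sends it to an equivalence. Applying Proposition~\ref{P:Fac=En} fiberwise, the global sections $p_*\mathcal{A}_U=\mathcal{A}(U\times \R^d)={\pi_1}_*(\mathcal{A})(U)$ acquire a natural $E_d$-algebra structure. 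Since pushforward preserves the \v{C}ech/descent condition~\cite{CG}, and the $E_d$-structure is carried along the $\R^d$-direction while the descent takes place in the $M$-direction, ${\pi_1}_*(\mathcal{A})$ refines to a factorization algebra on $M$ valued in $E_d\textit{-}Alg_\infty$. It is locally constant because $U\times\R^d\hookrightarrow V\times\R^d$ is an inclusion of balls for $U\subset V$ balls in $M$. This defines the functor ${\pi_1}_*$ of the statement.

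To produce a quasi-inverse, let $q\colon E_d\textit{-}Alg_\infty \stackrel{\simeq}\to \mathop{Fac}^{lc}_{\R^d}(\hkmod)$ be the inverse of $p_*$ from Proposition~\ref{P:Fac=En}. For $\mathcal{B}\in \mathop{Fac}^{lc}_M(E_d\textit{-}Alg_\infty)$, define a $\mathcal{W}$-factorization algebra by $U\times C\mapsto q(\mathcal{B}(U))(C)$ and extend it to a genuine factorization algebra $\Phi(\mathcal{B})$ on $M\times\R^d$ by the basis procedure~\eqref{eq:DFacAlginducedbyaBasis}. Local constancy of $\Phi(\mathcal{B})$ on product balls follows from that of $q(\mathcal{B}(U))$ in the $\R^d$-direction and of $\mathcal{B}$ in the $M$-direction, and the general case reduces to it through the \v{C}ech computation, as in Theorem~\ref{T:CH=FH}. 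The two functors are mutually inverse: on one side ${\pi_1}_*\Phi(\mathcal{B})(U)=\Phi(\mathcal{B})(U\times\R^d)\cong p_* q(\mathcal{B}(U))\cong \mathcal{B}(U)$ naturally in $U$; on the other, the equivalence $q\circ p_*\cong \mathrm{id}$ identifies $\Phi({\pi_1}_*\mathcal{A})(U\times C)=q(p_*\mathcal{A}_U)(C)$ with $\mathcal{A}_U(C)=\mathcal{A}(U\times C)$, so $\Phi({\pi_1}_*\mathcal{A})$ and $\mathcal{A}$ agree on the basis $\mathcal{W}$ and hence are equivalent.

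The main obstacle is the coherence of the fiberwise application of Proposition~\ref{P:Fac=En}: one must produce the $E_d$-algebra structure on ${\pi_1}_*(\mathcal{A})(U)$ not merely objectwise but functorially in $U$ and compatibly with the factorization products in the $M$-directions, and dually verify that $\Phi(\mathcal{B})$ satisfies \v{C}ech descent against \emph{arbitrary} factorizing covers of $M\times\R^d$ rather than only product covers. Both points hinge on the interplay between the two directions and on the uniqueness of the extension of a factorization algebra from a factorizing basis; making this precise at the level of $(\infty,1)$-categories is where the real work lies, and it is most cleanly handled by invoking Lurie's description~\cite{L-VI} of locally constant factorization algebras via the Ran space, exactly as in the proof of Proposition~\ref{P:Fac=En}.
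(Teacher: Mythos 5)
Your proposal is correct and follows essentially the same route as the paper: identify the fiberwise pushforward ${\pi_2}_*(\mathcal{A}_{|U\times\R^d})$ (your $\mathcal{A}_U$) as a locally constant factorization algebra on $\R^d$, apply Proposition~\ref{P:Fac=En} to endow ${\pi_1}_*(\mathcal{A})(U)\cong\mathcal{A}(U\times\R^d)$ with its $E_d$-structure, and build the inverse by spreading each $\mathcal{B}(U)$ out over the product basis $U\times C$ and extending from that factorizing basis. The only cosmetic difference is that the paper establishes local constancy in the $\R^d$-direction for \emph{arbitrary} opens $U\subset M$ via a \v{C}ech computation over $\Ball^g(U)\times B$, whereas you work directly from the basis of geodesic balls; both are legitimate, and your closing remark correctly identifies the coherence of the fiberwise identification as the point the paper treats implicitly.
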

In particular, if $\mathcal{F}\in \mathop{Fac}_{M\times \R^d}^{lc}(\hkmod)$, then its factorization homology $$HF(\mathcal{F},M\times \R^d)=p_*(\mathcal{F})(pt)\cong p_* \circ {\pi_1}_*(\mathcal{F})(pt)\cong HF({\pi_1}_*(\mathcal{F}),M)$$ is an $E_d$-algebra (here $p:X\to pt$ is the unique map).
\begin{proof}
Let $\pi_2: M\times \R^d \to \R^d$ be the second canonical projection.
For any open set $U$ in $M$, the restriction $\mathcal{A}_{|U\times \R^d}$ is a (locally constant) factorization algebra and ${\pi_2}_*(\mathcal{A}_{|U\times \R^d})$ is a  factorization algebra on $\R^d$. Note that ${\pi_2}_*(\mathcal{A}_{|U\times \R^d})$ is locally constant. Indeed, let $B$ be  any (open, homeomorphic to a) ball $B\subset \R^n$ and denote $\Ball^g(U)$ the set of geodesic convex open sets in $U$ (for a choice of a metric on $U$). Then the family $\Ball^g(U)\times B$ is a factorizing cover of $U\times B$. Since $\mathcal{A}$ is locally constant, for any inclusion $B\hookrightarrow \tilde{B}$ of open sets (homeomorphic to) balls and geodesic convex open set $O$ in $U$, the structure map $\mathcal{A}(O\times B) \to \mathcal{A}(O\times \widetilde{B})$ is a quasi-isomorphism. Using that  $\mathcal{A}$ is a factorization algebra we get 
\begin{eqnarray*} {\pi_2}_*(\mathcal{A}_{|U\times \R^d})(B)&\cong& \mathcal{A}(U\times B) \\&\cong & \check{C}(\Ball^g(U)\times B,\mathcal{A})\\
&\cong &\bigoplus_{k>0}\bigoplus_{\alpha_1\dots \alpha_k \in P\Ball^g(U)}  \bigotimes_{i_1\in \alpha_1,\dots i_k\in \alpha_k}\mathcal{A}((U_{i_1}\cap \cdots \cap U_{i_k})\times B)[k-1]\\
&\cong &\bigoplus_{k>0}\bigoplus_{\alpha_1\dots \alpha_k \in P\Ball^g(U)}  \bigotimes_{i_1\in \alpha_1,\dots i_k\in \alpha_k}\mathcal{A}((U_{i_1}\cap \cdots \cap U_{i_k})\times \tilde{B})[k-1]\\
&\cong & \check{C}(\Ball^g(U)\times \widetilde{B},\mathcal{A}) \; \cong\; {\pi_2}_*(\mathcal{A}_{|U\times \R^d})(\widetilde{B}),
\end{eqnarray*}
which proves that ${\pi_2}_*(\mathcal{A}_{|U\times \R^d})$ is locally constant (since  the above composition is  the structure map ${\pi_2}_*(\mathcal{A}_{|U\times \R^d})(B)\to {\pi_2}_*(\mathcal{A}_{|U\times \R^d})(\widetilde{B})$).
Hence, by Proposition~\ref{P:Fac=En}, ${\pi_2}_*(\mathcal{A}_{|U\times \R^d})$ is equivalent to an $E_d$-algebra $A_U $ and there are canonical equivalences  $$ A_U \; \cong \; {\pi_2}_*(\mathcal{A}_{|U\times \R^d})(\R^d)\; \cong \; \mathcal{A}(U\times \R^d).$$
Since ${\pi_1}_*(\mathcal{A})(U)= \mathcal{A}(U\times \R^d)\cong A_U$, it follows that for each open subset $U\in Op(M)$, ${\pi_1}_*(\mathcal{A})(U)$ is an $E_d$-algebra.
Since $\mathcal{A}$ is a (pre)factorization algebra on $M\times \R^d$, the structure maps \begin{multline*}\mu_{U_1,\dots, U_k, V}:{\pi_1}_*(\mathcal{A})(U_1) \otimes \cdots \otimes {\pi_1}_*(\mathcal{A})(U_k) \cong \mathcal{A}(U_1\times \R^d)\otimes \cdots \otimes \mathcal{A}(U_k\times \R^d) \\
\longrightarrow \mathcal{A}(V\times \R^d)\cong {\pi_1}_*(\mathcal{A})(V)\end{multline*} are maps of $E_d$-algebras.
Further, since $\mathcal{A}$ is a factorization algebra and locally constant, it follows that ${\pi_1}_*(\mathcal{A})(U)$ belongs to $\mathop{Fac}^{lc}_M({E_d}\textit{-}Alg_\infty)$.

Now we build an inverse of ${\pi_1}_*$. Consider  $\mathcal{B}$ in $\mathop{Fac}^{lc}_M({E_d}\textit{-}Alg_\infty)$. For any $U\in Op(M)$, $\mathcal{B}(U)$ is an $E_d$-algebra (compatible with the prefactorization algebra structure map) and thus defines canonically a locally constant factorization algebra on $\R^d$: $Op(\R^d) \ni V \mapsto \mathcal{B}(U)(V)$. A basis of neighborhood
of $M\times \R^d$ is given by the products $U\times V \in Op(M)\times Op(\R^d)$. In order to extend $\mathcal{B}$ to a factorization algebra on $M\times \R^d$, it is enough (by~\cite{CG}) to prove that the rule $(U\times V)\mapsto \mathcal{B}(U)(V)$ defines an $Op(M)\times Op(\R^d)$-factorization algebra where $Op(M)\times Op(\R^d)$ is the cover of $M\times \R^d$ obtained by taking the products of open sets. 
The latter follows from the fact that the $E_d$-algebra structure is natural with respects to the inclusions $\mu_{U_1\dots, U_n, \tilde{U}}$ of pairwise disjoint open subsets of $\tilde{U}\in Op(M)$. 
Hence the rule $(U\times V)\mapsto \mathcal{B}(U)(V)$ extends to give a factorization algebra $E(\mathcal{B})$ on $M\times \R^d$. It is immediate by construction that $E(\mathcal{B})$ is locally constant and functorial in $\mathcal{B}$.

It remains to prove that $E:\mathop{Fac}^{lc}_M({E_d}\textit{-}Alg_\infty) \to \mathop{Fac}^{lc}_{M\times \R^d}(\hkmod)$ is a natural inverse  to  ${\pi_1}_*$. Recall from above that the $E_d$-algebra structure on ${\pi_1}_*(\mathcal{A})(U)$ is   the one of the  $E_d$-algebra  $\mathcal{A}(U\times \R^d)$, which corresponds to the factorization algebra $V\mapsto {\pi_2}_*(\mathcal{A}_{|U\times \R^d}(V)) \cong \mathcal{A}(U\times V)$. It follows that there is a natural equivalence  $E\big({\pi_1}_*(\mathcal{A})\big) \big(U\times V\big)\cong \mathcal{A}(U\times V)$ in $\hkmod$. Further there are natural equivalences (in $ {E_d}\textit{-}Alg_\infty$) ${\pi_1}_*(E(\mathcal{B}))(U)\cong E(\mathcal{B})(U\times \R^d)\cong \mathcal{B}(U)$. The lemma now follows.
\end{proof}

Let $\mathcal{A}$ be a locally constant factorization algebra on a manifold $M$ and assume that  there is a codimension $1$ submanifold (possibly with corners) $N$ of $M$  with a trivialization $N\times D^1$ of its neighborhood  such that $M$ is decomposable as $M=X\cup_{N\times I}Y$ where $X,Y$ are submanifolds (with corners) of $M$ glued along  $N \times D^1$.  According to Lemma~\ref{L:EnFact} above $HF(\mathcal{A}_{|N\times D^1})$ is an $E_1$-algebra.

\begin{lemma}[Excision for locally constant factorization algebras]\label{L:Enmodule} $HF(\mathcal{A}_{|X})$ and $HF(\mathcal{A}_{|Y})$ are right and left $HF(\mathcal{A}_{|N\times D^1})$-modules and further,
 $$HF(\mathcal{A}) \; \cong \; HF(\mathcal{A}_{|X}) \mathop{\otimes}_{HF(\mathcal{A}_{|N\times D^1})}^{\mathbb{L}} HF(\mathcal{A}_{|Y}).$$
\end{lemma}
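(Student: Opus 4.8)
The plan is to reduce the statement to one-dimensional excision, namely to the fact that the factorization homology of a factorization algebra on an interval which is locally constant in the interior computes a two-sided bar construction, and hence a derived tensor product over the $E_1$-algebra attached to the collar. Throughout write $B:=HF(\mathcal{A}_{|N\times D^1})$, $L:=HF(\mathcal{A}_{|X})$ and $R:=HF(\mathcal{A}_{|Y})$, and identify the collar neighbourhood of $N$ with $N\times(-1,1)$ so that $X\cap Y=N\times(-1,1)$ and $X\cup Y=M$. The $E_1$-structure on $B$ is the one furnished by Lemma~\ref{L:EnFact} applied with $N$ in place of $M$ and $d=1$: the pushforward of $\mathcal{A}_{|N\times D^1}$ along $N\times\R\to N$ is a locally constant factorization algebra on $N$ valued in $E_1$-algebras, whose factorization homology is $B$. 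Concretely the multiplication $B\otimes B\to B$ is realized by the prefactorization structure maps $\mu$ of Lemma~\ref{P:prefactorization} applied to two disjoint sub-collars $N\times(a,b)$, $N\times(c,d)$ with $b<c$ sitting inside a larger collar, together with the equivalences $\mathcal{A}(N\times(a,b))\cong B$ coming from local constancy of $\mathcal{A}$.

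The module structures arise the same way. Since $X$ contains the half-open collar $N\times(-1,0]$, pushing a disjoint sub-collar toward the boundary of $X$ and applying $\mu$ yields a map $L\otimes B\to L$ making $L$ a right $B$-module; here one uses local constancy to identify the factorization homology of $X$ with an inner sub-collar removed with $L$ itself, the relevant inclusion being an isotopy equivalence. Symmetrically $R$ is a left $B$-module. Associativity and unitality of these actions follow from the coherences of the prefactorization maps $\mu$, i.e. from the $E_1$-structure of $B$ obtained through Proposition~\ref{P:Fac=En}.

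For the main identity I would pass to the interval. Choose a continuous map $f:M\to\R$ restricting to the projection $N\times(-1,1)\to(-1,1)$ on the collar, with $f\le -1$ on $X\setminus Y$ and $f\ge 1$ on $Y\setminus X$, so that $f^{-1}((-\infty,1))=X$, $f^{-1}((-1,\infty))=Y$ and $f^{-1}((-1,1))=N\times(-1,1)$. The pushforward $g:=f_*\mathcal{A}$ is then a factorization algebra on $\R$ whose restriction to $(-1,1)$ is locally constant with value $B$, and which satisfies $g((-\infty,1))\cong L$, $g((-1,\infty))\cong R$ and $g(\R)=\mathcal{A}(M)=HF(\mathcal{A})$. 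Now evaluate $HF(g)=g(\R)$ through the \v{C}ech complex of a factorizing cover of $\R$ built from the two rays together with $n$-fold families of disjoint sub-intervals of $(-1,1)$: stacking $n$ disjoint collars between $X$ and $Y$ produces a simplicial object with $n$-simplices $L\otimes B^{\otimes n}\otimes R$, whose face maps are the multiplication of $B$ and its actions on $L$ and $R$. This simplicial object is exactly the two-sided bar complex $\mathrm{Bar}_\bullet(L,B,R)$, and its realization is $L\otimes^{\mathbb{L}}_B R$; on the other hand, by the defining property of a factorization algebra, the same \v{C}ech complex computes $g(\R)=HF(\mathcal{A})$. Comparing the two descriptions gives the asserted equivalence $HF(\mathcal{A})\cong L\otimes^{\mathbb{L}}_B R$.

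The main obstacle is the last comparison: verifying that the factorizing-cover \v{C}ech complex of $g$ on $\R$ really is quasi-isomorphic to $\mathrm{Bar}_\bullet(L,B,R)$, i.e. that inserting additional sub-collars is absorbed by the local constancy of $\mathcal{A}$ exactly as the degeneracies of the bar construction require, and that the two ray-pieces contribute $L$ and $R$ as modules rather than as mere complexes. This is the genuine content of excision; by contrast the reduction to $\R$ and the construction of the $E_1$- and module structures are formal consequences of Lemma~\ref{L:EnFact}, Proposition~\ref{P:Fac=En} and the prefactorization axioms.
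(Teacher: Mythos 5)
Your proposal is correct and follows essentially the same route as the paper's proof: the module structures are built from the prefactorization structure maps on disjoint sub-collars exactly as in the paper, and the main identity is obtained by pushing forward to a one-dimensional base (the paper uses $q:M\to[-1,1]$ where you use $f:M\to\R$) and identifying the \v{C}ech complex of the interval cover with the two-sided bar construction $\mathrm{Bar}(L,B,R)$, citing the same one-dimensional excision fact. The "main obstacle" you flag is precisely the step the paper also treats briefly by appeal to \cite{Co,CG}.
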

\begin{proof}
Since $\mathcal{A}$ is locally constant, the canonical map $\mathcal{A}\Big(X\setminus \big(N\times [t, 1)\big)\Big) \to \mathcal{A}(X) $ is an equivalence for all $t\in D^1$. This follows, since for any open set of the form $U\times (a,b) \subset N\times D^1$, where $U$ is homeomorphic to a ball, there is a natural equivalence $\mathcal{A}(U\times (a,b)) \cong \mathcal{A}(U\times (a',b'))$ for any $a'<a<b<b'$ and that the open sets of the form $U\times (a,b)$ form a factorizing cover of $N\times D^1$.  Similarly, we have natural equivalences of $E_1$-algebras $\mathcal{A}(N\times (a,b))\stackrel{\simeq}\longrightarrow N\times D^1$ (as in the proof of Lemma~\ref{L:EnFact}).
 
Let $U$ be an open set in $X$  and $V_1,\dots, V_k$ be (not necessarily disjoint) open subsets in $N$. Then for any sequence of pairwise disjoint open intervals $I_0,I_1,\dots, I_k$ in $D^1$ (where we assume $I_0=(-1,t_0)$ for some $t_0\in D^1$), the open $V_i \times I_i$ ($i=1\dots k$) are pairwise disjoint and disjoint from $X-\setminus \big(N\times [t_0, 1)$. To shorten notation, we denote $X_{t_0}:=X\setminus \big(N\times [t_0, 1)\big)$  The structure maps of a prefactorization algebras yield a map
\begin{eqnarray*} 
\xymatrix{\mathcal{A}(X) \otimes \mathcal{A}(N\times D^1)^{\otimes n} \ar[rrd] \ar[rr]^{\hspace{-15pt}\cong} && \mathcal{A}(X_{t_0}) \otimes \mathcal{A}(N\times I_1) \otimes \cdots \otimes \mathcal{A}(N\times I_k) \ar[d]^{\mu_{X_{t_0}, N\times I_1\dots N\times I_k, X}}\\ && \mathcal{A}(X)}
\end{eqnarray*}
This map is natural with respect to the prefactorization algebra structure of $\mathcal{A}$ and $\mathcal{A}_{N\times D^1}$, hence induces a structure of right $HF(\mathcal{A}_{N\times D^1})\cong \mathcal{A} ({N\times D^1})$-module on $HF(\mathcal{A}_{|X})\cong \mathcal{A}(X)$. A similar argument applies to $HF(\mathcal{A}_{|Y})$.

The open sets $X_{t_0}$, $Y_{s}:=Y\setminus \big(N\times (-1,s]\big)$ and $N\times (a,b) $ (where $t_0, s, a<b \in D^1$) forms a factorizing  cover $\mathcal{N}$ of $M$ and we also denote $\widetilde{\mathcal{N}}$ the induced factorizing cover of $N\times D^1$. A finite sequence of pairwise disjoint open sets in $\mathcal{N}$ is just a sequence $X_{to},  N\times (t_1,t_2),\dots, N\times (t_m, t_{m+1}), Y_{t_m}$ for $-1<t_0<\cdots<t_{m+2}<1$. Note that the complexes $\mathcal{A}\Big(N\times (t_i, t_{i+1})\Big)$ are canonically equivalent to ${\pi_2}_\ast(\mathcal{A})\Big((t_i,t_{i+1})\Big)$, where $\pi_2: N\times D^1 \to D^1$ is the projection on the second factor. Since $\mathcal{A}(X_{t_0})\cong \mathcal{A}(X)$, $\mathcal{A}(Y_{t_{m+2}})\cong \mathcal{A}(Y)$, we have
$$HF(\mathcal{A}) \;\cong \; \check{C}(\mathcal{N},\mathcal{A}) \; \cong \; \mathcal{A}(X) \otimes \check{C}(\widetilde{\mathcal{N}},\mathcal{A}_{|N\times D^1})\otimes \mathcal{A}(Y).$$
Note that the canonical map $p:M\to pt$ factors as $M\stackrel{q}\to [-1,1] \to pt$ where $q$ is the map identifying $X\setminus (N\times D^1)$ with $\{-1\}$, $Y\setminus (N\times D^1)$ with $\{1\}$ and projecting $N\times D^1$ onto $D^1=(-1,1)$ by the second projection. Then the factorization homology $HF(\mathcal{A})\cong p_\ast(\mathcal{A}) \cong p_\ast\big(q_\ast(\mathcal{A}) \big)$  identifies with the factorization homology of the locally constant factorization algebra $q_\ast(\mathcal{A})$ on the closed interval $[-1,1]$ and further $\check{C}\big(\mathcal{N},\mathcal{A}\big)\cong \check{C}\big(\mathcal{I},q_\ast(\mathcal{A})\big)$ where $\mathcal{I}$ is the (factorizing) cover of $[-1,1]$ given by the intervals. Thus we are left to the case of a (locally constant) factorization algebra on $[-1,1]$ which assign the $E_1$-algebra $\mathcal{A}(N\times D^1)$ to any open interval $(a,b)$, and the modules $\mathcal{A}(X)$ to $[-1,t)$ and $\mathcal{A}(Y)$ to $(s,1]$ (with respect to the modules structures defined in the beginning of the proof). 
 By~\cite{Co,CG}, its factorization homology is the (derived) tensor product $\mathcal{A}(X) \otimes^{\mathbb{L}}_{\mathcal{A}(N\times D^1)}\mathcal{A}(Y)$. Indeed, the \v{C}ech complex $\check{C}\big(\mathcal{I},q_\ast(\mathcal{A})\big)$ is equivalent to the 
 two-sided Bar construction  $\mathop{Bar}\Big(\mathcal{A}(X), \mathcal{A}(N\times D^1), \mathcal{A}(Y) \Big)$ (strictly speaking after replacing the $E_1$-algebra and modules by differential graded associative ones).
\end{proof}

\section{Relationship with topological chiral homology}\label{S:TCH}

\subsection{Review of topological chiral homology \`a la Lurie}\label{S:ReviewTCH}

Let $A$ be an $E_n$-algebra and $M$ a manifold of dimension $m$ which is \emph{(stably) $n$-framed}, that is a manifold of dimension $m$ equipped with a framing of $M\times D^{n-m}$. The
\textbf{topological chiral homology} of $M$ with coefficients in $A$ was defined in~\cite{L-TFT,L-VI} and~\cite{Francis} and will be denoted $\int_{M} A$. Note that the above definition \emph{does} depend on the framing in general, even though it is not explicit in the notation\footnote{note that we also do not  write the factor $D^{n-m}$ in the notation}.
 Further $\int_M A$ is an $E_{n-m}$-algebra in general, see~\cite{L-TFT,L-VI}. 
We refer to the aforementioned references as well as to\cite{Francis,Andrade,AFT} for a precise definition. 
If $X$ is a manifold, let  $N(\text{Disj}(X))$ be the  $\infty$-category associated to the poset given by finite disjoint union of open sets in $X$ homeomorphic to an euclidean disk, ordered by inclusion.   According to Lurie~\cite[Remark 5.3.2.7]{L-VI} we have, roughly, that  
\begin{definition}\label{Def:TCHasColimit} Let $M$ be an $n$-framed manifold of dimension $m$ and $A$ an $E_n$-algebra. The \emph{topological chiral homology} $\int_M A$ is the colimit $\colim_{} \psi_M$ with $\psi_M:N(\emph{\text{Disj}}(M\times D^{n-m}))\to \hkmod$ the diagram given by the formula \begin{equation}\label{eq:DefTCH}\psi_M(V_1\cup\cdots\cup V_n)=\int_{V_1}A \otimes \cdots \otimes \int_{V_n} A\cong A\otimes \cdots \otimes A\end{equation} where $V_1,\dots, V_n$ are disjoint open sets homeomorphic to a ball (the latter equivalence follows from \cite[Example 5.3.2.8]{L-VI}). 
\end{definition}

For our purpose, among the properties satisfied by $\int_M A$, we will mainly need  the gluing property given in  Proposition~\ref{P:TCHpushout} below and the fact that $\int_{pt}A \cong A$.  Indeed, the gluing property of topological chiral homology comes from 
the fact that  topological chiral homology   defines an (extended) topological field theory in some appropriate monoidal $(\infty,n)$-category. In view of the cobordism hypothesis, the latter property can  actually be used as a definition of topological chiral homology~\cite[Theorem 4.1.24]{L-TFT}. 

\begin{remark}\label{R:Enoperad} The models for $E_n$-algebras that we are considering  are given by $E_n$-($\infty$-)operads as introduced in~\cite[Section 5.1]{L-VI} in the symmetric monoidal ($(\infty,1)$-)category $(\hkmod,\otimes)$.  The category of $E_n$-algebras is symmetric monoidal (\cite[Section 5.1.5]{L-VI}, \cite[Section 1.8]{L-III}) and, furthermore, 
 there is a commutative diagram of operads
\begin{equation}\label{E:EnCom} \xymatrix{E_1 \ar@{^{(}->}[r] \ar[rrd]_{j_1}&  E_2\ar@{^{(}->}[r] \ar[rd]^{j_2}& \dots \dots \ar@{^{(}->}[r]& E_n\ar@{^{(}->}[r] \ar[ld]^{j_n}& \dots \\
 & & \mathop{Com} & & }\end{equation}
where $\mathop{Com}$ is the operad of commutative (differential graded) algebras such that all maps are monoidals. Note that most models for $E_n$-algebras come with such monoidal properties and also as nested sequences (for instance, this is the case for the models based on the Barratt-Eccles operad~\cite{BF}).  In particular, a commutative differential graded algebra $A$ is naturally an $E_d$-algebra for any integer $d$. We write $j_d^*(A)$ for the $E_d$-algebra structure on $A$ induced by the map of operads $j_d:E_d\to \mathop{Com}$ whenever we want to put emphasis on this $E_d$-algebra structure. 

Likewise, any $E_n$-algebra $A$ is naturally an $E_d$ algebra for $d\leq n$.  Furthermore, we say that an $A$-module $M$ has a \emph{compatible} structure of $E_d$-algebras ($d\leq n$) if the structure maps of the module structure are maps of  $E_d$-algebras, where $A$ is equipped with its natural $E_d$-algebra induced by the diagram of operads~\eqref{E:EnCom}.  
\end{remark}

\begin{remark} 
With the exception of section \ref{SEC:En[M]-alg}, this paper only deals with a fixed $E_n$-algebra $A$, which, when $M$ is framed, is an example of a locally constant $N(\mathop{\mbox{Disk}}(M))$-algebra in the sense of Lurie~\cite{L-VI}, and for which topological chiral homology can be defined, too. In particular, we will show that $A$ also defines canonically a locally constant  factorization algebra on $M$ in the sense of Costello and Gwilliam~\cite{CG,Co}. This also means, that $\int_M A$ computes the global sections of a natural cosheaf defined on the Ran space of $M$ (see~\cite[Section 5.3.2]{L-VI}).
\end{remark}


One of the main consequence of  the interpretation~\cite{L-TFT} of topological chiral homology as an invariant produced by an (extended) topological field theory in some appropriate monoidal $(\infty,n)$-category is the following excision property.
\begin{proposition}[Gluing for topological chiral homology] \label{P:TCHpushout} Let  $M$ be an $n$-framed manifold (possibly with corners) of dimension $m$, (\emph{i.e.} $M\times D^{n-m}$ is framed).  Assume that  there is a codimension $1$ submanifold (possibly with corners) of $M$ of the form $N\times I^{m-1-j}$  (for some $0\leq j\leq m-1$) with a trivialization $N\times I^{m-j}$ of its neighborhood  and that $M$ is decomposable as $M=X\cup_{N\times I^{m-j}}Y$ where $X,Y$ are submanifolds (with corners) of $M$ glued along  $N \times I^{m-j}$.  We endow $X,Y$ and $N$ with the $n$-framing induced from $M$. Let $A$ be an $E_{n}$-algebra. Then 
\begin{itemize}
\item $\int_N A$ is an $E_{n-j}$-algebra.
\item $\int_M A$, $\int_X A$, and $\int_Y A$  are $E_{n-m}$-algebras. Further  $\int_X A$ and $\int_Y A$ are also modules over the $E_{n-j}$-algebra  $\int_N A$. 
\item The above module and algebra structures are compatible. Note that this uses the once and for all fixed telescopic sequence~\eqref{E:EnCom} of models for $E_n$-operads. 
\item There is a natural equivalence of $E_{n-m}$-algebras $$\int_X A\, \mathop{\otimes}\limits^{\mathbb{L}}_{\int_N\! A} \, \int_Y A \stackrel{\simeq}\longrightarrow \int_M A.$$  
\end{itemize}   
\end{proposition}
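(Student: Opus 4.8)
The plan is to deduce all four assertions from the excision property for locally constant factorization algebras (Lemma~\ref{L:Enmodule}), transported to topological chiral homology through the equivalence between $E_n$-algebras and locally constant factorization algebras (Proposition~\ref{P:Fac=En} and Lemma~\ref{L:EnFact}); this provides a rigorous counterpart to the heuristic derivation from the extended topological field theory structure. Throughout I keep the chosen framing of $M\times D^{n-m}$ fixed, and I record at the outset the dimension count: since the neighbourhood $N\times I^{m-j}$ is open in $M$ it is $m$-dimensional, so $N$ has dimension $j$, and $N\times I^{m-1-j}$ is indeed of codimension $1$ in $M$.

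First I would realize every topological chiral homology in sight as a factorization homology. The framing of $M\times \R^{n-m}$ lets one glue copies of the locally constant factorization algebra on $\R^n$ attached to $A$ by Proposition~\ref{P:Fac=En} into a locally constant factorization algebra $\mathcal{A}_M$ on $M\times \R^{n-m}$; by Definition~\ref{Def:TCHasColimit}, $\int_M A$ is the colimit of $\psi_M$ over disjoint disks, which is exactly the factorization homology $HF(\mathcal{A}_M, M\times \R^{n-m})$ in $\hkmod$, and similarly for the restrictions $\mathcal{A}_X,\mathcal{A}_Y$ and for the collar. For the $E$-structures I would apply Lemma~\ref{L:EnFact}: the projection $X\times \R^{n-m}\to X$ exhibits $\int_X A\cong HF(\mathcal{A}_{X\times \R^{n-m}})$ as an $E_{n-m}$-algebra, and likewise for $Y$ and $M$; while the collar, thickened by the framing directions, is $N\times \R^{m-j}\times\R^{n-m}=N\times\R^{n-j}$, so the projection $N\times\R^{n-j}\to N$ makes $HF(\mathcal{A}_{N\times\R^{n-j}})\cong \int_N A$ an $E_{n-j}$-algebra. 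This settles the algebra structures in the first two bullets.

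Next I would produce the module structures and the chain-level gluing. Writing the collar as $N\times I^{m-j}=(N\times I^{m-1-j})\times I$ with the last interval $I$ the normal direction to the wall, the decomposition $M=X\cup_{N\times I^{m-j}}Y$ crossed with $\R^{n-m}$ presents $M\times\R^{n-m}$ exactly in the form required by Lemma~\ref{L:Enmodule}, the codimension-one wall being $N\times I^{m-1-j}\times \R^{n-m}$ and its trivialized tubular neighbourhood $N\times\R^{n-j}$. Singling out the normal interval equips $\int_N A$ with the $E_1$-structure underlying its $E_{n-j}$-structure and exhibits $\int_X A$ and $\int_Y A$ as right and left modules via the prefactorization structure maps that stack intervals along the normal direction. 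Lemma~\ref{L:Enmodule} then yields a natural equivalence $\int_M A\cong \int_X A\otimes^{\mathbb{L}}_{\int_N A}\int_Y A$ in $\hkmod$, the right-hand side being the two-sided bar construction.

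What remains, and what I expect to be the main obstacle, is to upgrade this chain equivalence to one of $E_{n-m}$-algebras and to verify the compatibility asserted in the third bullet. The chain-level gluing is essentially already Lemma~\ref{L:Enmodule}; the genuine content is that all the structure maps $\mu$ occurring in the bar construction are simultaneously maps of $E_{n-m}$-algebras, so that the relative tensor product is computed internally to $E_{n-m}\textit{-}Alg_\infty$. Here I would use the once-and-for-all fixed nested sequence of operads $E_1\hookrightarrow\cdots\hookrightarrow E_n\to\mathop{Com}$ from diagram~\eqref{E:EnCom}: it realizes the single $E_1$-structure chosen for the bar construction as a substructure of the ambient $E_{n-m}$-structure, which is what forces the module, algebra, and gluing structures to be mutually compatible. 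The difficulty is purely one of coherence — tracking these nested operad structures through the colimit defining each $\int A$ and through the bar construction — rather than of further geometric input, since no new manifold-level argument beyond Lemma~\ref{L:Enmodule} is needed.
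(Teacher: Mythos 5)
Your overall strategy --- realize each $\int_{(-)} A$ as the factorization homology of a locally constant factorization algebra and then quote the excision Lemma~\ref{L:Enmodule} --- is not the route the paper takes, and as written it contains a genuine circularity. The load-bearing step is your assertion that the colimit $\colim \psi_M$ over $N(\mathrm{Disj}(M\times D^{n-m}))$ from Definition~\ref{Def:TCHasColimit} ``is exactly'' the factorization homology $HF(\mathcal{A}_M, M\times\R^{n-m})$. These are colimits over genuinely different diagrams: factorization homology is computed by the \v{C}ech complex of a factorizing cover, not by the poset of finite disjoint unions of disks, and identifying the two is a theorem, not a definition. Within this paper that identification is precisely Proposition~\ref{P:TCHisFact} (and Theorem~\ref{T:HF=TCH}), whose proofs use Proposition~\ref{P:TCHpushout} itself, together with Lemma~\ref{L:Enmodule}, to compare the two constructions handle by handle. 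So you cannot invoke it here without either proving it first by independent means (e.g.\ importing Lurie's Ran-space cosheaf theorem \cite[Theorem 5.3.4.14]{L-VI} for arbitrary framed manifolds, which the paper only exploits on $\R^n$ in the proof of Proposition~\ref{P:Fac=En}) or running in a circle.

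For contrast, the paper's own proof avoids factorization algebras entirely: it cites \cite[Section 5.3.4]{L-VI} and \cite[Section 4.1]{L-TFT} and reduces the statement to Lemma~\ref{L:TCHpushout2}, which is proved directly on the colimit of Definition~\ref{Def:TCHasColimit} by a cofinality argument --- any ball meeting the separating wall $N\times I^{m-1-j}$ contains a sub-ball lying entirely in the collar, and the corresponding restriction map is an equivalence because the algebra is locally constant, so the sub-poset of disjoint unions of balls each contained in $X_0$, $Y_0$, or the collar is cofinal in $N(\mathrm{Disj}(M))$. That cofinality computation (or the TFT/cobordism-hypothesis packaging the paper sketches after its proof) is the geometric input your argument is missing. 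Your final paragraph does correctly isolate the remaining coherence issue of upgrading the bar-construction equivalence to one of $E_{n-m}$-algebras via the telescopic sequence~\eqref{E:EnCom}; on that point your treatment is no less complete than the paper's, which also leaves it at the level of the fixed nested operads and the references to Lurie.
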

\begin{proof}
This is explained in~\cite[Section 5.3.4]{L-VI} and~\cite[Section 4.1]{L-TFT}, also see the proof of~\cite{Francis, AFT}. It is also an immediate consequence of Lemma~\ref{L:TCHpushout2} below, in the case where the manifolds are framed since, for a $n$-framed manifold $X$, an $E_n$-algebra yields canonically an $\mathbb{E}[X]$-algebra. 
\end{proof}
 {Let us explain roughly, how   the above Proposition can be seen in terms of extended topological field theory and the cobordism hypothesis. Consider the monoidal $(\infty,n)$-category $E_{\leq n}\textit{-}Alg$ which is fully dualizable~\cite{L-TFT}.  Each object $A$ is an $E_{n}$-algebra and defines a (unique up to equivalence) \emph{extended topological field theory} $\psi_A: \bordfr_{n} \to  E_{\leq n}\textit{-}Alg$ according to~\cite[Theorem 2.4.6]{L-TFT} and \cite[Section 4.1]{L-TFT}. The objects of the $(\infty,n-1)$-category $\mathop{Hom}_{E_{\leq n}\textit{-}Alg}(A,B))$ of morphisms between two $E_n$-algebras $A$, $B$ are $(A,B)$-bimodules equipped with a compatible $E_{n-1}$-algebra structure (in the sense of the telescopic sequence of $E_k$-operads~\eqref{E:EnCom}).  Note that the claims made in Section 4.1 in~\cite{L-TFT} actually essentially follows from~\cite[Theorem 1.2.2]{L-VI} and Sections 2 of~\cite{L-VI}.

The relationship between topological chiral homology with coefficients in $A$ and the field theory $\psi_A$ is as follows. Let $Z$ be an $n$-framed manifold  of dimension $d$ (framing of $Z\times D^{n-d}$). Then $Z$ defines a $d$-morphism in $\bordfr_{n}$, thus $\psi_A(Z)$ is an $E_{n-d}$-algebra and there is a natural equivalence 
 $$\psi_A(Z)\cong \int_Z A$$ see~\cite[Theorem 4.1.24]{L-TFT}.

Now assume  $M$ is a manifold following the assumption of Proposition~\ref{P:TCHpushout}. Thus  $M\times D^{n-m}$ is framed and we  have a decomposition  $M=X\cup_{N\times I^{m-j}}Y$  of $M$ where $X,Y$ are submanifolds of $M$ glued along a submanifold $N\times I^{m-1-j}$ of codimension $1$ in $M$. Then $N$ is a $j$-arrow in $\bordfr_{n}$ and $X,Y$ are $m$-arrows. Hence $\psi_A(N)\cong \int_N A$ is a $j$-arrow in $E_{\leq n}\textit{-}Alg$, hence an $E_{n-j}$-algebra. Similarly, $\psi_A(X)$ and $\psi_A(Y)$ are $E_{n-m}$-algebras. Since $N\times I^{m-1-j}$ is in the boundary of $X$ and $Y$, $\psi_A(X)$ and $\psi_A(Y)$ inherits $\psi_A(N)$-modules structures.  Further $M$ is equivalent to the composition of $X, Y$ along $N$ in the $(\infty,n)$-category $\bordfr_{n}$. Thus it follows that there is a natural equivalence
$\psi_A(X)\otimes_{\psi_A(N)}^{\mathbb{L}} \psi_A(Y)$ in $E_{\leq n}\textit{-}Alg$ since the composition in the $(\infty,n)$-category is induced by derived tensor products. }

\medskip

We finish this Section with the following Lemma.
\begin{lemma} \label{L:TCHcoproduct} Let $A$ be an $E_n$-algebra and $(M_i)_{i\in I}$ a family of $n$-framed manifolds of dimension $m$. There is a natural equivalence of $E_{n-m}$-algebras $$ \colim_{\small \begin{array}{l}F \subset J \\
   F \mbox{ finite}\end{array}
} \left(\bigotimes_{f \in F} \int_{M_f} A \right) \stackrel{\simeq} \longrightarrow \int_{\coprod_{i\in I} M_i} A  .$$
\end{lemma}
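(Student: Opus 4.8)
The plan is to prove the statement directly from the colimit description of topological chiral homology in Definition~\ref{Def:TCHasColimit}, exploiting two elementary facts: that a finite disjoint union of disks inside a disjoint union of manifolds meets only finitely many components, and that the tensor product on $\hkmod$ preserves colimits separately in each variable.

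First I would reduce to the case of a finite index set. Write $X_i = M_i\times D^{n-m}$. Any object of $N(\text{Disj}(\coprod_{i\in I}X_i))$ is a finite disjoint union of disks; since each disk is connected it lies in a single component $X_i$, so the whole union meets only finitely many components. Grouping the disks according to the component containing them identifies the poset $\text{Disj}(\coprod_{i\in I}X_i)$ with the filtered union $\bigcup_{F\subset I\text{ finite}}\text{Disj}(\coprod_{f\in F}X_f)$, i.e.\ with a filtered colimit of the subcategories indexed by the finite subsets $F\subset I$. Since the colimit of a diagram over a filtered colimit of index categories is the filtered colimit of the partial colimits, Definition~\ref{Def:TCHasColimit} then gives a natural equivalence $\int_{\coprod_{i\in I}M_i}A\cong \colim_{F\subset I\text{ finite}}\int_{\coprod_{f\in F}M_f}A$.

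Next I would establish the finite case $\int_{\coprod_{f\in F}M_f}A\cong\bigotimes_{f\in F}\int_{M_f}A$, that is, the monoidality of topological chiral homology. The same grouping-by-component bijection, applied to a finite family, identifies $N(\text{Disj}(\coprod_{f\in F}X_f))$ with the product $\prod_{f\in F}N(\text{Disj}(X_f))$ (each factor including the empty configuration as its initial object), and under this identification the diagram $\psi_{\coprod_{f\in F}M_f}$ of Definition~\ref{Def:TCHasColimit} is the external tensor product of the diagrams $\psi_{M_f}$, i.e.\ its value on a tuple $(U_f)_{f\in F}$ of disk-configurations is $\bigotimes_{f\in F}\psi_{M_f}(U_f)$; this follows from formula~\eqref{eq:DefTCH}, since the value on a configuration is the tensor product of a copy of $A$ for each disk. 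As $\hkmod$ is a closed symmetric monoidal $(\infty,1)$-category, $\otimes$ commutes with colimits in each variable, so the colimit of such an external tensor product over a finite product of index categories is the tensor product of the colimits. Hence $\int_{\coprod_{f\in F}M_f}A\cong\bigotimes_{f\in F}\colim_{N(\text{Disj}(X_f))}\psi_{M_f}=\bigotimes_{f\in F}\int_{M_f}A$.

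Combining the two steps yields $\int_{\coprod_{i\in I}M_i}A\cong\colim_{F}\bigotimes_{f\in F}\int_{M_f}A$, the asserted equivalence; one checks that the transition maps in this colimit are exactly those induced by the unit maps $k\cong\int_{\emptyset}A\to\int_{M_f}A$ attached to the newly added components, which match the structure maps coming from the inclusions of index categories. The framings and the resulting $E_{n-m}$-algebra structures are carried along functorially throughout, so the equivalence is one of $E_{n-m}$-algebras. The main obstacle I anticipate is making the two categorical identifications precise at the $(\infty,1)$-level — namely that $N(\text{Disj}(\coprod X_i))$ genuinely decomposes as a filtered colimit of products of the nerves $N(\text{Disj}(X_f))$, and that the interchange of $\colim$ with $\otimes$ holds for $\infty$-categorical colimits; both rest on the cocompleteness and closed monoidal structure of $\hkmod$ together with the connectedness of disks.
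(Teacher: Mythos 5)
Your proof is correct and follows essentially the same route as the paper: the reduction to finite index sets via the observation that every object of $N(\text{Disj}(\coprod_i M_i\times D^{n-m}))$ is a finite union of connected opens and hence lies in $N(\text{Disj}(M_F\times D^{n-m}))$ for some finite $F$, giving $\int_M A\cong\colim_F\int_{M_F}A$, is exactly the paper's first step. The only difference is the finite case: where you decompose $N(\text{Disj}(\coprod_{f\in F}X_f))$ as a product of posets and interchange $\otimes$ with colimits by hand, the paper simply cites the monoidality statement $\bigotimes_{f\in F}\int_{M_f}A\stackrel{\simeq}\to\int_{M_F}A$ from \cite[Theorem 5.3.3.1]{L-VI}; your unpacking is a valid (and more self-contained) substitute for that citation.
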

\begin{proof}
We set $M=\coprod_{i\in I} M_i$ and, for any finite subset $F$ of $I$, we denote $M_F:= \coprod_{f\in F} M_f$.  The inclusion $M_F\subset M$ yields a canonical map $N({\text{Disj}}(M_F\times D^{n-m})) \to N({\text{Disj}}(M\times D^{n-m}))$. Since an object in $N({\text{Disj}}(M\times D^{n-m}))$ is a \emph{finite} disjoint union of connected open sets in $M$,  every object in $N({\text{Disj}}(M\times D^{n-m}))$ lies in some $N({\text{Disj}}(M_F\times D^{n-m}))$ for a finite $F$. Hence we have an equivalence $N({\text{Disj}}(M\times D^{n-m}))\cong \colim_{F\text{ finite}}N({\text{Disj}}(M_F\times D^{n-m}))$ of $\infty$-categories and, by Definition~\ref{Def:TCHasColimit},  an natural equivalence
$$ \int_M A \cong \colim \psi_M \cong \colim_{F\text{ finite}} \psi_{M_F} \cong  \colim_{F\text{ finite}} \int_{M_F} A.$$ 
Now the lemma follows from~\cite[Theorem 5.3.3.1]{L-VI} which gives a natural equivalence
 $  \left(\bigotimes_{f \in F} \int_{M_f} A \right) \stackrel{\simeq}\to \int_{M_F} A$ for all finite $F$. 
\end{proof}

\subsection{Locality axiom and the equivalence of topological chiral homology with higher Hochschild functor for CDGAs} \label{S:locality}

In view of Proposition~\ref{P:TCHpushout} and Theorem~\ref{T:derivedfunctor}, Morse theory 
(or any triangulation) suggests the following result, which is the main result of this section.
\begin{theorem} \label{T:TCH=CH} Let $M$ be a manifold of dimension $m$ endowed with a framing of 
$M \times D^k$ and $A$ be a differential graded commutative algebra viewed as an $E_{m+k}$-algebra.
 Then, the topological chiral homology of $M$ with coefficients in $A$, denoted by $\int_M A$ is 
equivalent  to $CH^\com_M(A)$ viewed as an $E_k$-algebra (in other words to $j_k^*(CH^\com_M(A))$).
\end{theorem}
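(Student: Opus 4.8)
The plan is to compare the two invariants by matching their gluing behaviour along a handle decomposition of $M$, exploiting that both send such a decomposition to a derived tensor product and that both agree on the basic building blocks. Since topological chiral homology is only defined on (framed) manifolds, one cannot invoke the uniqueness statement of Theorem~\ref{T:deriveduniqueness} directly; instead one runs a geometric induction internal to the category of manifolds, which is precisely what the ``rather geometric'' Morse-theoretic phrasing suggests.

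First I would treat the base case. For an open ball $D^m$ sitting inside $M\times D^k$, Definition~\ref{Def:TCHasColimit} together with the computation of the diagram $\psi$ on disks gives $\int_{D^m} A\cong A$, while the value-on-a-point axiom of Theorem~\ref{T:derivedfunctor}, in its topological form Proposition~\ref{P:Topderived}, gives $CH^\com_{D^m}(A)\cong A$ since $D^m$ is contractible. For a finite disjoint union of balls, Lemma~\ref{L:TCHcoproduct} identifies $\int_{\coprod D^m} A$ with the corresponding tensor power of $A$, and this matches the monoidal axiom for $CH$; under the fixed telescopic sequence of operads~\eqref{E:EnCom} these identifications are equivalences of $E_k$-algebras, where the $E_k$-structure on $CH^\com(A)$ is the one denoted $j_k^*$ in Remark~\ref{R:Enoperad}. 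This pins down agreement, including the $E_k$-structure, on the building blocks.

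Next I would set up the inductive gluing. Choosing a handle decomposition of $M$ (equivalently a smooth triangulation), write $M$ as obtained by successively attaching handles, each step decomposing the current manifold as $M'=X\cup_{N\times I^{m-j}}Y$ along a collared codimension-one submanifold, as in the hypotheses of Proposition~\ref{P:TCHpushout}. The framing of $M\times D^k$ restricts to framings on $X$, $Y$, and $N$, and Proposition~\ref{P:TCHpushout} yields a natural equivalence of $E_k$-algebras $\int_{M'} A\cong \int_X A\otimes^{\mathbb{L}}_{\int_N A}\int_Y A$. On the Hochschild side, the homotopy-pushout axiom of Theorem~\ref{T:derivedfunctor} (Proposition~\ref{P:Topderived}) gives the exactly parallel equivalence $CH^\com_{M'}(A)\cong CH^\com_X(A)\otimes^{\mathbb{L}}_{CH^\com_N(A)}CH^\com_Y(A)$, the collar $N\times I^{m-j}$ being homotopy equivalent to $N$. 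A double induction — on the dimension $m$, to handle the lower-dimensional gluing loci $N$ (note $\dim N=j\leq m-1$), and on the number of handles — then propagates the equivalence: at each stage the inductive hypothesis supplies compatible equivalences $\int_X A\cong CH^\com_X(A)$, $\int_Y A\cong CH^\com_Y(A)$, and $\int_N A\cong CH^\com_N(A)$ (the last as $E_{m+k-j}$-algebras, so that the module structures over $\int_N A$ and over $CH^\com_N(A)$ match), whence the two derived tensor products are identified as $E_k$-algebras.

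The main obstacle I anticipate is not the colimit bookkeeping but the careful matching of the \emph{algebraic structures} carried by the two gluing formulas: Proposition~\ref{P:TCHpushout} produces $\int_X A$ and $\int_Y A$ as modules over the $E_{n-j}$-algebra $\int_N A$, and one must verify that, for a CDGA $A$, these module structures correspond under the induction to the $CH^\com_N(A)$-module structures appearing in the Hochschild gluing. Here one leans on the fact that the diagram of operads~\eqref{E:EnCom} collapses the framing-dependent $E_{m+k}$-structure to the commutative one, so that every relevant $E_\ell$-structure is simply the restriction $j_\ell^*$ of the fixed CDGA structure and is in particular insensitive to the framing data entering the decomposition. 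Establishing this compatibility \emph{coherently} across the entire handle decomposition — rather than merely levelwise at each gluing — is the technical heart of the argument, and is carried out using the naturality of both gluing equivalences in the relevant $\infty$-categories of algebras and modules.
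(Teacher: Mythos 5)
Your proposal is correct and follows essentially the same route as the paper: agreement on disks and disjoint unions of balls, then propagation along a handle decomposition by matching the excision property of Proposition~\ref{P:TCHpushout} against the homotopy-pushout axiom of Theorem~\ref{T:derivedfunctor}, with the fixed telescopic sequence of operads~\eqref{E:EnCom} guaranteeing that every relevant algebra and module structure is just the restriction $j_\ell^*$ of the CDGA structure. The only divergences are minor: the paper treats the attaching regions $\partial D^j\times D^{m-j}\cong S^{j-1}\times D^{m-j}$ via a dedicated induction on spheres rather than your general induction on the dimension of the gluing locus, and it completes the non-compact case by passing to a colimit over finite handle attachments (Lemmas~\ref{L:TCHhandling} and~\ref{L:CHhandling}).
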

In particular, topological chiral homology $\int_M A$ with coefficient in a CDGA $A$ is always 
equivalent to a CDGA and is defined for \emph{any} manifold $M$. 

\smallskip

This theorem is similar to~\cite[Theorem 5.3.3.8]{L-VI} (with the difference that we assume $M$ to be smooth and keep track of the $E_k$-algebra structure). In this section, we wish to prove it 
by using a straightforward geometrical approach based on the gluing property. Indeed, the key idea 
to prove Theorem~\ref{T:TCH=CH} is to use handle decomposition which is very appropriate to deal 
with manifolds and the definition of topological chiral homology. However, note that, with respect
 to Hochschild chains, a representation of $M$ as a CW complex is already nice enough.

Before proving Theorem~\ref{T:TCH=CH}, we recall a few facts on the handle decompositions. Let $M$ be 
a smooth manifold of dimension $m$.  A \emph{handle decomposition} of $M$ is a 
sequence $\emptyset \subset M_0 \subset \cdots \subset M_m =M$, where each $M_j$ is obtained by 
attaching $j$-handles to $M_{j-1}$, see \cite{Mil}. That is gluing a copy 
of $H^j= D^j \times D^{m-j}$ using the attaching map $ S^{j-1} \times D^{m-j} \to \partial M_{j-1}$ 
which is assumed to be an embedding.  In particular all handles of same dimension are attached using 
diffeomorphisms.  

\smallskip

We can achieve such a handle decomposition for $M$ using Morse theory as follows. Let 
$f: M \to \mathbb R$ be a Morse function with critical points $p_1, \cdots, p_k$ numbered in a way 
that $f(p_1) < \cdots < f(p_k)$. Choose $a_0, \cdots, a_k$ such that $a_0 < f(p_1) < a_1 < \cdots < a_{k-1} < f(p_k) < a_k$. 
 Now it is sufficient to note that $f^{-1}([a_{i-1}, a_{i}])$ is diffeomorphic to attaching a 
$j$-handle to $f(a_{i-1}) \times [0, 1]$, where $j$ is the index of the critical point $p_i$, 
\emph{i.e.} the number of the negative eigenvalues of the Hessian of $f$ at that critical point.
 For example a torus with the height function is first given by attaching a $D^0 \times D^2$ to
 the empty set, then attaching a $D^1 \times D^1$ (think of it as a thin ribbon) to the boundary of
 the previous $D^2$. Then attaching another ribbon to the boundary of the previous ribbon, and the 
finally attaching a $D^2 \times D^0$ to what has been obtained along the boundary.

\begin{lemma}\label{L:TCHhandling}
Let $M$ be an $n$-framed manifold and $N$ be an $n$-framed manifold obtained from $M$ by attaching a countable sequence of handles $(H_i)_{i\in \N}$. For any $n\in \N$, we write $X_{k}$ for the result of attaching the first $k$-handles to $M$.  For any $E_n$-algebra $A$, there is a natural equivalence
$$ \colim_{k\in \N} \int_{X_k} A \stackrel{\simeq} \longrightarrow \int_N A.$$ 
\end{lemma}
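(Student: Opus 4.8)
The plan is to deduce the statement directly from the colimit description of topological chiral homology given in Definition~\ref{Def:TCHasColimit}, closely following the pattern of the compactly-supported computation of Hochschild chains (Lemma~\ref{L:CHhandling}) and of the proof of Lemma~\ref{L:TCHcoproduct}. First I would observe that $N$ is the increasing union $N=\bigcup_{k\in\N} X_k$ of the submanifolds-with-corners $X_k$, where each handle $H_i\cong D^{j_i}\times D^{m-j_i}$ is \emph{compact}. To avoid boundary/corner subtleties when intersecting with disks, I would replace each $X_k$ by an open thickening $\widetilde{X}_k$ (adding an open collar along the attaching region), so that the $\widetilde{X}_k$ are nested open submanifolds of $N$ with $\bigcup_k \widetilde{X}_k=N$ and $\int_{\widetilde{X}_k}A\cong \int_{X_k}A$, the latter because the collar thickening is a cofinal modification of the diagram of disks and topological chiral homology is a homeomorphism invariant. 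The inclusions $\widetilde{X}_k\hookrightarrow N$ then induce compatible maps $\int_{X_k}A\to\int_N A$ in $\hkmod$, hence a canonical map $\colim_{k\in\N}\int_{X_k}A\to\int_N A$, and it remains to show it is an equivalence.

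The heart of the argument is a compactness/cofinality statement identifying the indexing $\infty$-category for $N$ with the filtered colimit of those for the $\widetilde{X}_k$. By Definition~\ref{Def:TCHasColimit}, $\int_N A=\colim\,\psi_N$ over $N(\text{Disj}(N\times D^{n-m}))$, whose objects are \emph{finite} disjoint unions of open disks. As in the computation of Hochschild chains with compact support, the full subcategory spanned by \emph{relatively compact} disjoint unions of disks is cofinal for this colimit (every disk admits an open re-embedding into a precompact one, compatibly with the disjointness conditions). A relatively compact object has compact closure in $N\times D^{n-m}=\bigcup_k(\widetilde{X}_k\times D^{n-m})$, and since this is an exhaustion by nested opens, the closure is contained in some $\widetilde{X}_k\times D^{n-m}$; moreover any morphism between two such objects again factors through some $\widetilde{X}_k$. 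This exhibits $N(\text{Disj}(N\times D^{n-m}))$ (up to this cofinal subcategory) as the filtered colimit $\colim_k N(\text{Disj}(\widetilde{X}_k\times D^{n-m}))$, with the diagrams $\psi_{\widetilde{X}_k}$ being the restrictions of $\psi_N$.

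Finally I would commute the two colimits: using that the $\psi_{\widetilde{X}_k}$ assemble into $\psi_N$ and that a colimit over a filtered union of indexing categories is the filtered colimit of the partial colimits, one gets natural equivalences
\[
\int_N A \;\cong\; \colim_{N(\text{Disj}(N\times D^{n-m}))}\psi_N \;\cong\; \colim_{k\in\N}\Big(\colim_{N(\text{Disj}(\widetilde{X}_k\times D^{n-m}))}\psi_{\widetilde{X}_k}\Big)\;\cong\;\colim_{k\in\N}\int_{X_k}A,
\]
which is exactly the asserted map; naturality in $A$ is immediate from the functoriality of $\psi_{(-)}$, and since the colimit over $k$ is filtered it is computed in $\hkmod$ and agrees with the colimit of the underlying $E_{n-m}$-algebras. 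The step I expect to be the main obstacle is the cofinality claim in the second paragraph, namely that relatively compact disjoint unions of disks are final in $N(\text{Disj}(N\times D^{n-m}))$: morphisms in $\text{Disj}$ are open embeddings rather than literal inclusions, so one must argue that every finite disjoint union of (possibly unbounded) disks can be pushed, through the coslice category, into a precompact one sitting in some $\widetilde{X}_k$, and that the resulting coslice categories are weakly contractible. This is where I would invoke the standard shrinking-of-disks arguments underlying the factorization/Ran-space formalism of~\cite{L-VI} (cf. the compact-support computation preceding Lemma~\ref{L:CHhandling}), the remaining steps being the formal manipulation of filtered colimits.
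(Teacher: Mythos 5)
Your proposal is correct and follows essentially the same route as the paper's proof: both compute $\int_N A$ via the colimit of Definition~\ref{Def:TCHasColimit} and identify $\colim_{k}\int_{X_k}A$ with the colimit over the subdiagram of disjoint unions of balls lying in some $X_k$, the key point in each case being that any finite family of balls in $N$ can be shrunk into such a configuration and that the shrinking inclusions become equivalences after applying $\psi_N$ (local constancy of the disk algebra). Your detour through relatively compact configurations and the thickenings $\widetilde{X}_k$ is harmless and collapses to the paper's condition of being contained in some $X_k$, and the obstacle you flag (shrinking goes the wrong way for strict finality) is resolved by exactly the argument you cite, which is also the one the paper invokes.
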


\begin{proof} We may assume $\dim(M)=n$. The  maps $X_k\to X_{k+1}$ yield  
a diagram  \begin{equation} \label{eq:seqbfD} N({\text{Disj}}(M)) \to N({\text{Disj}}(X_1)) \to \cdots \to N({\text{Disj}}(X_k)) \to \cdots \to N({\text{Disj}}(N))\end{equation} of faithfull maps, hence  canonical maps  $\colim_{k\in \N} N({\text{Disj}}(X_k)) \to N({\text{Disj}}(N))$ and
\begin{equation}\label{eq:TCHunion}  \colim_{k\in \N} \int_{X_k} A\cong \colim_{k\in \N} \big(\colim_{} \psi_{X_k}\big) \to \colim_{} \psi_N\cong \int_N A\end{equation} (using the notation introduced in Definition~\ref{Def:TCHasColimit}). Note that $\colim_{k\in \N} \big(\colim_{} \psi_{X_k}\big)$ can be identified with the colimit $\colim \tilde{\psi}_N$ given informally by the diagram $$\tilde{\psi}_N (V_1^k\cup \cdots\cup V_j^k)=\int_{V_1^k}A \otimes \cdots \otimes \int_{V_j^k} A $$ where the $V_i^k$ are disjoint open subsets of $X_k$ homeorphic to a ball (here $k$ is not fixed).

 Since $A$ is a fixed $E_n$-algebra, it is in particular an $\mathbb{E}(X)$-algebra (in the sense of~\cite[Section 5.2.4]{L-VI}) for any $n$-framed manifold $X$.  In particular, by~\cite[Theorem 5.2.4.9]{L-VI} (also see Proposition~\cite[Proposition 5.3.2.13]{L-VI}), if $U\subset V$ are two open subsets of $X$ which are homeomorphic to a ball, then the induced map $\int_U A \to \int_V A $ is an equivalence. 

Now, let $V$ be an open subset of $N$ which is homeomorphic to a ball. Since $N=\colim_{k\in \N} X_k$, there exists a $k$ such that $V\cap X_k$ is a non-empty open subset of $X_k$, and thus contains an open ball $V^k$ in $X_k$ which is lying in $V$ too. In particular the natural map $\int_{V^k} A\to \int_{V} A$ is an equivalence. It follows that, given a finite set $V_1,\dots,V_n$ of open homeomorphic to a ball  in $N$, we can find an integer $k$ big enough and open sets  $V^k_1\subset V_1$, ...., $V^k_n\subset V_n$ in $X_k$ which are homeomorphic to a ball, yielding a natural equivalence $$\tilde{\psi}_N (V_1^k,\dots, V_n^k)=\int_{V_1^k}A \otimes \cdots \otimes \int_{V_n^k} A \cong \int_{V_1}A \otimes \cdots \otimes \int_{V_n} A=\psi_N (V_1,\dots, V_n).$$
This proves the cofinality of the functor $N(\widetilde{\mathop{\rm Disj}}(N))\hookrightarrow N(\mathop{\rm Disj}(N)) $ induced by the natural inclusion. Here we have denoted $\widetilde{\mathop{\rm Disj}}(N)$ the partially ordered set of open subsets of $N$ which are homeomorphic to $F\times R^n$  for a finite set $F$ and included in some $X_k$ (where $k$ is not fixed). Passing to colimits, we get that the map   $\colim_{k\in \N} \big(\colim_{} \psi_{X_k}\big)\cong \colim \tilde{\psi}_N \to \colim \psi_N$ is an equivalence and thus the canonical map~\eqref{eq:TCHunion} is an equivalence as well.
\end{proof}

\smallskip

\begin{proof}[Proof of Theorem~\ref{T:TCH=CH}] 
Let us sketch the key idea of the proof first: by the value on a point axiom, both topological chiral homology and higher Hochschild chains agree on a point and further on any disk $D^k$ (up to neglect of structure). Using handle decompositions, one can chop manifolds on disks which are glued along their boundaries. Since both topological chiral homology and higher Hochschild chains satisfy a similar gluing axiom (and also behave the same way under disjoint unions), one then can lift the natural equivalence for disks to any manifold using handle decompositions. We now make the above scheme precise.

\smallskip
 
Assume $M$ is compact.
Let us choose a generic Morse function on $M$ and the associated handle decomposition $\emptyset \subset M_0 \subset \cdots \subset M_m =M$ of $M$. Then $\emptyset \subset M_0\times  D^k \subset \cdots \subset M_m\times D^k =M \times D^k $ is a handle decomposition of $M� \times D^k$. That is $(M\times D^k)_j=M_j\times D^k$ where   we replace each $j$-handle $H^j=D^j\times D^{m-j}$ attached to $M_{j-1}$ by the $j$-handle $D^j\times D^{m+k-j}\cong D^j\times \big(D^{m-j}\times D^{k}\big)$ attached to $M_{j-1} \times D^{k}= (M\times D^k)_{j-1}$. The $(m+k)$-framing of $M$ induces an $(m+k)$-framing of each $M_j\times D^k =(M\times D^k)_j$.

\smallskip

 By homotopy invariance of Hochschild chains, one has an equivalence of CDGAs $CH_{M\times D^d}(A)\cong CH_{M}(A)$ (for any integer $d$). Further, from diagram~\eqref{E:EnCom} we deduce that, for any CDGA $B$, one has $j_{k}^*(B)\cong \iota_{d}^*(j_{k+d}^*(B))$ where $\iota_d: E_k\hookrightarrow E_{k+d}$ is the natural map. 
Since, for any $E_{m+k}$-algebra $B$, one has $\int_{D^m}B \cong B$ viewed as an $E_k$-algebra, the result of Theorem~\ref{T:TCH=CH} holds for all disks.

We now prove by induction that it holds for all ($(m+d)$-framed) spheres  $S^m$ and $E_{m+d}$-algebra $A$.
For $S^0=pt\coprod pt$, it follows from Theorem~\ref{T:derivedfunctor} and~\cite[Theorem 5.3.3.1]{L-VI} that $\int_{S^0}A \cong A\otimes A \cong CH_{S^0}(A)$ (as $E_{d}$-algebras). Now, assume the result for $S^{m-1}$ and $m\geq 1$. We have a decomposition of the $m$-sphere $S^m$ as $S^m\cong D^m \cup_{S^{m-1}} D^m$ as in the assumption of Proposition~\ref{P:TCHpushout}. Since this decomposition is also an homotopy pushout, it follows from the induction hypothesis, Proposition~\ref{P:TCHpushout} and Theorem~\ref{T:derivedfunctor}.(3) that there are natural equivalences $$\int_{S^m}A \;\cong \; A\!\mathop{\otimes}\limits^{\mathbb{L}}_{\int_{S^{n-1}}\! A}\! A \;\cong\; CH^\com_{D^m}(A)\!\mathop{\otimes}\limits^{\mathbb{L}}_{CH^\com_{S^{m-1}}(A)} \! CH^\com_{D^m}(A) \;\cong \; CH^\com_{S^m}(A) $$ of $E_d$-algebras which finishes the induction step.

\smallskip

Clearly, $M_0$ is a disjoint union $M_0=\coprod_{I_0} D^m$ of finitely many $m$-dimensional balls (here $I_0$ is the set indexing the various disks in $M_0$). Using again that, for any $E_{m+k}$-algebra $B$, one has $\int_{D^m}B \cong B$ viewed as an $E_k$-algebra, we get a natural equivalence of $E_k$-algebras $\int_{M_0} A \cong \bigotimes_{I_0} j_k^*(A)\cong j_k^*(\bigotimes_{I_0} A)$  since $\int_{M\coprod N} A\cong \int_M A \otimes \int_N A$ by~\cite[Theorem 5.3.3.1]{L-VI}, the set $I_0$ is finite  and $j_k$ is monoidal (that is commutes with the diagonals of the $E_k$-operads). Further $CH^\com_{D^m}(A) \cong CH^\com_{pt}(A)\cong A$ and, by Theorem~\ref{T:derivedfunctor}.(1) and (2), there is  a natural equivalence $CH^\com_{M_0}(A)\cong \bigotimes_{I_0} A$ of CDGAs. Hence the theorem is proved for $M_0$. 

\smallskip

By assumption $M_1$ is obtained by attaching finitely many $1$-handles $H^1_1,\dots, H^1_{i_1}$ (the sequence may be empty) to $M_0$. Choosing appropriate tubular neighborhoods for the image of $\partial{D^1}\times D^{m-1}$ in $M_0$ and gluing it with $\big(\partial{D^1}\times D^{m-1}\big)\times [0,\varepsilon]\cong\big(\partial{D^1}\times [0,\varepsilon]\times D^{m-1}\big)\subset H^1_1$, we can assume that the result $M_0\cup_{\partial D^1 \times D^{m-1}} H_1^1$ of  attaching $H^1_1$ to  $M_0$ satisfies the assumption of Proposition~\ref{P:TCHpushout}. Then, by Proposition~\ref{P:TCHpushout} we have a natural equivalence of $E_k$-algebras:
\begin{equation}\label{eq:attachhandle}
\int_{M_0} A\mathop{\otimes}\limits_{\int_{\partial D^1 \times D^{m-1}} A}^{\mathbb{L}} \int_{H^1_{1}} A\cong \int_{M_0\cup_{\partial D^1 \times D^{m-1}} H_1^1} A.
\end{equation}
By Theorem~\ref{T:derivedfunctor}.(3), we  also have a natural equivalence of CDGAs (and thus of the underlying $E_{k}$-algebras)
\begin{equation}\label{eq:attachhandleCH}
CH^\com_{M_0} (A)\mathop{\otimes}\limits_{CH^\com_{\partial D^1 \times D^{m-1}}(A)}^{\mathbb{L}} CH^\com_{H^1_{1}} (A)\cong CH^\com_{M_0\cup_{\partial D^1 \times D^{m-1}} H_1^1} (A).
\end{equation}
We have already seen that we have natural equivalences $CH^\com_{M_0}(A) \cong \int_{M_0}(A)$  of $E_k$-algebras and similarly for  $\partial D^1 \times D^{m-1}$ and $D^1\times D^{m-1}$ in place of $M_0$ (since those manifolds are disks).  Combining these equivalences with those given by the identities~\eqref{eq:attachhandle} and~\eqref{eq:attachhandleCH}, we get a natural equivalence
$$\int_{M_0\cup_{\partial D^1 \times D^{m-1}} H_1^1} A \cong CH^\com_{M_0\cup_{\partial D^1 \times D^{m-1}} H_1^1} (A).$$ Attaching more $1$-handles, we inductively get a natural equivalence $\int_{M_1} A \cong CH^\com_{M_1}(A)$ of $E_k$-algebras. 

\smallskip

We proceed the same for attaching $j\geq 2$-handles (by induction on $j$). The proof is identical to the $1$-handles case once we notice that there are also a natural equivalence $\int_{\partial{ D^j}\times D^{m-j}} A \cong CH^\com_{\partial{ D^j}\times D^{m-j}}(A)$. The later follows 
from the natural equivalences relating topological chiral homology and higher Hochschild chains of spheres (with value in $A$) proved above. Indeed, $\partial{ D^j}\times D^{m-j} \cong S^{j-1}\times D^{m-j}$ and there is a natural equivalence $\int_{S^{j-1}\times D^{m-j}} A\cong i_{m-j}^*\big( \int_{S^{j-1}} A \big)$ where $i_{m-j}: E_{k+1}\hookrightarrow E_{m+k-j+1}$ is the canonical map (neglecting part of the structure). Similarly, there are natural equivalences $CH^\com_{S^{j-1}}(A) \cong CH^\com_{S^{j-1} \times D^{m-j}}(A)$ of CDGAs. It follows, since $j_{k+m -j+1}^*\big(CH^\com_{S^{j-1}} (A)\big) \cong \int_{S^{j-1}} A$, that  we get a natural equivalence $$j_{k+1}^*(CH^\com_{S^{j-1}\times D^{m-j}} \big( (A)\big)\cong i_{m-j}^*\, j_{k+m-j+1}^*\big( CH^\com_{S^{j-1}\times D^{m-j}}(A)\big) \cong \int_{S^{j-1}\times D^{m-j}} A$$
which finishes the proof in the compact case.

\smallskip

If $M$ is non-compact, we still have a handle decomposition, but we may have to attach countable many handles to go from $M_i$ to $M_{i+1}$. In particular, we can find an increasing sequence of relatively compact open subsets $M_i=X_0\subset X_1\subset\cdots X_n\subset \cdots \subset \bigcup_{n\geq 0} X_n =M_{i+1}$ (for instance by choosing $\overline{X_n}$ to be the result of attaching the first $n$ $i$-handles  to $M_i$).  We wish to prove the result by induction on $i$. Note first that,
by definition of the Hochschild chain functor and Lemma~\ref{L:TCHcoproduct}, there is an equivalence (for the underlying $E_k$-algebras structures) $$CH^\com_{M_0}(A)\cong \colim_{\small \begin{array}{l}F_0 \subset I_0\\ 
F_0 \mbox{ finite} \end{array}
} \left(\bigotimes_{f\in F_0} CH^\com_{D^m}(A)\right) \cong \colim_{\small \begin{array}{l}F_0 \subset I_0\\ 
F_0 \mbox{ finite} \end{array}} \left(\bigotimes_{f\in F_0} \int_{D^m}(A)\right) \cong \int_{M_0} A$$
which proves the result for $M_0$. Now, assume we have an natural equivalence $CH^\com_{M_i}(A)\cong \int_{M_i} A$. Writing $M_i=X_0\subset X_1\subset\cdots X_n\subset \cdots \subset \bigcup_{n\geq 0} X_n =M_{i+1}$, by the above argument for the finite handles case,  we have natural equivalences 
$CH^\com_{X_n}(A) \cong \int_{X_n} A$ for all $n$ and thus a commutative diagram 
\begin{equation}
\xymatrix{\colim_{n\in \N} CH^\com_{X_n}(A) \ar[r]^{\simeq} \ar[d] & \colim_{n\in \N} \int_{X_n} A \ar[d] \\ CH^\com_{M_{i+1}}(A) \ar[r] &  \int_{M_{i+1}} A}.
\end{equation}
 By Lemmas~\ref{L:TCHhandling} and Lemma~\ref{L:CHhandling}, the vertical arrows are equivalences,hence the lower map is too, which finishes the induction.
\end{proof}

\begin{remark} A geometric intuition behind Theorem~\ref{T:TCH=CH} can be seen as follows. Let $M$ be a dimension $m$ manifold. Since a CDGA $A$ is an $E_{n}$-algebra for any $n$,  the topological chiral homology $\int_M A$ is defined for any $n$-framing of $M$, and is an $E_{n-m}$-algebra.  
Further, if $M$ is $n$-framed (hence we have chosen a trivialization of $M\times D^{n-m}$), then $M$ is also naturally $(n+k)$-framed for any  integer $k$. Since $A$ is a CDGA, it is an $E_{n+k}$-algebra as well and thus we could have used the trivialization of $M\times D^{n-m} \times D^k\cong M\times D^{n+k-m}$ as well to define $\int_M A$ as an $E_{n-m+k}$-algebra. 

\smallskip

It is well known that the transversality theorem implies  that two embeddings $\phi_1: M \to S^n$ and $\phi_2: M\to S^n$  of $M$ are isotopic if $n$ is large enough. In particular, for large $n$ the framing that comes from the embedding into $S^n$ is unique. This unique invariant of $M$ is called the stable normal bundle. Note that this implies that any two abstract framings of $M\times D^k$ and $M \times D^l$ are stably equivalent since, for example, for $M\times D^k$  we can make $M$ sit inside $\mathbb R^n$ and then the normal bundle of $M$ in $\mathbb R^n$ is the complement of the framing of the tangent bundle of $M\times D^k$ in $\mathbb{R}^n \times D^k$. 
\end{remark}

Building upon the last remark, we see that  the topological chiral homology of an $(m+k)$-framed manifold $M$ with value in  a CDGA should be equivalent (up to neglect of structure) to the topological chiral homology of $M$ equipped with the stable normal framing so that we get
\begin{corollary} Topological chiral homology with values in CDGAs is independent of the framing. In other words, if  $M_{1}$ and $M_{2}$ are diffeomorphic manifolds equipped respectively with an $(n+k_1)$-framing and $(n+k_2)$-framing, then there is a canonical equivalence $\int_{M_1} A \cong \int_{M_2} A$ of $E_{\min(k_1,k_2)}$-algebras.
\end{corollary}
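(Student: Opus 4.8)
The plan is to reduce everything to Theorem~\ref{T:TCH=CH}, which already expresses topological chiral homology of a CDGA in terms of Hochschild chains, and then to play the homotopy invariance of Hochschild chains off against the compatibility of the fixed telescopic sequence of operad maps in diagram~\eqref{E:EnCom}. Without loss of generality I may assume $k_1 \leq k_2$, so that $\min(k_1,k_2)=k_1$. Fixing a diffeomorphism $\phi\colon M_1 \stackrel{\simeq}{\to} M_2$ (both of dimension $n$, so that an $(n+k_i)$-framing is a framing of $M_i\times D^{k_i}$ and $A$ is viewed as an $E_{n+k_i}$-algebra), the first step is to apply Theorem~\ref{T:TCH=CH} to each framed manifold, yielding natural equivalences $\int_{M_i} A \cong j_{k_i}^*\big(CH^\com_{M_i}(A)\big)$ of $E_{k_i}$-algebras for $i=1,2$.

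The second step is to identify the two Hochschild complexes. Since $\phi$ is in particular a weak homotopy equivalence of the underlying spaces, Proposition~\ref{P:homologyinvariance} (homotopy invariance of the higher Hochschild functor) produces a natural equivalence of CDGAs $CH^\com_{M_1}(A)\cong CH^\com_{M_2}(A)=:B$. Because the $E_k$-algebra structure on a CDGA is obtained functorially from its $\mathop{Com}$-algebra structure through the pullback $j_k^*$, this CDGA equivalence automatically upgrades to an equivalence $j_{k}^*\big(CH^\com_{M_1}(A)\big)\cong j_k^*(B)$ for every $k$.

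The final step is the bookkeeping of the neglect-of-structure functors. From diagram~\eqref{E:EnCom} the operad maps satisfy $j_{k_1}=j_{k_2}\circ \iota$, where $\iota\colon E_{k_1}\hookrightarrow E_{k_2}$ is the inclusion from the telescopic sequence, whence $j_{k_1}^*=\iota^*\circ j_{k_2}^*$ on $\mathop{Com}$-algebras. Combining the previous steps gives
$$\int_{M_1} A \;\cong\; j_{k_1}^*(B)\;=\;\iota^*\big(j_{k_2}^*(B)\big)\;\cong\; \iota^*\Big(\int_{M_2} A\Big),$$
and $\iota^*\big(\int_{M_2}A\big)$ is precisely $\int_{M_2} A$ regarded as an $E_{k_1}$-algebra by neglect of structure. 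This is exactly the asserted equivalence of $E_{\min(k_1,k_2)}$-algebras.

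The hard part is not any single computation --- all the geometric content was already spent in proving Theorem~\ref{T:TCH=CH} --- but rather making sure the two neglect-of-structure operations $j_{k_2}^*$ and $\iota^*$ compose coherently into $j_{k_1}^*$. This is where the emphasis (recalled in Proposition~\ref{P:TCHpushout} and Remark~\ref{R:Enoperad}) on working with a single, once-and-for-all-fixed nested sequence of $E_n$-operads mapping monoidally to $\mathop{Com}$ is essential: only then is the identity $j_{k_1}=j_{k_2}\circ\iota$ available on the nose, so that the chain of equivalences above is genuinely one of $E_{k_1}$-algebras and not merely of underlying complexes. I would therefore take care to phrase each equivalence as one of $E_{k_i}$-algebras with respect to this fixed sequence, and to check that the equivalence $CH^\com_{M_1}(A)\cong CH^\com_{M_2}(A)$ is natural, so that the resulting comparison is canonical.
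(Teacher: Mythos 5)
Your proposal is correct and takes essentially the same route as the paper: apply Theorem~\ref{T:TCH=CH} to each framed manifold to get $\int_{M_i}A\cong CH^\com_{M_i}(A)$ as $E_{k_i}$-algebras, then use the diffeomorphism and homotopy invariance of Hochschild chains to identify the two CDGAs. The paper's proof is terser and leaves the $j_{k}^*/\iota^*$ bookkeeping implicit; your explicit tracking of the fixed telescopic sequence is a harmless (and welcome) elaboration of the same argument.
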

{In particular, $\int_M A$ is naturally defined for every manifold $M$ when $A$ is a CDGA.}
\begin{proof}
 By Theorem~\ref{T:TCH=CH}, there are natural equivalences $\int_{M_1} A \cong CH_{M_1}^{\com}(A)$ of $E_{k_1}$-algebras and $\int_{M_2} A \cong CH_{M_2}^\com (A)$ of $E_{k_2}$-algebras. Since $M_1$ and $M_2$ are diffeomorphic, we have an equivalence $CH_{M_1}^\com (A)\cong CH_{M_2}^\com (A)$ as CDGAs.
\end{proof}

\subsection{Relation with the Blob complex}\label{S:blob}
It is  asserted in \cite[Example 6.2.10]{MW} that the \textbf{Blob complex} of Morrison-Walker is equivalent to \emph{an unoriented variant of} topological chiral homology. We briefly explain here how to deduce this equivalence from the proof of Theorem~\ref{T:TCH=CH} and several statements from~\cite{MW}. The main difference with \S~\ref{S:ReviewTCH} and \S~\ref{S:locality} to keep in mind is that we no longer assume our manifold to be \emph{framed}. Indeed, we are using the variant of topological chiral homology for (non-necessarily framed nor oriented) manifolds of dimension $n$. However this forces us to restrict our attention to  \emph{unoriented $E_n$-algebras}:
\begin{definition}\label{D:unorientedEn}
 The category of \emph{unoriented $E_n$-algebras}, denoted
$\mathbb{E}_n^{O(n)}\textit{-}Alg_\infty$ 
is defined as the ($(\infty,1)$-)category of symmetric monoidal functors
$$
\mathbb{E}_n^{O(n)}\textit{-}Alg_\infty := \text{\emph{Fun}}^\otimes(\text{\emph{Disk}}_n,\,\hkmod)
$$
where $\text{\emph{Disk}}_n$ is the category with objects the integers and morphism the spaces $\text{\emph{Disk}}_n(k,\ell):= \text{\emph{Emb}}(\coprod_{k} \R^n, \coprod_{\ell} \R^n)$  of smooth embeddings of $k$ disjoint copies of a disk $\R^n$ into $\ell$ such copies; the monoidal structure is induced by disjoint union of copies of disks.  
\end{definition}
Note that $\mathbb{E}_n^{O(n)}$-algebras are denoted $\mathcal{EB}_n$-algebras   in~\cite{MW}.
\begin{remark} The definition above is extracted from~\cite{L-VI,Francis}.
There is an natural action of the orthogonal group $O(n)$ on $E_n\textit{-}Alg_\infty$, see~\cite{L-TFT}; the category $(E_n\textit{-}Alg_\infty)^{hO(n)}$ of $O(n)$-homotopy fixed points is equivalent to  the $(\infty,1)$-category $\mathbb{E}_n^{O(n)}\textit{-}Alg_\infty$ of Definition~\ref{D:unorientedEn}
 In particular, \emph{any CDGA is an unoriented $E_n$-algebra}.

\smallskip

If one replaces the action of $O(n)$ by $SO(n)$,one recovers the notion of \emph{oriented} $E_n$-algebras which are commonly known as \emph{framed $E_n$-algebras} in the literature.
\end{remark}
Let $M$ be a dimension $n$ manifold. By~\cite[Example 6.2.10]{MW}, an $\mathbb{E}_n^{O(n)}$-algebra $A$  defines an $A_\infty$-$n$-category\footnote{this is, for $n=1$, an instance of the linear category with a single object associated to an associative algebra and, for general $n$, a slight variant   of the construction of an $(\infty, n)$-category $\mathcal{B}^n(A)$ associated to an $E_n$-algebra $A$ as in~\cite{Francis}} $\mathcal{C}^A$ and thus yields the \textbf{Blob complex} $\mathcal{B}_*(M,\mathcal{C}^A)$, see~\cite[Definition 7.0.1]{MW}. 
Similarly we can form the topological chiral homology $\int_M A$, see~\cite[Definition 3.15]{Francis} (or~\cite{L-VI,L-TFT}).
\begin{proposition}\label{P:blob} Let $M$ be a closed $n$-dimensional manifold (non necessarily framed nor oriented) and $A$ an $\mathbb{E}_n^{O(n)}$-algebra.
  There is an natural equivalence $$\int_{M} A\; \cong\; \mathcal{B}_*(M,\mathcal{C}^A)$$ in $\hkmod$. Further, if $A$ is a CDGA,  there is an natural equivalence $CH_M^\com(A) \cong \int_{M} A$ in $\hkmod$.
\end{proposition}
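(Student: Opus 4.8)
The plan is to prove both equivalences by the same handle-decomposition strategy already used for Theorem~\ref{T:TCH=CH}, the only new input being the point and gluing axioms for the two functors $\mathcal{B}_*(-,\mathcal{C}^A)$ and the unoriented topological chiral homology $\int_{(-)}A$. First I would record the three structural properties that both functors share on closed $n$-manifolds (and on the manifolds-with-corners built out of handles): (i) the value on an $n$-ball $D^n$ is naturally equivalent to $A$; (ii) disjoint unions are sent to tensor products; and (iii) an excision/gluing formula holds, namely for a decomposition $M = X\cup_{N\times I}Y$ along a codimension-one submanifold there is a natural equivalence expressing the value on $M$ as a derived tensor product of the values on $X$ and $Y$ over the value on $N\times I$. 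For $\int_{(-)}A$ these are the unoriented analogues of $\int_{D^n}A\cong A$, Lemma~\ref{L:TCHcoproduct} and Proposition~\ref{P:TCHpushout}, all available from~\cite{L-VI,Francis} once one drops the framing and works with an $\mathbb{E}_n^{O(n)}$-algebra; for $\mathcal{B}_*(-,\mathcal{C}^A)$ they are exactly the contractibility of the blob complex of a ball and the gluing theorem of~\cite{MW}.

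Second, I would fix a Morse function on $M$ and the induced handle decomposition $\emptyset\subset M_0\subset\cdots\subset M_n=M$ exactly as in the proof of Theorem~\ref{T:TCH=CH}. The base case $M_0=\coprod D^n$ follows from properties (i) and (ii), giving a natural equivalence $\int_{M_0}A\cong\mathcal{B}_*(M_0,\mathcal{C}^A)$. The induction step attaches a $j$-handle $H^j=D^j\times D^{n-j}$ along $\partial D^j\times D^{n-j}\cong S^{j-1}\times D^{n-j}$; one first establishes the equivalence for the spheres $S^{j-1}$ (and hence for $S^{j-1}\times D^{n-j}$, using (i) and homotopy invariance) by the same inner induction $S^m\cong D^m\cup_{S^{m-1}}D^m$ used previously, and then applies the gluing formula (iii) to both functors simultaneously. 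Matching the two gluing equivalences and propagating the already-constructed equivalence on $M_{j-1}$ and on the attaching region yields $\int_{M_j}A\cong\mathcal{B}_*(M_j,\mathcal{C}^A)$; the non-compact case is handled, as before, by passing to the colimit over finitely many handles via the analogues of Lemma~\ref{L:TCHhandling} and Lemma~\ref{L:CHhandling}.

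For the second assertion I would specialize to $A$ a CDGA. A CDGA is canonically an $\mathbb{E}_n^{O(n)}$-algebra (it is $O(n)$-homotopy invariant), so the unoriented $\int_M A$ is defined, and the very same handle argument applies with $\mathcal{B}_*(-,\mathcal{C}^A)$ replaced by $CH^\com_{(-)}(A)$: the latter satisfies (i) by the value-on-a-point axiom of Theorem~\ref{T:derivedfunctor}, (ii) by its monoidal axiom, and (iii) by the homotopy-pushout axiom, while $\int_{(-)}A$ satisfies the unoriented versions of these. Since all three properties coincide on balls, on spheres and under gluing, the induction produces a natural equivalence $CH^\com_M(A)\cong\int_M A$ in $\hkmod$. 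Equivalently, one can read this off the first part together with a direct identification of $\mathcal{B}_*(-,\mathcal{C}^A)$ with $CH^\com_{(-)}(A)$ for commutative $A$.

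The main obstacle will be step one on the blob complex side: citing and correctly interpreting the gluing theorem and the ball axiom of~\cite{MW} so that the module structures (over $\mathcal{B}_*(N\times I,\mathcal{C}^A)$ versus $\int_{N\times I}A$) are identified compatibly with the algebra structures, and checking that the equivalence $\mathcal{B}_*(D^n,\mathcal{C}^A)\cong A$ coming from the construction of $\mathcal{C}^A$ is natural enough to be threaded through the entire induction. Once these naturality and compatibility statements are in place, the geometric handle induction is essentially formal, being a word-for-word transcription of the argument proving Theorem~\ref{T:TCH=CH}.
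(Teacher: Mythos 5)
Your proposal is correct and follows essentially the same route as the paper: the paper's proof simply records that the blob complex satisfies the ball axiom, the disjoint-union axiom, and excision (citing \cite[Properties 1.3.2, 1.3.4, Theorem 7.2.1]{MW}), notes that unoriented topological chiral homology satisfies the same properties, and then invokes the proof of Theorem~\ref{T:TCH=CH} \emph{mutatis mutandis}. Your only superfluous step is the discussion of the non-compact case, which is not needed since the proposition assumes $M$ closed.
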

\begin{proof}
 According to~\cite[Theorem 7.2.1]{MW}, the Blob complex $\mathcal{B}_*(M,\mathcal{C}^A)$ satisfies the excision property for closed manifolds and can be computed using a colimit construction (\cite[\S~6.3]{MW}) similar to Definition~\ref{Def:TCHasColimit}. It also converts disjoint union of manifolds to tensor products~\cite[Property 1.3.2]{MW} and $\mathcal{B}_*(D^n,\mathcal{C}^A)\cong A$ by~\cite[Property 1.3.4]{MW} (and~\cite[Example 6.2.10]{MW}). The same properties hold for topological chiral homology as in Proposition~\ref{P:TCHpushout} (or Lemma~\ref{L:TCHpushout2}), the proofs and references to \cite{Francis,L-VI,L-TFT} in the unoriented case being essentially the same as in the framed one (using $O(n)$-homotopy fixed $E_n$-algebras). It follows that one can apply \emph{mutatis mutandis} the proof of Theorem~\ref{T:TCH=CH} to get the  equivalences stated in the proposition. 
\end{proof}
\begin{remark} Let  $A=C_\com(\Omega^{n} Y)$ be the $E_n$-algebra associated to the $n$-fold loop space of an $n$-connective pointed space $Y$.  The Blob complex  $\mathcal{B}_*(M,\mathcal{C}^{C_\com(\Omega^{n} Y)})$ for non-necessarily closed manifolds shall not be mistaken with the Blob complex associated to the $A_\infty$-$n$-category associated to the fundamental groupoid of the space $Y$ in~\cite[\S~6]{MW} (though the construction share some similarities). Indeed, it is claimed~\cite[Theorem 7.3.1]{MW} that the latter construction compute  the chains on the space of all maps $\textrm{Map}(M,Y)$, while the first one, by non-abelian Poincar\'e duality~\cite{L-VI}, $\int_{M} \Omega^{n} Y$ is the chains on the space of maps with compact support $\textrm{Map}_{c}(M,Y)$.
\end{remark}

\smallskip

Theorem~\ref{T:HF=TCH} below and Proposition~\ref{P:blob} suggest that the blob complex should be closely related to Factorization algebras as well. It would be interesting to relate system of fields, n-category and the blob complex in the sense of~\cite{MW} to factorization algebras (with extra properties) in the sense of~\cite{CG}. We plan to investigate these relationship in a future work.

\subsection{Topological chiral homology as a factorization algebra}\label{S:TCH=Fact}
In this section we give a precise relationship between factorization algebras, topological chiral homology for (stably) framed manifolds, and $E_n$-algebras.

\subsubsection{Topological chiral homology and factorization algebras for $n$-framed manifolds}
For any manifold $M$ of dimension $m$ which is $n$-framed (\emph{i.e.} $M\times D^{n-m}$ is framed) and $E_n$-algebra $A$, we can consider the topological chiral homology $\int_M A$ as well as $\int_U A$ for every open subset $U$ in $M$ (equipped with the induced framing). Further, if $U_1,\dots, U_k$ are pairwise disjoint open subsets of $V\in Op(M)$, there is a canonical equivalence (\cite[Theorem 3.5.1]{L-VI}) $$\int_{U_1} A \otimes \cdots \otimes \int_{U_k} A \;\stackrel{\simeq}{\longrightarrow}\int_{U_1\cup \cdots \cup U_k} A \;    $$   and a natural map $\int_{U_1\cup \cdots \cup U_k} A\to \int_V A$ (since any ball in $\bigcup U_i$ is a ball in $V$). Composing these two maps yield natural maps of $E_{m-n}$-algebras
\begin{equation}\label{eq:muTCHisFact}\mu_{U_1,\dots, U_k, V}: \int_{U_1} A \otimes \cdots \otimes \int_{U_k} A \; \longrightarrow \int_V A.\end{equation}

\begin{proposition}\label{P:TCHisFact} Let $M$ be an $n$-framed manifold of dimension $m$.
\begin{enumerate}
\item For any  $E_n$-algebra $A$, the rule $U\mapsto \int_U A$ (for $U$ open in $M$) together with the structure maps $\mu_{U_1,\dots, U_k,V}$~\eqref{eq:muTCHisFact} define a \emph{locally constant} factorization algebra $\mathcal{TC}_M(-,A)$ on $M$, such that $\mathcal{TC}_M(U,A)=\int_U A$ is canonically an $E_{n-m}$-algebra for any open $U$.     
\item The rule $A\mapsto \mathcal{TC}_M(-,A)$ defines  a functor
$\mathcal{TC}_M:{E_n}\textit{-}Alg_\infty\to \mathop{Fac}_M({E_{n-m}}\textit{-}Alg_\infty)$ which fits into the following commutative diagram
$$\xymatrix{{E_n}\textit{-}Alg_\infty \ar[rr]^{\int_M} \ar[d]_{\mathcal{TC}_M} & &{E_{n-m}}\textit{-}Alg_\infty  \\ 
\mathop{Fac}_M({E_{n-m}}\textit{-}Alg_\infty) \ar[rru]_{HF(-)} && } $$
\end{enumerate}
\end{proposition}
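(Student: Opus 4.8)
The plan is to follow closely the structure of the proof that Hochschild chains form a factorization algebra (Lemma~\ref{P:prefactorization} and Theorem~\ref{T:CH=FH}), replacing the gluing and monoidal properties of $CH$ by their topological chiral homology counterparts, and then to read off descent from the cosheaf description of $\int_{(-)}A$ on the Ran space.

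First I would verify that $U\mapsto \int_U A$ is a \emph{prefactorization} algebra. This is formal: topological chiral homology is functorial with respect to open embeddings, so each inclusion $U_i\hookrightarrow V$ induces a map $\int_{U_i}A\to \int_V A$, and the disjoint-union equivalence $\int_{U_1}A\otimes\cdots\otimes\int_{U_k}A\stackrel{\simeq}\to\int_{U_1\cup\cdots\cup U_k}A$ of Lemma~\ref{L:TCHcoproduct} (equivalently~\cite[Theorem 5.3.3.1]{L-VI}), composed with the canonical map $\int_{U_1\cup\cdots\cup U_k}A\to\int_V A$, gives the structure maps~\eqref{eq:muTCHisFact}. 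The compatibility diagram for nested families of pairwise disjoint opens is then exactly the one appearing in Lemma~\ref{P:prefactorization}, and commutes by functoriality of $\int_{(-)}A$. Since each $U$, with the framing induced from $M$, is an $m$-dimensional $n$-framed manifold, $\int_U A$ is canonically an $E_{n-m}$-algebra, and the maps $\mu_{U_1,\dots,U_k,V}$ are maps of $E_{n-m}$-algebras as noted before the statement; hence $\mathcal{TC}_M(-,A)$ is a prefactorization algebra valued in $E_{n-m}\textit{-}Alg_\infty$.

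Next I would establish \emph{local constancy}: for $U\subset V$ both homeomorphic to a ball, the map $\int_U A\to\int_V A$ is an equivalence. This is precisely the statement already invoked in the proof of Lemma~\ref{L:TCHhandling}, which follows from the $\mathbb{E}(M)$-algebra structure attached to the fixed $E_n$-algebra $A$ and~\cite[Theorem 5.2.4.9]{L-VI} via the Kister–Mazur isotopy. The heart of the argument, and the step I expect to be the main obstacle, is the \emph{\v{C}ech/descent} condition: for every open $U\subset M$ and every factorizing cover $\mathcal{U}$ of $U$, the canonical map $\check{C}(\mathcal{U},\mathcal{TC}_M(-,A))\to\int_U A$ must be a quasi-isomorphism. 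I would deduce this from the cosheaf description of topological chiral homology, exactly as in the proof of Proposition~\ref{P:Fac=En}: by~\cite[Theorem 5.3.4.10]{L-VI} the $E_n$-algebra $A$ gives rise to a factorizable cosheaf $\underline{A}$ on $\text{Ran}(M)$ whose sections over $\text{Ran}(U)$ compute $\int_U A$, and a factorizing cover $\mathcal{U}$ of $U$ induces the cover $P\mathcal{U}$ of $\text{Ran}(U)$. The disjoint-union equivalences identify $\check{C}(\mathcal{U},\mathcal{TC}_M(-,A))$ with $\check{C}(P\mathcal{U},\underline{A})$, and~\cite[Theorem 5.3.4.14]{L-VI} then yields $\check{C}(P\mathcal{U},\underline{A})\stackrel{\simeq}\to\underline{A}(\text{Ran}(U))\cong\int_U A$. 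The subtle part is purely bookkeeping: matching the two \v{C}ech complexes term by term and checking naturality of all the identifications, since the analytic content is already packaged in Lurie's results.

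Finally, part (2) is essentially formal once (1) is in place. Functoriality of $\int_{(-)}A$ in the coefficient $E_n$-algebra makes $A\mapsto\mathcal{TC}_M(-,A)$ a functor $E_n\textit{-}Alg_\infty\to\mathop{Fac}_M(E_{n-m}\textit{-}Alg_\infty)$, and commutativity of the triangle is immediate from the definition of factorization homology as the pushforward to a point: $HF(\mathcal{TC}_M(-,A))\cong\mathcal{TC}_M(M,A)=\int_M A$, using that $M$ is itself a factorizing cover of $M$.
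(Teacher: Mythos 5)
Your outline is correct, and parts of it (the prefactorization structure via Lemma~\ref{L:TCHcoproduct}, local constancy via the $\mathbb{E}(M)$-algebra structure and Kister--Mazur, and the formal part (2)) coincide with what the paper does. But for the crucial \v{C}ech/descent step you take a genuinely different route. You propose to deduce descent for an arbitrary factorizing cover $\mathcal{U}$ of an arbitrary open $U\subset M$ directly from Lurie's Ran-space machinery, identifying $\check{C}(\mathcal{U},\mathcal{TC}_M(-,A))$ with $\check{C}(P\mathcal{U},\underline{A})$ and invoking the cosheaf theorem. The paper instead argues geometrically: it first produces, on each ball $B$, the locally constant factorization algebra $\mathcal{F}_B$ supplied by Proposition~\ref{P:Fac=En}, proves $\mathcal{F}_B(U)\cong\int_U A$ for all opens $U\subset B$ by induction over handle decompositions (comparing the excision property of factorization algebras, Lemma~\ref{L:Enmodule}, with the gluing property of topological chiral homology, Proposition~\ref{P:TCHpushout}, starting from spheres $S^i\times D^{m-i}$), and then assembles a global factorization algebra on $M$ by descent along a handlebody cover before identifying it with $\mathcal{TC}_M$. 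Your route is shorter and more uniform, and it is essentially the argument the paper itself uses in the proof of Proposition~\ref{P:Fac=En} --- but only there for \emph{convex} covers of \emph{convex} opens in $\R^n$. Extending it as you propose requires checking that an arbitrary Costello--Gwilliam factorizing cover of an arbitrary open $U$ induces a cover of $\mathrm{Ran}(U)$ to which Lurie's descent theorem applies, and that the two \v{C}ech complexes (whose terms are now $\int_{U_{i_1}\cap\cdots\cap U_{i_k}}A$ for opens that need not be balls) match; this is genuine content rather than the ``pure bookkeeping'' you call it, though it does go through. What the paper's longer argument buys in exchange is the explicit comparison with the locally constant factorization algebras of Proposition~\ref{P:Fac=En} and the excision identities that are reused in Theorem~\ref{T:TCH=CH} and Theorem~\ref{T:HF=TCH}; what yours buys is independence from handle decompositions and a cleaner reduction to a single citation.
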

In other words topological chiral homology computes the factorization homology of $\mathcal{TC}_M$.

\smallskip

The idea behind Proposition~\ref{P:TCHisFact} is that for any submanifold $U$ of $M$ and $E_n$-algebra $A$, we can cover $U$ by a (coherent family of) open balls on which $A$ defines a locally constant factorization algebra. Gluing these data defines a  factorization algebra on $U$ whose homology can be computed from the balls by using the gluing/locality lemma given above (Lemma~\ref{L:Enmodule}). Since   the  topological chiral homology is equivalent to $A$ on balls and satisfy a similar locality axiom, they agree on $U$.
\begin{proof}[Proof of Proposition \ref{P:TCHisFact}] 
For any open subset $V$ of $M$, the topological chiral homology $\int_V A$ is the colimit $\colim_{} \psi_V$,  where $\psi_V:N(\emph{\emph{\text{Disj}}}(V))\to \hkmod$ is the diagram given by the formula $\psi_V(V_1\cup\cdots\cup V_n)=\int_{V_1}A \otimes \cdots \otimes \int_{V_n} A$ where $V_1,\dots, V_n$ are disjoint open sets homeomorphic to a ball (Definition~\ref{Def:TCHasColimit}). In particular, the structure maps $\mu_{U_1,\dots,U_n,V}$ are induced by a map of colimits and it is easy to check that they are natural with respect to open embeddings and thus defines a prefactorization algebra. Hence $\mathcal{TC}_M(-,A)$ is functorially (in $A$) a \emph{prefactorization} algebra on $M$ with value in $E_{n-m}$-algebras.

\smallskip

To prove that $\mathcal{TC}_M(-,A)$ is actually a \emph{factorization} algebra,  the idea is first to use a handle body decomposition to define another locally constant factorization algebra $\mathcal{F}_M$ on $M$ whose factorization homology $HF(\mathcal{F},M)$ is equivalent to $\int_M A$ and then to prove that this factorization algebra is indeed equivalent to  $\mathcal{TC}_M$. Note that by Lemma~\ref{L:EnFact}, if $\mathcal{F}_M$ is a locally constant factorization algebra on $M$ with value in $E_d$-algebras such that $\int_M A \cong HF(\mathcal{F}_M,M)$, then  we have a natural factorization algebra $\mathcal{F}_{M\times \R^d}$ on $M\times \R^d$  and further $HF(\mathcal{F}_{M\times \R^d},M\times \R^d)\cong \int_M A$ as an $E_d$-algebra. 

\smallskip

We start with the case of open balls.
By definition of topological chiral homology, for every manifold $B$ homeomorphic to an $m$-dimensional ball, there is a natural equivalence $\int_B A \cong A$ of $E_{n-m}$-algebras (where the $E_{n-m}$-algebra structure of $A$ is by restriction of structure), see~\cite{L-VI,L-TFT}. By a result of Lurie~\cite{L-VI} (also see \cite[Proposition 3.4.1]{Co} or Proposition~\ref{P:Fac=En}), there is a locally constant factorization algebra $\mathcal{F}_B$ on $B$ whose factorization homology is isomorphic to $A$.  

We now  prove that the factorization algebra $\mathcal{F}_B$ is equivalent to $\mathcal{TC}_B$, \emph{i.e.}, that there  are equivalences of prefactorization algebras $ \mathcal{F}_B(U) \cong \int_U A$ (for any open subset $U\subset B$). Note that $\mathcal{F}_{B}(U)\cong {\mathcal{F}_B}_{|U}(U)\cong HF({\mathcal{F}_B}_{|U}, U)$. To shorten notation, we denote $\mathcal{F}_U$ the factorization algebra ${\mathcal{F}_B}_{|U}$ induced on $U$ by restriction of $\mathcal{F}_B$ to $U$.
  One has $A^{\otimes n} \cong \int_{U_1} A \otimes \cdots \int_{U_l} A \cong \int_{U_1\cup \cdots \cup U_l} A$ for any pairwise disjoint open subsets $U_1,\dots,U_n$ of $B$ homeomorphic to a ball (\cite{L-VI}) and $$\mathcal{F}_B\Big(\coprod_{i=1}^l B_i\Big)\;\cong \;\bigotimes_{i=1}^l \mathcal{F}_{B}(B_i)\;\cong \;\bigotimes_{i=1}^l A$$ for any open balls $B_i\subset B$ (since $\mathcal{F}_B$ is locally constant). Hence $\mathcal{F}_B(U)\cong \int_U A$ when $U$ is a disjoint union of balls. 
In particular, this equivalence holds for any (framed) embedding of $S^0\times D^m$ in $B$. We now prove, by induction on $i$, that $$\mathcal{F}_B(S^{i}\times D^{m-i})\;\cong\; HF({\mathcal{F}_{S^i\times D^{m-i}}}, S^i\times D^{m-i})\;\cong\; \int_{S^i\times D^{m-i}} A$$ for any embedding of $S^{i}\times D^{m-i}$ in $B$ (where $i\geq 1$).   We have a decomposition of the sphere $S^i\times D^{m-i}$ as $S^i\times D^{m-i}\cong D^+ \cup_{S^{i-1}\times D^{i-m}\times D^1} D^-$ where $D^+$ and $D^-$ are homeomorphic to framed open subballs of $B$. By above, the factorization homology of $\mathcal{F}_{D^+}$ and  $\mathcal{F}_{D^-}$ are equivalent to $A$ and the factorization homology of  $\mathcal{F}_{S^{i-1}\times D^{i-m}\times D^1}$ is equivalent to $\int_{S^{i-1}\times D^{i-m}} A$ as an $E_{1}$-algebra by the induction hypothesis.  Now the equivalence $HF({\mathcal{F}_{S^i\times D^{m-i}}}, S^i\times D^{m-i})\cong \int_{S^i\times D^{m-i}} A$ follows from Lemma~\ref{L:Enmodule} and  Proposition~\ref{P:TCHpushout}. This completes the inductive step.  For general open submanifold  $U$ of $B$, we use a handle decomposition as in the proof of Theorem~\ref{T:TCH=CH} (using Lemma~\ref{L:Enmodule} instead of Theorem~\ref{T:derivedfunctor} and arguments  similar to what has just been explained above for spheres) to obtain, in a similar way, that $$\int_U A \;\cong\; HF\big(\mathcal{F}_U,U\big)\; \cong \; \mathcal{F}(U).$$
This proves that $\mathcal{TC}_B\cong \mathcal{F}_B$ and thus that $\mathcal{TC}_B$ is a factorization algebra. 

\medskip

 If $M=\coprod_{i=1}^l B_i$ is a disjoint union of balls of dimension $m$, then we also deduce that $\mathcal{TC}_{M}$ is a factorization algebra since 
 $\int_{U_1} A \otimes \cdots \otimes \int_{U_l} A \cong \int_{U_1\coprod \dots \coprod U_l} A$ for any  open subsets $U_i\subset B_i$.
In particular, this applies to the case of $S^0\times D^d$ ($d\geq 0$). Now let $M\cong S^m\times D^d $ ($d\geq 0$, $m\geq 1$) be framed (we do not assume it is embedded as an open set of an euclidean space). We work  by induction on $m$ so that we may assume the result of the proposition is known for $S^{m-1}\times D^l$. 

Assume we have a cover $U\cup V$ of a space $X$ and factorization algebras $\mathcal{B}_U$, $\mathcal{B}_V$, $\mathcal{B}_{U\cap V}$ on $U$, $V$ and $U\cap V$ with equivalences $\mathcal{B}_{U\cap V} \stackrel{\simeq}\longrightarrow {\mathcal{B}_U}_{|U\cap V}$ and $\mathcal{B}_{U\cap V} \stackrel{\simeq}\longrightarrow {\mathcal{B}_V}_{|U\cap V}$. Then we can glue these factorization algebras to define a factorization algebra on $X$, see~\cite{CG} (note that this descent property can be generalized to arbitrary covers). 
We wish to apply this to a decomposition of the $m$-sphere $S^m$ as $S^m\cong D^+ \cup_{S^{m-1}\times D^1} D^-$ where $D^+$ and $D^m$ are homeomorphic to framed open balls. By the above analysis, there are locally constant factorization algebras $\mathcal{F}_+$, $\mathcal{F}_{-}$ on $D^+\times D^d$ and $D^-\times D^d$ which are equivalent to $\mathcal{TC}_{D^{m+d}}(A)$. Restricting these equivalences to $S^{m-1}\times D^1 \times D^{d}$, we get an equivalence $${\mathcal{F}_{+}}_{|S^{m-1}\times D^1 \times D^{d}} \stackrel{\simeq}\longrightarrow {\mathcal{F}_{-}}_{|S^{m-1}\times D^1 \times D^{d}}.$$  Since $S^{m-1}\times D^1 \times D^d$ is the intersection of $D^+\times D^d$ with $D^{-}\times D^d$, we thus get a locally constant factorization algebra $\mathcal{F}$ on their union $S^m\times D^d$. 
It follows from Lemma~\ref{L:Enmodule} and Proposition~\ref{P:TCHpushout}, that $\mathcal{F}(S^m\times  D^d)\cong \int_{S^m\times D^d} A$ as an $E_d$-algebra. The equivalences $\mathcal{F}(U)\cong \int_U A$ follows for any open proper  subset of $S^m\times D^d$ by the above case for open balls (or by mimicking the proof). It follows that $\mathcal{TC}_{S^m\times D^d} \cong \mathcal{F}$ and thus is a factorization algebra.

\smallskip

The case of general $n$-framed manifolds $M$ is done similarly. Using a handle decomposition, induction and the descent property of factorization algebras we build a locally constant factorization algebra $\mathcal{F}_M$ on $M$ and then prove, by using handle decomposition of any open subset $U$ of $M$, that $\mathcal{F}_M(U) \cong \int_U M$; the argument is similar to the case of spheres (as in the proof of Theorem~\ref{T:TCH=CH}). 
 
 \medskip
 
 The factorization algebra $\mathcal{TC}_M(-,A)$ is locally constant by construction (since $\int_B A \cong A$ for any ball $B$) and its factorization homology is precisely the topological chiral homology of $M$ with value in $A$. Further $\mathcal{TC}_M(-,A)$ is functorial in $A$ since topological chiral homology is.      
\end{proof}

\begin{remark} \label{R:TCH=HF=CH}
Theorem~\ref{T:TCH=CH} follows easily from Proposition~\ref{P:TCHisFact} and Corollary~\ref{C:HFact=CH}.
\end{remark}

\subsubsection{Topological chiral homology and factorization algebras for $\mathbb{E}_n[M]$-algebras}\label{SEC:En[M]-alg}
We now go beyond the notion of $n$-framed manifolds $M$, and, more generally, consider locally constant algebras over an operad $\mathbb{E}_n[M]$, for which there might not exist a globally defined $E_n$-algebra.

\smallskip 

Following Lurie~\cite{L-VI}, topological chiral homology can also be defined for a (locally constant) family of $E_n$-algebras parametrized by the points in $M\times D^{m-n}$ even if $M$ is \emph{not}  $n$-framed. Such  families   objects are (locally constant) algebras over an \emph{($\infty$-)operad $\mathbb{E}_n[M]:=\mathbb{E}_{M\times D^{n-m}}^{\otimes}$, the operad of little $n$-cubes in $M\times D^{m-n}$}, see~\cite[Definition 5.2.4.1]{L-VI} (here $M$ is still of dimension $m$, and of course one can choose $m=n$). Note that there is a canonical map of $\infty$-operads from $\mathbb{E}_n[M]$ to the the operad $Comm$ governing CDGAs. Thus a CDGA is an  $\mathbb{E}_n[M]$-algebra in a canonical way.

\medskip

By~\cite[Theorem 5.2.4.9]{L-VI}, we can also describe an $\mathbb{E}_n[M]$-algebra as a locally constant  $N({\text{Disk}}(M\times D^{n-m}))$-algebra. Indeed, by~\cite[Remark 5.3.2.7]{L-VI} we can extend Definition~\ref{Def:TCHasColimit} to an $\mathbb{E}_n[M]$-algebra $\mathcal{A}$ as well by replacing the last equivalence in~\eqref{eq:DefTCH} by $\mathcal{A}(V_1)\otimes \cdots \otimes \mathcal{A}(V_n)$. That is, there is an equivalence 
\begin{equation}\label{eq:DefTCHnotframed} \int_M \mathcal{A} \; \cong \; \colim \int_{V_1}A \otimes \cdots \otimes \int_{V_n} A\cong \colim\mathcal{A}(V_1)\otimes \cdots \otimes \mathcal{A}(V_n)\end{equation} where $V_1,\dots, V_n$ are disjoint open sets homeomorphic to a ball.

\smallskip

\begin{lemma}[Lurie~\cite{L-VI}]\label{L:TCHpushout2} Let $M$ be a manifold and $\mathcal{A}$ be an $\mathbb{E}[M]$-algebra. Assume that  there is a codimension $1$ submanifold  $N$ of $M$  with a trivialization $N\times D^1$ of its neighborhood  such that $M$ is decomposable as $M=X\cup_{N\times D^1}Y$ where $X,Y$ are submanifolds  of $M$ glued along  $N \times D^1$. Then
\begin{enumerate}
\item $\int_{N\times D^1} \mathcal{A}$ is an $E_1$-algebra and $\int_X \mathcal{A}$ and $\int_{Y} \mathcal{A}$ are right and left modules over $\int_{N\times D^1} \mathcal{A}$.
\item The natural map $$\int_X \mathcal{A}\; \mathop{\otimes}_{\int_{N\times D^1} \mathcal{A}}^{\mathbb{L}} \; \int_Y \mathcal{A} \; \longrightarrow \; \int_{M} \mathcal{A}$$ is an equivalence.
\end{enumerate}
\end{lemma}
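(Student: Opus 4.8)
The plan is to reduce Lemma~\ref{L:TCHpushout2} to the already-established machinery of locally constant factorization algebras, thereby deducing it from Lemma~\ref{L:EnFact} and Lemma~\ref{L:Enmodule}. First I would observe that an $\mathbb{E}[M]$-algebra $\mathcal{A}$ is, by~\cite[Theorem 5.2.4.9]{L-VI}, the same data as a locally constant $N(\text{Disk}(M\times D^{n-m}))$-algebra, and that such data assembles into a locally constant factorization algebra on $M\times D^{n-m}$ by a Ran-space argument entirely analogous to the one used in Proposition~\ref{P:TCHisFact} (where I no longer require a global framing, only the local family of $E_n$-structures encoded in the operad $\mathbb{E}[M]$). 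Pushing forward along the projection $\pi_1\colon M\times D^{n-m}\to M$ and invoking Lemma~\ref{L:EnFact}, I obtain a locally constant factorization algebra $\mathcal{F}_{\mathcal{A}}$ on $M$ whose factorization homology over any open $U\subseteq M$ computes $\int_U \mathcal{A}$, and whose value on $N\times D^1$ is the $E_1$-algebra $\int_{N\times D^1}\mathcal{A}$; this establishes part~(1).

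Granting this identification, part~(2) is then a direct application of the excision Lemma~\ref{L:Enmodule}. Concretely, I would apply the decomposition $M=X\cup_{N\times D^1} Y$ and the equivalences $HF(\mathcal{F}_{\mathcal{A}},M)\cong \int_M \mathcal{A}$, $HF({\mathcal{F}_{\mathcal{A}}}_{|X},X)\cong \int_X\mathcal{A}$, $HF({\mathcal{F}_{\mathcal{A}}}_{|Y},Y)\cong \int_Y\mathcal{A}$, and $HF({\mathcal{F}_{\mathcal{A}}}_{|N\times D^1},N\times D^1)\cong \int_{N\times D^1}\mathcal{A}$, all of which follow from the previous paragraph. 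Lemma~\ref{L:Enmodule} supplies exactly the natural equivalence
$$HF(\mathcal{F}_{\mathcal{A}}) \;\cong\; HF({\mathcal{F}_{\mathcal{A}}}_{|X}) \mathop{\otimes}_{HF({\mathcal{F}_{\mathcal{A}}}_{|N\times D^1})}^{\mathbb{L}} HF({\mathcal{F}_{\mathcal{A}}}_{|Y}),$$
which, under the above identifications, is precisely the asserted equivalence $\int_X\mathcal{A}\otimes^{\mathbb{L}}_{\int_{N\times D^1}\mathcal{A}}\int_Y\mathcal{A}\to\int_M\mathcal{A}$. The module structures needed to make sense of the derived tensor product are the ones produced by Lemma~\ref{L:Enmodule} from the inclusion $N\times D^1\hookrightarrow X$ (and into $Y$), which match those induced by the prefactorization structure maps $\mu_{-,-,-}$.

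The main obstacle I anticipate is the first step: verifying that a locally constant $\mathbb{E}[M]$-algebra genuinely yields a locally constant factorization algebra on $M\times D^{n-m}$ with $HF$ computing $\int_{(-)}\mathcal{A}$, \emph{without} the global framing hypothesis that was available in Proposition~\ref{P:TCHisFact}. In the framed case one could transport a single global $E_n$-algebra around via the framing; here one must instead work directly with the cosheaf $\underline{\mathcal{A}}$ on the Ran space $\text{Ran}(M\times D^{n-m})$ furnished by~\cite[Theorem 5.3.4.10]{L-VI} and check that restricting to disjoint unions of balls recovers the factorization-algebra gluing axiom, using~\cite[Theorem 5.3.4.14]{L-VI} to match \v{C}ech complexes with the defining colimit~\eqref{eq:DefTCHnotframed}. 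Once this Ran-space translation is in place, the handle-decomposition inductions of Proposition~\ref{P:TCHisFact} carry over \emph{mutatis mutandis}, since they only ever used local triviality of the family of algebras rather than a global framing; I would simply cite Lurie~\cite[Section 5.3.4]{L-VI} and the proof of Proposition~\ref{P:TCHisFact} for this verification, as the statement is attributed to Lurie.
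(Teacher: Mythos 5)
Your proposal runs the logic of the paper backwards and, as written, is circular. You build a locally constant factorization algebra $\mathcal{F}_{\mathcal{A}}$ on $M$ out of $\mathcal{A}$ and claim $HF(\mathcal{F}_{\mathcal{A}},U)\cong \int_U\mathcal{A}$ for all open $U$, citing ``the proof of Proposition~\ref{P:TCHisFact}'' and Theorem~\ref{T:HF=TCH} for this identification. But in the paper that identification is exactly what is \emph{deduced from} the present lemma: the handle-decomposition induction in the proof of Proposition~\ref{P:TCHisFact} matches $HF(\mathcal{F},-)$ against $\int_{(-)}A$ on spheres and general opens by invoking Lemma~\ref{L:Enmodule} on the factorization side \emph{and} Proposition~\ref{P:TCHpushout} on the topological chiral homology side, and Proposition~\ref{P:TCHpushout} is itself obtained as a consequence of Lemma~\ref{L:TCHpushout2}. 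So the step you defer to ``carries over \emph{mutatis mutandis}'' is precisely the step that uses the excision statement you are trying to prove. You do flag this as the main obstacle and point at Lurie's Ran-space cosheaf (Theorems 5.3.4.10 and 5.3.4.14 of \cite{L-VI}), and that route can indeed be made to work --- but then the content of the proof is entirely in that citation, and you would still need to verify, without excision, that the rule $U\mapsto\int_U\mathcal{A}$ satisfies the \v{C}ech gluing condition for arbitrary factorizing covers of arbitrary opens, not just of balls; that verification is not supplied.

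The paper's own argument avoids all of this and is considerably more elementary. For~(1), it uses the homeomorphisms $N\times D^1\cong N\times(0,1)$ and $X\cong X_0:=X\setminus N\times[0,1)$; since $X_0$ and $N\times(0,1)$ are disjoint opens of $X$, the defining colimit of Definition~\ref{Def:TCHasColimit} directly produces the structure map $\int_{X_0}\mathcal{A}\otimes\int_{N\times(0,1)}\mathcal{A}\to\int_X\mathcal{A}$, giving the module structure with no factorization-algebra input. For~(2), it identifies the two-sided bar construction computing $\int_X\mathcal{A}\otimes^{\mathbb{L}}_{\int_{N\times D^1}\mathcal{A}}\int_Y\mathcal{A}$ with the colimit of $\psi_M$ restricted to the sub-poset of finite disjoint families of balls each of which lies entirely in $X_0$, in $Y_0$, or in $N\times D^1$, and then proves this sub-poset is cofinal in $N(\text{Disj}(M))$: any ball $B_j$ meeting $N\times\{0\}$ contains a smaller ball $U_j\subset B_j\cap(N\times D^1)$, and local constancy of $\mathcal{A}$ makes $\int_{U_j}\mathcal{A}\to\int_{B_j}\mathcal{A}$ an equivalence. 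If you want a self-contained proof that fits the paper's dependency order, this cofinality argument on the defining colimit is the one to give; your factorization-algebra route should instead be viewed as a consistency check available only \emph{after} Theorem~\ref{T:HF=TCH} has been established.
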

\begin{proof}
The lemma is explained in~\cite{L-VI} after Theorem 5.3.4.14 (where $\int_{N\times D^1} \mathcal{A}$ is simply denoted by $\int_N \mathcal{A}$). Let us give some more details. There is a homeomorphism $N\times D^1 \cong N\times (0,1)$ and similarly (since $X$ has a neighborhood homeomorphic to $N\times D^1$) a homeomorphism $X\cong X_0$ where $X_0=X\setminus N\times [0,1)$ (we also denote $Y_0=Y\setminus N\times (-1,0]$). Since $N\times (-1,0)$ and $N\times (0,1)$ are disjoint open sets in $N\times D^1$, we get a natural map $\int_{X_0} \mathcal{A} \otimes \int_{N\times (0,1)} \mathcal{A} \to \int_{X} \mathcal{A}$. Then the module structure  on $\int_{X} \mathcal{A}$ is given the composition
$$\int_{X} \mathcal{A} \otimes \int_{N\times D^1}\mathcal{A}\stackrel{\simeq} \longrightarrow  \int_{X_0} \mathcal{A} \otimes \int_{N\times (0,1)} \mathcal{A} \longrightarrow \int_{X} \mathcal{A} $$ and similarly for the module structure of $\int_{Y} \mathcal{A}$ and the $E_1$-structure of $\int_{N\times D^1}\mathcal{A}$.

For the second statement, we need to prove that  the canonical map
\begin{equation} \label{eq:L:TCHpushout2}
\colim \Big(\int_{X} \mathcal{A} \, \otimes \int_{N\times D^1} \mathcal{A} \, \otimes \int_{Y} \mathcal{A} \rightrightarrows \int_{X} \mathcal{A} \, \otimes \int_{Y} \mathcal{A}\Big) \longrightarrow \int_{M} \mathcal{A}
\end{equation} induced by the two module structures is an equivalence.
Using the equivalences of modules $\int_{X_0}\mathcal{A}\stackrel{\simeq}\to \int_X \mathcal{A}$,  $\int_{Y_0}\mathcal{A}\stackrel{\simeq}\to \int_Y \mathcal{A}$ and Definition~\ref{Def:TCHasColimit} (or more precisely the equivalence~\eqref{eq:DefTCHnotframed}), the left hand side of the map~\eqref{eq:L:TCHpushout2} is computed by the  colimit 
$$\colim_{\scriptsize \begin{array}{l} U_1,\dots, U_\ell \in \widetilde{\Ball} \\ \mbox{pairwise disjoint}\end{array} } \int_{U_1}A \otimes \cdots\otimes \int_{U_\ell} A $$
where $\widetilde{\Ball}$ is the set of open sets $U\in Op(M)$ homeomorphic to a ball  such that $U$ is either in $X_0$ or $Y_0$ or in $N\times D^1$. Thus, denoting $\Ball(M)$ the set of open sets homeomorphic to a ball in $M$, we are left to prove that the canonical map
 \begin{equation} \label{eq:LTCHpushout3} \colim_{\scriptsize \begin{array}{l} U_1,\dots, U_\ell \in \widetilde{\Ball} \\ \mbox{pairwise disjoint}\end{array} } \int_{U_1}A \otimes \cdots\otimes \int_{U_\ell} A \, \longrightarrow \hspace{-20pt}\colim _{\scriptsize \begin{array}{l} V_1,\dots, V_i \in \Ball(M) \\ \mbox{pairwise disjoint}\end{array} } \int_{V_1}A \otimes \cdots\otimes \int_{V_i} A \to \int_{V}A \end{equation} is an equivalence. 
For every family $B_1,\dots, B_k$ of pairwise disjoint open subsets of $M$ homeomorphic to a ball which intersects $N\times \{0\}$, choosing a point $x_j$ in each $B_j\cap \big(N\times \{0\}\big)$, we can find disjoint open subsets (homeomorphic to a ball)  $U_1,\dots, U_k$ included in $N\times D^1$, such that  $U_j$ contains  $x_j$ and is included in $B_j$.   The inclusion map $U_j\hookrightarrow B_j$ induces a canonical  morphism $ \int_{U_j} \mathcal{A} \to \int_{B_j} \mathcal{A}$ which is an equivalence since there is a natural equivalence $\int_B A \cong A$ for any ball.    It follows that the map  of (homotopy) colimits~\eqref{eq:LTCHpushout3}  is an equivalence.
\end{proof}

The generalization of Proposition~\ref{P:TCHisFact} to $\mathbb{E}_n[M]$-algebras is: 
\begin{theorem}\label{T:HF=TCH}Let $M$ be a manifold of dimension $m$ and $d \in \N$ an integer. 
\begin{enumerate}
\item The rule $\mathcal{A}\mapsto \Big(U\mapsto \int_U \mathcal{A} \Big)$ defines a functor of $(\infty,1)$-algebras $\mathcal{TC}_M: \mathbb{E}_d[M]\textit{-}Alg \to \mathop{Fac}^{lc}_{M}(E_d\textit{-}Alg)$ which fits into a commutative diagram
$$\xymatrix{\mathbb{E}_d[M]\textit{-}Alg \ar[rr]^{\int_M} \ar[d]_{\mathcal{TC}_M} & &{E_{d}}\textit{-}Alg_\infty  \\ 
\mathop{Fac}^{lc}_M({E_{d}}\textit{-}Alg_\infty) \ar[rru]_{HF(-)} && } $$
\item The functor $\mathcal{TC}_M: \mathbb{E}_d[M]\textit{-}Alg \to \mathop{Fac}^{lc}_{M}(E_d\textit{-}Alg)$ is an equivalence of $(\infty,1)$-categories. 
\end{enumerate}
\end{theorem}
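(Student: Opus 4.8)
The plan is to prove part (1) by running the proof of Proposition~\ref{P:TCHisFact} in the parametrized setting, and to deduce part (2) by factoring $\mathcal{TC}_M$ as a composite of two equivalences through factorization algebras on $M\times \R^d$. For part (1), first observe that the equivalences $\int_{U_1}\mathcal{A}\otimes\cdots\otimes\int_{U_k}\mathcal{A}\stackrel{\simeq}\to \int_{U_1\cup\cdots\cup U_k}\mathcal{A}\to \int_V\mathcal{A}$, which hold for an $\mathbb{E}_d[M]$-algebra by the colimit description~\eqref{eq:DefTCHnotframed}, assemble into structure maps $\mu_{U_1,\dots,U_k,V}$ making $U\mapsto \int_U\mathcal{A}$ a prefactorization algebra; it is locally constant because $\int_B\mathcal{A}\cong\mathcal{A}(B)$ on balls and this value is unchanged under enlarging a ball inside another, and the $E_d$-structure on each $\int_U\mathcal{A}$ is supplied by the $\R^d$-directions exactly as in Lemma~\ref{L:EnFact}. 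That this prefactorization algebra satisfies \v{C}ech descent (hence is a genuine factorization algebra) is proved verbatim as in Proposition~\ref{P:TCHisFact}: one builds a locally constant $\mathcal{F}_M$ by handle decomposition, now using the gluing Lemma~\ref{L:TCHpushout2} in place of Proposition~\ref{P:TCHpushout}, checks $HF(\mathcal{F}_U,U)\cong\int_U\mathcal{A}$ by induction over handles (starting from disjoint balls, then spheres $S^i\times D^{m-i}$, using Lemma~\ref{L:Enmodule} and Lemma~\ref{L:TCHpushout2}), and identifies $\mathcal{F}_M\cong\mathcal{TC}_M(\mathcal{A})$. Since $HF(\mathcal{TC}_M(\mathcal{A}),M)=\mathcal{TC}_M(\mathcal{A})(M)=\int_M\mathcal{A}$, the triangle in (1) commutes, and functoriality in $\mathcal{A}$ is inherited from the functoriality of $\int_{(-)}$.

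For part (2), the strategy is to interpose $\mathop{Fac}^{lc}_{M\times\R^d}(\hkmod)$. First I would prove a relative version of Proposition~\ref{P:Fac=En}: its proof applies essentially unchanged with $\R^n$ replaced by $M\times\R^d$, using a factorizing basis of opens homeomorphic to disks (in place of convex subsets), Lurie's identification of $\mathbb{E}_{M\times\R^d}^\otimes$-algebras with locally constant $N(\text{Disk}(M\times\R^d))$-algebras~\cite[Theorem 5.2.4.9]{L-VI}, and the Ran-space cosheaf description~\cite[Theorem 5.3.4.14]{L-VI} to produce the unique extension from the basis. This yields an equivalence $q:\mathbb{E}_d[M]\textit{-}Alg\stackrel{\simeq}\to \mathop{Fac}^{lc}_{M\times\R^d}(\hkmod)$ whose underlying object assigns $\int_O\mathcal{A}$ to each disk-like $O$. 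Composing with the equivalence ${\pi_1}_*:\mathop{Fac}^{lc}_{M\times\R^d}(\hkmod)\stackrel{\simeq}\to\mathop{Fac}^{lc}_M(E_d\textit{-}Alg_\infty)$ of Lemma~\ref{L:EnFact} gives an equivalence $\mathbb{E}_d[M]\textit{-}Alg\stackrel{\simeq}\to\mathop{Fac}^{lc}_M(E_d\textit{-}Alg_\infty)$.

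It then remains to identify this composite with the functor $\mathcal{TC}_M$ of part (1). Writing $\mathcal{G}=q(\mathcal{A})$, one has ${\pi_1}_*\mathcal{G}(U)=\mathcal{G}(U\times\R^d)=HF(\mathcal{G}|_{U\times\R^d},U\times\R^d)$, and the handle-decomposition argument of part (1) (again via Lemma~\ref{L:TCHpushout2}) identifies this $E_d$-algebra naturally with $\int_U\mathcal{A}$, compatibly with the structure maps $\mu_{U_1,\dots,U_k,V}$. Hence ${\pi_1}_*\circ q\simeq\mathcal{TC}_M$ as $(\infty,1)$-functors, so $\mathcal{TC}_M$ is an equivalence, proving (2).

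The hard part will be the last identification promoted to the level of $(\infty,1)$-functors: upgrading the objectwise, handle-by-handle equivalences $HF(\mathcal{G}|_{U\times\R^d},U\times\R^d)\cong\int_U\mathcal{A}$ to a \emph{natural} equivalence of locally constant factorization algebras valued in $E_d$-algebras. The genuine difficulty is that in the $\mathbb{E}_d[M]$-setting $M$ need not be framed and there is no globally defined $E_{m+d}$-algebra, so one must carry the $E_d$-structure and the parametrized excision of Lemma~\ref{L:TCHpushout2} coherently through the entire handle induction and through the naturality squares for open embeddings, rather than merely matching underlying objects.
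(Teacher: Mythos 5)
Your part (1) follows the paper's proof essentially verbatim: the paper likewise reduces via Lemma~\ref{L:EnFact} to producing a locally constant factorization algebra on $M\times D^d$ and then reruns the proof of Proposition~\ref{P:TCHisFact} with $\mathcal{A}$ in place of $A$ and Lemma~\ref{L:TCHpushout2} in place of Proposition~\ref{P:TCHpushout}. For part (2) your route is correct but organized differently. You factor $\mathcal{TC}_M$ as ${\pi_1}_*\circ q$, where $q$ is a relative version of Proposition~\ref{P:Fac=En} over $M\times\R^d$; the paper instead constructs an explicit inverse $\mathcal{EA}_M$ by restricting a locally constant factorization algebra to opens homeomorphic to disks (yielding a locally constant $N(\text{Disk}(M\times\R^d))$-algebra, hence an $\mathbb{E}_d[M]$-algebra by Lurie's Theorem 5.2.4.9) and then checks the two composites: $\mathcal{EA}_M\circ\mathcal{TC}_M\cong\mathrm{id}$ is immediate from $\int_B\mathcal{A}\cong\mathcal{A}(B)$ on balls, and $\mathcal{TC}_M\circ\mathcal{EA}_M\cong\mathrm{id}$ follows by descent: both sides are factorization algebras determined by their restrictions to a factorizing basis of geodesically convex opens, on which they agree naturally. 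This last observation also disposes of what you flag as ``the hard part'': once part (1) guarantees that $\mathcal{TC}_M(\mathcal{A})$ is a locally constant factorization algebra, you do not need to carry the handle induction coherently through the identification of ${\pi_1}_*\circ q$ with $\mathcal{TC}_M$ --- it suffices to match the two on small disk-like opens, where both are canonically $\mathcal{A}(B)$, and invoke uniqueness of the extension from a factorizing basis. What your factorization buys is a cleaner conceptual picture (everything passes through $\mathop{Fac}^{lc}_{M\times\R^d}(\hkmod)$, making the role of Lemma~\ref{L:EnFact} symmetric on both sides); what the paper's version buys is that it never needs the full relative analogue of Proposition~\ref{P:Fac=En} as a separate statement, only the Disk-algebra equivalence plus the basis-descent property of factorization algebras.
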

In particular, any locally constant factorization algebra $\mathcal{F}$ on $M$ with values in $E_d$-algebras is equivalent to $\mathcal{TC}_M(\mathcal{A})$ for a unique (up to equivalences)  $\mathbb{E}_d[M]$-algebra $\mathcal{A}$, \emph{i.e.}, .  
 algebra over the operad of little cubes in $M\times D^d$.  Further, topological chiral homology of an open set $U$ with value in the associated  $\mathbb{E}_{M\times D^d}^{\otimes}$-algebra computes the (derived) sections of the  factorization algebra.  
 
 Also note that if $M$ is any manifold and $A$ is a CDGA, viewed as an $\mathbb{E}_d[M]$-algebra, then the factorization algebra $\mathcal{TC}_M(A)$ induced by Theorem~\ref{T:HF=TCH} is strongly constant in the sense of Section~\ref{S:Factorization}. Thus by  Corollary~\ref{C:HFact=CH}, there is an natural equivalence of ($E_{d}$-algebras) \begin{equation*}\int_M A \;\cong\; CH_M(A)\end{equation*}
 extending Theorem~\ref{T:TCH=CH} to non-framed manifolds.
\begin{proof}[Proof of Theorem~\ref{T:HF=TCH}] We first deal with assertion (1).
By Lemma~\ref{L:EnFact}, it is enough to prove that the rule $U\mapsto \int_U \mathcal{A}$, together with the structure maps 
\begin{equation} \label{eq:muFacloc} \int_{U_1}\mathcal{A} \otimes \cdots\otimes \int_{U_\ell}\mathcal{A} \stackrel{\sim}\longrightarrow  \int_{U_1\cup \cdots \cup U_\ell}\mathcal{A}\longrightarrow \int_{V}\mathcal{A} \end{equation} 
for ${U_i}$'s pairwise disjoint open subsets of $V\in Op(M\times D^d)$, defines a locally constant factorization algebra on $M\times D^d$, naturally in $\mathcal{A}\in \mathbb{E}_{M\times D^d}^{\otimes}\textit{-}Alg$.  Note that, by~\cite[Theorem 5.2.4.9]{L-VI}, the $\mathbb{E}_{M\times D^d}^{\otimes}$-algebra $\mathcal{A}$ satisfies that, for any ball $B$ which is a subset of a ball $B'$, the canonical map $\int_B \mathcal{A}\cong \mathcal{A}(B) \to \mathcal{A}(B')\cong \int_{B'}\mathcal{A}$ is an equivalence in $\hkmod$.  Now we can apply the same proof as the one of Proposition~\ref{P:TCHisFact} with $\mathcal{A}$ instead of $A$, using Lemma~\ref{L:TCHpushout2} instead of Proposition~\ref{P:TCHpushout}.

\smallskip

We now prove assertion (2). By~\cite[Theorem 5.2.4.9]{L-VI}, the canonical embedding $\theta: \mathbb{E}_{M\times D^d}^{\otimes}\textit{-}Alg\to N(\text{Disk}(M\times \R^d))\textit{-}Alg $ induces an equivalence between $\mathbb{E}_{M\times D^d}^{\otimes}\textit{-}Alg$ and locally constant $ N(\text{Disk}(M\times \R^d))$-algebras; we write $N(\text{Disk}^{lc}(M))\textit{-}Alg$ for the latter subcategory.  It is thus enough to   define a functor $\mathcal{EA}_M: \mathop{Fac}^{lc}_M({E_{d}}\textit{-}Alg_\infty)\to N(\text{Disk}^{lc}(M))\textit{-}Alg$ such that $\mathcal{TC}_M\circ \mathcal{EA}_M$ and  $\mathcal{EA}_M \circ \mathcal{TC}_M$ are respectively equivalent to the identity functors of $\mathop{Fac}^{lc}_M({E_{d}}\textit{-}Alg_\infty)$ and $N(\text{Disk}^{lc}(M))\textit{-}Alg$. Let $\mathcal{F}$ be in $\mathop{Fac}^{lc}_M({E_{d}}\textit{-}Alg_\infty)$. By Lemma~\ref{L:EnFact}, we can think of $\mathcal{F}$ as a locally constant factorization algebra on $M\times D^d$.  Let $B \in Op(M\times D^d)$ be homeomorphic to a ball. Then the restriction $\mathcal{F}_{|B}$ is a locally constant factorization algebra on $B\cong \R^n$, thus is equivalent to an $E_n$-algebra (which is canonically equivalent to $\mathcal{F}(B)$).
Further, for any finite family $B_1,\dots, B_\ell$ of pairwise disjoint open subsets homeomorphic to a ball and $U$ an open subset homeomorphic to a ball containing the $B_i$'s, the locally constant factorization algebra structure defines a canonical map $$\gamma_{B_1,\dots,B_l,U}:\mathcal{F}(B_1)\otimes \cdots \otimes \mathcal{F}(B_\ell) \longrightarrow \mathcal{F}(U) $$ which is an equivalence if $\ell=1$. The maps $\gamma_{B_1,\dots,B_l,U}$ are compatible in a natural way. This shows that the collection $\mathcal{F}(B)$ for all open sets $B\subset M\times D^d$ homeomorphic to a ball is a locally constant  $N(\text{Disk}(M\times \R^d))$-algebra, denoted $\mathcal{A}_\mathcal{F}$ and we define the functor $\mathcal{EA}_M$ to be defined by $\mathcal{EA}_M(\mathcal{F}):=\mathcal{A}_\mathcal{F}$ (for $M=pt$ it is the same as the functor $for$ defined in the proof of Proposition~\ref{P:Fac=En}). In other words, the functor $\mathcal{EA}_M(\mathcal{F})$ is simply induced by the composition $\text{Disk}(M\times \R^d)\hookrightarrow Op(M\times \R^d)\stackrel{\mathcal{F}}\to \hkmod$.   By abuse of notation, we also write  $\mathcal{A}_{\mathcal{F}}$ for the associated (well defined up to equivalences) $\mathbb{E}_{M\times D^d}^{\otimes}$-algebra.

Let $\mathcal{A}$ be an $\mathbb{E}_{M\times D^d}^{\otimes}$-algebra. By construction $\mathcal{EA}_M \circ \mathcal{TC}_M(\mathcal{A})$ is the (locally constant) $N(\text{Disk}(M\times \R^d))$-algebra given, on any (open set homeomorphic to an euclidean) ball $B$, by 
$$\Big(\mathcal{EA}_M \circ \mathcal{TC}_M(\mathcal{A})\Big)(B) = \int_B\mathcal{A}\, \cong \,  \mathcal{A}(B)$$ by definition of topological chiral homology~\cite[Example 5.3.2.8]{L-VI}. Hence there is a canonical equivalence $\mathcal{EA}_M \circ \mathcal{TC}_M(\mathcal{A}) \cong \mathcal{A}$ of locally constant $N(\text{Disk}(M\times \R^d))$-algebras and  thus of $\mathbb{E}_{M\times D^d}^{\otimes}$-algebras as well. 

It remains to prove that, there are natural equivalences $\mathcal{TC}_M(\mathcal{A}_{\mathcal{F}})\cong \mathcal{F}$ of factorization algebras, where $\mathcal{A}_{\mathcal{F}}$ is the $\mathbb{E}_{M\times D^d}^{\otimes}$-algebra $\mathcal{EA}_M(\mathcal{F})$ associated to $\mathcal{F}$ as above. Fixing a Riemannian metric on $M\times D^d$,  we can find a cover $\Ball^g(M\times D^d)$ of $M$ given by open sets in $M\times D^d$ which are geodesically convex. On every $U\in \Ball^g(M\times \R^d)$, the restrictions $\mathcal{TC}_{|U}(\mathcal{A}_{\mathcal{F}})$ and $\mathcal{F}_{|U}$ are naturally isomorphic by the above paragraph. In particular, for any set $U_I:= \bigcap_{i\in I} U_i$ and any subset  $J\subset I$, the following diagram 
$$ \xymatrix{\mathcal{TC}_{|U_I}(\mathcal{A}_{\mathcal{F}}) \ar[rr]^{\simeq} \ar[d] & & \mathcal{F}_{|U_I} \ar[d]\\  \mathcal{TC}_{|U_{I\setminus J}}(\mathcal{A}_{\mathcal{F}}) \ar[rr]^{\simeq}& & \mathcal{F}_{|U_{I\setminus J}}}$$ is commutative.
  Since $\mathcal{TC}_M(\mathcal{A}_{\mathcal{F}})$ and $\mathcal{F}$ are the factorization algebras obtained by descent from their restrictions on the cover $\Ball^g(M\times \R^d)$, on which they are naturally equivalent, it follows that  $\mathcal{F}$ is equivalent to $\mathcal{TC}_M(\mathcal{A}_{\mathcal{F}})$.
\end{proof}

\begin{example}
Since $S^2\times D^1$ embeds as an open set in $\R^3$, any $E_3$-algebra $A$ yields, by restriction, a (locally constant) factorization algebra $\mathcal{A}_{S^2}$ on $S^2$ (with values in $E_1\textit{-}Alg$) (Lemma~\ref{L:EnFact}). By Theorem~\ref{T:HF=TCH} and Proposition~\ref{P:TCHpushout}, decomposing the sphere as two disks glued along the equator, we  get that the factorization homology of $\mathcal{A}_{S^2}$ is given by 
$$ HF(\mathcal{A}_{S^2}) \; \; \cong \;\; A \mathop{\otimes}^{\mathbb{L}}_{CH_{S^1}(A)} A$$ as an $E_1$-algebra. Here $CH_{S^1}(A)$ is the usual Hochschild chain complex of the (underlying) $E_1$-algebra structure of $A$, which is naturally an $E_2$-algebra by~\cite[Theorem 5.3.3.11]{L-VI} and Proposition~\ref{P:TCHpushout}.


Similarly, any $E_2$-algebra $B$ yields a (translation invariant and locally constant) factorization algebra on $\R^2$, and thus a (locally constant) factorization algebra $\mathcal{B}_T$ on a torus $T=S^1\times S^1\cong \R^2/\Z^2$.  Since $T$ is framed, we can also define its topological chiral homology directly using the framing. It follows easily from the uniqueness statement in Theorem~\ref{T:HF=TCH}, that $\mathcal{B}_T$ is equivalent to $\mathcal{TC}_T(B)$ in $\mathop{Fac}_T(\hkmod)$. 

Note that the two canonical projections $p_1, p_2:\R^2\to \R$ define two locally constant factorization algebras ${p_1}_*(\mathcal{B})$, ${p_2}_*(\mathcal{B})$ on $\R$ and thus, two $E_1$-algebras $B_1$ and $B_2$. Now, cutting the torus along two meridian circles, we get two copies of $S^1\times D^1\cong \R^2/(\Z\oplus \{0\})$ glued along their boundaries.  By Theorem~\ref{T:HF=TCH} again, the topological chiral homology of  $S^1\times D^1$ is the same as the factorization algebra homology of the descent  factorization algebra $\mathcal{B}^{\Z\oplus \{0\}}$. Thus $\int_{S^1\times D^1} B $  is equivalent to the usual Hochschild chain complex $CH_{S^1}(B_1)$ and the later complex inherits an $E_1$-structure from the $E_2$-algebra structure of $\mathcal{B}$. 
From Proposition~\ref{P:TCHpushout}, we deduce a natural equivalence (in $\hkmod$)
$$HF(\mathcal{B}_T)  \cong  CH_{S^1}(B_1) \!\mathop{\otimes}^{\mathbb{L}}_{CH_{S^1}(B_1) \otimes (CH_{S^1}(B_1))^{op}} \!CH_{S^1}(B^1)  
 \cong  CH_{S^1}(CH_{S^1}(B_1)).
$$
Note that if $B$ was actually a CDGA, then the later equivalence follows directly from Corollary~\ref{L:CHYoCHX=CHXoCHZ}.
\end{example}

\subsection{Some applications}\label{SS:applications}

\subsubsection{Another construction of topological chiral homology for framed manifolds}
Let $M$ be an $m$-dimensional manifold which is $n$-framed. Given an $E_n$-algebra $A$, we can define the topological chiral homology $\int_M A$ of $M$ with values in $A$. By Proposition~\ref{P:TCHisFact} and Theorem~\ref{T:HF=TCH}, $\int_M A$ is the factorization homology of a factorization algebra on $M\times D^{n-m}$. We explain how to construct this factorization algebra directly.

\smallskip

Since $M$ is $n$-framed, there is a bundle isomorphism $\varphi: T(M\times D^{n-m}) \stackrel{\simeq}\longrightarrow \underline{\R^n}$ where $\underline{\R^n}$ is a trivial bundle over $M\times D^{n-m}$. 
Choosing a Riemannian metric on $M\times D^{n-m}$, we have, using the spray associated to the exponential map, canonical diffeomorphisms of (a basis of) open neighborhoods of any $x\in M\times D^{n-m}$ to open sets in the tangent space $T(M\times D^{n-m})_x$   of $M\times D^{n-m}$ at $x$. Composing with the map $\varphi$ induced by the framing, we get diffeomorphisms  $U\mapsto \psi(U_x)\in Op(\R^n)$ where $U$ is a (geodesically convex) open neighborhood of $x$.

Let $\mathcal{U}$ be the cover of $M\times D^{n-m}$ obtained by considering the $U$ above such that $\phi_x(U_x)\in \Ball(\R^n)$ is an euclidean ball. The cover $\mathcal{U}$ is a factorizing basis of open subsets of $M\times D^{n-m}$. 

To any $U\in \mathcal{U}$, we associate $\mathcal{A}(U)=A$, a (fixed) $E_n$-algebra. We wish to extend $\mathcal{A}$ into a factorization algebra. Since $A$ is an $E_n$-algebra, it defines a locally constant factorization algebra on $\R^n$ (see~\cite{Co,L-VI} and Proposition~\ref{P:Fac=En}), which we, by abuse of notation, again denote by $A$.
For any pairwise disjoint $U_1, \cdots, U_n \in \mathcal{U}$, and $V \in \mathcal{U}$ such that $U_i \subset V$ ($i=1\cdots n$), we define
the structure maps $\mu_{U_1,\dots, U_n, V}$ (see Section~\ref{S:Factorization})  by the following commutative diagram:
 \begin{equation*}\xymatrix{ \mathcal{A}({U_1})  \otimes \cdots \otimes \mathcal{A}({U_n}) \ar[d]_{\simeq} \ar[rr]^{\qquad \quad \mu_{U_1,\dots, U_n,V}}  & & \mathcal{A}({V})  \ar[d]^{\simeq}\\ A(\psi(U_1))\otimes \cdots \otimes A(\psi(U_n)) \ar[rr]_{} & & A(\psi(V))}\end{equation*}
where the lower arrow is given by the $E_n$-algebra structure of $A$. 
This yields a $\mathcal{U}$-factorization algebra  (in the sense of~\cite{CG} and \S~\ref{S:Factorization}) since $A$ is a factorization algebra on $\R^n$ and $M\times D^{n-m}$ is framed. By~\cite[Section 3]{CG}, we can now extend $\mathcal{A}$ to a factorization algebra on $M\times D^{n-m}$.

\begin{corollary}\label{C:FactforframedTCH}
There is an equivalence of $E_{n-m}$-algebras $$\int_M A \; \cong \; HF(M,\mathcal{A}).$$
\end{corollary}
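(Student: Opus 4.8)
The plan is to identify the factorization algebra $\mathcal{A}$ constructed just above the corollary with the topological chiral homology factorization algebra $\mathcal{TC}_M(A)$ of Proposition~\ref{P:TCHisFact}, and then to invoke that proposition to read off the value of the factorization homology. First I would observe that, by construction, the $\mathcal{U}$-factorization algebra $\mathcal{A}$ assigns to each $U\in\mathcal{U}$ the $E_n$-algebra $A$, with structure maps $\mu_{U_1,\dots,U_n,V}$ induced precisely by the locally constant factorization algebra on $\R^n$ determined by $A$ (via Proposition~\ref{P:Fac=En}) together with the framing-induced diffeomorphisms $\psi$. This is exactly the local data that, under the equivalence of Proposition~\ref{P:Fac=En} between locally constant factorization algebras on $\R^n$ and $E_n$-algebras, corresponds to $A$ itself.

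The key step is then to check that the factorization algebra obtained by extending $\mathcal{A}$ from the factorizing basis $\mathcal{U}$ (via~\cite{CG}) is locally constant and, on each ball $B\in\mathcal{U}$, satisfies $\mathcal{A}(B)\cong \int_B A\cong A$ as $E_{n-m}$-algebras. Indeed $\mathcal{A}$ is locally constant since $A$ is a locally constant factorization algebra on $\R^n$ and the transition maps $\psi$ are diffeomorphisms sending balls to balls; thus $\mathcal{A}(U)\to\mathcal{A}(V)$ is an equivalence for $U\subset V$ homeomorphic to balls. Having matched both the local values and the structure maps $\mu_{U_1,\dots,U_n,V}$ with those of $\mathcal{TC}_M(A)$ from Proposition~\ref{P:TCHisFact}, I would appeal to the uniqueness of the extension of a factorization algebra from a factorizing basis (Theorem~\ref{T:HF=TCH}, or the gluing/descent property of~\cite{CG}) to conclude that $\mathcal{A}$ and $\mathcal{TC}_M(A)$ are equivalent as locally constant factorization algebras on $M\times D^{n-m}$.

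The conclusion then follows from Proposition~\ref{P:TCHisFact}.(2), which gives a natural equivalence $HF(\mathcal{TC}_M(A))\cong \int_M A$ of $E_{n-m}$-algebras; combined with the equivalence $\mathcal{A}\cong \mathcal{TC}_M(A)$ just established, and the identification $HF(M,\mathcal{A})\cong HF(M\times D^{n-m},\mathcal{A})$ coming from Lemma~\ref{L:EnFact} (pushforward along $M\times D^{n-m}\to M$), this yields $\int_M A\cong HF(M,\mathcal{A})$ as $E_{n-m}$-algebras.

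The main obstacle I expect is the bookkeeping needed to verify that the framing-dependent identifications $\psi$ assemble coherently into an equivalence of the \emph{whole} factorization algebra rather than merely a pointwise equivalence of values: one must confirm that the diagram defining $\mu_{U_1,\dots,U_n,V}$ is compatible with the globally-defined structure maps of $\mathcal{TC}_M(A)$ on overlapping charts, so that the two $\mathcal{U}$-factorization algebras agree \emph{as} $\mathcal{U}$-factorization algebras and not just objectwise. This is exactly where the $n$-framing is used, since it trivializes the tangent bundle and makes the local identifications with $\R^n$ globally consistent; once this coherence is in place, the extension/uniqueness machinery of~\cite{CG} and Theorem~\ref{T:HF=TCH} does the rest.
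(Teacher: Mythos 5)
Your proposal is correct and follows essentially the same route as the paper: the paper's (very terse) proof likewise observes that $\int_U A \cong A \cong \mathcal{A}(U)$ for each ball $U$ in the factorizing basis and then invokes Theorem~\ref{T:HF=TCH} (and its proof) to identify the two factorization algebras and pass to global sections. Your additional care about the coherence of the framing-induced identifications $\psi$ across overlapping charts is a reasonable filling-in of a step the paper leaves implicit, but it does not change the argument.
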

\begin{proof}
For any ball $U$, we have a natural equivalence $\int_U A \cong A \cong \mathcal{A}(U)$. Now the result follows from Theorem~\ref{T:HF=TCH} (and its proof) after taking global sections. 
\end{proof}
Note that  topological chiral homology $\mathcal{TC}_M(A)$ is independent of the Riemannian metric, hence the factorization algebra $\mathcal{A}\in \mathop{Fac}_{M\times D^{n-m}}^{lc}(\hkmod)$ on $M$ thus obtained  is also independent of the Riemannian metric.

\subsubsection{Interpretation of topological chiral and higher Hochschild in terms of mapping spaces}

As we have already noticed, higher Hochschild chains behave much like mapping spaces (and thus so do $\int_M A$ for CDGAs $A$). Indeed,
\begin{corollary} Let $A=\Omega^\ast N$ be the de Rham forms on a $d$-connected manifold (with its usual differential). Then for any manifold  $M$ of dimension $m\leq d$, there is a natural quasi-isomorphism $\int_M A \cong \Omega^\ast (N^M)$, the space of (Chen) de Rham forms of the mapping space $N^M= \mathop{Map} (M,N)$. 
\end{corollary}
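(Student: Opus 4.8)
The plan is to reduce the statement to the already-established equivalence $\int_M A \cong CH^\com_M(A)$ (Theorem~\ref{T:TCH=CH}) and then apply a known identification of higher Hochschild chains of de Rham forms with Chen forms on mapping spaces. Since $A=\Omega^\ast N$ is a CDGA, Theorem~\ref{T:TCH=CH} gives a natural equivalence $\int_M A \cong CH^\com_M(\Omega^\ast N)$ for any manifold $M$ (the framing independence being guaranteed by the corollary following Theorem~\ref{T:TCH=CH}). Thus the entire content of the statement is the second equivalence $CH^\com_M(\Omega^\ast N) \cong \Omega^\ast(N^M)$, which is precisely the content of Chen's iterated integral construction in the higher Hochschild setting, as developed in~\cite{GTZ}. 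So the real work is to invoke and correctly set up that comparison.

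First I would recall the iterated integral map. For a simplicial set model $M_\com$ of $M$, there is a natural map of CDGAs
$$
\mathit{It}: CH^\com_{M_\com}(\Omega^\ast N) \longrightarrow \Omega^\ast\big(\mathop{Map}(M,N)\big)
$$
sending a Hochschild chain $a_0\otimes a_1\otimes\cdots\otimes a_k$ (indexed by the simplices of $M_\com$) to the associated Chen form on the mapping space, obtained by pulling back the $a_i$ along the evaluation maps $\mathop{Map}(M,N)\times M \to N$ and integrating over the simplices. This is the higher-dimensional analogue of Chen's iterated integrals; its construction and compatibility with the CDGA structures is carried out in~\cite[Sections 2.2, 2.4]{GTZ}. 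The second step is to prove that $\mathit{It}$ is a quasi-isomorphism under the connectivity hypothesis $m\leq d$. Here I would use the locality/gluing axiom (Theorem~\ref{T:derivedfunctor}.(3)) on both sides: the mapping space functor $N\mapsto \Omega^\ast(N^{(-)})$ sends homotopy pushouts of spaces to homotopy pullbacks of mapping spaces, which, after applying $\Omega^\ast$, become derived tensor products of de Rham forms \emph{provided} the relevant Eilenberg--Moore/de Rham comparison holds. This is exactly where the connectivity assumption enters.

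Concretely, I would argue by induction on a handle (or CW) decomposition of $M$, exactly as in the proof of Theorem~\ref{T:TCH=CH}. On a point the map $\mathit{It}$ is the identity $\Omega^\ast N \cong \Omega^\ast(N^{pt})$, establishing the base case. For the inductive step, writing $M = X\cup_{Z} Y$ as a homotopy pushout of lower-complexity pieces, the left side satisfies
$$
CH^\com_M(\Omega^\ast N)\cong CH^\com_X(\Omega^\ast N)\mathop{\otimes}^{\mathbb{L}}_{CH^\com_Z(\Omega^\ast N)} CH^\com_Y(\Omega^\ast N)
$$
by Theorem~\ref{T:derivedfunctor}.(3), while the mapping space side satisfies $N^M \simeq N^X \times^h_{N^Z} N^Y$ (a homotopy pullback), whence $\Omega^\ast(N^M)$ is computed by the de Rham Eilenberg--Moore spectral sequence. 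The induction then reduces everything to checking that the de Rham forms functor converts this homotopy pullback into the corresponding derived tensor product, i.e.\ that the natural map $\Omega^\ast(N^X)\otimes^{\mathbb{L}}_{\Omega^\ast(N^Z)}\Omega^\ast(N^Y)\to \Omega^\ast(N^M)$ is a quasi-isomorphism.

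The main obstacle is precisely the convergence of this de Rham Eilenberg--Moore comparison, and this is where the hypothesis $m\leq d$ (with $N$ being $d$-connected) is indispensable. The de Rham/Eilenberg--Moore spectral sequence for a fibration converges when the base is simply connected (or, more robustly, when sufficient connectivity forces the relevant mapping spaces to be nilpotent and the Chen forms to model them). Since $M$ has dimension $m\leq d$ and $N$ is $d$-connected, the skeleta of $M$ attach cells only in dimensions $\leq d$, and the mapping spaces $N^{M_{(k)}}$ built along the skeletal filtration remain simply connected (indeed highly connected), so that rational homotopy theory applies and $\Omega^\ast$ correctly computes the cohomology of each homotopy pullback. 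I would therefore isolate this connectivity-controlled convergence statement as the key lemma, citing the relevant Chen/rational-homotopy input from~\cite{GTZ} (in particular the iterated integral quasi-isomorphism theorem there), and then let the inductive gluing argument propagate the equivalence from disks to all of $M$. The genuinely delicate point, worth stating carefully, is that one must keep the intermediate spaces in the range where the de Rham comparison is an equivalence; the bound $m\leq d$ is exactly what guarantees this throughout the induction.
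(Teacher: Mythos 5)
Your proposal is correct and follows the paper's own route: the paper likewise first invokes Theorem~\ref{T:TCH=CH} to reduce the claim to $CH^\com_M(\Omega^\ast N)\cong \Omega^\ast(N^M)$, notes that $M$ admits a simplicial model with no non-degenerate simplices above dimension $m$, and then cites \cite[Propositions 2.5.3 and 2.4.6]{GTZ} for the iterated-integral quasi-isomorphism. Your additional sketch of how that comparison is established (skeletal/handle induction, the locality axiom, and Eilenberg--Moore convergence controlled by the bound $m\leq d$) is a faithful unpacking of the cited input rather than a genuinely different argument.
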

In other words, topological chiral homology of $M$ with value in $\Omega^\ast N$ calculates the mapping space $N^M$ (if $N$ is sufficiently connected).
\begin{proof}
 By Theorem~\ref{T:TCH=CH}, we are left to a similar statement for $CH_{M}(\Omega^\ast N)$. Since $M$ is $m$-dimensional it has a simplicial model with no non-degenerate simplices in dimensions above $m$. Now the result follows from~\cite[Proposition 2.5.3 and Proposition 2.4.6]{GTZ}.
\end{proof}

\begin{remark}
By~\cite[Section 2.4]{GTZ}, there is a canonical map $\int_M \Omega^\ast N \to \Omega^\ast (N^M)$. Further, it is possible to replace $N$ by any nilpotent space (by mimicking the proof of \cite[Propositions 2.5.3 and 2.4.6]{GTZ}) and $\Omega^\ast N$ by a Sullivan model of $N$.
\end{remark}

We now give a (derived/homotopical) algebraic geometry statement. Recall that $k$ denotes a field of characteristic zero and let $\mathbf{dSt}_k$ be the (model) category of \emph{derived stacks} over $k$ described in details in~\cite[Section 2.2]{ToVe} (which is a derived enhancement of the category of stacks over $k$). This category admits internal Hom's that we denote by $\mathbb{R}\mathop{Map}(\mathfrak{X},\mathfrak{Y})$ following~\cite{ToVe,ToVe2}. To any simplicial set $X_\com$, we associate the constant simplicial presheaf $k\textit{-}Alg \to \sset$ defined by $R\mapsto X_\com$ and we denote $\mathfrak{X}$ the associated stack. For a (derived) stack $\mathfrak{Y}$, we denote $\mathcal{O}_{\mathfrak{Y}}$ its functions~\cite{ToVe} (\emph{i.e.}, $\mathcal{O}_{\mathfrak{Y}}:=\mathbb{R}\underline{Hom}(\mathfrak{Y},\mathbb{A}^1)$).
\begin{corollary}\label{C:mappingstack} Let $\mathfrak{R}=\mathbb{R}\mathop{Spec}(R)$ be an affine derived stack (for instance an affine stack)~\cite{ToVe}. Then the Hochschild chains over $X_\com$ with coefficient in $R$ represent the mapping stack $\mathbb{R}\mathop{Map}(\mathfrak{X}, \mathfrak{R})$. That is, $$\mathcal{O}_{\mathbb{R}\mathop{Map}(\mathfrak{X},\mathfrak{R})}\; \cong \; CH_{X_\com}^{\com}(R).$$
\end{corollary}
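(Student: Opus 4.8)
The plan is to identify both sides as corepresentable functors on $\hcdga$ and invoke the universal property of the tensor $\boxtimes$ established in Theorem~\ref{T:derivedfunctor}. First I would recall that for an affine derived stack $\mathfrak{R}=\mathbb{R}\mathop{Spec}(R)$, the functor of points sends a $k$-algebra $S$ to $\mathbb{R}\mathop{Map}_{\hcdga}(R,S)$, so that $\mathfrak{R}$ is corepresented by $R$. The constant stack $\mathfrak{X}$ associated to a simplicial set $X_\com$ is, by construction, the constant simplicial presheaf $S\mapsto X_\com$. The key computation is then to unwind the internal Hom $\mathbb{R}\mathop{Map}(\mathfrak{X},\mathfrak{R})$ using the adjunction between the constant-stack functor and global sections, together with the fact that $\mathfrak{X}$ is a (homotopy) colimit of points indexed by $X_\com$.

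The central step is the chain of natural equivalences, for any test CDGA $B$ (equivalently any affine test stack $\mathbb{R}\mathop{Spec}(B)$):
\begin{equation*}
\mathrm{Map}_{\hcdga}\big(\mathcal{O}_{\mathbb{R}\mathop{Map}(\mathfrak{X},\mathfrak{R})}, B\big)
\;\cong\;
\mathbb{R}\mathop{Map}\big(\mathbb{R}\mathop{Spec}(B)\times \mathfrak{X},\, \mathfrak{R}\big)
\;\cong\;
\mathrm{Map}_{\hsset}\big(X_\com,\, \mathrm{Map}_{\hcdga}(R,B)\big).
\end{equation*}
The first equivalence is the defining adjunction for functions on the mapping stack (using $\mathfrak{R}$ affine so that maps into it are detected on functions); the second uses that $\mathfrak{X}$ is the constant stack on $X_\com$, so a map $\mathbb{R}\mathop{Spec}(B)\times\mathfrak{X}\to\mathfrak{R}$ is the same as an $X_\com$-indexed family of $B$-points of $\mathfrak{R}$, i.e. an element of $\mathrm{Map}_{\hsset}(X_\com,\mathrm{Map}_{\hcdga}(R,B))$. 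Comparing this with the defining equivalence of the tensor in Theorem~\ref{T:derivedfunctor},
\begin{equation*}
\mathrm{Map}_{\hcdga}\big(CH_{X_\com}(R),B\big)\;\cong\;\mathrm{Map}_{\hsset}\big(X_\com,\mathrm{Map}_{\hcdga}(R,B)\big),
\end{equation*}
we conclude that $\mathcal{O}_{\mathbb{R}\mathop{Map}(\mathfrak{X},\mathfrak{R})}$ and $CH_{X_\com}^{\com}(R)$ corepresent naturally equivalent functors on $\hcdga$, hence are equivalent by the Yoneda lemma in the $(\infty,1)$-setting.

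The main obstacle I expect is making precise the second equivalence above, namely that computing functions on a mapping stack out of a constant stack reduces to the simplicial mapping space $\mathrm{Map}_{\hsset}(X_\com,-)$ rather than to some more rigidified limit. This requires care because $\mathfrak{X}$ must be interpreted as a homotopy colimit and one must check that $\mathbb{R}\mathop{Map}(-,\mathfrak{R})$ converts this homotopy colimit of points into the appropriate homotopy limit, which on functions becomes the tensor rather than a strict cotensor. I would handle this by appealing to the general formalism of~\cite{ToVe,ToVe2} identifying mapping stacks into affine targets, reducing first to the case $X_\com = pt_\com$ (where the statement is the value-on-a-point axiom $CH_{pt}(R)\cong R$) and then propagating along the skeletal decomposition exactly as in the proof of Theorem~\ref{T:deriveduniqueness}, so that the corepresentability argument is compatible with homotopy colimits in $X_\com$. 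Once these colimit compatibilities are in place, the identification is forced by the uniqueness of the tensor.
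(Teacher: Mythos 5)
Your corepresentability strategy is viable and mechanically different from the paper's proof: the paper checks that the bifunctor $(X_\com,R)\mapsto \mathcal{O}_{\mathbb{R}\mathop{Map}(\mathfrak{X},\mathfrak{R})}$ satisfies the value-on-a-point, coproduct and homotopy-pushout axioms (using that $\mathbb{R}\mathop{Map}(-,\mathfrak{R})$ turns homotopy colimits of stacks into homotopy limits) and then invokes the uniqueness Theorem~\ref{T:deriveduniqueness} together with Remark~\ref{R:cdga-}, whereas you match the functors corepresented by the two CDGAs via the tensor characterization of Theorem~\ref{T:derivedfunctor} and Yoneda. Since the uniqueness theorem is itself a reformulation of the tensor property, both proofs rest on the same foundation; your version, once completed, gives the slightly stronger conclusion that $\mathbb{R}\mathop{Map}(\mathfrak{X},\mathfrak{R})$ is actually \emph{affine}, not merely that its ring of functions is $CH_{X_\com}^{\com}(R)$.

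There is, however, one genuine gap: your first equivalence $\mathrm{Map}_{\hcdga}\big(\mathcal{O}_{\mathbb{R}\mathop{Map}(\mathfrak{X},\mathfrak{R})}, B\big)\cong \mathbb{R}\mathop{Map}\big(\mathbb{R}\mathop{Spec}(B)\times\mathfrak{X},\mathfrak{R}\big)$ is circular as written. For a general derived stack $\mathfrak{Y}$ one only has a natural map $\mathbb{R}\mathop{Map}(\mathbb{R}\mathop{Spec}(B),\mathfrak{Y})\to \mathrm{Map}_{\hcdga}(\mathcal{O}_{\mathfrak{Y}},B)$, factoring through the affinization $\mathfrak{Y}\to\mathbb{R}\mathop{Spec}(\mathcal{O}_{\mathfrak{Y}})$, and it is an equivalence essentially only when $\mathfrak{Y}$ is affine --- which for $\mathfrak{Y}=\mathbb{R}\mathop{Map}(\mathfrak{X},\mathfrak{R})$ is the very point at issue; the affineness of $\mathfrak{R}$ that you cite there is irrelevant to this step. (Conversely, the \emph{second} equivalence, which you single out as the obstacle, is the unproblematic one: $\mathfrak{X}$ is the colimit of points indexed by $X_\com$, colimits are universal, and $\mathbb{R}\mathop{Map}(-,\mathfrak{R})$ sends this colimit to the cotensor $\mathrm{Map}_{\hsset}(X_\com,-)$.) The repair is to reverse the order of the argument: first compute the functor of points $B\mapsto \mathbb{R}\mathop{Map}(\mathbb{R}\mathop{Spec}(B),\mathbb{R}\mathop{Map}(\mathfrak{X},\mathfrak{R}))\cong \mathrm{Map}_{\hsset}(X_\com,\mathrm{Map}_{\hcdga}(R,B))\cong \mathrm{Map}_{\hcdga}(CH_{X_\com}^{\com}(R),B)$, conclude from Yoneda on affines that the mapping stack is $\mathbb{R}\mathop{Spec}(CH_{X_\com}^{\com}(R))$, and only then read off $\mathcal{O}$. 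You should also restrict to $R\in\cdga^{\leq 0}$ so that $CH_{X_\com}^{\com}(R)$ is again connective and $\mathbb{R}\mathop{Spec}$ is fully faithful; this is the role of Remark~\ref{R:cdga-} in the paper's proof and is missing from yours.
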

\begin{proof}
The bifunctor $(\mathfrak{X},\mathbb{R}\mathop{Spec}(R))\mapsto \mathbb{R}\mathop{Map}(\mathfrak{X}, \mathbb{R}\mathop{Spec}(R))$ is contravariant in $\mathfrak{X}$ and $R \in \cdga^{\leq 0}$.  Thus, $\mathcal{O}_{\mathbb{R}\mathop{Map}(\mathfrak{X},\mathfrak{R})}$ defines a covariant bifunctor. Since $\mathbb{R}\mathop{Map}(-,\mathfrak{R})$ sends (homotopy) limits to (homotopy) colimits, it follows from Theorem~\ref{T:deriveduniqueness} (also see Remark~\ref{R:cdga-}) that   $\mathcal{O}_{\mathbb{R}\mathop{Map}(\mathfrak{X},\mathfrak{R})}$ is equivalent to $CH_{X_\com}^{\com}(R)$.
\end{proof}
\begin{example}
Let $B_\com \mathbb{Z}$ be the nerve of $\mathbb{Z}$ and $\mathfrak{B}\mathbb{Z}$ its associated stack. Recall that there is an homotopy equivalence  $S^1\to |B_\com \mathbb{Z}|$ (actually induced by a simplicial set map, see~\cite{L}). From Corollary~\ref{C:mappingstack} we recover that the derived loop stack $L\mathfrak{R}:=\mathbb{R}\mathop{Map}(\mathfrak{B}\mathbb{Z},\mathfrak{R})$ is represented by $CH_{B_\com \mathbb{Z}}^{\com}(R) \stackrel{\simeq}\longleftarrow CH_{S^1_\com}^{\com}(R)$ the standard Hochschild chain complex of $R$ as was proved in~\cite{ToVe2}. Similarly,  the derived torus mapping stack $\mathbb{R}\mathop{Map}(\mathfrak{B}\mathbb{Z}\times \mathfrak{B}\mathbb{Z},\mathfrak{R})$ is represented by $CH_{S^1\times S^1}^\com(R)$ and the secondary cyclic homology in the sense of~\cite{ToVe2} is represented by the homotopy fixed points $CH_{S^1\times S^1}^\com(R)^{h (S^1\times S^1)}$ with respect to the induced action of the simplicial group $B_\com \mathbb{Z}\times B_\com \mathbb{Z}$ on the derived mapping space.
\end{example}
\begin{remark}
Sheafifying (or rather stackifying) the higher Hochschild derived functor, it seems possible to extend Corollary~\ref{C:mappingstack} to  general derived schemes.  
\end{remark}

\subsubsection{Topological chiral Homology and homology spheres}

Topological chiral homology of CDGAs is a homology invariant, and thus, in particular, a homotopy invariant. Indeed, we have the following corollary.
\begin{corollary} Let $f:M \to N$ be smooth map between two manifolds inducing isomorphisms on homology and $A$ be a \cdga, then $\int_M A\cong \int_N  A$. 
\end{corollary}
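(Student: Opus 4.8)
The plan is to reduce the statement about topological chiral homology to the already-established homotopy invariance of higher Hochschild chains, using Theorem~\ref{T:TCH=CH} (or its non-framed extension via Theorem~\ref{T:HF=TCH} together with Corollary~\ref{C:HFact=CH}) as the bridge. Since $A$ is a CDGA, for each of the manifolds $M$ and $N$ we have natural equivalences $\int_M A \cong CH^\com_M(A)$ and $\int_N A \cong CH^\com_N(A)$ in $\hkmod$, where the right-hand sides are the derived higher Hochschild chains over the topological spaces $M$ and $N$ respectively. This replaces the question about topological chiral homology, which a priori depends on framings and on the operadic $E_n$-structure, by a question purely about the Hochschild functor $CH: \hTop \times \hcdga \to \hcdga$ of Proposition~\ref{P:Topderived}.

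First I would invoke Theorem~\ref{T:TCH=CH} to identify $\int_M A$ with $CH^\com_M(A)$ and $\int_N A$ with $CH^\com_N(A)$; strictly speaking one should note that $\int_M A$ is defined for every manifold when $A$ is a CDGA (as remarked after Theorem~\ref{T:TCH=CH}), so these identifications make sense without any framing hypothesis on $M$ or $N$. Next I would apply the homotopy (and homology) invariance of Hochschild chains. By Proposition~\ref{P:homologyinvariance} (see also Remark~\ref{R:derivedfunctorTop} and Proposition~\ref{P:Topderived}.i), if a map of spaces induces an isomorphism on homology $H_\com(M) \stackrel{\simeq}\to H_\com(N)$, then the induced map $CH^\com_M(A) \to CH^\com_N(A)$ is a quasi-isomorphism of CDGAs, hence an equivalence in $\hcdga$ and a fortiori in $\hkmod$. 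Since $f: M\to N$ is assumed to induce isomorphisms on homology, the map $f_\com: CH^\com_M(A)\to CH^\com_N(A)$ is an equivalence.

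Combining these two facts yields the desired chain of natural equivalences
\begin{equation*}
\int_M A \;\cong\; CH^\com_M(A) \;\stackrel{f_\com}{\cong}\; CH^\com_N(A) \;\cong\; \int_N A
\end{equation*}
in $\hkmod$, which is exactly the claim. The argument is essentially a formal consequence of the equivalence between topological chiral homology and higher Hochschild chains for CDGAs (Theorem~\ref{T:TCH=CH}) together with the homology invariance built into the Hochschild functor from the outset (Proposition~\ref{P:homologyinvariance}).

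The only real subtlety, and the step I would expect to require the most care, is checking that the identification $\int_M A \cong CH^\com_M(A)$ can be made natural in $f$, i.e.\ that the equivalences of Theorem~\ref{T:TCH=CH} for $M$ and for $N$ fit into a commutative square with the map $f_\com$ on the Hochschild side and the corresponding functorial map $f_*:\int_M A\to \int_N A$ on the topological chiral homology side. A clean way to handle this is to route the comparison through the factorization-algebra picture of Proposition~\ref{P:TCHisFact} and Corollary~\ref{C:HFact=CH}, where both $\int_{(-)}A$ and $CH^\com_{(-)}(A)$ are realized as factorization homology of the (strongly) constant factorization algebra $\mathcal{CH}_{(-)}(A)$, so that functoriality in the space is manifest; alternatively one observes that $f_*$ is, up to the canonical identifications, simply the map induced on Hochschild chains by $f$ on singular simplicial sets, for which invariance under homology isomorphisms is Proposition~\ref{P:homologyinvariance}.
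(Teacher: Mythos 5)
Your proposal is correct and follows exactly the paper's own argument: the paper proves this corollary by combining Theorem~\ref{T:TCH=CH} with the quasi-isomorphism invariance of $CH_{(-)}^{\com}(A)$ from Proposition~\ref{P:homologyinvariance}. Your additional care about naturality of the identification in $f$ is a reasonable refinement, but the route is the same.
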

\begin{proof} This follows  from Theorem~\ref{T:TCH=CH} and the quasi-isomorphism invariance of $CH_{(-)}^{\com}(A)$ (Proposition~\ref{P:homologyinvariance}).
\end{proof}

\begin{example}  
The composition  $S^3 \to SO(3) \to SO(3)/I$, where $I$ is the icosahedral group,  
 induces an isomorphism on homology. To see this note that the fundamental group of $SO(3)/I$ 
is the binary icosahedral group $\tilde{I}$ which is a perfect group and therefore $H_1(SO(3)/I)=0$. 
The result $SO(3)/I$ is the Poincar\'e homology sphere and has thus the same topological chiral homology 
with value in any CDGA as $S^3$. 
\end{example}

\begin{remark}
Note that we study topological chiral homology in the framework of chain complexes, \emph{i.e.} 
 we have fixed the $(\infty,1)$-category of chain complexes as our \lq\lq{}ground\rq\rq{} 
 monoidal $(\infty,1)$-category. If one works in some other framework (such as topological spaces), 
 one can expect to have more refined invariants.
\end{remark}

\begin{remark} 
Note that $S^1$ has two diffeomorphic $1$-framings (specified by a choice of orientation). 
This accounts for the fact that classically there is only one Hochschild complex for 
associative algebras. 
There are countably many $2$-framings for the circle, one for each integer, 
giving rise to equivalent topological chiral homologies when the integrand is a \cdga. 
It would therefore be meaningful to look for an explicit $E_2$-algebra that distinguishes these 
framings from one another, if such exists.  Similarly, it would be interesting to find an explicit $E_3$-algebra 
that distinguishes the two $3$-framings of $S^1$.
\end{remark}

\subsubsection{Fubini formula for topological chiral homology}
The exponential law for Hochschild chains (Proposition~\ref{P:product}) has an analogue for topological chiral homology.
\begin{corollary} \label{C:FubiniTCH} Let $M$, $N$ be manifolds and $\mathcal{A}$ be an $\mathbb{E}_d[M\times N]$-algebra. Then, $\int_N\mathcal{A}$ has a canonical lift as an $\mathbb{E}_d[M]$-algebra and further, there is an equivalence of $E_d$-algebras
$$\int_{M\times N} \mathcal{A} \; \cong \; \int_M\Big(\int_N \mathcal{A}\Big). $$
\end{corollary}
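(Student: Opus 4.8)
The plan is to establish the Fubini formula \(\int_{M\times N}\mathcal{A}\cong\int_M(\int_N\mathcal{A})\) by reducing it to the equivalence between topological chiral homology and locally constant factorization algebras established in Theorem~\ref{T:HF=TCH} and the pushforward/iteration structure of factorization algebras. First I would set up the operadic bookkeeping: an \(\mathbb{E}_d[M\times N]\)-algebra \(\mathcal{A}\) is, by~\cite[Theorem 5.2.4.9]{L-VI} and Theorem~\ref{T:HF=TCH}.(2), the same datum as a locally constant factorization algebra on \((M\times N)\times\R^d\cong M\times(N\times\R^d)\) with values in \(\hkmod\). Thus \(\mathcal{TC}_{M\times N}(\mathcal{A})\) is a locally constant factorization algebra on \(M\times N\) with values in \(E_d\)-algebras, and \(\int_{M\times N}\mathcal{A}\cong HF(\mathcal{TC}_{M\times N}(\mathcal{A}),M\times N)\).

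Next I would produce the inner integral \(\int_N\mathcal{A}\) as an \(\mathbb{E}_d[M]\)-algebra. The key is to apply Lemma~\ref{L:EnFact} along the projection \(\pi_1:M\times(N\times\R^d)\to M\times\R^d\) factoring the \(N\)-direction: pushing forward the factorization algebra associated to \(\mathcal{A}\) along the projection that integrates out \(N\) yields a locally constant factorization algebra on \(M\times\R^d\) whose fibrewise value over an open \(U\subset M\) is \(\int_N(\mathcal{A}_{|U})\), which is an \(E_d\)-algebra by Lemma~\ref{L:EnFact}. By Theorem~\ref{T:HF=TCH}.(2) applied to \(M\), this locally constant factorization algebra corresponds to an \(\mathbb{E}_d[M]\)-algebra, which is precisely the claimed lift of \(\int_N\mathcal{A}\); this defines the right-hand side \(\int_M(\int_N\mathcal{A})\). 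Concretely, for any ball \(U\subset M\) the value is \(\int_N(\mathcal{A}_{|U\times N})\), and the \(N({\text{Disk}}(M\times\R^d))\)-algebra structure maps come from the structure maps of \(\mathcal{A}\) restricted to products of balls.

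The equivalence itself then follows from commuting two iterated colimit/pushforward operations. I would observe that both sides compute the global sections of the \emph{same} locally constant factorization algebra on the product \(M\times N\times\R^d\): the left side via the single projection \(M\times N\times\R^d\to pt\), the right side via the factorization \(M\times N\times\R^d\xrightarrow{\pi}M\times\R^d\to pt\). Since pushforward of factorization algebras is strictly functorial under composition of maps, \(p_*\cong(p')_*\circ\pi_*\), and since \(HF(-,-)\) is by definition pushforward to a point, the two computations agree. Making this precise amounts to the natural equivalence \(HF(\mathcal{F},M\times N)\cong HF(\pi_*\mathcal{F},M)\) of Lemma~\ref{L:EnFact} (in its iterated form), combined with the identification \(\pi_*\mathcal{TC}_{M\times N}(\mathcal{A})\cong\mathcal{TC}_M(\int_N\mathcal{A})\) coming from the uniqueness in Theorem~\ref{T:HF=TCH}.(2). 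Tracking the \(E_d\)-algebra structures throughout shows the equivalence is one of \(E_d\)-algebras.

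The main obstacle will be the compatibility of \(E_d\)-algebra structures under the two pushforwards, i.e.\ checking that the inner integral \(\int_N\mathcal{A}\) really lifts to an \(\mathbb{E}_d[M]\)-algebra (not merely a factorization algebra valued in \(E_d\)-algebras) and that this lift is natural, so that Theorem~\ref{T:HF=TCH}.(2) can be invoked on \(M\) to identify \(\mathcal{TC}_M(\int_N\mathcal{A})\) with \(\pi_*\mathcal{TC}_{M\times N}(\mathcal{A})\). This requires the functoriality of the equivalence in Theorem~\ref{T:HF=TCH}.(2) to be compatible with restriction along open embeddings and with the monoidal structure; once the factorization-algebra side is set up correctly via Lemma~\ref{L:EnFact}, the structure maps are forced and the verification is formal but needs care. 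Everything else reduces to the strict functoriality \(p_*\cong(p')_*\circ\pi_*\) of pushforward, which is immediate from the definition \(f_*(\mathcal{F})(V)=\mathcal{F}(f^{-1}(V))\).
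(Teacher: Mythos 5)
Your proposal is correct and follows essentially the same route as the paper: convert to locally constant factorization algebras via Theorem~\ref{T:HF=TCH}, use the functoriality of pushforward along $M\times N\to M\to pt$, and observe that the pushforward along the projection is locally constant with value $\int_N\mathcal{A}$ on balls, which is exactly how the $\mathbb{E}_d[M]$-lift is defined. The only cosmetic difference is that the paper first reduces to $d=0$ by replacing $M$ with $M\times\R^d$ via Lemma~\ref{L:EnFact}, whereas you carry the $\R^d$ factor through; this changes nothing of substance.
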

\begin{proof} Replacing $M$ by $M\times \mathbb{R}^d$ and using Lemma~\ref{L:EnFact}, 
it is enough to prove the result for $d=0$. Since the homology of a factorization algebra on $X$ is given by the pushforward along the canonical map $p:X\to pt$, 
by Theorem~\ref{T:HF=TCH}, one has 
 \begin{equation}\label{eq:FubiniTCH}\int_{M\times N}\mathcal{A}
 \cong p_*\big( \mathcal{TC}_{M\times N}(\mathcal{A})\big) 
\cong p_*\big(\pi_*\big(\mathcal{TC}_{M\times N}(\mathcal{A}) \big)\big)\end{equation} 
where $\pi:M\times N\to M$ is the canonical projection. Since $\mathcal{TC}_{M\times N}(\mathcal{A})$
 is locally constant, $\pi_*\big(\mathcal{TC}_{M\times N}(\mathcal{A})$ 
is also locally constant whose value on an open ball $D\subset M$ is given by 
$\pi_*\big(\mathcal{TC}_{M\times N}(\mathcal{A})(U) \cong \mathcal{TC}_{M\times N}(\mathcal{A})(U\times N)
\cong \int_N \mathcal{A}$.  This defines the canonical  $\mathbb{E}_d[M]$-algebra structure on 
$ \int_N \mathcal{A}$ and the result now follows from the equivalence~\eqref{eq:FubiniTCH}.
\end{proof}
\begin{example}
Let $M$, $N$ be $m+k$-framed and $n+\ell$-framed manifolds of respective dimension $m$, $n$ and $A$
 be an ${E}_{n+k+m+\ell}$-algebra. Then, the product $M\times N$ is canonically $m+n+k+\ell$-framed 
and $A$ is an $\mathbb{E}_{k+\ell}$-algebra. Then,  Corollary~\ref{C:FubiniTCH} yields an equivalence of
 $E_{k+\ell}$-algebras: 
$$\int_{M\times N} A \; \cong \; \int_{M} \Big(\int_{N} A\Big). $$
In particular, if $A$ is a CDGA, then Corollary~\ref{C:FubiniTCH} reduces to the exponential 
law for Hochschild chains (Proposition~\ref{P:product})
 under the equivalence between topological chiral homology and derived Hochschild chains (Theorem~\ref{T:TCH=CH}).
\end{example}


\begin{thebibliography}{1111}
\bibitem{Andrade} R.~Andrade, \textit{From Manifolds to invariants of $E_n$-algebras}, Thesis, MIT (2010).
\bibitem{AFT} D. Ayala, J. Francis, H.-L. Tanaka, \textit{Factorization homology for stratified manifolds}, in preparation
\bibitem{BD} A. Beilinson, V. Drinfeld, \textit{Chiral algebras}, American Mathematical Society Colloquium Publications, 51. American Mathematical Society, Providence, RI, 2004
\bibitem{BF} C. Berger, B. Fresse, \textit{Combinatorial operad actions on cochains}, Math. Proc. Cambridge Philos. Soc. 137 (2004), 135-174.
\bibitem{Be1} J. Bergner, \textit{Three models for the homotopy theory of homotopy theories}, Topology 46 (2007), 397--436.
\bibitem{Be2} J. Bergner, \textit{Complete Segal Spaces arising from simplicial categories}, Trans. Amer. Math. Soc. 361 (2009), no. 1, 525--546.
\bibitem{Bo} M. Boardman, \textit{Conditionally convergent spectral sequences}, Homotopy invariant algebraic structures,  49--84, Contemp. Math., 239, Amer. Math. Soc., Providence. 
\bibitem{CG} K. Costello, O. Gwilliam, \textit{Factorization algebras in perturbative quantum field theory},
 \\Online wiki available at 
http://math.northwestern.edu/$\sim$costello/factorization$\_$public.html
\bibitem{Co} K. Costello, \textit{A geometric construction of Genus Witten, I}, preprint, arXiv:1006.5422v2.
\bibitem{DK} W.G. Dwyer, D.M. Kan, \textit{Calculating simplicial localizations}, J. Pure Appl. Alg. 18 (1980), 17--35.
\bibitem{Francis} J.~Francis, \textit{The tangent complex and Hochschild cohomology of $E_n$-rings},  To appear, Compositio Mathematica
\bibitem{G} G. Ginot, \textit{Higher order Hochschild Cohomology},  C. R. Math. Acad. Sci. Paris  346  (2008),  no. 1-2, 5--10.
\bibitem{GTZ} G. Ginot, T. Tradler, M. Zeinalian, \textit{A Chen model for mapping spaces and the surface product}, Ann. Sc. de l'\'Ec. Norm. Sup., 4e s\'erie, t. 43 (2010), p. 811-881.
\bibitem{GoJa} P. Goerss, J. Jardine, \textit{Simplicial Homotopy Theory}, Modern Birkh\"auser Classics, first ed. (2009), Birkh\"auser Basel.
\bibitem{He} K. Hess, \textit{Model Categories in Algebraic Topology}, Papers in honour of the seventieth birthday of Professor Heinrich Kleisli (Fribourg, 2000). Appl. Categ. Structures 10 (2002), no. 3, 195--220.
\bibitem{Hi} V. Hinich, \textit{Homological Algebra of Homotopy Algebras}, Comm. Algebra 25 (1997), no. 10, 3291--3323. 
\bibitem{H} G. Hochschild, \textit{On the cohomology groups of an associative algebra}, Ann. of Math. (2) 46, (1945). 58--67.
\bibitem{H2} G. Hochschild, \textit{On the cohomology theory for associative algebras}, Ann. of Math. (2) 47, (1946). 568--579.
\bibitem{Ho} M. Hovey, \textit{Model Categories}, Mathematical Surveys and Monographs, 63. American Mathematical Society, Providence, RI, 1999. xii+209 pp.
\bibitem{KM} I. K{r}{i}{z},  J. P. May, \textit{Operads, algebras, modules and motives}, Ast\'erisque No. 233 (1995), {\rm iv}+145pp.
\bibitem{L} J.-L. Loday, \textit{Cyclic Homology}, Grundlehren der mathematischen Wissenschaften 301 (1992), Springer Verlag
\bibitem{Lu11} J. Lurie, \textit{Higher Topos Theory}, Annals of Mathematics Studies, 170. Princeton University Press, Princeton, NJ, 2009. xviii+925 pp.
\bibitem{L-III} J. Lurie, \textit{Derived Algebraic Geometry III: Commutative Algebra}, preprint, arXiv:math/0703204v4.
\bibitem{L-VIold} J. Lurie, \textit{Derived Algebraic Geometry VI: $E_k$ Algebras}, preprint, arXiv:0911.0018v1.
\bibitem{L-VI} J. Lurie, \textit{Higher Algebra}, book available online, http://www.math.harvard.edu/~lurie/
\bibitem{L-TFT} J. Lurie, \textit{On the Classification of Topological Field Theories}, preprint, arXiv:0905.0465v1. 
\bibitem{Ma}  J. P. May, \textit{Simplicial Objects in Algebraic Topology}, University of Chicago Press, 1992
\bibitem{Mil} J. Milnor, \textit{Morse Theory}, Annals of Math. Studies AM-51, 1st ed. (1963), Princeton University Press.
\bibitem{MW} S. Morrison, K. Walker, \textit{The blob complex}, preprint, arXiv:1009.5025.
\bibitem{P} T. Pirashvili, \textit{Hodge Decomposition for higher order Hochschild Homology},  Ann. Sci. \'Ecole Norm. Sup. (4)  33  (2000),  no. 2, 151--179.
\bibitem{Re} C. Rezk, \textit{A model for the homotopy theory of homotopy theory}, Trans. Amer. Math. Soc. 353 (3) (2001), 937--1007.
\bibitem{ToVe} B. To\"en, G. Vezzosi, \textit{Homotopical Algebraic Geometry II: geometric stacks and applications}, Mem. Amer. Math. Soc. 193 (2008), no. 902.
\bibitem{ToVe2} B. To\"en, G. Vezzosi, \textit{A note on  Chern character, loop spaces and derived algebraic geometry}, Abel Symposium, Oslo (2007), Volume 4, 331-354. 
\bibitem{ToVe3} B. To\"en, G. Vezzosi,\textit{$S^1$-equivariant simplicial algebras and de Rham theory}, preprint arXiv:0904.3256v1.
\bibitem{TWZ} T. Tradler, S. Wilson, M. Zeinalian, \textit{Equivariant holonomy for bundles and abelian gerbes}, preprint arXiv:1106.1668.
\end{thebibliography}
\end{document}